\newcommand{\C}{\operatorname{\mathbb{C}}}
\newcommand{\Z}{\operatorname{\mathbb{Z}}}
\newcommand{\N}{\operatorname{\mathbb{N}}}
\newcommand{\R}{\operatorname{\mathbb{R}}}
\newcommand{\soc}{\operatorname{\mathrm{soc}}}
\newcommand{\rad}{\operatorname{\mathrm{rad}}}
\newcommand{\im}{\operatorname{\mathrm{im}}}
\newcommand{\id}{\operatorname{\mathrm{id}}}
\newcommand{\het}{\operatorname{\mathrm{ht}}}
\newcommand{\SL}{\operatorname{\mathrm{SL}}}
\newcommand{\GL}{\operatorname{\mathrm{GL}}}
\newcommand{\JSF}{\operatorname{\mathrm{JSF}}}
\newcommand{\AGL}{\operatorname{\mathrm{AGL}}}
\newcommand{\ch}{\operatorname{\mathrm{ch}}}
\newcommand{\Sp}{\operatorname{\mathrm{Sp}}}
\newcommand{\Spin}{\operatorname{\mathrm{Spin}}}
\newcommand{\g}{\operatorname{\mathfrak{g}}}
\newcommand{\h}{\operatorname{\mathfrak{h}}}
\newcommand{\sln}{\operatorname{\mathfrak{sl}}}
\newcommand{\Ug}{\operatorname{\mathcal{U}(\mathfrak{g})}}
\newcommand{\XR}{X_{\mathbb{R}}}
\newtheorem{thm}{Theorem}[section]
\newtheorem{lemma}[thm]{Lemma}
\newtheorem{defn}[thm]{Definition}
\newtheorem{prop}[thm]{Proposition}
\newtheorem{cor}[thm]{Corollary}
\newtheorem{question}{Question}
\theoremstyle{definition}
\newtheorem{rem}[thm]{Remark}
\newtheorem{notation}[thm]{Notation}
\newtheorem{argument}{Argument}
\newcommand\sbullet{%
	\raisebox{-1ex}{\scalebox{2.7}{$\cdot$}}%
}
\date{\today}
\begin{document}

\pagenumbering{roman}
\setcounter{page}{1}
%this creates the title page. You must complete the information there
\begin{titlepage}
\newcommand{\HRule}{\rule{\linewidth}{0.5mm}} % Defines a new command for the horizontal lines, change thickness here

\center % Center everything on the page
 
%----------------------------------------------------------------------------------------
%   HEADING SECTIONS
%----------------------------------------------------------------------------------------

\vspace{3cm}
\textsc{\LARGE École polytechnique fédérale de Lausanne}\\[1.5cm] % Name of your university/college
\textsc{\Large Master thesis}\\[0.5cm] % Major heading such as course name
\textsc{\large Master in Mathematics}\\[0.5cm] % Minor heading such as course title

%----------------------------------------------------------------------------------------
%   TITLE SECTION
%----------------------------------------------------------------------------------------

\HRule \\[0.4cm] % line above and under the title
{ \Huge \bfseries Multiplicity-free tensor products of irreducible modules over simple algebraic groups in positive characteristic}\\[0.4cm] % Title of your document
%{ \huge \bfseries Vers le théorème d'O'Nan-Scott}\\[0.4cm] % Title of your document
\HRule \\[1.5cm]
 
%----------------------------------------------------------------------------------------
%   AUTHOR SECTION
%----------------------------------------------------------------------------------------

\begin{minipage}{0.4\textwidth}
\begin{flushleft} \large
\emph{Author:}\\
Gaëtan \textsc{Mancini} % Your name
\end{flushleft}
\end{minipage}
~
\begin{minipage}{0.4\textwidth}
\begin{flushright} \large
\emph{Supervisor:} \\
Donna \textsc{Testerman} % Supervisor's Name
\end{flushright}
\end{minipage}\\[10cm]

%----------------------------------------------------------------------------------------
%   LOGO SECTION
%----------------------------------------------------------------------------------------

%\includegraphics[width=0.4\linewidth]{Logo.pdf}\\[1cm] % Include a department/university logo - this will require the graphicx package
 
%----------------------------------------------------------------------------------------

\vfill % Fill the rest of the page with whitespace

\end{titlepage}

\cleardoublepage
\tableofcontents

\cleardoublepage

\renewcommand{\labelitemi}{$\bullet$}

\section*{List of notations}
\addcontentsline{toc}{section}{List of notations}
\vspace{1em}
\begin{tabular}{lcl}
	$\Phi$ && root system\\
	$\Pi$&& base of $\Phi$\\
	$\alpha^\vee$&& coroot of $\alpha$\\
	$X$&& weight lattice of $\Phi$\\
	$X^+$&& set of dominant weights\\
	$\leq$&& usual partial order on $X$\\
	$\omega_i$&& fundamental dominant weight corresponding to $\alpha_i$\\
	$\alpha_h$&& highest short root in $\Phi$\\
	$\rho$&& half sum of all positive roots\\
	$h$&& Coxeter number of $\Phi$\\
	$s_\alpha$&& reflection corresponding to $\alpha$\\
	$W$&& Weyl group of $\Phi$\\
	$w_0$&& longest element in $W$\\
	$s_{\alpha,mp}$&& affine reflection\\
	$W_p$&& affine Weyl group associated to $\Phi$ and $p$\\
	$w\sbullet x$&& $=w(x+\rho)-\rho$\\
	$D$&& $=X^+-\rho$, fundamental domain for the dot action of $W$ on $X$\\
	$\widehat{C}$&& upper closure of an alcove $C$\\
	$\overline{C}$&& closure of an alcove $C$\\
	$C_1$&& fundamental alcove\\
	$\uparrow$&& linkage order on $X$\\
	$\g$&& simple Lie algebra associated to $\Phi$ (over $\C$)\\
	$\Ug$&& universal enveloping algebra of the Lie algebra $\g$\\
	$G$&& simply connected Chevalley group with root system $\Phi$\\
	$X_\alpha$&& root subgroup corresponding to $\alpha$\\
	$T$&& maximal torus of $G$\\
	$B$&& Borel subgroup of $G$\\
	$G_{\C}$&& simply connected Chevalley group with root system $\Phi$ over $\C$\\
	$\soc M$&& socle of the module $M$\\
	$\rad M$&& radical of the module $M$\\
	$m_M(\lambda)$&& multiplicity of the weight $\lambda$ in $M$\\
	$S^aM$&& $a$-th symmetric power of $M$\\
	$L(\lambda)$&& simple $G$-module of highest weight $\lambda$\\
	$[M:L(\lambda)]$&& multiplicity of the composition factor $L(\lambda)$ in $M$\\
	$M^*$&& dual module of $M$\\
	$M^\tau$&& contravariant dual of $M$\\
	$\Delta(\lambda)$&& Weyl module of highest weight $\lambda$\\
	$\nabla(\lambda)$&& costandard module of highest weight $\lambda$\\
	$T(\lambda)$&& indecomposable tilting module of highest weight $\lambda$\\
	$\ch M$&& character of the module $M$\\
	$\chi(\lambda)$&& Weyl character associated to $\lambda$\\
	$\nu_p(m)$&& $p$-adic valuation of $m$\\
	$L_{\C}(\lambda)$&& simple $G_{\C}$-module of highest weight $\lambda$
\end{tabular}

\cleardoublepage

\section*{Introduction}
\addcontentsline{toc}{section}{Introduction}

Tensor products are of great interest in representation theory. Determining their structure has been the subject of much research. When the category of representations we are working with is semisimple, the central question is to find the decomposition of a tensor product of simple modules into a direct sum of simple modules. In particular, this is the case for representations of simple Lie algebras over the field of complex numbers, and for representations of simple algebraic groups over the same field. In the case of $\sln_2(\C)$, the Clebsch-Gordan formula gives an answer to this question.

Another question that has been the subject of research is to determine whether certain simple modules appear several times in the decomposition of the tensor product, and thus classify tensor products without multiplicity. In the case of simple Lie algebras and simple algebraic groups, this question was resolved in 2003 by Stembridge (\cite{Stembridge}). In particular, this classification shows that if the tensor product of two simple modules is multiplicity-free, then the highest weight of one of the two modules is a multiple of a fundamental weight. This fact is no longer true in positive characteristic.

When we move on to a field of positive characteristic, the category of representations of a simple algebraic group is no longer semisimple. Other questions, often complicated, may then arise, such as finding indecomposable direct summands and classifying completely reducible tensor products. The classification of multiplicity-free modules is also of interest. Recently, Gruber showed that a multiplicity-free tensor product of simple modules is necessarily completely reducible, which gives another motivation for this classification.

In this project, we will therefore focus on the classification of multiplicity-free tensor products of simple modules over an algebraically closed field of characteristic $p>0$. Our main question is:
\begin{question}
	Given a simply connected simple algebraic group $G$, for which pairs of simple modules $L(\lambda)$ and $L(\mu)$ is the tensor product $L(\lambda)\otimes L(\mu)$ multiplicity-free ?
\end{question}

We will provide the complete classification in the case of $\SL_2$ and $\SL_3$, and show a number of important results in the case of $\Sp_4$. In addition, we will show that, under certain assumptions, being completely reducible implies being multiplicity-free. Using the classification of completely reducible tensor products of simple modules for $\SL_n$ over a field of characteristic $2$, established by Gruber (\cite{article_jonathan}), we will answer our question in the case of $\SL_n$ for $p=2$.

The first part of this project recalls important notions of representation theory that will be used later. In the second part, we recall some results related to tensor products and show that we can restrict our attention to simple modules with $p$-restricted highest weight in order to answer our question. In the third part, we show some connections between multiplicity-freeness over $\C$ and in positive characteristic. In parts 4 to 7, we proceed to the classification of multiplicity-free tensor products in the cases of $\SL_2$, $\SL_3$, $\Sp_4$ (partial classification only), and $\SL_n$ for $p=2$ respectively. This work could be continued on the one hand by completing the classification for $\Sp_4$, and on the other hand by generalising these results to other simply connected simple algebraic groups.

It should also be noted that we used Magma (\cite{magma}) in order to compute the composition factors of certain tensor products and to have concrete examples, which enabled us to have a better understanding of the structure of these tensor products.

We assume that the reader is familiar with the representation theory of semisimple Lie algebras, as well as with the basics of algebraic group theory. We will therefore not repeat the relative notions in the preliminaries, but refer the reader to \cite{Humphreys_Lie} for the representation theory of semisimple Lie algebras and to \cite{Testerman_Gunter} for the theory of algebraic groups.\\

\emph{Acknowledgements:} First of all, I would like to thank Professor Donna Testerman for supervising this project and giving me the opportunity to discover this beautiful field of mathematics, as well as for her useful comments, her precious time and her support throughout the semester.  I would also like to thank Aluna Rizzoli for his explanation of the basics of Magma, and Jonathan Gruber for his help and for providing me with the alcove pictures. Finally, I would like to thank Maite Carli for her careful review of this project, and Amandine Favre for her constant support and her review of this project.

\iffalse
An important question in representation theory is to find the decomposition of the tensor product of two simple modules into indecomposable components. For $\sln_2(\C)$, we can use the \emph{Clebsch-Gordan formula} to find the decomposition as a direct sum of irreducible modules.

Computing the indecomposable direct summands of a tensor product can be an hard task. An easier question is to find the composition factor of a tensor product.

Recently, Gruber proved that if the tensor product of two simple modules is multiplicity-free, then it is completely reducible.

The main question of this project is the following one:

\begin{question}
	Given a simple algebraic group $G$ and simple modules $L(\lambda),\;L(\mu)$, is the tensor product $L(\lambda)\otimes L(\mu)$ multiplicity-free ?
\end{question}

We will show that, in order to answer this question, we can restrict our attention to simple modules with $p$-restricted highest weight. We will answer this question in the case of $\SL_3$, and give an important number of results in the case of $\Sp_4$. 

Furthermore, we will show that, under some conditions, complete reducibility implies multiplicity-freeness. This will allows us to answer our question when $G=\SL_n$ and $p=2$.

In order to have example, we did a number of computations using magma.
\fi

\cleardoublepage

\pagenumbering{arabic}
\setcounter{page}{1}

\section{Preliminaries}

In this section, we introduce all the notions needed to solve our problem and introduce some arguments which will be used several times in the next parts of this project.
%In this paper, $G$ is a simple algebraic group over an algebraically closed field $k$. Unless otherwise stated, we assume $\car (k)=p>0$. Let $\Phi$ be the root system associated to $G$.

%We will work essentially with three types of modules (irreducible, Weyl and tilting), but we now introduce four types of modules.

\subsection{Weights and alcoves}
We start by recalling some results about root systems and weights. Then, we will define the notion of an alcove, which has a very important role in the structure of some modules.\\

Let $X_{\R}$ be a Euclidean space of dimension $n$ with scalar product $(\; ,\;):X_{\R}\times X_{\R}\to \R$ and let $\Phi\subseteq X_{\R}$ be an irreducible root system. We fix $\Pi=\{\alpha_1,\ldots,\alpha_n\}$ a base of $\Phi$ and denote by $\Phi^+$ the set of positive roots with respect to $\Pi$. Moreover, we fix $p$ a prime.\\

For $\alpha\in \Phi$, we define its \emph{coroot} $\alpha^\vee:=\frac{2\alpha}{(\alpha,\alpha)}\in \XR$.
	We set 
	$$\XR^+:=\{x\in \XR|\; (x,\alpha^\vee)\geq 0\quad \forall \alpha\in \Phi^+\},$$
and define the \emph{weight lattice} of $\Phi$ to be the set
	$$X:=\{\lambda\in X_{\R}|\; (\lambda,\alpha^\vee)\in \Z\quad \forall \alpha\in \Phi\}.$$
Moreover we define the set of \emph{dominant weights} to be the set
	$$X^+:=X\cap \XR^+=\{\lambda\in X|\; (\lambda,\alpha^\vee)\geq 0\quad \forall \alpha\in \Phi^+\}.$$
There is a partial order on $X$ given by $\lambda\leq \mu$ if $\lambda-\mu$ is a $\N$-linear combination of simple roots, i.e. $\lambda-\mu=\sum\limits_{\alpha\in \Pi}n_\alpha\alpha$ with $n_\alpha\in \N \; \forall\alpha\in \Pi$.\\

Since $\Pi$ is a basis of $X_{\R}$, it follows that $\{\alpha^\vee|\; \alpha\in \Pi\}$ is a basis of $X_{\R}$. Thus it admits a dual basis with respect to $(\; ,\;)$ and there exists a set $\{\omega_\alpha|\; \alpha\in \Pi\}$ whose elements satisfy
$$(\omega_\alpha,\alpha^\vee)=1\qquad \text{and} \qquad (\omega_\alpha,\beta^\vee)=0\quad\forall \beta \in \Pi\setminus\{\alpha\}.$$
We observe that the $\omega_\alpha\in X^+$ for all $\alpha\in \Pi$. We call the weights $\omega_\alpha$ with $\alpha\in \Pi$ the \emph{fundamental dominant weights}. One can easily check that every weight is a $\Z$-linear combination of the fundamental dominant weights.
To simplify the notation, we set $\omega_i:=\omega_{\alpha_i}$ for $i\in\{1,\ldots,n\}$. We will use the numeration of simple roots given in \cite[11.4]{Humphreys_Lie}, so the numeration of the fundamental dominant weights will correspond to this labelling of Dynkin diagrams.

\iffalse
\begin{notation}
	We denote by $\omega_1,\ldots,\omega_n$ the fundamental dominant weights associated to $\Pi$. In particular, we will use the numeration of simple roots given in \cite[11.4]{Humphreys_Lie}, so the numeration of fundamental dominant weights will correspond to this labelling of Dynkin diagrams.
\end{notation}
\fi

\begin{defn}
	A dominant weight $\lambda\in X^+$ is called \emph{$p$-restricted} if $(\lambda,\alpha^\vee)<p$ for all $\alpha\in \Pi$.
\end{defn}

We denote the highest short root in the root system $\Phi$ (with respect to $\Pi$) by $\alpha_h$ and the half sum of all positive roots by $\rho:=\frac{1}{2}\sum\limits_{\alpha\in \Phi^+}\alpha.$ This element satisfies $\rho=\sum\limits_{i=1}^n\omega_i$ \linebreak(\cite[13.3]{Humphreys_Lie}).
The \emph{Coxeter number} of the root system $\Phi$ is $h:=(\rho,\alpha_h^\vee)+1$.

\begin{rem}\label{highest_short_root}
	The coroot $\alpha_h^\vee$ is the highest root in the dual root system $\Phi^\vee$ and for any dominant weight $\lambda\in X^+$, we have $(\lambda,\alpha^\vee)\leq (\lambda,\alpha_h^\vee)$ for all $\alpha\in \Phi^+$. More generally, for every $x\in \XR^+$ and $\alpha\in \Phi^+$, we have $(x,\alpha^\vee)\leq (x,\alpha_h^\vee)$. Recall also that $(\alpha,\alpha_h^\vee)\geq 0$ for all $\alpha\in \Phi^+$.
\end{rem}

\iffalse
\begin{defn}
	The \emph{Coxeter number} of the root system $\Phi$ is $h:=(\rho,\alpha_h^\vee)+1$.
\end{defn}

\begin{lemma}[{\cite[13.3]{Humphreys_Lie}}]
	We have
	$$\rho=\sum_{i=1}^n\omega_i.$$
\end{lemma}
%A proof of this Lemma is given in \cite[13.3]{Humphreys_Lie}.
\fi

\begin{defn}
	A set $Y\subseteq X$ is called \emph{saturated} if for all $\lambda\in Y, \; \alpha\in \Phi$ and $i$ between $0$ and $(\lambda,\alpha^\vee)$, we have $\lambda -i\alpha\in Y$. 
\end{defn}

	For $\alpha\in \Phi$ we define the reflection $s_\alpha\in \GL(X_{\R})$ by 
	$$s_\alpha(v):=v-\frac{2(v,\alpha)}{(\alpha,\alpha)}\alpha\qquad \text{for } v\in X_{\R}.$$
	We also define the \emph{Weyl group} of $\Phi$ by
	$$W:=\langle s_\alpha|\; \alpha\in \Phi\rangle\subseteq\GL(X_{\R}).$$
	The group $W$ is generated by $\{s_\alpha|\; \alpha\in \Pi\}$. More precisely, $(W,\{s_\alpha|\; \alpha\in \Pi\})$ is a Coxeter system (\cite[1.5]{Humphreys_Coxeter}). It contains a unique longest element which we denote by $w_0\in W$ (\cite[1.8]{Humphreys_Coxeter}).

%\begin{defn}
%For $x\in X_{\R}$, we define the translation by $x$ by
%$$t_x(v):=x+v \qquad \text{for } v\in X_{\R}.$$
%Thus $t_x\in \mathrm{Isom}(X_{\R})$. Further on, for $\beta\in \Phi^+$ and $m\in\Z$, we define $s_{\beta,m}:=t_{pm\beta}s_\beta$, i.e.
%$$s_{\beta,m}(v) =s_{\beta}(v)+pm\beta.$$
%\end{defn}

%\begin{defn}
	 For $\alpha\in \Phi$ and $m\in \Z$, we define the affine reflection $s_{\alpha,mp}\in \AGL(X_{\R})$ by
	$$s_{\alpha,mp}(v):=s_\alpha(v)+mp\alpha\qquad \text{for } v\in X_{\R}.$$
	The \emph{affine Weyl group} associated to $\Phi$ and $p$, denoted by $W_p$, is the group 
	$$W_p:=\langle s_{\alpha,mp}|\; \alpha\in \Phi,\; m\in \Z\rangle\subseteq\AGL(X_{\R}).$$
%\end{defn}
The \emph{dot action} of $W_p$ on $X_{\R}$ is the group action given by 
$$w\sbullet x:=w(x+\rho)-\rho \qquad \text{for }\; w\in W_p \:\text{ and }\; x\in \XR.$$
For $\alpha\in \Phi$ and $m\in \Z$, we define the \emph{reflection hyperplane} of $s_{\alpha,mp}$ (for the dot action) to be the set 
$$H_{\alpha,m}:=\{x\in X_{\R}|\; (x+\rho,\alpha^\vee)=mp\}.$$
Since $W$ is a subgroup of $W_p$, we can restrict the dot action to $W$. We set
$$D:=\{\lambda\in X|\; \lambda+\rho\in X^+\},$$ which is a fundamental domain for the dot action of $W$ on $X$. (This follows from the facts that $\XR^+$ is a fundamental domain for the action of $W$ on $\XR$ (\cite[1.12]{Humphreys_Coxeter}) and that $X$ is preserved by $W$.) 
\iffalse
\begin{defn}%[Dot action]
	We define the \emph{dot action} of $W_p$ on $X_{\R}$ as follows. For $x\in X_{\R}$ and $w\in W_p$, we have $$w\sbullet x:=w(x+\rho)-\rho.$$
\end{defn}

%Since $W$ is a subgroup of $W_p$, we can restrict the dot action to $W$.

\begin{notation}	

	We set
	$$D:=\{\lambda\in X|\; \lambda+\rho\in X^+\},$$ which is a fundamental domain for the dot action of $W$ on $X$. (This follows from the fact that $\XR^+$ is a fundamental domain for the action of $W$ on $\XR$ (see \cite[1.12]{Humphreys_Coxeter}) and $X$ is preserved by $W$.) 
\end{notation}

\begin{defn}
	Let $\alpha\in \Phi$ and $m\in \Z$. The \emph{reflection hyperplane} of $s_{\alpha,mp}$ (for the dot action) is the set 
	$$H_{\alpha,m}:=\{x\in X_{\R}|\; (x+\rho,\alpha^\vee)=mp\}.$$
\end{defn}
\fi

\begin{defn}%[Alcove]
	Let $n=(n_{\alpha})_{\alpha\in \Phi^+}\in \Z^{|\Phi^+|}$. We define
	$$C_n:=\{x\in X_{\R}|\; (n_\alpha-1)p<(x+\rho,\alpha^\vee)<n_{\alpha}p\quad \text{for all }\alpha\in \Phi^+\}.$$
	We say that $C_n$ is an \emph{alcove} if it is a non-empty set.
	For $C_n$ an alcove, its \emph{upper closure} is the set
	$$\widehat{C_n}:=\{x\in X_{\R}|\; (n_\alpha-1)p<(x+\rho,\alpha^\vee)\leq n_{\alpha}p\quad \text{for all }\alpha\in \Phi^+\},$$
	and its \emph{closure} is the set
	$$\overline{C_n}:=\{x\in X_{\R}|\; (n_\alpha-1)p\leq(x+\rho,\alpha^\vee)\leq n_{\alpha}p\quad \text{for all }\alpha\in \Phi^+\}.$$
\end{defn}
%\begin{rem}
	Alternatively, we can define an alcove to be a connected component of $X_{\R}\setminus\bigcup\limits_{\substack{\alpha\in \Phi\\m\in \Z}}H_{\alpha,m}$.
%\end{rem}

\begin{defn}
	The \emph{fundamental alcove} is the alcove 
	$$C_1:=\{x\in X_{\R}|\; 0<(x+\rho,\alpha^\vee)<p\quad \text{for all }\alpha\in \Phi^+\}.$$
\end{defn}

\begin{rem}\label{C1_highest_short_root}
	By Remark \ref{highest_short_root}, we have
	$$C_1=\{x\in X_{\R}|\;(x+\rho,\alpha_h^\vee)<p \text{  and  } 0<(x+\rho,\alpha^\vee)\quad \text{for all }\alpha\in \Pi\}.$$
\end{rem}

\begin{lemma}\label{C1}
	Let $\lambda \in X^+\cap \widehat{C_1}$ and $\mu\in X^+$ be such that $\mu\leq \lambda$. We have $\mu \in \widehat{C_1}$.
\end{lemma}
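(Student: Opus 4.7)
The plan is to reduce membership in $\widehat{C_1}$, for any dominant weight, to a single scalar inequality against the coroot $\alpha_h^\vee$ of the highest short root, and then to compare $(\mu+\rho,\alpha_h^\vee)$ with $(\lambda+\rho,\alpha_h^\vee)$ via the partial order.

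First I would observe that the lower bounds in the definition of $\widehat{C_1}$ are automatic for any dominant weight. Indeed, if $\nu\in X^+$ then $\nu+\rho$ is strictly dominant, so $(\nu+\rho,\alpha^\vee)\geq (\rho,\alpha^\vee)>0$ for every $\alpha\in \Phi^+$. Thus membership of $\nu$ in $\widehat{C_1}$ reduces to verifying the upper bounds $(\nu+\rho,\alpha^\vee)\leq p$ for all $\alpha\in \Phi^+$. Applying Remark \ref{highest_short_root} to $\nu+\rho\in \XR^+$, this whole family of inequalities follows from the single inequality $(\nu+\rho,\alpha_h^\vee)\leq p$, exactly in the spirit of Remark \ref{C1_highest_short_root}. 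In particular, the hypothesis $\lambda\in \widehat{C_1}$ yields $(\lambda+\rho,\alpha_h^\vee)\leq p$, and to establish $\mu\in \widehat{C_1}$ it suffices to show $(\mu+\rho,\alpha_h^\vee)\leq p$.

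Next I would exploit $\mu\leq\lambda$ to compare these two numbers. Writing $\lambda-\mu=\sum_{\alpha\in \Pi}n_\alpha\alpha$ with $n_\alpha\in \N$, and using the last assertion of Remark \ref{highest_short_root} that $(\alpha,\alpha_h^\vee)\geq 0$ for every $\alpha\in \Phi^+$ (in particular for every $\alpha\in \Pi$), I obtain
\[
(\lambda-\mu,\alpha_h^\vee)\;=\;\sum_{\alpha\in \Pi}n_\alpha\,(\alpha,\alpha_h^\vee)\;\geq\;0.
\]
Hence $(\mu+\rho,\alpha_h^\vee)\leq (\lambda+\rho,\alpha_h^\vee)\leq p$, which combined with the first step gives $\mu\in \widehat{C_1}$.

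There is no real obstacle in this argument: the whole content is the reduction to the single highest-short-root inequality, after which the conclusion follows by a one-line comparison. The only mild care needed is to invoke both halves of Remark \ref{highest_short_root}, namely that on $\XR^+$ the functional $(\,\cdot\,,\alpha_h^\vee)$ dominates every $(\,\cdot\,,\alpha^\vee)$ for $\alpha\in \Phi^+$, and that $(\alpha,\alpha_h^\vee)\geq 0$ for $\alpha\in \Phi^+$.
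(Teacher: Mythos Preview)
Your proof is correct and follows essentially the same approach as the paper: both reduce membership in $\widehat{C_1}$ to the single inequality $(\mu+\rho,\alpha_h^\vee)\leq p$ via Remark~\ref{C1_highest_short_root}, and then verify this by writing $\lambda-\mu$ as a nonnegative combination of simple roots and using $(\alpha_i,\alpha_h^\vee)\geq 0$.
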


\begin{proof}
	Let $a_1,\ldots a_n\in \N$ be such that $\mu=\lambda-\sum\limits_{i=1}^nc_i\alpha_i$. By assumption, $\mu\in X^+$, thus $(\mu+\nolinebreak\rho, \alpha)\geq 0$ for all $\alpha\in \Pi$. Thus, using Remark \ref{C1_highest_short_root}, we only need to show that $(\mu+\nolinebreak\rho,\alpha_h^\vee)\leq\nolinebreak p$. Since $(\lambda+\rho,\alpha_h^\vee)\leq p$ and $(\alpha_i,\alpha_h^\vee)\geq 0$ for all $\alpha_i\in \Pi$, we have
	$$(\mu+\rho,\alpha_h^\vee)=(\lambda-\sum_{i=1}^nc_i\alpha_i+\rho,\alpha_h^\vee)=(\lambda+\rho,\alpha_h^\vee)-\sum_{i=1}^nc_i(\alpha_i,\alpha_h^\vee)\leq(\lambda+\rho,\alpha_h^\vee)\leq p,$$
	so $\mu\in \widehat{C_1}$.
\end{proof}

\begin{defn}
	An alcove $C$ is \emph{$p$-restricted} if there exists a $p$-restricted dominant weight $\lambda\in X^+$ such that $\lambda\in \widehat{C}$.
\end{defn}

\begin{thm}[{\cite[4.5 and 4.8]{Humphreys_Coxeter}}]\label{W_simply_transitively}
	The affine Weyl group $W_p$ acts simply transitively on the set of alcoves. Moreover, $\overline{C_1}$ is a fundamental domain for the dot action of $W_p$ on $\XR$.
\end{thm}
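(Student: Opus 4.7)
The plan is to reduce the statement to a standard result about Coxeter groups acting on Euclidean spaces, applied to the affine Weyl group $W_p$. First I would observe that the dot action of $W_p$ on $\XR$ is conjugate, via the translation $x \mapsto x + \rho$, to the ordinary affine action of $W_p$. Under this conjugation, the fundamental alcove $C_1$ corresponds to the open simplex $\{y \in \XR \mid 0 < (y, \alpha^\vee) < p \text{ for all } \alpha \in \Phi^+\}$, and the reflection hyperplanes $H_{\alpha,m}$ correspond to the hyperplanes $\{y \mid (y, \alpha^\vee) = mp\}$. So it is enough to prove the analogous statement for the ordinary action, and alcoves are precisely the connected components of the complement of these hyperplanes.

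Next, for transitivity, I would argue as follows. Consider the "adjacency graph" on the set of alcoves, where two alcoves are joined if they share a wall (a codimension-one face lying on some $H_{\alpha,m}$). Two adjacent alcoves are exchanged by the reflection $s_{\alpha,mp}$ through their common wall. Given an arbitrary alcove $C$, one can draw a generic straight path from a point of $C_1$ to a point of $C$ crossing the hyperplanes one at a time; the sequence of alcoves met along this path shows that $C_1$ and $C$ are connected in the adjacency graph. Composing the successive reflections yields an element of $W_p$ carrying $C_1$ to $C$.

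For freeness, I would use the standard separation count: for $w \in W_p$, let $n(w)$ be the number of hyperplanes $H_{\alpha,m}$ strictly separating $C_1$ from $w(C_1)$. A length-function argument (as in \cite[4.5]{Humphreys_Coxeter}) shows that $n(w)$ equals the length of $w$ with respect to the generating set consisting of reflections through the walls of $C_1$; in particular $n(w) = 0$ forces $w = e$. So if $w \cdot C_1 = C_1$ then $w$ is the identity, which gives the simple transitivity. Combined with the preceding step, this identifies $W_p$ with a Coxeter group generated by the wall-reflections of $C_1$.

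Finally, to see that $\overline{C_1}$ is a fundamental domain for the dot action, I would take an arbitrary $x \in \XR$. The orbit $W_p \cdot x$ meets some alcove closure, and by transitivity on alcoves we may translate it into $\overline{C_1}$, proving existence. For uniqueness, suppose $x, y \in \overline{C_1}$ lie in the same orbit, say $y = w \cdot x$. The main obstacle, and the subtlest step, is ruling out a nontrivial $w$ fixing an interior point of some face of $C_1$: one handles this by induction on the number of walls of $C_1$ on which $x$ lies, using that the isotropy group of any point in $\overline{C_1}$ is generated by the wall-reflections fixing it (a standard consequence of the Coxeter group structure established above). This yields $w = e$ and hence $x = y$, completing the proof.
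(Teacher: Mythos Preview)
The paper does not supply a proof of this theorem; it simply records the result with a citation to \cite[4.5 and 4.8]{Humphreys_Coxeter}. Your outline is correct and is essentially the argument found in that reference: the reduction from the dot action to the linear action via translation by $\rho$, the gallery argument for transitivity, the length-versus-separating-hyperplanes identity for freeness, and the isotropy-group description for the fundamental domain are exactly the ingredients of Humphreys' proof. There is nothing to compare beyond noting that your sketch faithfully reproduces the cited source.
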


\begin{defn}
	Let $\lambda,\mu\in X$. The weight $\lambda$ is \emph{linked} to $\mu$ if $\lambda=\mu$ or if there exist affine reflections $s_{\beta_1,m_1p},\dots,s_{\beta_t,m_tp}\in W_p$ such that 
	$$\lambda\leq s_{\beta_1,m_1p}\sbullet \lambda\leq \ldots \leq s_{\beta_t,m_tp}\cdots s_{\beta_1,m_1p}\sbullet \lambda =\mu.$$
	In this case, we write $\lambda \uparrow \mu$.
\end{defn}

\begin{rem}
	The relation $\uparrow$ is a partial order on $X$.
\end{rem}

\subsection{Chevalley groups and algebraic groups}\label{Chevalley}

In this section, we recall some definitions about linear algebraic groups, following \cite{Testerman_Gunter}, and we construct Chevalley groups following \cite{Steinberg}.

\subsubsection{Linear algebraic groups}
	Let $G$ be a linear algebraic group. A \emph{Borel subgroup} $B\leq G$ is a closed, connected, solvable subgroup of $G$ which is maximal with respect to all these properties. The \emph{radical} $R(G)$ of $G$ is the maximal closed connected solvable normal subgroup of $G$. The group $G$ is \emph{semisimple} if $R(G)=1$. A non-trivial semisimple algebraic group $G$ is \emph{simple} if it has no non-trivial proper closed connected normal subgroups. A representation $\rho :G\to \GL(V)$ is \emph{rational} if $\rho$ is a morphism of algebraic groups.

\subsubsection{Chevalley groups}
We fix a numeration of the roots $\Phi=\{\alpha_1,\ldots,\alpha_m\}$ such that $\het \alpha_i\leq \het \alpha_j$ for all $i\leq j$ (recall that $\Pi=\{\alpha_1,\ldots,\alpha_n\}$). Let $\g$ be the simple Lie algebra associated to $\Phi$ (over $\C$) with Cartan subalgebra $\h$. We denote by $\Ug$ its universal enveloping algebra and by $\kappa:\g\times\g\to \C$ the \emph{Killing form} on $\g$. We fix $\{e_\alpha,\; h_\beta|\; \alpha\in \Phi,\; \beta\in \Pi\}$ a Chevalley basis of $\g$ which satisfies the following properties:
\begin{enumerate}[label = (\arabic*)] 
	\item $h_\alpha$ is the coroot of $\alpha$, i.e. $h_\alpha=\frac{2t_\alpha}{\kappa(t_\alpha,t_\alpha)}$ where $t_\alpha \in \h$ is the unique element such that $\kappa(t_\alpha,h)=\alpha(h)$ for all $h\in \h$,
	\item $e_\alpha\in \g_\alpha$ for all $\alpha\in \Phi$,
	\item $[h_\alpha h_\beta]=0$ for all $\alpha,\beta\in \Pi$,
	\item $[h_\alpha e_\beta]=(\beta,\alpha^\vee) e_\alpha$,
	\item $[e_\alpha e_{-\alpha}]=h_\alpha$ for all $\alpha\in \Pi$,
	\item $[e_\alpha e_\beta]=0$ if $\alpha+\beta\notin \Phi$ and $\beta \neq -\alpha$,
	\item if $\beta-r\alpha,\ldots,\beta +q\alpha$ is the $\alpha$-string through $\beta$, then $[e_\alpha e_\beta]=N_{\alpha,\beta}e_{\alpha+\beta}$ if $\alpha+\beta \in \Phi$ with $N_{\alpha,\beta}=-N_{-\alpha,-\beta}=\pm (r+1)$.
\end{enumerate}
The existence of this basis is proven in \cite[25.2]{Humphreys_Lie}, \cite[4.2.1]{Carter} and \cite[Chapter 1]{Steinberg}.

For $\alpha\in \Phi^+$, we set $f_\alpha:=e_{-\alpha}$. Moreover, for any sequences of non-negative integers $A=(a_1,\ldots,a_m),B=(b_1,\ldots,b_m)\in \N^m,\; C=(c_1,\ldots,c_n)\in \N^n$, we define
\begin{ceqn}
\begin{align*}
	&E^A:=\frac{e_{\alpha_1}^{a_1}}{a_1!}\cdots \frac{e_{\alpha_m}^{a_m}}{a_m!}\in \Ug,\\
	&F^B:=\frac{f_{\alpha_1}^{b_1}}{b_1!}\cdots \frac{f_{\alpha_m}^{b_m}}{b_m!}\in \Ug,\\
	&H^C:=\binom{h_{\alpha_1}}{c_1}\cdots\binom{h_{\alpha_n}}{c_n}\in \Ug,
\end{align*}
\end{ceqn}
where $$\binom{h_{\alpha_i}}{c_i}:=\frac{h_{\alpha_i}(h_{\alpha_i}-1)\cdots(h_{\alpha_i}-c_i+1)}{c_i!}.$$

Using the PBW Theorem (see \cite[17.3]{Humphreys_Lie} or \cite[Chapter 2]{Steinberg}), the set $\{F^BH^CE^A\}$ is a basis of $\Ug$. 
We define $\Ug_{\Z}$ to be the subring of $\Ug$ generated by $\{\frac{e_\alpha^a}{a!}|\; \alpha\in \Phi,\; a\in \N\}$, $\Ug_{\Z}^\pm$ to be the subring of $\Ug$ generated by $\{\frac{e_\alpha^a}{a!}|\; \alpha\in \Phi^\pm,\; a\in \N\}$, and $\Ug_{\Z}^\circ$ the subring of $\Ug$ generated by $\{\binom{h_{\alpha}}{c}|\; \alpha\in \Phi,\; a\in \N\}$. Then $\{F^BH^CE^A\}$ is a $\Z$-basis of $\Ug_{\Z}$ \linebreak(\cite[Chapter 2]{Steinberg}).

Let $V$ be an irreducible finite-dimensional $\Ug$-module with highest weight $\lambda$. There exists $v^+\in V$ a maximal vector for $\Ug^\circ\Ug^+$ of weight $\lambda$. We define $M:=\Ug_{\Z}^-v^+$, which is a lattice in $V$. Then $M$ is stable under $\Ug_{\Z}$. For an arbitrary field $k$, we define $\Ug_k:=\Ug\otimes_{\Z}k$ and 
$V_k:=M\otimes_{\Z}k$, which has thus the structure of a $\Ug_k$-module.

For $\alpha\in \Phi$ and $t\in k$, we define
$$x_\alpha(t):=\exp(te_\alpha)=\sum_{i=0}^\infty t^i\frac{e_{\alpha}^i}{i!}.$$
Since $e_\alpha$ acts nilpotently on $V_k$, the map $x_\alpha(t)$ is well-defined and is an automorphism of $V_k$ (\cite[Chapter 3]{Steinberg}). % (see \cite[4.3]{Carter}). 
We call the group 
$$G=G(V,k)=\langle x_{\alpha}(t)|\; \alpha\in \Phi,\; t\in k\rangle\subseteq\GL(V_k)$$
the \emph{Chevalley group} associated to $V$ and $k$. The type of $G$ is the type of the root system $\Phi$.\\

We fix $G=G(V,k)$ a Chevalley group. For $\alpha\in \Phi$, we define the \emph{root subgroup} corresponding to $\alpha$ to be $$X_\alpha:=\{x_{\alpha}(t)|\; t\in k\}\leq G.$$ 
	We also define 
	$$U:=\langle X_\alpha|\; \alpha\in \Phi^+\rangle\leq G\quad  \text{ and  } \quad 
	U^-:=\langle X_\alpha|\; \alpha\in \Phi^-\rangle\leq G.$$
	Furthermore, for $t\in k^*$, we define
	$$w_\alpha(t):=x_{\alpha}(t)x_{-\alpha}(-t^{-1})x_{\alpha}(t)\in G\quad \text{ and }\quad  h_{\alpha}(t):=w_{\alpha}(t)w_{\alpha}(1)^{-1}\in G$$ and set $$T:=\langle h_{\alpha}(t)|\; \alpha\in \Phi,\; t\in k^*\rangle\leq G.$$
	Finally, we set $$B:=\langle U,T\rangle\leq G.$$

From now on, we assume that $k$ is algebraically closed. Then $G$ is a semisimple algebraic group (over $k$) with maximal torus $T$ and Borel subgroup $B$ (\cite[Theorem 6]{Steinberg}). We call $\Phi$ the \emph{root system associated to $G$}. 

Let $X_G$ be the lattice of all weights appearing in rational representations of $G$. Then $X_G\subseteq X$ (\cite[Section 9.2]{Testerman_Gunter}), and we say that $G$ is \emph{simply connected} if $X_G=X$. For each type of root system, there exists a unique simply connected Chevalley group of this type (up to isomorphism). For a root system of type $A_n$, we have $G=\SL_{n+1}$, for a root system of type $B_n$, we have $G=\Spin_{2n+1}$, and for a root system of type $C_n$, we have $G=\Sp_{2n}$ (\cite[Chapter 3]{Steinberg}).

If $G=G(V,k)$ is simply connected and $G(V',k)$ is another Chevalley group of the same type, there exists a surjective homomorphism $G\to G(V',k)$ (\cite[Corollary 5]{Steinberg}). In particular, $V'$ has the structure of a $G$-module.

For the rest of this paper, we fix $k$ an algebraically closed field of characteristic $p>0$, $\Phi$ a root system with base $\Pi$, weight lattice $X$ and set of dominant weights $X^+$, $G=G(V,k)$ the simply connected Chevalley group with root system $\Phi$, and $B,T\leq G$ as in the last section. Moreover, we fix $W$ the Weyl group associated to $\Phi$ and we set $G_{\C}=G(V,\C)$.

%{\color{red} ajouter universal chevalley group et simply connected algebraic group}

\subsection{Modules}
%In this section, we define the Weyl modules and the simple modules. We also define two kind of duality in the category of $G$-modules. Further on, we define the notions of complete reducibility and of multiplicity.\\
In this section, we define several notions related to modules for $G$. In particular, we define Weyl modules and tilting modules. All the modules that we consider are finite-dimensional and correspond to rational representations. Moreover, by module, we always mean $G$-module.
%, i.e. we have an algebraic group homomorphism $G\to \GL(M)$.

\subsubsection{First definitions and irreducible modules}

We start by recalling some basic definitions and the classification of finite-dimensional simple $G$-modules.\\

Let $M$ be a $G$-module.  Its \emph{socle}, denoted by $\soc M,$ is the sum of all its simple submodules and its \emph{radical}, denoted by $\rad M$, is the intersection of all its maximal submodules. The socle $\soc M$ is the largest completely reducible submodule of $M$. The radical $\rad M$ is the smallest submodule of $M$ such that $M/\rad M$ is completely reducible (\cite[I 2.14]{Jantzen}). A vector $v\in M$ is a \emph{maximal vector} with respect to $B$ if $Bv\subseteq kv$. For $\lambda\in X$, we denote by $m_M(\lambda):=\dim M_\lambda$ the multiplicity of the weight $\lambda$ in $M$, where $M_\lambda$ is the weight space associated to the weight $\lambda$ in $M$. The weight $\mu$ is called the \emph{highest weight} of $M$ if every $\nu\in X$ with $m_M(\nu)>0$ satisfy $\nu\leq \mu$.
For $a\in \N$, we define the \emph{$a$-th symmetric power} of $M$ to be 
	$$S^aM:=M^{\otimes a}/\langle P-\sigma(P)\rangle$$
	where $P$ is a pure tensor and $\sigma(v_1\otimes\ldots\otimes v_a)=v_{\sigma(1)}\otimes\ldots\otimes v_{\sigma(a)}$ for $\sigma\in S_a$. By multilinear algebra, if $(v_1,\ldots, v_m)$ is an ordered basis of $M$, then $\{v_{i_1}\otimes\ldots\otimes v_{i_a}\}_{1\leq i_1\leq \ldots\leq i_a\leq m}$ is a basis of $S^aV$.

\iffalse
\begin{notation}
	Let $M$ be a $G$-module and $\lambda\in X$. We denote by $m_M(\lambda):=\dim M_\lambda$ the multiplicity of the weight $\lambda$ in $M$, where $M_\lambda$ is the weight space associated to the weight $\lambda$ in $M$.
\end{notation}
\begin{defn}
	Let $M$ be a $G$-module. Its \emph{socle} $\soc M$ is the sum of all its simple submodule and its \emph{radical} $\rad M$ is the intersection of all its maximal submodule.
\end{defn}

\begin{rem}
	The socle $\soc M$ is the largest completely reducible submodule of $M$. The radical $\rad M$ is the smallest submodule of $M$ such that $M/\rad M$ is completely reducible (see \cite[I 2.14]{Jantzen}).
\end{rem}

\begin{defn}
	A module $M$ is \emph{irreducible} (or \emph{simple}) if it admits no nontrivial submodule, i.e. if $N<M$ is a submodule, then either $N=0$ or $N=M$.
\end{defn}
\fi

The irreducible $G$-modules are classified by their highest weight.

\begin{thm}[{\cite[31.3]{Humphreys_LAG}}]
	Let $\lambda \in X^+$ be a dominant weight. Up to isomorphism, there exists a unique irreducible module with highest weight $\lambda$ which we denote by $L(\lambda)$. This module satisfies $m_{L(\lambda)}(\lambda)=1$. Moreover, every irreducible module is of the form $L(\nu)$ for some $\nu\in X^+$.
\end{thm}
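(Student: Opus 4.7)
The plan is to prove the three assertions—existence of an irreducible module $L(\lambda)$ with $m_{L(\lambda)}(\lambda)=1$, uniqueness up to isomorphism, and that every simple $G$-module has this form—separately, using the Chevalley construction of the preceding subsection as starting material. For \textbf{existence}, I would take $V$ to be the simple $\Ug$-module over $\C$ of highest weight $\lambda$ (which exists by the classical theory, \cite{Humphreys_Lie}) with maximal vector $v^+$, and form the admissible lattice $M=\Ug_{\Z}^-v^+\subseteq V$ together with the reduction $V_k=M\otimes_{\Z}k$, on which $G$ acts via the exponentials $x_\alpha(t)$. The vector $\overline v:=v^+\otimes 1$ spans the one-dimensional weight space $(V_k)_\lambda$, and every other weight of $V_k$ lies strictly below $\lambda$. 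Let $N$ be the sum of all $G$-submodules of $V_k$ that do not contain $\overline v$; since $(V_k)_\lambda$ is one-dimensional, no such submodule meets $(V_k)_\lambda$ non-trivially, hence neither does their sum, so $N$ is proper. Then $L(\lambda):=V_k/N$ is irreducible with highest weight $\lambda$ and $m_{L(\lambda)}(\lambda)=1$.

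For \textbf{uniqueness}, given two irreducible modules $L,L'$ with highest weight $\lambda$, I pick non-zero $v\in L_\lambda$, $v'\in L'_\lambda$; each is automatically a maximal vector, since there is no weight strictly above $\lambda$. Consider the $G$-submodule $N\subseteq L\oplus L'$ generated by $(v,v')$. Both projections $\pi:N\to L$ and $\pi':N\to L'$ are surjective by simplicity. If $\ker\pi\neq 0$, simplicity of $L'$ forces $\ker\pi=\{0\}\oplus L'$, whence $(v,0),(0,v')\in N$ and $N=L\oplus L'$; but the $\lambda$-weight space of a $G$-submodule generated by a single maximal vector of weight $\lambda$ is one-dimensional (each root group element $x_\alpha(t)$ acts as the identity on the top weight component of $v$, so every element of the orbit has $\lambda$-component a scalar multiple of the generator), whereas $(L\oplus L')_\lambda$ is two-dimensional, a contradiction. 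Hence $\pi$ is an isomorphism, and symmetrically $L\cong L'$.

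For \textbf{exhaustiveness}, let $L$ be any simple $G$-module. I invoke the Lie--Kolchin theorem applied to the closed connected solvable subgroup $B$ acting on the finite-dimensional $k$-vector space $L$: this yields a $B$-stable line $kv\subseteq L$, on which $T$ acts by some character $\mu\in X$ and $U$ acts trivially. By simplicity $L$ is generated by $v$, so every weight of $L$ lies $\leq\mu$; since $W$ permutes the weights of $L$, in particular $s_\alpha\mu\leq\mu$ for each $\alpha\in\Pi$, forcing $(\mu,\alpha^\vee)\geq 0$ and hence $\mu\in X^+$. The uniqueness just established then gives $L\cong L(\mu)$. The main obstacle is the existence step: one must verify from the integral Chevalley construction that $(V_k)_\lambda$ is genuinely one-dimensional and that the sum of all submodules avoiding $\overline v$ remains proper, so that $V_k/N$ really is simple with the correct top multiplicity; the one-dimensionality of the top weight space used in the uniqueness argument rests on essentially the same calculation with the root-subgroup action.
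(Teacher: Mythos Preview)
The paper does not prove this theorem; it is quoted from \cite[31.3]{Humphreys_LAG}. Your outline follows the standard argument found there: existence via reduction of a Chevalley lattice, uniqueness via the diagonal-submodule trick, and exhaustiveness via Lie--Kolchin.

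The one place that deserves more care is the claim, used in both the uniqueness and exhaustiveness steps, that a $G$-submodule $N$ generated by a single maximal vector $w$ of weight $\lambda$ satisfies $N_\lambda=kw$ (equivalently, that every weight of $N$ is $\leq\lambda$). Your parenthetical justification handles a single root-group element $x_\alpha(t)$ acting on $w$, but a general $g\in G$ is a \emph{product} of such elements, and an $x_\alpha(t)$ with $\alpha\in\Phi^+$ applied to a vector of weight strictly below $\lambda$ can in principle produce a nonzero $\lambda$-component. Two standard ways to close this: (i) use density of the big cell $U^-B\subseteq G$---the morphism of varieties $g\mapsto(gw)_\lambda$ lands in the line $kw$ on $U^-B$, hence on all of $G$ since $kw$ is Zariski-closed; or (ii) pass to the hyperalgebra $\Ug_k$ and its triangular decomposition inherited from $\Ug_{\Z}=\Ug_{\Z}^-\Ug_{\Z}^\circ\Ug_{\Z}^+$, so that $N=\Ug_k\cdot w$ equals the image of $\Ug_{\Z}^-\otimes_{\Z}k$ applied to $w$, which visibly has $\lambda$-component $kw$. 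You already flag this as the delicate step; one of these arguments makes it precise.
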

%A proof of this Theorem is given in \cite[31.3]{Humphreys_LAG}.

\iffalse
\begin{defn}
	A module $M$ is \emph{completely reducible} (or \emph{semisimple}) if it is isomorphic to a direct sum of simple modules, i.e. $M\cong \bigoplus_{i=1}^nN_i$ with $N_i$ simple for all $i\in\{1,\ldots,n\}$.
\end{defn}
\fi

\begin{prop}[{\cite[2.4]{Pierce}}]\label{submodule_comp_red}
	Let $M$ be a completely reducible module, and $N<M$ be a submodule. Then $N$ and $M/N$ are completely reducible.
\end{prop}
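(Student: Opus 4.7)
The plan is to establish a stronger intermediate claim: if $M$ is completely reducible, then every submodule $P \leq M$ admits a direct complement in $M$, i.e.\ there exists $Q \leq M$ with $M = P \oplus Q$. Both parts of the proposition then follow quickly from this.

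To prove the intermediate claim, I would write $M = \bigoplus_{i \in I} S_i$ with each $S_i$ simple and then, using the finite-dimensionality of $M$, pick a subset $J \subseteq I$ that is maximal among those satisfying $P \cap \bigoplus_{j \in J} S_j = 0$. Setting $Q := \bigoplus_{j \in J} S_j$, it suffices to check that $M = P + Q$. Suppose not; then some $S_{i_0}$ is not contained in $P + Q$, and by simplicity of $S_{i_0}$ we have $S_{i_0} \cap (P + Q) = 0$. In particular $i_0 \notin J$, and a short calculation shows $P \cap (Q \oplus S_{i_0}) = 0$, contradicting the maximality of $J$.

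With the complement property in hand, the quotient $M/N$ is isomorphic, via the canonical projection, to any complement $Q$ of $N$ in $M$, and $Q$ is visibly a direct sum of some of the $S_i$, hence completely reducible. For $N$ itself I would induct on $\dim N$: choose a minimal nonzero submodule $S \leq N$, which is automatically simple, apply the complement property to $S$ inside $M$ to obtain $M = S \oplus S'$, and then invoke the modular law to write $N = S \oplus (N \cap S')$. The summand $N \cap S'$ has strictly smaller dimension and is still a submodule of the completely reducible module $M$, so by induction it is completely reducible, and the desired decomposition of $N$ follows.

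No serious obstacle arises here; the only real care is in the maximality argument, namely verifying that $S_{i_0} \cap (P + Q) = 0$ indeed forces $P \cap (Q \oplus S_{i_0}) = 0$, and in a correct application of the modular law during the induction.
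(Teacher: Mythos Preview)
Your argument is correct and is one of the standard proofs of this classical fact. The paper, however, does not supply its own proof of this proposition: it is stated with a citation to \cite[2.4]{Pierce} and used as a black box, so there is nothing to compare against beyond noting that your approach is in line with the usual textbook treatment.
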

%A proof of this Proposition is given in \cite[2.4]{Pierce}.

\iffalse
\begin{defn}
	Let $M$ be a $G$-module. Its \emph{radical} $\rad M$ is the intersection of all its maximal submodule. Thus, $\rad M$ is the smallest submodule of $M$ such that $M/\rad M$ is completely reducible (see \cite[I 2.14]{Jantzen}).
\end{defn}
\fi

	Let $M$ be a $G$-module. A \emph{composition series} for $M$ is a sequence of submodules
	$$0=M_0\subseteq M_1\subseteq \ldots \subseteq M_n=M$$
	such that the quotients $M_i/M_{i-1}$ are simple for all $i\in \{1,\ldots,n\}$.
	For $\nu\in X^+$, we write 
	$$[M : L(\nu)]:=|\{i\in\{1,\ldots,n\}|\; M_i/M_{i-1}\cong L(\nu)\}|.$$
	Due to Jordan-Hölder Theorem, the value $[M : L(\nu)]$ does not depend on the choice of the composition series (see for example \cite[Theorem 3.11]{Erdmann}).
An irreducible module $L(\nu)$ is called a \emph{composition factor} of $M$ if it appears in a composition series for $M$, i.e. if $[M:L(\nu)]>0$.
If $M$ is a $G$-module with composition series $0=M_0\subseteq M_1\subseteq \ldots \subseteq M_n=M$, we write this composition series $[L(\nu_n),\ldots, L(\nu_1)]$ where $L(\nu_i)\cong M_i/M_{i-1}$.\\

We are now ready to define the central notion of this project.
\begin{defn}
	A module $M$ is \emph{multiplicity-free} if all composition factors appear with multiplicity $1$, i.e. $[M:L(\nu)]\leq 1$ for all $\nu\in X^+$.
	If a module $M$ is not multiplicity-free, we say that $M$ \emph{has multiplicity}.
\end{defn}

\begin{lemma}[{\cite[1.30]{Testerman_AMS}}]\label{-1}
	Let $\lambda=\sum\limits_{i=1}^na_i\omega_i\in X^+$ with $0\leq a_i<p$ for all $i\in\{1,\ldots, n\}$. Then for $i=1,\ldots,n$ and $0\leq r\leq a_i$, we have 
	$$m_{L(\lambda)}(\lambda-r\alpha_i)=1.$$
\end{lemma}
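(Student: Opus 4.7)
The plan is to establish the upper bound $m_{L(\lambda)}(\lambda - r\alpha_i) \le 1$ via a PBW-type argument in the integral form $\Ug_{\Z}$, and the lower bound $m_{L(\lambda)}(\lambda - r\alpha_i) \ge 1$ by restriction to the $\SL_2$-subgroup associated with $\alpha_i$. The hypothesis $a_i < p$ enters crucially in the lower bound.

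Fix a maximal vector $v^+ \in L(\lambda)_\lambda$. Since $L(\lambda)$ is generated by $v^+$ under $\Ug_{\Z}^- \otimes k$ (because $\Ug_{\Z}^+$ annihilates $v^+$ in positive degree and $\Ug_{\Z}^\circ$ acts on $v^+$ by scalars), every weight vector of weight $\lambda - r\alpha_i$ is a $k$-linear combination of PBW monomials $F^B v^+ = \left(\prod_{\alpha \in \Phi^+} f_\alpha^{b_\alpha}/b_\alpha!\right) v^+$ with $\sum_{\alpha \in \Phi^+} b_\alpha \alpha = r\alpha_i$. Since each positive root is a $\N$-linear combination of simple roots, and $\alpha_i$ is the only positive root whose expansion is supported on $\alpha_i$ alone, this equation forces $b_\alpha = 0$ for $\alpha \ne \alpha_i$ and $b_{\alpha_i} = r$. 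Hence $L(\lambda)_{\lambda - r\alpha_i}$ is spanned by $(f_{\alpha_i}^r/r!) v^+$ and has dimension at most one. For the reverse inequality, I would pass to $G_{\alpha_i} := \langle X_{\alpha_i}, X_{-\alpha_i}\rangle \le G$, an $\SL_2$-type subgroup. The vector $v^+$ is a maximal vector for $G_{\alpha_i}$ of weight $a_i = (\lambda, \alpha_i^\vee)$, so the $G_{\alpha_i}$-submodule generated by $v^+$ is a quotient of the Weyl module of $\SL_2$ of highest weight $a_i$; the hypothesis $a_i < p$ guarantees that this $\SL_2$-Weyl module is already simple, of dimension $a_i + 1$, with weights $a_i, a_i - 2, \ldots, -a_i$ each of multiplicity one. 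Translated back to $G$-weights, this forces $(f_{\alpha_i}^r/r!) v^+ \ne 0$ for $0 \le r \le a_i$, yielding the lower bound.

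The main obstacle is justifying the two background facts that underlie the argument: first, that $L(\lambda)$ is generated by $v^+$ over $\Ug_{\Z}^- \otimes k$, which follows from $L(\lambda)$ being a quotient of the Chevalley module $V_k = \Ug_{\Z}^- v^+ \otimes_{\Z} k$ constructed in Section 1.2, itself generated by $v^+$ over $\Ug_{\Z}^-$ by definition; and second, that the $\SL_2$-Weyl module of $p$-restricted highest weight is simple, a standard fact that may be verified by a direct computation in the distribution algebra of $\SL_2$. Combining the two bounds yields $m_{L(\lambda)}(\lambda - r\alpha_i) = 1$.
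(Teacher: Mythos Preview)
The paper does not supply its own proof of this lemma; it simply cites the external reference \cite[1.30]{Testerman_AMS} and moves on. So there is no in-paper argument to compare against.

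Your argument is correct and is in fact the standard one. The upper bound via PBW monomials is clean: since $\alpha_i$ is simple, the only way to write $r\alpha_i$ as a non-negative integer combination of positive roots is $r\cdot\alpha_i$, so the weight space is at most one-dimensional. The lower bound via restriction to $G_{\alpha_i}\cong\SL_2$ is also sound; the only point worth making explicit is that $G_{\alpha_i}$ really is $\SL_2$ (rather than $\mathrm{PGL}_2$) because $G$ is assumed simply connected, and the simplicity of the $\SL_2$-Weyl module of highest weight $a_i<p$ is exactly the $\SL_2$ case of Lemma~\ref{fundamental_alcove} already established in the paper. With those two remarks absorbed, the proof is complete.
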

%A proof of this Lemma is given in \cite[1.30]{Testerman_AMS}.

\subsubsection{Duality}

Now we define two notions of duality in the category of $G$-modules.

\begin{defn}%[Dual module]
	Let $M$ be a $G$-module. Its \emph{dual} $M^*$ is the usual dual vector space of $M$ with $G$-action given by 
	$$(gf)(m)=f(g^{-1}m)\qquad \text{for }f\in M^*,\; g\in G,\; m\in M.$$
\end{defn}

\begin{prop}[{\cite[II 1.16]{Jantzen}}]\label{antiautomorphism}
	There exists $\tau$ an antiautomorphism of $G$ which satisfies 
	$$\tau^2=\id_G,\quad \tau|_T=\id_T\quad \text{and}\quad  \tau(X_\alpha)=X_{-\alpha}\quad \text{for all }\alpha \in \Phi.$$
	Moreover, if $G=\SL_n(k)$ and $T:=\{\text{diagonal matrices}\}$, we can take $\tau$ to be the matrix transposition, i.e. $\tau(g)=g^t$ for $g\in G$.
\end{prop}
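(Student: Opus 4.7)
The plan is to define $\tau$ on a generating set of $G$ by setting $\tau(x_\alpha(t)) := x_{-\alpha}(t)$ for all $\alpha \in \Phi$ and $t \in k$, and to show that this rule extends uniquely to an antiautomorphism of $G$. Since the root subgroups $X_\alpha$ generate $G$, any such extension is automatically unique, so the real content is existence, which amounts to compatibility with the Steinberg relations. The additive relation $x_\alpha(t)x_\alpha(s) = x_\alpha(t+s)$ is preserved trivially, so the substantive check is Chevalley's commutator formula
\[
[x_\beta(s), x_\alpha(t)] = \prod_{\substack{i,j > 0\\ i\alpha + j\beta \in \Phi}} x_{i\alpha + j\beta}\bigl(C_{ij}^{\alpha,\beta}\, t^i s^j\bigr),
\]
whose integer coefficients $C_{ij}^{\alpha,\beta}$ are built from the structure constants $N_{\alpha,\beta}$ of the Chevalley basis. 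Applying $\tau$ to both sides (recalling that an antiautomorphism reverses the order of products and sends a commutator $[g,h]$ to $[\tau(g)^{-1},\tau(h)^{-1}]$), the identity becomes precisely Chevalley's formula for the pair $(-\alpha,-\beta)$, thanks to the antisymmetry $N_{\alpha,\beta} = -N_{-\alpha,-\beta}$ furnished by property (7) of the Chevalley basis. This verification is carried out in detail in \cite[Chapter 3]{Steinberg}, where the presentation of $G$ by generators and relations makes the argument rigorous.

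Once $\tau$ is known to be a well-defined antiautomorphism, the remaining claims follow quickly. The involution property $\tau^2 = \id_G$ holds because $\tau^2(x_\alpha(t)) = x_\alpha(t)$ and two automorphisms of $G$ agreeing on a generating set must coincide. The relation $\tau(X_\alpha) = X_{-\alpha}$ is immediate. For the torus, recall that $T$ is generated by the elements $h_\alpha(t) = w_\alpha(t) w_\alpha(1)^{-1}$ with $w_\alpha(t) = x_\alpha(t) x_{-\alpha}(-t^{-1}) x_\alpha(t)$. Since $\tau$ reverses products, a direct computation gives $\tau(w_\alpha(t)) = w_{-\alpha}(t)$. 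Inside the rank-one subgroup $\langle X_\alpha, X_{-\alpha}\rangle$ (a quotient of $\SL_2$), a short $2\times 2$ matrix calculation shows $w_{-\alpha}(t) = w_\alpha(-t^{-1})$; combining this with the standard product formulas for the $w_\alpha(t)$ and $h_\alpha(t)$ yields $\tau(h_\alpha(t)) = h_\alpha(t)$, so $\tau$ fixes a generating set of $T$ and hence $\tau|_T = \id_T$.

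For the final assertion concerning $G = \SL_n(k)$, matrix transposition $g \mapsto g^t$ is a classical antiautomorphism of $\SL_n(k)$ which fixes each diagonal matrix pointwise, so it restricts to $\id_T$ on the diagonal torus. For the positive root $\alpha_{ij}$ with $i<j$, one has $x_{\alpha_{ij}}(t) = I + t E_{ij}$, whose transpose is $I + t E_{ji} = x_{-\alpha_{ij}}(t)$. Hence transposition agrees with the antiautomorphism $\tau$ constructed above on a generating set of $\SL_n(k)$, and therefore coincides with it everywhere.

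The main obstacle in this proof is the sign bookkeeping in the Chevalley commutator formula: one must verify that the simultaneous sign change $\alpha \mapsto -\alpha$, $\beta \mapsto -\beta$, combined with the order reversal forced by the antiautomorphism property, reproduces exactly the integer coefficients on the right-hand side of Chevalley's formula for the opposite pair. Everything else is a routine manipulation once that symmetry, which is encoded in the identity $N_{\alpha,\beta} = -N_{-\alpha,-\beta}$, is in hand.
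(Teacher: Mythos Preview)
The paper does not give its own proof of this proposition; it is simply quoted from \cite[II 1.16]{Jantzen} and used as a black box thereafter. There is therefore nothing to compare against.

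Your argument is the standard construction and is correct in substance. One small slip: for an antiautomorphism one has $\tau([g,h]) = [\tau(h)^{-1},\tau(g)^{-1}]$ (the arguments are swapped as well as inverted), not $[\tau(g)^{-1},\tau(h)^{-1}]$ as you wrote. This does not affect the argument, since applying $\tau$ to the commutator formula for $(\alpha,\beta)$ still produces a commutator formula in the root subgroups $X_{-\alpha}, X_{-\beta}$, and the antisymmetry $N_{\alpha,\beta} = -N_{-\alpha,-\beta}$ makes the coefficients match. Your verification that $\tau$ fixes $T$ pointwise via the $\SL_2$ calculation $w_{-\alpha}(t) = w_\alpha(-t^{-1})$ is correct, and the $\SL_n$ case is handled properly.
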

%A proof of this Proposition is given in \cite[II 1.16]{Jantzen}.

\begin{defn}%[Contravariant dual]
	Let $M$ be a $G$-module. Its \emph{contravariant dual} $M^\tau$ is the dual vector space $M^*$ with action defined by
	$$(gf)(m)=f(\tau(g)m)\qquad \text{for }f\in M^*,\; g\in G,\; m\in M,$$
	where $\tau$ is the antiautomorphism from Proposition \ref{antiautomorphism}. The module $M$ is called \emph{contravariantly self-dual} if $M^\tau\cong M$.
\end{defn}

\iffalse
\begin{defn}
	A $G$-module $M$ is called \emph{contravariantly self-dual} if $M^\tau\cong M$.
\end{defn}
\fi

\begin{rem}\label{cont_tens_prod}
	One can easily check that $M^\tau\otimes N^\tau\cong(M\otimes N)^\tau$ for any $G$-modules $M,N$.
\end{rem}

\begin{rem}[{\cite[II 2.12]{Jantzen}}]\label{irred_cont_self}
	Irreducible modules are contravariantly self-dual.
\end{rem}

\begin{lemma}[{\cite[V 4.2]{Jonathan_these}}]\label{coselfdual_reducible}
	Let $M$ be a contravariantly self-dual module. If $M$ is multiplicity-free, then $M$ is completely reducible.
\end{lemma}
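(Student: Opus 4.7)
The plan is to exploit contravariant self-duality to exhibit $\soc M$ as a quotient of $M$, and then rule out a non-trivial kernel by using the multiplicity-free hypothesis together with the fact that any non-zero submodule has a non-zero socle.

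More precisely, I would first observe that $\soc M$ is completely reducible, and since every irreducible module is contravariantly self-dual (Remark \ref{irred_cont_self}), we get $(\soc M)^\tau \cong \soc M$: if $\soc M \cong \bigoplus_i L(\nu_i)$, then $(\soc M)^\tau \cong \bigoplus_i L(\nu_i)^\tau \cong \bigoplus_i L(\nu_i)$. Applying the contravariant dual to the inclusion $\soc M \hookrightarrow M$ gives a surjection $M^\tau \twoheadrightarrow (\soc M)^\tau$, and since $M^\tau \cong M$ by hypothesis, we obtain a short exact sequence
\[
0 \longrightarrow N \longrightarrow M \longrightarrow \soc M \longrightarrow 0
\]
for some submodule $N \subseteq M$.

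Now I would argue that $N = 0$. Suppose for contradiction that $N \neq 0$; since $N$ is a non-zero finite-dimensional $G$-module, its socle $\soc N$ is non-zero, so it contains a simple submodule isomorphic to some $L(\lambda)$. On the one hand, $L(\lambda)$ is a composition factor of $N$, so $[N : L(\lambda)] \geq 1$. On the other hand, the inclusion $L(\lambda) \hookrightarrow N \hookrightarrow M$ realises $L(\lambda)$ as a simple submodule of $M$, hence $L(\lambda) \subseteq \soc M$, and therefore $[\soc M : L(\lambda)] \geq 1$. Adding composition multiplicities along the exact sequence yields
\[
[M : L(\lambda)] = [N : L(\lambda)] + [\soc M : L(\lambda)] \geq 2,
\]
which contradicts the assumption that $M$ is multiplicity-free.

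Hence $N = 0$, so $M \cong \soc M$ is completely reducible. The only slightly delicate point, which I would spell out carefully, is the compatibility of contravariant duality with the short exact sequence (that dualising the inclusion $\soc M \hookrightarrow M$ produces a genuine $G$-module surjection $M^\tau \twoheadrightarrow (\soc M)^\tau$); this is a straightforward consequence of the definition of the $\tau$-action on the vector space dual, combined with the fact that $\tau$ is an antiautomorphism fixing $T$ pointwise, so weight-space decompositions are preserved in the expected way.
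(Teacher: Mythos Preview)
Your argument is correct and is the standard proof of this fact. Note, however, that the paper does not actually give its own proof of this lemma: it is stated with a citation to \cite[V 4.2]{Jonathan_these} and no proof is included. There is therefore nothing in the paper to compare against, but your proposal stands on its own as a complete and clean argument.
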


%A proof of this Lemma is given in \cite[V 4.2]{Jonathan_these}
%{\color{red} éventuellement ajouter preuve}

%\newpage
\subsubsection{Weyl modules}
%Following \cite{Testerman_Gunter}, simple rational $G$-modules are classified by dominants weight $\lambda$. For $\lambda\in X^+$ a dominant weight, we will denote $L(\lambda)$ the unique irreducible module with highest weight $\lambda$.

In this section, we will define the so called Weyl modules. In characteristic $0$, those modules and the irreducible modules coincide. In positive characteristic, the Weyl modules are no longer irreducible, but are still useful to understand the irreducible modules.

Let $\lambda \in X^+$, $V'$ the $\Ug$-module of highest weight $\lambda$ and $V'_k$ be as defined in section \ref{Chevalley}. The group $G$ acts naturally on $V'_k$. We call this $G$-module the \emph{Weyl module} of highest weight $\lambda$\linebreak and denote it by $\Delta(\lambda)$. The Weyl module $\Delta(\lambda)$ is generated by a maximal vector for $B$ of weight $\lambda$ and satisfies the following universal property (\cite[II 2.13]{Jantzen}):
\begin{lemma}\label{Weyl_univ_prop}
	Let $V$ be a $G$-module generated by a maximal vector for $B$ of weight $\lambda\in X^+$. There exists a surjective morphism $\Delta(\lambda)\to V$.
\end{lemma}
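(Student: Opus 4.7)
The plan is to exploit the explicit Chevalley construction of $\Delta(\lambda)$ recalled just above. Writing $u^+ := v^+\otimes 1 \in \Delta(\lambda)$, one first checks that $u^+$ is a maximal vector for $B$ of weight $\lambda$: the relations $e_\alpha v^+ = 0$ for $\alpha\in\Phi^+$ and $h_\beta v^+ = (\lambda,\beta^\vee)v^+$ for $\beta\in\Pi$ hold in $L_\C(\lambda)$ and descend to $M\otimes_{\Z}k$, translating into $x_\alpha(t)u^+ = u^+$ for $\alpha\in\Phi^+$ together with weight $\lambda$ for $T$. Moreover, $\Delta(\lambda)$ is spanned by elements of the form $F^B v^+\otimes 1$ with $B\in\N^{|\Phi^+|}$. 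The goal is to send $u^+$ to $v$ and extend equivariantly.

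The first ingredient is that any rational $G$-module $V$ carries a canonical action of $\Ug_{\Z}\otimes_{\Z}k$: expanding $x_\alpha(t)$ as a polynomial in $t$ on $V$ produces operators identifiable with the reductions modulo $p$ of the divided powers $\tfrac{e_\alpha^i}{i!}\in\Ug_{\Z}$. With this action in hand, I would define a linear map $\varphi\colon \Delta(\lambda)\to V$ on the spanning set by $\varphi(F^B v^+\otimes 1) := F^B\cdot v$, where on the right $F^B$ acts through the induced $\Ug_{\Z}\otimes_{\Z}k$-action on $V$.

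The main obstacle is well-definedness: any $\Ug_{\Z}$-relation among the $F^B v^+$ that holds in $M\otimes_{\Z}k$ must also hold among the $F^B v$ in $V$. Via the triangular decomposition $\Ug_{\Z}=\Ug_{\Z}^-\Ug_{\Z}^\circ\Ug_{\Z}^+$, such relations reduce to three families: $e_\alpha\cdot v^+=0$ for $\alpha\in\Phi^+$, the weight identity $h_\beta\cdot v^+=(\lambda,\beta^\vee)\,v^+$ for $\beta\in\Pi$, and the extremal Serre-type vanishing $\tfrac{f_{\alpha_i}^{m_i+1}}{(m_i+1)!}v^+=0$ with $m_i=(\lambda,\alpha_i^\vee)$. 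The first two hold for $v$ by the hypothesis that it is a maximal vector of weight $\lambda$. The third follows by restricting the $G$-action on $V$ to the $\SL_2$-subgroup $\langle X_{\alpha_i},X_{-\alpha_i}\rangle\leq G$: the submodule generated by $v$ under this subgroup is a rational quotient of the Weyl module of highest weight $m_i$ for $\SL_2$, in which the stated divided power already annihilates the highest weight vector.

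Once well-definedness is secured, intertwining with each $x_\alpha(t)$ reduces to universal commutation identities between divided powers of $e_\alpha$ and $f_\beta$ in $\Ug_{\Z}$, which hold in every $\Ug_{\Z}$-module and hence in both $\Delta(\lambda)$ and $V$. Surjectivity is then automatic: $\varphi(u^+)=v$, the image is $G$-stable, and $V=Gv$ by assumption. The technical heart of the argument is the extremal Serre-type relation; this is precisely the piece subsumed by Frobenius reciprocity in Jantzen's induced-module formulation $\nabla(\lambda)=\operatorname{ind}_B^G(k_\lambda)$, which is why the universal property is often stated more painlessly from that vantage point.
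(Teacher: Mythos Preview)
The paper does not prove this lemma; it merely cites \cite[II 2.13]{Jantzen}. Jantzen's argument there goes through Frobenius reciprocity for the induced module $\nabla(\lambda)=\operatorname{ind}_B^G k_\lambda$ together with the identification $\Delta(\lambda)\cong\nabla(-w_0\lambda)^*$, exactly the route you allude to in your final sentence. Your direct hyperalgebra approach is a genuine alternative, but there is a real gap in the well-definedness step.

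The claim that the relations among the $F^B u^+$ in $\Delta(\lambda)$ ``reduce to three families'' via the triangular decomposition is precisely the statement that the left ideal $\operatorname{Ann}_{\Ug_k}(u^+)$ is generated by $e_\alpha^{(n)}$ ($n\geq 1$, $\alpha\in\Phi^+$), the elements $\binom{h_\beta}{n}-\binom{(\lambda,\beta^\vee)}{n}$ ($n\geq 1$, $\beta\in\Pi$), and $f_{\alpha_i}^{(n)}$ ($n>m_i$, $\alpha_i\in\Pi$). The triangular decomposition by itself does not give this: it only tells you that $\Ug_k$ has a PBW-type basis, not what the annihilator of a particular vector looks like. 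What you are asserting is the \emph{presentation of the Weyl module} as a cyclic $\operatorname{Dist}(G)$-module by these relations, which is a genuine theorem requiring its own proof (over $\C$ it follows from the BGG description of the maximal submodule of a Verma module, but the passage to the Kostant $\Z$-form and then to $k$ is not automatic). Note also that you cannot get away with only $n=m_i+1$: since $f_{\alpha_i}^{(a)}f_{\alpha_i}^{(b)}=\binom{a+b}{a}f_{\alpha_i}^{(a+b)}$ and the binomial coefficient may vanish in $k$, the single relation $f_{\alpha_i}^{(m_i+1)}u^+=0$ does not imply $f_{\alpha_i}^{(n)}u^+=0$ for all $n>m_i$. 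Your $\SL_2$-restriction argument does establish the latter vanishing for $v$, so this point is easily patched, but the generation of the annihilator is not.

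If you want to keep the hyperalgebra approach, you must either cite the presentation theorem explicitly or prove it. Otherwise, the Frobenius reciprocity argument you mention is both shorter and self-contained given the definitions already in the paper.
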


\begin{prop}[{\cite[21.3]{Humphreys_Lie}}]\label{saturated}
	Let $\lambda\in X^+$. The set $\{\nu\in X|\; m_{\Delta(\lambda)}(\nu)>0\}$ is saturated with highest weight $\lambda$.
\end{prop}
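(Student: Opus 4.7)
The plan is to reduce the statement to the corresponding classical fact over $\C$. Recall from Section \ref{Chevalley} that $\Delta(\lambda) = V'_k = M \otimes_{\Z} k$, where $V'$ is the irreducible $\Ug$-module of highest weight $\lambda$ and $M = \Ug_{\Z}^- v^+$ is the admissible lattice generated by a maximal vector $v^+$ of weight $\lambda$. The approach is to show that the weight space dimensions are preserved under base change, so that the weights of $\Delta(\lambda)$ (as a $G$-module) coincide with the weights of $V'$ (as a $\g$-module), and then invoke the classical result \cite[21.3]{Humphreys_Lie} for $V'$.

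First I would establish compatibility of the lattice $M$ with the weight decomposition of $V'$. Applying a monomial $F^B \in \Ug_{\Z}^-$ to $v^+$ produces a vector of weight $\lambda - \sum_i b_i \alpha_i$, so $M$ is spanned by weight vectors and decomposes as $M = \bigoplus_{\nu} M_\nu$ with $M_\nu := M \cap V'_\nu$. By the standard analysis of admissible lattices (\cite[Chapter 2]{Steinberg}), each $M_\nu$ is a free $\Z$-module whose rank equals $\dim_{\C} V'_\nu$. Consequently $(V'_k)_\nu := M_\nu \otimes_{\Z} k$ satisfies $\dim_k (V'_k)_\nu = \dim_{\C} V'_\nu$.

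Next, I would identify these subspaces with the $T$-weight spaces of $\Delta(\lambda)$. The torus $T$ is generated by the elements $h_\alpha(t)$, which act on a vector of weight $\nu$ by the scalar $t^{(\nu, \alpha^\vee)}$; this identification sends the weight lattice of $V'$ (as a $\g$-module) to $X$ in the way consistent with the identification used throughout the excerpt. Hence $m_{\Delta(\lambda)}(\nu) = \dim_{\C} V'_\nu$ for every $\nu \in X$, so the set of weights of $\Delta(\lambda)$ coincides with the set of weights of $V'$.

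Finally, since $V'$ is the irreducible finite-dimensional $\Ug$-module of highest weight $\lambda$, the classical result \cite[21.3]{Humphreys_Lie} states that its set of weights is saturated with highest weight $\lambda$, and transferring this via the previous step gives the proposition. The main subtlety in the argument is the second paragraph: verifying that the $\Z$-lattice $M$ meets each weight space of $V'$ in a free $\Z$-submodule of the correct rank, which is exactly what makes Chevalley's construction produce well-behaved integral forms, and this is where I would refer to \cite[Chapter 2]{Steinberg} rather than reproduce the argument.
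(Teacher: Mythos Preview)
The paper gives no proof of its own here; the proposition is stated with a bare citation to \cite[21.3]{Humphreys_Lie} and nothing more. Your proposal is correct and in fact supplies the step the citation leaves implicit: the cited result in Humphreys concerns the irreducible $\g$-module $V'$ over $\C$, and one needs precisely your base-change argument (that $M$ splits into weight lattices of full rank, so $\ch \Delta(\lambda)=\ch V'$) to transfer it to the Weyl module in characteristic $p$.
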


To compute weight multiplicities in Weyl modules, we can use Freudenthal's formula, whose proof can be found in \cite[22.3]{Humphreys_Lie}.

\begin{thm}[Freudenthal's formula]\label{Freudenthal}
	Let $\lambda \in X^+$ and $\mu\in X$. Then
	$$((\lambda+\rho,\lambda+\rho)-(\mu+\rho,\mu+\rho))m_{\Delta(\lambda)}(\mu)=2\sum_{\alpha\in \Phi^+}\sum_{i=1}^\infty m_{\Delta(\lambda)}(\mu+i\alpha)(\mu+i\alpha,\alpha).$$
\end{thm}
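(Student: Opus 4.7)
The plan is to reduce to characteristic~$0$ and exploit the fact that the Casimir element of $\Ug$ acts as a scalar on any simple $\g$-module. Recall from Section~\ref{Chevalley} that $\Delta(\lambda)=M\otimes_\Z k$, where $M=\Ug_\Z^- v^+$ sits inside the simple $\Ug$-module $V$ of highest weight $\lambda$ over $\C$, and $M$ is a full $\Z$-lattice compatible with the weight space decomposition of $V$. Consequently $m_{\Delta(\lambda)}(\mu)=m_V(\mu)$ for every $\mu\in X$, and since both sides of the stated identity depend only on weight multiplicities and on the inner product on $\XR$, it is enough to prove the formula with $\Delta(\lambda)$ replaced by $V=L_\C(\lambda)$.

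I would then set up the Casimir. Fix a basis $\{h_1,\ldots,h_n\}$ of $\h$ with dual basis $\{h^1,\ldots,h^n\}$ with respect to the Killing form $\kappa$, and set
$$c:=\sum_{i=1}^n h_i h^i+\sum_{\alpha\in\Phi^+}\bigl(e_\alpha e_{-\alpha}+e_{-\alpha}e_\alpha\bigr)\in\Ug.$$
Using $[e_\alpha,e_{-\alpha}]=h_\alpha$ for $\alpha\in\Phi^+$ and $\sum_{\alpha\in\Phi^+}h_\alpha=2t_\rho$, this rewrites as $c=\sum_i h_i h^i+2t_\rho+2\sum_{\alpha\in\Phi^+}e_{-\alpha}e_\alpha$. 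Applying $c$ to the highest weight vector $v^+\in V$ kills every term $e_{-\alpha}e_\alpha$ while the $\h$-part contributes $(\lambda,\lambda)+2(\lambda,\rho)$; since $c$ is central in $\Ug$ and $V$ is simple, Schur's lemma forces $c$ to act on all of $V$ as the scalar $(\lambda+\rho,\lambda+\rho)-(\rho,\rho)$.

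Next I take the trace on the weight space $V_\mu$. On $V_\mu$, the piece $\sum_i h_i h^i+2t_\rho$ acts as the scalar $(\mu,\mu)+2(\mu,\rho)=(\mu+\rho,\mu+\rho)-(\rho,\rho)$, so equating the two ways of computing $\operatorname{tr}(c\mid V_\mu)$ yields
$$\bigl((\lambda+\rho,\lambda+\rho)-(\mu+\rho,\mu+\rho)\bigr)\,m_V(\mu)=2\sum_{\alpha\in\Phi^+}\operatorname{tr}(e_{-\alpha}e_\alpha\mid V_\mu).$$

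The last step is to evaluate each of these residual traces by $\sln_2$-theory. For a fixed $\alpha\in\Phi^+$ the subspace $\bigoplus_{i\in\Z}V_{\mu+i\alpha}$ is finite-dimensional and stable under the copy of $\sln_2$ spanned by $e_\alpha,e_{-\alpha},h_\alpha$, so it decomposes into irreducible $\sln_2$-summands. On each such summand of $\sln_2$-highest weight $m$ with standard basis $v_0,v_1=e_{-\alpha}v_0,\ldots,v_m$, the textbook $\sln_2$-identity gives $e_{-\alpha}e_\alpha\cdot v_j=j(m-j+1)v_j$; summing these contributions over all summands intersecting $V_\mu$ and reindexing by the $\g$-weights $\mu+i\alpha$ produces, after regrouping, the identity $\operatorname{tr}(e_{-\alpha}e_\alpha\mid V_\mu)=\sum_{i=1}^\infty m_V(\mu+i\alpha)(\mu+i\alpha,\alpha)$. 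Substituting this into the trace identity above gives exactly Freudenthal's formula. The main obstacle is precisely this last bookkeeping: one has to reconcile the normalization of the Killing form used to define $c$ with the inner product on $\XR$ so that the $(\alpha,\alpha)$-factors emerging from the $\sln_2$ computation cancel those hidden in the Casimir and leave the clean inner-product expression $(\mu+i\alpha,\alpha)$ on the right-hand side.
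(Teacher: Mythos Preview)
The paper does not give its own proof of this theorem; it simply cites \cite[22.3]{Humphreys_Lie}. Your argument is precisely the Casimir-operator proof found there, prefaced by the (correct and necessary) observation that $m_{\Delta(\lambda)}(\mu)=m_{L_\C(\lambda)}(\mu)$ so that one may work over $\C$. The normalization bookkeeping you flag at the end is indeed the only technical wrinkle, and Humphreys handles it exactly as you sketch; your proposal is correct and matches the cited reference.
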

%A proof of this Theorem is given in \cite[22.3]{Humphreys_Lie}.

It can be combined with the following proposition.

\begin{prop}[{\cite[Proposition A]{Cavallin}}]\label{Cavallin}
	Let $\lambda=\sum\limits_{i=1}^na_i\omega_i\in X^+$ and $\mu=\lambda-\sum\limits_{i=1}^nc_i\alpha_i$ with $c_i\in \N$ for all $i$. Suppose the existence of a non-empty subset $J\subseteq\{1,\ldots, n\}$ such that $c_j\leq a_j$ for all $j\in J$. Let $\lambda'=\lambda-\sum\limits_{j\in J}(a_j-c_j)\omega_j$ and $\mu'=\lambda'-\sum\limits_{i=1}^nc_i\alpha_i$. Then
	$$m_{\Delta(\lambda)}(\mu)=m_{\Delta(\lambda')}(\mu').$$
\end{prop}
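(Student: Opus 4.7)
The plan is to invoke Kostant's multiplicity formula
$$m_{\Delta(\lambda)}(\mu) = \sum_{w \in W}(-1)^{\ell(w)} P\bigl(w(\lambda+\rho) - (\mu+\rho)\bigr),$$
where $P$ is Kostant's partition function. This is available because the character of $\Delta(\lambda)$ is independent of the characteristic (being the same as that of $L_{\C}(\lambda)$), so we may compute these weight multiplicities as if we were in characteristic zero. An alternative tool is Freudenthal's formula (Theorem \ref{Freudenthal}), but a direct induction via Freudenthal fails because the normalising factor $(\lambda+\rho,\lambda+\rho) - (\mu+\rho,\mu+\rho)$ does not match up between $(\lambda,\mu)$ and $(\lambda',\mu')$.

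First I would reduce to the case $J = \{j\}$ and $\lambda' = \lambda - \omega_j$, $\mu' = \mu - \omega_j$ (with $c_j < a_j$), by induction on $N := \sum_{j \in J}(a_j - c_j)$: pick any $j \in J$ with $a_j > c_j$, apply the single-$\omega_j$ case to replace $(\lambda,\mu)$ by $(\lambda - \omega_j, \mu - \omega_j)$, and iterate. Throughout, the condition $c_j \leq a_j$ is preserved since the $c_j$'s are fixed.

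In the reduced case, I would rewrite the Kostant sum for $(\lambda - \omega_j, \mu - \omega_j)$ as
$$\sum_{w \in W}(-1)^{\ell(w)} P\bigl(w(\lambda+\rho) - (\mu+\rho) + \omega_j - w\omega_j\bigr),$$
and compare it term by term with the sum for $(\lambda,\mu)$. Writing $W_K := \langle s_i : i \neq j\rangle$ for the stabiliser of $\omega_j$ and decomposing each $w$ via $W = W^K \cdot W_K$, the factor $u \in W_K$ does not affect $w\omega_j$, so the two sums agree identically on $W_K$ (where $\omega_j - w\omega_j = 0$). The task is then to show that the remaining contributions also match. The key positivity input is that $P(\xi) = 0$ as soon as the simple-root expansion of $\xi$ has a negative $\alpha_j$-coefficient; combined with $c_j < a_j$, this forces the involutions pairing $v \in W^K \setminus \{1\}$ with $s_j v$ (or $v s_j$) to have the two $P$-values in each difference either both vanish or cancel via the sign $(-1)^{\ell(w)}$.

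The main obstacle is to make this cancellation bookkeeping rigorous for $w$ outside $W_K$ of arbitrary length, rather than only for $w = s_j$ itself (for which the vanishing is immediate as computed above). A cleaner workaround I would pursue is to manipulate the Weyl character formula directly: multiplying the identity $\operatorname{ch}\Delta(\lambda) \cdot \prod_{\alpha \in \Phi^+}(1 - e^{-\alpha}) = \sum_w (-1)^{\ell(w)} e^{w(\lambda+\rho) - \rho}$ by $e^{-\omega_j}$ and comparing with the analogous identity for $\Delta(\lambda - \omega_j)$, then reading off the coefficient of $e^\mu$ on both sides under the constraint that $\lambda - \mu$ has $\alpha_j$-coefficient strictly less than $a_j + 1$, should isolate precisely the claimed equality of weight multiplicities.
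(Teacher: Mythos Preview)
The paper does not prove this proposition; it is quoted from \cite{Cavallin} without argument, so there is no in-paper proof to compare against and I assess your proposal on its own.

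Your reduction to $\lambda'=\lambda-\omega_j$, $\mu'=\mu-\omega_j$ with $c_j<a_j$ is correct, and Kostant's formula is the right tool. The gap you flag is genuine but closes more cleanly than via the involutive pairing you sketch: for \emph{every} $w$ outside the stabiliser $W_K:=\langle s_i:i\neq j\rangle$ of $\omega_j$, both partition-function values already vanish, so no cancellation is needed. The $\alpha_j$-coefficient of $w(\lambda+\rho)-(\mu+\rho)$ equals $c_j$ minus that of $(\lambda+\rho)-w(\lambda+\rho)$, and the latter is $\tfrac{2}{(\alpha_j,\alpha_j)}\bigl(\lambda+\rho,\,\omega_j-w^{-1}\omega_j\bigr)$. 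Write $\omega_j-w^{-1}\omega_j=\sum_i f_i\alpha_i$ with all $f_i\in\Z_{\geq 0}$ (this difference lies in the root lattice and is $\geq 0$ by dominance of $\omega_j$). The key point is $f_j\geq 1$: if $f_j=0$ then $\eta:=\omega_j-w^{-1}\omega_j$ lies in $\sum_{i\neq j}\Z\alpha_i$, so $(\omega_j,\eta)=0$ (since $(\omega_j,\alpha_i)=0$ for $i\neq j$), while $W$-invariance of the form gives $(\eta,\eta)=2(\omega_j,\omega_j)-2(\omega_j,w^{-1}\omega_j)=2(\omega_j,\eta)=0$, forcing $\eta=0$ and hence $w\in W_K$, a contradiction. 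From $f_j\geq 1$ one gets that the $\alpha_j$-coefficient of $(\lambda+\rho)-w(\lambda+\rho)$ is at least $a_j+1>c_j$, so $P\bigl(w(\lambda+\rho)-(\mu+\rho)\bigr)=0$; replacing $\lambda$ by $\lambda-\omega_j$ lowers the bound to $a_j$, which still exceeds $c_j$ by hypothesis, giving the same vanishing. Both Kostant sums therefore reduce to identical sums over $W_K$, completing your argument without the Weyl-numerator detour.
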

%A proof of this Proposition is given in \cite[Proposition A]{Cavallin}.

For $\lambda\in X^+$, we define the \emph{costandard} module of highest weight $\lambda$ by
	$$\nabla(\lambda):=\Delta(-w_0(\lambda))^*.$$
	%where $w_0$ denote the unique longest element in the Weyl group associated to $\Phi$.
The module $\nabla(\lambda)$ is also called the \emph{induced module} or \emph{dual Weyl module} in the literature, and satisfies $\nabla(\lambda)\cong \Delta(\lambda)^\tau$ (\cite[II 2.13]{Jantzen}).

\begin{prop}[{\cite[II 2.4 and 2.14]{Jantzen}}]\label{radical}
	Let $\lambda\in X^+$. We have $\Delta(\lambda)/\rad \Delta(\lambda)\cong L(\lambda)$ and $\soc \nabla(\lambda)\cong L(\lambda)$.
\end{prop}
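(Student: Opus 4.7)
The plan is to handle the two assertions separately, deducing the second from the first by contravariant duality.

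For the first assertion, the strategy is to show that $\Delta(\lambda)$ has a unique maximal submodule, so that the radical coincides with it and the head becomes irreducible. Recall that by construction $\Delta(\lambda)$ is generated by a maximal vector $v^+$ for $B$ of weight $\lambda$, and one can check from the explicit basis (obtained by applying the $F^B$ to $v^+$) that $m_{\Delta(\lambda)}(\lambda)=1$ and every weight $\mu$ of $\Delta(\lambda)$ satisfies $\mu\leq\lambda$. Consequently, for any proper submodule $N<\Delta(\lambda)$, we cannot have $v^+\in N$ (otherwise $N$ would contain $\Ug_{\Z}^-v^+\otimes k$ and hence equal $\Delta(\lambda)$), so $N\subseteq \bigoplus_{\mu<\lambda}\Delta(\lambda)_\mu$. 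It then follows that the sum $N_{\max}$ of all proper submodules still avoids the $\lambda$-weight space, so it is itself a proper submodule, and by construction it is the unique maximal submodule of $\Delta(\lambda)$. Therefore $\rad\Delta(\lambda)=N_{\max}$, and $\Delta(\lambda)/\rad\Delta(\lambda)$ is a simple module generated by the image of $v^+$, which is a maximal vector of weight $\lambda$; by the classification of simple $G$-modules this quotient must be isomorphic to $L(\lambda)$.

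For the second assertion, the idea is to transport the statement through the contravariant duality. Using $\nabla(\lambda)\cong\Delta(\lambda)^\tau$, it is enough to show that $\soc M^\tau\cong(M/\rad M)^\tau$ for any $G$-module $M$, and then to apply this to $M=\Delta(\lambda)$. The contravariant dual $(-)^\tau$ is exact and sends submodules to quotients (and conversely), so it exchanges socle and head; more precisely, if $U=\rad M$ then the exact sequence $0\to U\to M\to M/U\to 0$ dualises to $0\to(M/U)^\tau\to M^\tau\to U^\tau\to 0$, with $(M/U)^\tau$ a completely reducible submodule of $M^\tau$. Any strictly larger semisimple submodule of $M^\tau$ would dualise back to a strictly smaller semisimple quotient of $M$, contradicting the defining property of $\rad M$, so $(M/U)^\tau=\soc M^\tau$. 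Combining this with the first part and the self-duality $L(\lambda)^\tau\cong L(\lambda)$ from Remark \ref{irred_cont_self} yields
\[
\soc\nabla(\lambda)\cong\soc\Delta(\lambda)^\tau\cong\bigl(\Delta(\lambda)/\rad\Delta(\lambda)\bigr)^\tau\cong L(\lambda)^\tau\cong L(\lambda).
\]

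The main obstacle is the first step: verifying that $\dim\Delta(\lambda)_\lambda=1$ and, more generally, that every weight of $\Delta(\lambda)$ is $\leq\lambda$, which is what lets the uniqueness of the maximal submodule go through. These facts are not themselves stated in the preliminaries but follow from the explicit $\Ug_{\Z}$-lattice construction of $\Delta(\lambda)$ in Section \ref{Chevalley} together with the PBW basis, and I would invoke them directly at that point rather than reprove them.
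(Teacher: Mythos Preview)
The paper does not give its own proof of this proposition; it simply records the statement with a reference to \cite[II 2.4 and 2.14]{Jantzen}. Your argument is the standard one and is correct: the unique-maximal-submodule argument for $\Delta(\lambda)$ uses exactly the two inputs available in the paper (that $\Delta(\lambda)$ is generated by a $B$-maximal vector of weight $\lambda$, and that the $\lambda$-weight space is one-dimensional with all other weights $<\lambda$, both following from the construction in Section~\ref{Chevalley}), and the passage to $\nabla(\lambda)$ via $\nabla(\lambda)\cong\Delta(\lambda)^\tau$ and Remark~\ref{irred_cont_self} is the natural duality argument.

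One small wording point: when you write that a proper submodule ``would contain $\Ug_{\Z}^- v^+\otimes k$'', you are implicitly treating $N$ as stable under the hyperalgebra action; in the setting of the paper it is cleaner to say that $N$ is a $G$-submodule containing the generator $v^+$ and hence equals $\Delta(\lambda)$, since the paper explicitly states that $\Delta(\lambda)$ is generated as a $G$-module by $v^+$. This is cosmetic and does not affect the validity of your proof.
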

%A proof of this Proposition is given in \cite[II 2.4 and 2.14]{Jantzen}

%Weyl module;Weyl filtration;induced module;good filtration;tilting module;weyl group;weights;lattice;dot action;character;

\subsubsection{Filtrations and tilting modules}

Another class of useful modules are the so-called tilting modules, which we define in this subsection. They will be very useful to show that some tensor products have multiplicity. Before that, we define two special kinds of filtrations. We end this section by stating the classification of indecomposable tilting modules.

\begin{defn}%[Weyl filtration]
	Let $M$ be a $G$-module. A \emph{Weyl filtration} of $M$ is a sequence 
	$$0=M_0\subseteq M_1\subseteq \ldots\subseteq M_n=M$$
	of submodules such that $M_i/M_{i-1}$ is a Weyl module for all $i\in\{1,\ldots,n\}$, i.e. there exist dominant weights $\lambda_1,\ldots,\lambda_n\in X^+$ such that $M_i/M_{i-1}\cong \Delta(\lambda_i)$ for all $i\in\{1,\ldots,n\}$.
\end{defn}

\begin{thm}[{\cite[Theorem 1]{Mathieu}}]\label{Weyl_filtration}
	Let $\lambda,\;\mu\in X^+$ be dominant weights. The tensor product $\Delta(\lambda)\otimes \Delta(\mu)$ admits a Weyl filtration.
\end{thm}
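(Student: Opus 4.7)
The plan is to follow the strategy of Mathieu's original proof, which reduces the statement to a geometric vanishing result on the flag variety via Frobenius splittings. A direct module-theoretic attack seems hopeless because Weyl filtrations are not controlled by highest-weight inclusions alone; one really needs the cohomological machinery.

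First, I would translate the claim into the dual setting. Since $\nabla(\lambda) \cong \Delta(\lambda)^\tau$ and contravariant duality is exact and respects tensor products (Remark \ref{cont_tens_prod}), $\Delta(\lambda) \otimes \Delta(\mu)$ admits a Weyl filtration if and only if $\nabla(\lambda) \otimes \nabla(\mu)$ admits a \emph{good} filtration, i.e.\ a filtration whose successive quotients are costandard modules. The standard cohomological criterion (due to Donkin) asserts that a finite-dimensional $G$-module $N$ has a good filtration if and only if $\operatorname{Ext}_G^1(\Delta(\nu), N) = 0$ for every $\nu \in X^+$, and in that case all higher Ext groups vanish as well. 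So the target becomes the vanishing of $\operatorname{Ext}_G^1\bigl(\Delta(\nu), \nabla(\lambda) \otimes \nabla(\mu)\bigr)$ for all dominant $\nu$.

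Next, I would pass to geometry via the Borel--Weil identification $\nabla(\lambda) = H^0(G/B, \mathcal{L}_\lambda)$, together with Kempf's vanishing theorem $H^i(G/B, \mathcal{L}_\lambda) = 0$ for $i > 0$ and $\lambda \in X^+$. By the Künneth formula, the sections of $\mathcal{L}_\lambda \boxtimes \mathcal{L}_\mu$ on $G/B \times G/B$ compute $\nabla(\lambda) \otimes \nabla(\mu)$, and the higher cohomology vanishes. The strategy is then to filter this tensor product by pulling back along the diagonal $\Delta \colon G/B \hookrightarrow G/B \times G/B$ and controlling the sheaves $\mathcal{L}_\lambda \otimes \mathcal{L}_\mu$ on successive subvarieties. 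The essential tool is the Frobenius splitting of $G/B$ (Mehta--Ramanathan), refined by Mathieu to a \emph{canonical} splitting of $G/B \times G/B$ compatibly splitting the diagonal and the Schubert subvarieties $X_w \times X_{w'}$. Such a compatible splitting forces $H^i$ of $\mathcal{L}_\lambda \boxtimes \mathcal{L}_\mu$ restricted to these subvarieties to vanish for $i > 0$ and dominant weights, which yields surjections of global sections and ultimately produces the filtration by pieces of the form $\nabla(\lambda + w \cdot \mu)$-type quotients coming from the Bruhat stratification.

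The principal obstacle is precisely the construction of the canonical Frobenius splitting of $G/B \times G/B$ that is simultaneously compatible with the diagonal \emph{and} all products of Schubert varieties; this is the technical heart of Mathieu's paper and cannot be shortcut. Once it is in place, the filtration of $\nabla(\lambda) \otimes \nabla(\mu)$ is obtained by induction on the Bruhat order, each step being guaranteed by a short exact sequence of sheaves whose long exact cohomology sequence degenerates thanks to the splitting. For the thesis, I would not reproduce this geometric argument in full, but rather invoke it as a black box from \cite{Mathieu}, since it is orthogonal to the combinatorial and representation-theoretic methods developed in the remainder of the text.
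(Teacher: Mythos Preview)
The paper does not prove this theorem at all: it is stated with a citation to \cite[Theorem 1]{Mathieu} and used as a black box throughout. Your final sentence---that you would not reproduce the geometric argument but invoke it from \cite{Mathieu}---is therefore exactly what the paper does, and the preceding sketch of Mathieu's Frobenius-splitting strategy, while broadly accurate as background, is superfluous for the purposes of this thesis.
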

%A proof of this Theorem is given in \cite[Theorem 1]{Mathieu}.

\begin{defn}%[Good filtration]
	Let $M$ be a $G$-module. A \emph{good filtration} of $M$ is a sequence 
	$$0=M_0\subseteq M_1\subseteq \ldots\subseteq M_n=M$$
	of submodules such that there exist dominant weights $\lambda_1,\ldots,\lambda_n\in X^+$ with $M_i/M_{i-1}\cong \nabla(\lambda_i)$ for all $i\in\{1,\ldots,n\}$.
\end{defn}

\begin{rem}
	A $G$-module $M$ admits a Weyl filtration if and only if its dual $M^*$ admits a good filtration if and only if $M^\tau$ admits a good filtration.
\end{rem}

\begin{defn}%[Tilting module]
	A module $M$ is a \emph{tilting module} if it admits a Weyl filtration and a good filtration.
\end{defn}

An important result is that the tensor product of two tilting modules is again a tilting module. We will use it several times in the next sections without further reference.
\begin{thm}\label{tensor_tilting}
	Let $M,N$ be two tilting modules. Then $M\otimes N$ is a tilting module.
\end{thm}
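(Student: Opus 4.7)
The plan is to obtain the Weyl filtration half of the statement by a double induction reducing to Mathieu's theorem (Theorem \ref{Weyl_filtration}), and then to deduce the good filtration half by contravariant duality.

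First I would record the following extension stability: if $0 \to A \to B \to C \to 0$ is a short exact sequence of $G$-modules and both $A$ and $C$ admit Weyl filtrations, then so does $B$. Indeed, a Weyl filtration of $C$ lifts via the surjection $B \to C$ to a filtration of $B$ starting at $A$ and ending at $B$, whose successive quotients are those of the filtration of $C$; concatenating with a Weyl filtration of $A$ yields a Weyl filtration of $B$. The analogous statement holds for good filtrations by the same argument.

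Next I would prove that $M \otimes N$ admits a Weyl filtration by induction on the length $r$ of a fixed Weyl filtration $0 = M_0 \subseteq \ldots \subseteq M_r = M$ of $M$. Applying the exact functor $- \otimes N$ to $0 \to M_{r-1} \to M \to \Delta(\lambda_r) \to 0$ and using the extension stability above, the problem reduces to showing that $\Delta(\lambda) \otimes N$ admits a Weyl filtration for every $\lambda \in X^+$ whenever $N$ does. I would then run the same induction on the length of a Weyl filtration of $N$: the inductive step is again handled by extension stability, and the base case $N = \Delta(\mu)$ is exactly Mathieu's theorem.

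Finally, I would transfer this to a good filtration using the contravariant dual. By the remark immediately following the definition of good filtration, $M$ admits a Weyl (respectively good) filtration if and only if $M^\tau$ admits a good (respectively Weyl) filtration; in particular a module is tilting if and only if its contravariant dual is. Hence $M^\tau$ and $N^\tau$ are tilting, so the previous paragraph gives a Weyl filtration of $M^\tau \otimes N^\tau$. Combined with the isomorphism $(M \otimes N)^\tau \cong M^\tau \otimes N^\tau$ from Remark \ref{cont_tens_prod}, this yields a Weyl filtration of $(M \otimes N)^\tau$, and applying $\tau$ once more produces the desired good filtration of $M \otimes N$.

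The main obstacle is the extension stability lemma of the first paragraph: while intuitive, it requires one to verify carefully that pulling back a Weyl filtration of $C$ along $B \to C$ produces submodules of $B$ whose successive quotients are isomorphic to the corresponding Weyl modules, and that gluing this with a Weyl filtration of $A$ respects the filtration condition. Once this is in place, the two inductions and the duality step are essentially formal.
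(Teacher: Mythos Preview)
Your proof is correct. The paper's proof is a one-line appeal to \cite[Theorem~1]{Mathieu} in its full strength, which already asserts that the tensor product of two modules with good filtration again has a good filtration (and dually for Weyl filtrations), so the tilting statement follows immediately. You instead start only from the special case recorded in the paper as Theorem~\ref{Weyl_filtration} (that $\Delta(\lambda)\otimes\Delta(\mu)$ has a Weyl filtration), bootstrap to arbitrary $M$ and $N$ via the extension-closure lemma and the two inductions, and then obtain the good filtration half by contravariant duality using Remark~\ref{cont_tens_prod} and the remark preceding the definition of tilting modules. This is a standard and entirely valid reduction; it has the merit of using only what is explicitly stated in the paper, at the cost of spelling out what the paper absorbs into a single citation.
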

\begin{proof}
	This is a direct consequence of \cite[Theorem 1]{Mathieu}.
\end{proof}

Like the simple modules, the indecomposable tilting modules are classified by their highest weight.

\begin{prop}[{\cite[II E.6]{Jantzen}}]\label{tilting}
	Let $\lambda\in X^+$ be a dominant weight. There exists a unique indecomposable tilting module $T(\lambda)$ with highest weight $\lambda$ and $m_{T(\lambda)}(\lambda)=1$. Moreover, for every tilting module $T$, there exist dominant weights $\nu_1,\ldots,\nu_n\in X^+$ such that
	$$T\cong \bigoplus_{i=1}^nT(\nu_i).$$
\end{prop}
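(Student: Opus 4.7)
The plan is to prove this in three stages: existence of $T(\lambda)$, uniqueness of $T(\lambda)$, and the direct sum decomposition of an arbitrary tilting module.

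For existence, I would proceed by induction on the partial order $\leq$ restricted to the finite set of dominant weights below $\lambda$. The starting object is $\Delta(\lambda)$, which trivially has a Weyl filtration but typically fails to have a good filtration. The key homological input is the Ext-vanishing $\operatorname{Ext}^1_G(\Delta(\mu),\nabla(\nu))=0$ for all $\mu,\nu\in X^+$, obtained from Kempf's vanishing theorem. From this one deduces two facts: the class of modules admitting a good filtration is closed under extensions, and if a module $M$ with a Weyl filtration does \emph{not} admit a good filtration, then there exists a minimal weight $\mu< \lambda$ with $\operatorname{Ext}^1_G(\Delta(\mu),M)\neq 0$. I would then form a non-split extension
\[
0\to M\to M'\to \Delta(\mu)\to 0,
\]
which still has a Weyl filtration and whose set of weights is contained in the saturated set associated to $\lambda$ (using Proposition \ref{saturated}). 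Since only finitely many dominant weights lie below $\lambda$, this process terminates, producing a tilting module $T(\lambda)$ with highest weight $\lambda$ and $m_{T(\lambda)}(\lambda)=1$ (as this multiplicity is unchanged by the construction).

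For uniqueness, suppose $T$ and $T'$ are both indecomposable tilting modules with highest weight $\lambda$ and $\lambda$-multiplicity one. Using the Ext-vanishing together with the good filtration of $T'$ and the Weyl filtration of $T$, the evaluation map $\operatorname{Hom}_G(T,T')\to \operatorname{Hom}(T_\lambda,T'_\lambda)=k$ is surjective; lifting a nonzero element produces $\phi:T\to T'$ which is an isomorphism on the $\lambda$-weight space. Indecomposability combined with Fitting's lemma (applied to $\phi\circ\psi$ and $\psi\circ\phi$ for $\psi$ a reverse morphism built symmetrically) forces $\phi$ to be an isomorphism.

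For the direct sum decomposition, I would invoke the Krull--Schmidt theorem for finite-dimensional $G$-modules to write any tilting module $T$ as a finite direct sum of indecomposables. One then verifies that being tilting is inherited by direct summands: an indecomposable summand of $T$ inherits a Weyl filtration and a good filtration from $T$ (this again relies on the Ext-vanishing to split filtrations through direct sum decompositions). Each indecomposable tilting summand has a unique highest weight and is therefore isomorphic to the corresponding $T(\nu_i)$ by the uniqueness established above. The main obstacle is the Ext-vanishing $\operatorname{Ext}^1_G(\Delta(\mu),\nabla(\nu))=0$, which is the deep geometric input; once this is granted, both the inductive construction and the uniqueness argument become essentially formal.
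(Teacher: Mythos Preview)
The paper does not give its own proof of this proposition; it simply records the statement with a citation to Jantzen \cite[II~E.6]{Jantzen}. Your sketch is precisely the Ringel--Donkin construction that Jantzen presents there (build $T(\lambda)$ from $\Delta(\lambda)$ by successive universal extensions by $\Delta(\mu)$'s, using the cohomological criterion for good filtrations; get uniqueness via Fitting's lemma; decompose via Krull--Schmidt and the fact that summands of tilting modules are tilting), so your approach coincides with the one the paper defers to.

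Two small points worth tightening. First, for termination of the extension process, ``only finitely many dominant weights lie below $\lambda$'' is not sufficient by itself: you must also argue that choosing $\mu$ minimal with $\operatorname{Ext}^1_G(\Delta(\mu),M)\neq 0$ ensures the extension strictly lowers $\dim\operatorname{Ext}^1_G(\Delta(\mu),-)$ while preserving the vanishing for all $\nu<\mu$; otherwise multiplicities could in principle grow without bound. Second, the inheritance of the tilting property by direct summands is cleanest via the cohomological criterion ($M$ has a good filtration iff $\operatorname{Ext}^1_G(\Delta(\mu),M)=0$ for all $\mu$, and dually), which is manifestly stable under taking summands, rather than by trying to split an explicit filtration.
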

%A proof of this Proposition is given in \cite[II E.6]{Jantzen}.

\begin{lemma}\label{tilting_coselfdual}
	Every tilting module is contravariantly self-dual.
\end{lemma}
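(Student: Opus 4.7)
The plan is to reduce to the indecomposable case and then invoke the uniqueness part of Proposition \ref{tilting}. By Proposition \ref{tilting}, every tilting module $T$ decomposes as $T\cong \bigoplus_{i=1}^n T(\nu_i)$, and since $(M\oplus N)^\tau\cong M^\tau\oplus N^\tau$, it is enough to establish that $T(\lambda)^\tau\cong T(\lambda)$ for each $\lambda\in X^+$.

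First I would show that $T(\lambda)^\tau$ is again a tilting module. By the remark preceding the definition of tilting modules, a module admits a Weyl filtration if and only if its contravariant dual admits a good filtration. Applying this with $M=T(\lambda)$ shows that $T(\lambda)^\tau$ admits a good filtration, and applying it with $M=T(\lambda)^\tau$ (using $(T(\lambda)^\tau)^\tau\cong T(\lambda)$) shows that $T(\lambda)^\tau$ admits a Weyl filtration. Hence $T(\lambda)^\tau$ is tilting.

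Next I would check that $T(\lambda)^\tau$ has highest weight $\lambda$ with multiplicity one. Because the antiautomorphism $\tau$ from Proposition \ref{antiautomorphism} satisfies $\tau|_T=\id_T$, the weight spaces of $M^\tau$ coincide with those of $M^*$, and in particular $\ch M^\tau=\ch M$. Thus $T(\lambda)^\tau$ has the same character as $T(\lambda)$, so its highest weight is $\lambda$ and $m_{T(\lambda)^\tau}(\lambda)=1$.

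Finally I would argue that $T(\lambda)^\tau$ is indecomposable. This is where I expect the only subtlety: the point is that $M\mapsto M^\tau$ is an involutive equivalence of the category of rational $G$-modules (its square is naturally isomorphic to the identity), so it preserves indecomposability. Once this is granted, $T(\lambda)^\tau$ is an indecomposable tilting module of highest weight $\lambda$, and by the uniqueness statement in Proposition \ref{tilting} we conclude $T(\lambda)^\tau\cong T(\lambda)$. The main obstacle is therefore the categorical observation about $(-)^\tau$ being an involution, which reduces to checking that the natural double-dual map $M\to (M^\tau)^\tau$ is a $G$-isomorphism in the finite-dimensional setting; this follows at once from $\tau^2=\id_G$.
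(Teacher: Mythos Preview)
Your proposal is correct and follows essentially the same approach as the paper: reduce to the indecomposable case, observe that $T(\lambda)^\tau$ is again an indecomposable tilting module with the same highest weight (since $(-)^\tau$ preserves characters and is an involutive equivalence), and invoke the uniqueness in Proposition~\ref{tilting}. The paper's proof is more terse and leaves the indecomposability and character-preservation steps implicit, but the argument is the same.
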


\begin{proof}
	If $T$ is a tilting module, then $T^\tau$ is a tilting module. By definition, $(-)^\tau$ preserves the weights of the representation. Thus, if $T$ is indecomposable with highest weight $\lambda$, then so is $T^\tau$, and we conclude by uniqueness in Proposition \ref{tilting} that $T(\lambda)^\tau\cong T(\lambda)$.
\end{proof}

\begin{cor}\label{tilting_irred}
	Let $T(\lambda)$ be an indecomposable tilting module. Then $T(\lambda)$ is multiplicity-free if and only if $T(\lambda)$ is irreducible.
\end{cor}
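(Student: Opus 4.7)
The statement is an \emph{if and only if}, so the plan is to treat the two directions separately, with the forward direction ($T(\lambda)$ irreducible $\Rightarrow$ multiplicity-free) being essentially immediate and the reverse direction being the substantive one.

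For the easy direction, if $T(\lambda)$ is irreducible, then being indecomposable with highest weight $\lambda$ and $m_{T(\lambda)}(\lambda)=1$ forces $T(\lambda)\cong L(\lambda)$. A simple module has only one composition factor, appearing with multiplicity one, so it is trivially multiplicity-free.

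For the reverse direction, the plan is to combine two facts already proved in the preliminaries. First I would invoke Lemma \ref{tilting_coselfdual}, which says every tilting module is contravariantly self-dual. So $T(\lambda)^\tau\cong T(\lambda)$. Next, applying Lemma \ref{coselfdual_reducible} to the multiplicity-free, contravariantly self-dual module $T(\lambda)$ shows that $T(\lambda)$ is completely reducible. Finally, a completely reducible module which is also indecomposable must be simple: writing $T(\lambda)\cong \bigoplus_{i=1}^m L(\nu_i)$ with $m\geq 1$, indecomposability forces $m=1$, so $T(\lambda)\cong L(\nu_1)$. Since $T(\lambda)$ has highest weight $\lambda$ with $m_{T(\lambda)}(\lambda)=1$, we get $\nu_1=\lambda$ and $T(\lambda)\cong L(\lambda)$, which is irreducible.

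There is no serious obstacle: both directions chain together results already established, and the only small verification is the standard observation that ``completely reducible $+$ indecomposable $\Rightarrow$ simple,'' which follows immediately from Proposition \ref{tilting} (uniqueness of highest weight and $m_{T(\lambda)}(\lambda)=1$) together with the decomposition into simples.
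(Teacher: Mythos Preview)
Your proof is correct and follows essentially the same approach as the paper, which simply cites Lemmas \ref{tilting_coselfdual} and \ref{coselfdual_reducible} as a direct consequence. You have just spelled out the details (the trivial forward direction and the ``completely reducible $+$ indecomposable $\Rightarrow$ simple'' step) that the paper leaves implicit.
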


\begin{proof}
	This is a direct consequence of Lemmas \ref{tilting_coselfdual} and \ref{coselfdual_reducible}.
\end{proof}

\begin{lemma}\label{argument_tilting}
	Let $M$ be a tilting module. Let $\eta \in X^+$ be such that $L(\eta)$ is a composition factor of $M$ and $T(\eta)$ is not irreducible. Then $M$ has multiplicity.
\end{lemma}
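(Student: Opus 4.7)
The plan is to argue by contradiction: assume $M$ is multiplicity-free and then force $T(\eta)$ to be irreducible, contradicting the hypothesis.

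First, since $M$ is a tilting module, Lemma \ref{tilting_coselfdual} tells me that $M$ is contravariantly self-dual. Combined with the assumption that $M$ is multiplicity-free, Lemma \ref{coselfdual_reducible} then yields that $M$ is completely reducible. This is the crucial reduction step, because it turns an abstract statement about composition factors into a direct sum decomposition into simples.

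Next, I would invoke Proposition \ref{tilting} to write $M \cong \bigoplus_j T(\nu_j)$ as a sum of indecomposable tilting modules. Each $T(\nu_j)$ is then an indecomposable direct summand of the completely reducible module $M$, and Proposition \ref{submodule_comp_red} guarantees that each such summand is itself completely reducible. An indecomposable, completely reducible module must be simple, so every $T(\nu_j)$ is irreducible, and comparing highest weights gives $T(\nu_j) \cong L(\nu_j)$.

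Finally, since $L(\eta)$ is a composition factor of the completely reducible module $M \cong \bigoplus_j L(\nu_j)$, there must exist some index $j$ with $\eta = \nu_j$. For that $j$ we obtain $T(\eta) = T(\nu_j) \cong L(\nu_j)$, so $T(\eta)$ is irreducible, which contradicts the hypothesis. I do not expect any real obstacle here; the only point that could be overlooked is that Krull--Schmidt, combined with complete reducibility, forces every indecomposable tilting summand of $M$ to be simple, which is precisely what converts the contravariant self-duality of $M$ into the desired rigidity.
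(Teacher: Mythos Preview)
Your proof is correct. It uses the same underlying ingredients as the paper (Proposition~\ref{tilting}, Lemma~\ref{tilting_coselfdual}, Lemma~\ref{coselfdual_reducible}) but organizes them slightly differently. The paper argues directly: it decomposes $M \cong \bigoplus_i T(\nu_i)$, picks a summand $T(\nu_i)$ with $\nu_i \geq \eta$ containing $L(\eta)$ as a composition factor, and then does a case split on $\nu_i > \eta$ versus $\nu_i = \eta$; in either case $T(\nu_i)$ is not irreducible, and Corollary~\ref{tilting_irred} (itself a one-line consequence of Lemmas~\ref{tilting_coselfdual} and~\ref{coselfdual_reducible}) gives that $T(\nu_i)$ already has multiplicity. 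Your contradiction argument applies the self-duality/complete-reducibility step globally to $M$ rather than to a single summand, which lets you bypass the case distinction entirely and conclude that \emph{every} $T(\nu_j)$ is simple. Both arguments are short and equivalent in strength; yours is arguably a touch cleaner since it avoids the highest-weight comparison $\nu_i \geq \eta$ altogether.
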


\begin{proof}
	Using Proposition \ref{tilting}, there exist $\nu_1, \ldots, \nu_s\in X^+$ such that $M\cong \bigoplus_{i=1}^sT(\nu_i)$. There exists $\nu_i\geq \eta$ such that $L(\eta)$ is a composition factor of $T(\nu_i)$. If $\nu_i>\eta$, then $T(\nu_i)$ is not irreducible, hence it has multiplicity by Corollary \ref{tilting_irred}, and so $M$ has multiplicity. If $\nu_i=\eta$, we conclude using the assumption that $T(\eta)$ is not irreducible.
\end{proof}

\begin{lemma}\label{tilting_irreducible}
	Let $\lambda\in X^+$. If $\Delta(\lambda)\cong L(\lambda)$, then $T(\lambda)\cong \nabla(\lambda)\cong L(\lambda)$. Else, $T(\lambda)$ is not irreducible.
\end{lemma}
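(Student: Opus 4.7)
The plan is to treat the two implications separately, using the characterisations of Weyl modules, costandard modules, and tilting modules already recorded.

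For the forward direction, I would assume $\Delta(\lambda) \cong L(\lambda)$ and first pin down $\nabla(\lambda) \cong L(\lambda)$. Since by Remark \ref{irred_cont_self} the simple module $L(\lambda)$ is contravariantly self-dual, and since $\nabla(\lambda) \cong \Delta(\lambda)^{\tau}$, applying $(-)^{\tau}$ to the hypothesis yields $\nabla(\lambda) \cong \Delta(\lambda)^{\tau} \cong L(\lambda)^{\tau} \cong L(\lambda)$. The one-step filtration $0 \subsetneq L(\lambda)$ then serves simultaneously as a Weyl filtration and a good filtration, so $L(\lambda)$ is itself a tilting module. Being simple, it is indecomposable, and its highest weight is $\lambda$; uniqueness in Proposition \ref{tilting} therefore gives $T(\lambda) \cong L(\lambda)$, completing this direction.

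For the converse I would argue the contrapositive: if $T(\lambda)$ is irreducible, I need to deduce $\Delta(\lambda) \cong L(\lambda)$. Since $T(\lambda)$ is simple with highest weight $\lambda$, it must coincide with $L(\lambda)$. By definition $T(\lambda)$ admits a Weyl filtration $0 = T_0 \subseteq T_1 \subseteq \cdots \subseteq T_n = T(\lambda)$ with each quotient $T_i/T_{i-1}$ a (necessarily nonzero) Weyl module $\Delta(\lambda_i)$. Nonzeroness of the quotients makes every inclusion strict, and then the simplicity of $T(\lambda) \cong L(\lambda)$ forces $n = 1$. Hence $T(\lambda) \cong \Delta(\lambda_1)$, and comparing highest weights gives $\lambda_1 = \lambda$, so $\Delta(\lambda) \cong T(\lambda) \cong L(\lambda)$.

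No step here is genuinely delicate; everything is a direct consequence of the preliminaries. The only point that deserves a moment's care is in the forward direction, where one must remember to combine $\nabla(\lambda) \cong \Delta(\lambda)^{\tau}$ with the contravariant self-duality of simple modules in order to upgrade the hypothesis on $\Delta(\lambda)$ to the corresponding statement on $\nabla(\lambda)$, after which the uniqueness clause of Proposition \ref{tilting} closes the argument.
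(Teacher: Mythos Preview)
Your proof is correct and follows essentially the same approach as the paper's. The forward direction is identical in spirit (show $L(\lambda)$ is itself tilting via $\Delta(\lambda)\cong L(\lambda)\cong\nabla(\lambda)$ and invoke uniqueness from Proposition~\ref{tilting}); for the other direction the paper argues directly that $\Delta(\lambda)$ occurs in a Weyl filtration of $T(\lambda)$ and hence $T(\lambda)$ inherits its non-simplicity, while you phrase the same idea contrapositively, but the content is the same.
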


\begin{proof}
	If $\Delta(\lambda)$ is irreducible, then so is $\nabla(\lambda)$. In particular, $L(\lambda)\cong \Delta(\lambda)\cong \nabla(\lambda)$ admits a Weyl filtration and a good filtration. Thus, $L(\lambda)$ is a tilting module, and we conclude by uniqueness in Proposition \ref{tilting}.
	
	Otherwise, $\Delta(\lambda)$ appear in the Weyl filtration of $T(\lambda)$, hence $T(\lambda)$ is not irreducible.
\end{proof}

\subsection{Characters}

A lot of information about a $G$-module $M$ is given by the dimensions of its weight spaces. 
%An important information about a given module is the multiplicity of the weights which appear in it, i.e. the dimension of the weight spaces. All 
These informations are encoded in the character of the module, a notion that we define in this section. 
%Moreover, the characters can be used to compute the multiplicity of composition factors, and hence to classify multiplicity-free tensor products.
Later on, we will use those characters to compute the composition factors of tensor products of simple $G$-modules, and in particular to show that some of them are multiplicity-free.

\begin{lemma}[{\cite[Lemma 15.3]{Testerman_Gunter}}] \label{orbit}
	Let $M$ be an irreducible $G$-module, $\lambda\in X$ and $w\in W$. Then
	$$m_M(\lambda)=m_M(w\lambda).$$
\end{lemma}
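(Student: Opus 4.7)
The plan is to establish the statement first for a simple reflection $s_\alpha$ with $\alpha \in \Pi$, and then extend to arbitrary $w \in W$ by using the fact that $W$ is generated by simple reflections. Note that although the lemma is stated for an irreducible module, the argument works verbatim for any rational $G$-module, since it only relies on having compatible actions of $T$ and of the elements $w_\alpha(1)$ constructed in Section \ref{Chevalley}.

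\medskip

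\textbf{Step 1: Produce a representative of $s_\alpha$ in $G$.} For each simple root $\alpha \in \Pi$, I will use the element $\tilde{s}_\alpha := w_\alpha(1) = x_\alpha(1)x_{-\alpha}(-1)x_\alpha(1) \in G$ introduced in Section \ref{Chevalley}. A standard computation (see \cite[Chapter 3]{Steinberg}) shows that $\tilde{s}_\alpha$ lies in $N_G(T)$ and its image in $N_G(T)/T = W$ is precisely $s_\alpha$; concretely, for every $t \in T$ one has $\tilde{s}_\alpha^{-1} t \tilde{s}_\alpha \in T$, and for every character $\mu \in X$, $\mu(\tilde{s}_\alpha^{-1} t \tilde{s}_\alpha) = (s_\alpha\mu)(t)$.

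\medskip

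\textbf{Step 2: Transport the weight space.} Fix $\lambda \in X$, take $v \in M_\lambda$, and consider $\tilde{s}_\alpha v \in M$. For any $t \in T$,
$$
t \cdot (\tilde{s}_\alpha v) \;=\; \tilde{s}_\alpha \cdot \bigl(\tilde{s}_\alpha^{-1} t \tilde{s}_\alpha\bigr) v \;=\; \lambda\bigl(\tilde{s}_\alpha^{-1} t \tilde{s}_\alpha\bigr) \tilde{s}_\alpha v \;=\; (s_\alpha \lambda)(t)\, \tilde{s}_\alpha v,
$$
so $\tilde{s}_\alpha v \in M_{s_\alpha \lambda}$. Hence the action of $\tilde{s}_\alpha$ restricts to a linear map $M_\lambda \to M_{s_\alpha \lambda}$, and the same reasoning applied to $\tilde{s}_\alpha^{-1}$ gives a two-sided inverse. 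Therefore $\dim M_\lambda = \dim M_{s_\alpha \lambda}$, i.e.\ $m_M(\lambda) = m_M(s_\alpha \lambda)$.

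\medskip

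\textbf{Step 3: Extend to all of $W$.} Recall that $W$ is generated by $\{s_\alpha \mid \alpha \in \Pi\}$. Writing a general $w \in W$ as a product $w = s_{\beta_1} \cdots s_{\beta_r}$ of simple reflections, I will iterate Step 2 to obtain
$$
m_M(\lambda) = m_M(s_{\beta_r}\lambda) = m_M(s_{\beta_{r-1}}s_{\beta_r}\lambda) = \cdots = m_M(w\lambda),
$$
which gives the desired equality.

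\medskip

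\textbf{Main obstacle.} The only nontrivial point is the identity $\mu(\tilde{s}_\alpha^{-1} t \tilde{s}_\alpha) = (s_\alpha\mu)(t)$ used in Step 2, i.e.\ that the element $w_\alpha(1) \in G$ genuinely lifts the simple reflection $s_\alpha$ to the normalizer of $T$. This is precisely one of the structural properties of Chevalley groups recorded in the construction of Section \ref{Chevalley}, and I would simply cite \cite[Chapter 3]{Steinberg} rather than reprove it. Everything else is a short linear-algebra manipulation on weight spaces.
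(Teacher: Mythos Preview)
Your proof is correct and follows the standard argument. Note, however, that the paper does not actually prove this lemma: it is stated with a citation to \cite[Lemma 15.3]{Testerman_Gunter} and no proof is given in the text. So there is nothing to compare against; your argument is precisely the classical one (lift simple reflections to $N_G(T)$ via $w_\alpha(1)$, use them to permute weight spaces, then generate $W$), and your observation that irreducibility is not needed is also correct.
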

%A proof of this Lemma is given in \cite[15.4]{Testerman_Gunter}

\begin{defn}%[Character]
	Let $M$ be a $G$-module. Its \emph{character} is the formal sum
	$$\ch M:=\sum_{\lambda\in X} m_M(\lambda)e^\lambda \in \Z[X],$$
	where $\Z[X]$ has $\Z$-basis $\{e^\lambda|\; \lambda\in X\}$.
\end{defn}
We denote by $\Z[X]^W$ the fixed points in $\Z[X]$ for the natural action of $W$. By \linebreak Lemma \ref{orbit}, we have $\ch M\in \Z[X]^W$ for every $G$-module $M$.
%	Let $M$ be a $G$-module. By Lemma \ref{orbit}, $\ch M\in \Z[X]^W$, the fixed points under the natural action of the Weyl group.

\begin{rem}
	Let $M,N$ be two $G$-modules. We have $\ch(M\oplus N)=\ch M +\ch N$ and \linebreak$\ch(M\otimes N)=\ch M \cdot \ch N$.
\end{rem}

\begin{notation}
	%Let $\lambda\in X^+$. 
	We denote the character of the Weyl module of highest weight $\lambda\in X^+$ by
	$$\chi(\lambda):=\ch \Delta(\lambda).$$
\end{notation}

\begin{thm}[Weyl's character formula]\label{Weyl_character_formula}
	For $\lambda\in X^+$, we have
	$$\chi(\lambda)=\frac{\sum_{w\in W}\det(w)e^{w(\lambda+\rho)}}{\sum_{w\in W}\det(w)e^{w(\rho)}}.$$
\end{thm}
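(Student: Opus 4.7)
The plan is to first reduce to the classical characteristic-zero statement, then carry out Harish-Chandra's antisymmetrization argument. By the construction of Weyl modules recalled in Section \ref{Chevalley}, $\Delta(\lambda)$ is the base change to $k$ of a $\Ug_{\Z}$-stable lattice $M = \Ug_{\Z}^- v^+$ inside the complex simple module $L_{\C}(\lambda)$, and base change preserves weight multiplicities. Therefore $\chi(\lambda) = \ch \Delta(\lambda) = \ch L_{\C}(\lambda)$, and it suffices to prove the identity for the character of the simple $\g$-module over $\C$.

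Working over $\C$, I would introduce the antisymmetrization operator $J : \Z[X] \to \Z[X]$ defined by $J(f) := \sum_{w \in W} \det(w)\,w(f)$, so that the numerator and denominator of the right-hand side are $J(e^{\lambda+\rho})$ and $J(e^\rho)$ respectively. Since $\ch L_{\C}(\lambda) \in \Z[X]^W$ by Lemma \ref{orbit}, the product $\ch L_{\C}(\lambda) \cdot J(e^\rho)$ is antisymmetric under the $W$-action. The antisymmetric part of $\Z[X]$ is a free $\Z$-module with basis $\{J(e^{\nu+\rho}) : \nu \in X^+\}$, because $J(e^\mu) = 0$ whenever $\mu$ lies on a reflection hyperplane of $W$ (as $s_\alpha$ fixes such $\mu$ while contributing sign $-1$), and otherwise every $W$-orbit of strictly dominant weights contributes one basis element up to sign. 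Hence I can write
$$\ch L_{\C}(\lambda) \cdot J(e^\rho) = \sum_{\mu \in X^+} c_\mu\, J(e^{\mu+\rho}),$$
and comparing the coefficient of $e^{\lambda+\rho}$ using $m_{L_{\C}(\lambda)}(\lambda) = 1$ together with the fact that $\rho$ is the unique highest weight appearing in $J(e^\rho)$ immediately yields $c_\lambda = 1$; support considerations (weights of $\ch L_{\C}(\lambda)$ lie below $\lambda$) further show that any $\mu$ appearing must satisfy $\mu \leq \lambda$.

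The main obstacle is ruling out contributions from $\mu < \lambda$. The slickest route uses the Casimir element $\Omega \in \Ug$: it acts on $L_{\C}(\lambda)$ by the scalar $(\lambda+\rho,\lambda+\rho) - (\rho,\rho)$, and comparing with Freudenthal's formula (Theorem \ref{Freudenthal}) shows that any weight $\nu$ that can appear with nonzero coefficient in the antisymmetrized expansion must satisfy $(\nu,\nu) = (\lambda+\rho,\lambda+\rho)$ with $\nu = \mu + \rho$ strictly dominant; combined with $\mu \leq \lambda$ this forces $\mu = \lambda$. Therefore $\ch L_{\C}(\lambda) \cdot J(e^\rho) = J(e^{\lambda+\rho})$, and dividing by $J(e^\rho)$ (which is nonzero in the fraction field of $\Z[X]$, being a nonzero antisymmetric element) yields Weyl's formula. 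An alternative route, avoiding the Casimir altogether, is to argue inductively from Freudenthal's formula directly, verifying that both sides satisfy the same weight-multiplicity recursion with matching leading term $e^\lambda$ and agree by descending induction on the dominance order; this is more elementary but combinatorially heavier, so I would prefer the Casimir-based argument.
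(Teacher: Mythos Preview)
The paper does not actually prove this theorem: it simply cites \cite[24]{Humphreys_Lie} for the proof. Your sketch is correct and is essentially the argument Humphreys gives there --- reduce to $\C$ via the lattice construction of $\Delta(\lambda)$, multiply $\ch L_{\C}(\lambda)$ by the Weyl denominator $J(e^\rho)$, expand in the basis $\{J(e^{\mu+\rho}):\mu\in X^+\}$ of antisymmetric elements, and use the Casimir eigenvalue to kill all terms with $\mu\neq\lambda$.

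One small imprecision: the norm constraint $(\mu+\rho,\mu+\rho)=(\lambda+\rho,\lambda+\rho)$ on the surviving $\mu$ does not really come ``by comparing with Freudenthal's formula''. In Humphreys' argument it comes from computing the action of the Casimir on the formal character directly (via the identity $\Omega = \sum h_i^2 + \ldots$ applied termwise to $e^\nu$), which yields a differential-operator-style identity on $\Z[X]$ rather than the recursive multiplicity formula. Freudenthal's formula is a consequence of the same Casimir computation, so the information is equivalent, but invoking Freudenthal as the \emph{mechanism} here is slightly misleading. Your alternative inductive route via Freudenthal is genuine, though as you say combinatorially heavier.
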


\begin{cor}[Weyl's degree formula]\label{Weyl_degree}
	For $\lambda\in X^+$, we have 
	$$\dim \Delta(\lambda)=\frac{\prod_{\alpha\in \Phi^+}(\lambda+\rho,\alpha)}{\prod_{\alpha\in \Phi^+}(\rho,\alpha)}.$$
\end{cor}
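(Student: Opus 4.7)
The plan is to specialize the Weyl character $\chi(\lambda)$ at the trivial substitution, since $\dim \Delta(\lambda) = \sum_\nu m_{\Delta(\lambda)}(\nu)$ is formally obtained from $\ch \Delta(\lambda)$ by setting every $e^\nu$ to $1$. The immediate difficulty is that this specialisation makes both numerator and denominator of Theorem~\ref{Weyl_character_formula} vanish. To bypass this I would introduce a real parameter $t$ and consider the family of ring homomorphisms $\phi_t : \Z[X] \to \R$ defined by $\phi_t(e^\nu) = e^{t(\rho, \nu)}$, observing that $\phi_t(\ch M) \to \dim M$ as $t \to 0$ for every $G$-module $M$. It thus suffices to compute $\lim_{t \to 0} N(t)/D(t)$, where
$$N(t) = \sum_{w \in W} \det(w)\, e^{t(\rho,\, w(\lambda+\rho))}, \qquad D(t) = \sum_{w \in W} \det(w)\, e^{t(\rho,\, w\rho)},$$
both of which vanish at $t = 0$.

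The central ingredient is the Weyl denominator formula
$$\sum_{w \in W} \det(w)\, e^{w\rho} \;=\; \prod_{\alpha \in \Phi^+} \bigl(e^{\alpha/2} - e^{-\alpha/2}\bigr),$$
a standard companion to Weyl's character formula (see e.g.\ \cite[24.3]{Humphreys_Lie}). Applying $\phi_t$ immediately rewrites $D(t) = \prod_{\alpha \in \Phi^+}\bigl(e^{t(\rho,\alpha)/2} - e^{-t(\rho,\alpha)/2}\bigr)$. To obtain an analogous product for $N(t)$, I would first use the identity $(\rho, w(\lambda+\rho)) = (w^{-1}\rho, \lambda+\rho)$ and the reindexing $w \leftrightarrow w^{-1}$ (which preserves $\det w$) to rewrite $N(t) = \sum_w \det(w)\, e^{t(\lambda+\rho,\, w\rho)}$. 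This is precisely the image of the Weyl denominator formula under the substitution $e^\nu \mapsto e^{t(\lambda+\rho,\nu)}$, hence
$$N(t) \;=\; \prod_{\alpha \in \Phi^+}\bigl(e^{t(\lambda+\rho,\alpha)/2} - e^{-t(\lambda+\rho,\alpha)/2}\bigr).$$

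A factor-by-factor Taylor expansion $e^{ts/2} - e^{-ts/2} = ts + O(t^3)$ then shows that $N(t)$ and $D(t)$ each vanish to order $|\Phi^+|$ at $t = 0$, with leading behaviour $t^{|\Phi^+|}\prod_{\alpha \in \Phi^+}(\lambda+\rho, \alpha)$ and $t^{|\Phi^+|}\prod_{\alpha \in \Phi^+}(\rho, \alpha)$ respectively. The powers of $t$ cancel in the quotient and the limit as $t \to 0$ delivers the claimed formula. The main obstacle is the appeal to the Weyl denominator formula, which the excerpt does not record explicitly; I would either cite it from \cite{Humphreys_Lie} or include a short derivation from Theorem~\ref{Weyl_character_formula}. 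Once that identity is in hand, the rest of the argument is a routine limit computation, and no further input is required beyond the character formula itself.
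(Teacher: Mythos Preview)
Your argument is correct and is precisely the classical derivation of the degree formula from the character formula via the Weyl denominator identity and an l'H\^opital-type limit; the paper itself does not give an independent proof but simply cites \cite[24]{Humphreys_Lie}, where exactly this argument appears. So your approach coincides with the one the paper defers to.
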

Proofs of Weyl's character formula and Weyl's degree formula are given in \cite[24]{Humphreys_Lie}.\\

Weyl's character formula allows us to extend our definition of character for non-dominant weights.
\begin{defn}
	Let $\lambda \in X$. The \emph{Weyl character} associated to $\lambda$ is the formal element
	$$\chi(\lambda):=\frac{\sum_{w\in W}\det(w)e^{w(\lambda+\rho)}}{\sum_{w\in W}\det(w)e^{w(\rho)}}.$$
\end{defn}

\begin{lemma}\label{action_characters}
	For $\lambda\in X$, we have 
	\begin{enumerate}[label = \emph{(\arabic*)}] 
		\item $\chi(w\sbullet \lambda )=\det(w)\chi(\lambda)\quad \forall w\in W$,
		\item $\chi(\lambda)=0\quad \forall\lambda\in D\setminus X^+$.
	\end{enumerate}
\end{lemma}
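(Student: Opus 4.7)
Both parts will follow quickly from manipulating the defining formula
$$\chi(\lambda)=\frac{\sum_{w\in W}\det(w)e^{w(\lambda+\rho)}}{\sum_{w\in W}\det(w)e^{w(\rho)}},$$
so the only real content is a careful reindexing argument for (1) together with a geometric observation about $D\setminus X^+$ for (2).

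For part (1), I would expand $\chi(w\sbullet\lambda)=\chi(w(\lambda+\rho)-\rho)$ directly, using that $w\sbullet\lambda+\rho=w(\lambda+\rho)$. This turns the numerator into $\sum_{v\in W}\det(v)e^{vw(\lambda+\rho)}$. Performing the substitution $v'=vw$ in the summation (so $v=v'w^{-1}$ and $\det(v)=\det(v')\det(w)$, since $\det(w^{-1})=\det(w)=\pm 1$), the numerator becomes $\det(w)\sum_{v'\in W}\det(v')e^{v'(\lambda+\rho)}$, while the denominator is unchanged. Dividing yields $\chi(w\sbullet\lambda)=\det(w)\chi(\lambda)$.

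For part (2), suppose $\lambda\in D\setminus X^+$. By definition of $D$, we have $(\lambda+\rho,\alpha^\vee)\geq 0$ for every $\alpha\in\Pi$. Since $\lambda\notin X^+$, there is some $\alpha_i\in\Pi$ with $(\lambda,\alpha_i^\vee)<0$, equivalently $(\lambda+\rho,\alpha_i^\vee)<(\rho,\alpha_i^\vee)=1$, so that $(\lambda+\rho,\alpha_i^\vee)\leq 0$. Combining the two inequalities forces $(\lambda+\rho,\alpha_i^\vee)=0$, i.e.\ $s_{\alpha_i}(\lambda+\rho)=\lambda+\rho$, which means $s_{\alpha_i}\sbullet\lambda=\lambda$. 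Applying part (1) to $w=s_{\alpha_i}$ gives $\chi(\lambda)=\chi(s_{\alpha_i}\sbullet\lambda)=\det(s_{\alpha_i})\chi(\lambda)=-\chi(\lambda)$, so $2\chi(\lambda)=0$, and since the ring in which $\chi(\lambda)$ lives is torsion-free we conclude $\chi(\lambda)=0$.

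I do not expect any serious obstacle: the only subtlety is to verify that $\lambda+\rho$ lies on a simple reflection hyperplane (not merely on some reflection hyperplane), which is precisely what the inequalities $0\leq(\lambda+\rho,\alpha_i^\vee)\leq 0$ give, and to notice that $\det(w)^2=1$ so that the change of variable in (1) works uniformly.
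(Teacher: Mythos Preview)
Your proof is correct and follows essentially the same approach as the paper: the same reindexing of the numerator for part (1), and the same observation that $\lambda\in D\setminus X^+$ forces $s_{\alpha_i}\sbullet\lambda=\lambda$ for some simple root $\alpha_i$, whence $\chi(\lambda)=-\chi(\lambda)$, for part (2). Your argument for part (2) is in fact slightly more explicit than the paper's, which simply asserts the existence of such an $\alpha_i$ without spelling out the inequality $(\lambda+\rho,\alpha_i^\vee)=0$.
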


\begin{proof}
	For $g\in W$, we have
	\begin{align*}
		\chi(g\sbullet \lambda )&=\frac{\sum_{w\in W}\det(w)e^{w(g\sbullet\lambda+\rho)}}{\sum_{w\in W}\det(w)e^{w(\rho)}}=\frac{\sum_{w\in W}\det(w)e^{w(g(\lambda+\rho))}}{\sum_{w\in W}\det(w)e^{w(\rho)}}\\
		&=\det(g)\frac{\sum_{w\in W}\det(wg)e^{wg(\lambda+\rho)}}{\sum_{w\in W}\det(w)e^{w(\rho)}}=\det(g)\frac{\sum_{w\in W}\det(w)e^{w(\lambda+\rho)}}{\sum_{w\in W}\det(w)e^{w(\rho)}}\\
		&=\det(g)\chi(\lambda).
	\end{align*}
	Now observe that for $\lambda\in D\setminus X^+$, there exists $\alpha\in\Pi$ such that $s_\alpha\sbullet \lambda=\lambda$. Therefore, $\chi(\lambda)=-\chi(\lambda)$ so $\chi(\lambda)=0$.
\end{proof}

The following lemma will be useful to compute an explicit decomposition of a product of characters into a sum of irreducible or Weyl characters. It ensures the existence and the uniqueness of such a decomposition. We will use it several times in the next sections without further reference.

\begin{lemma}[{\cite[II 5.8]{Jantzen}}]\label{basis_character}
	The set of characters of irreducible modules \linebreak$\{\ch L(\lambda)|\; \lambda\in X^+\}$ is a $\Z$-basis of $\Z[X]^W$. Moreover, the set of Weyl characters \linebreak$\{\chi(\lambda)|\; \lambda\in X^+\}$ is a $\Z$-basis of $\Z[X]^W$.
\end{lemma}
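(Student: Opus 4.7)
The plan is to pass through the intermediate basis of $W$-orbit sums. For each $\lambda\in X^+$, let $s(\lambda):=\sum_{\mu\in W\lambda}e^\mu\in \Z[X]^W$. First I would check that $\{s(\lambda)\mid \lambda\in X^+\}$ is a $\Z$-basis of $\Z[X]^W$: any $f=\sum_\mu c_\mu e^\mu\in \Z[X]^W$ has coefficients constant along $W$-orbits (since the action of $W$ permutes the basis $\{e^\mu\}$), and because $X^+$ is a fundamental domain for the $W$-action on $X$, every orbit has a unique dominant representative. Hence $f=\sum_{\lambda\in X^+}c_\lambda s(\lambda)$ uniquely, which gives both spanning and linear independence.

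Next I would relate $\ch L(\lambda)$ and $\chi(\lambda)$ to this orbit basis. For any $G$-module $M$ and any $w\in W$, Lemma \ref{orbit} (applied to composition factors, or directly via the fact that $T$ is normalised by a representative of $w$ in $G$) gives $m_M(\mu)=m_M(w\mu)$, so $\ch M$ is a non-negative integer combination of orbit sums $s(\mu)$. For $L(\lambda)$ we know that $\lambda$ is the highest weight with multiplicity $1$ and every other weight $\mu$ satisfies $\mu<\lambda$; likewise for $\Delta(\lambda)$, using that $m_{\Delta(\lambda)}(\lambda)=1$ and that all weights of $\Delta(\lambda)$ lie below $\lambda$ (by Proposition \ref{saturated}). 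Therefore
\begin{equation*}
\ch L(\lambda)=s(\lambda)+\sum_{\substack{\nu\in X^+\\\nu<\lambda}}a_{\lambda,\nu}\,s(\nu),\qquad \chi(\lambda)=s(\lambda)+\sum_{\substack{\nu\in X^+\\\nu<\lambda}}b_{\lambda,\nu}\,s(\nu),
\end{equation*}
with $a_{\lambda,\nu},b_{\lambda,\nu}\in \N$.

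Finally, I would extend $\leq$ to a total order on $X^+$ (which is possible since for any fixed $\lambda$ the set $\{\nu\in X^+\mid \nu\leq \lambda\}$ is finite, by Proposition \ref{saturated}). With respect to this total order the change-of-basis matrices from $\{s(\lambda)\}$ to $\{\ch L(\lambda)\}$ and from $\{s(\lambda)\}$ to $\{\chi(\lambda)\}$ are upper unitriangular with integer entries, hence invertible over $\Z$. This transfers the basis property established in the first paragraph to both families, proving both claims.

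The only subtlety, and the main point of care in the argument, is ensuring that the triangularity argument survives the fact that $X^+$ is infinite: one must be careful to work ``column by column'', noting that inverting the unitriangular relation to express each $s(\lambda)$ in terms of the $\ch L(\mu)$ (or $\chi(\mu)$) involves only the finitely many $\mu\leq \lambda$, so the inverse change-of-basis lies in $\Z[X]^W$ without any completion issue.
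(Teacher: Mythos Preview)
Your argument is correct and is essentially the standard proof (the one given in Jantzen, to which the paper defers). Note that the paper does not supply its own proof of this lemma: it is stated with a citation to \cite[II~5.8]{Jantzen} and used as a black box, so there is nothing further to compare. One minor remark: your appeal to Proposition~\ref{saturated} for the finiteness of $\{\nu\in X^+\mid \nu\leq\lambda\}$ is slightly indirect---what you are really using is that every such $\nu$ occurs as a weight of the finite-dimensional module $\Delta(\lambda)$, which does follow from saturation together with \cite[13.4, Lemma~B]{Humphreys_Lie}; the conclusion is correct, but the justification could be phrased more explicitly.
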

%A proof of this Lemma is given in \cite[II 5.8]{Jantzen}.
% Jantzen 5.8

\begin{prop}[{\cite[Proposition 2.1]{Stembridge}}]\label{product_characters}
	For $\lambda,\mu \in X^+$, we have $$\chi(\lambda)\chi(\mu)=\sum_{\nu\in X}m_{\Delta(\lambda)}(\nu)\cdot\chi(\mu+\nu).$$
\end{prop}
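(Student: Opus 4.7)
The plan is to use the Weyl character formula directly, together with the $W$-invariance of weight multiplicities in $\Delta(\lambda)$ (Lemma \ref{orbit} applies to $\Delta(\lambda)$ since, by Proposition \ref{saturated}, its set of weights is saturated and hence $W$-stable, and more generally Freudenthal's formula gives $m_{\Delta(\lambda)}(\nu)=m_{\Delta(\lambda)}(w\nu)$ for all $w\in W$). Set $A(\eta):=\sum_{w\in W}\det(w)e^{w\eta}$ and $J:=A(\rho)$, so that by Theorem \ref{Weyl_character_formula} we have $\chi(\mu)=A(\mu+\rho)/J$ for any $\mu\in X$.

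First, I would write $\chi(\lambda)=\ch\Delta(\lambda)=\sum_{\nu\in X}m_{\Delta(\lambda)}(\nu)e^\nu$ and use the Weyl character formula for $\chi(\mu)$ to get
$$\chi(\lambda)\chi(\mu)=\frac{1}{J}\sum_{\nu\in X}m_{\Delta(\lambda)}(\nu)\sum_{w\in W}\det(w)e^{w(\mu+\rho)+\nu}.$$
Next, for each fixed $w\in W$, I would perform the change of variables $\nu\mapsto w\nu$ in the inner sum. Using $W$-invariance of the multiplicities $m_{\Delta(\lambda)}(w\nu)=m_{\Delta(\lambda)}(\nu)$, the inner sum becomes
$$\sum_{\nu\in X}m_{\Delta(\lambda)}(\nu)e^{w(\mu+\rho+\nu)}.$$
Substituting this back and interchanging the order of summation yields
$$\chi(\lambda)\chi(\mu)=\sum_{\nu\in X}m_{\Delta(\lambda)}(\nu)\,\frac{A(\mu+\nu+\rho)}{J}=\sum_{\nu\in X}m_{\Delta(\lambda)}(\nu)\chi(\mu+\nu),$$
which is the claimed identity.

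The only subtlety is justifying the $W$-invariance of $m_{\Delta(\lambda)}(\nu)$: Lemma \ref{orbit} is stated for irreducible modules, so strictly speaking I would appeal to the fact that the set of weights of $\Delta(\lambda)$ (with multiplicity) is $W$-stable, which follows from the existence of a $W$-action on $\Delta(\lambda)$ through representatives of $W$ in the normaliser of $T$, or equivalently from the saturation property in Proposition \ref{saturated} combined with Freudenthal's formula (Theorem \ref{Freudenthal}), which is manifestly $W$-symmetric in $\mu$. Once this is in hand, the re-indexing step is routine and the computation proceeds as above. I do not expect any further obstacle; the argument is essentially a formal manipulation of the Weyl numerator, and the summation over $\nu$ has finite support since $\Delta(\lambda)$ is finite-dimensional, so all sums are well-defined in $\Z[X]$.
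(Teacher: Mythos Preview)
Your argument is correct and is precisely the standard proof of this identity (sometimes called Brauer's formula). The paper does not give its own proof of this proposition; it is simply cited from \cite[Proposition~2.1]{Stembridge}, so there is nothing to compare against beyond noting that your approach is the expected one.

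One minor remark: your justification of the $W$-invariance of $m_{\Delta(\lambda)}(\nu)$ is more elaborate than necessary. The paper already records, immediately after the definition of characters, that $\ch M\in\Z[X]^W$ for \emph{every} $G$-module $M$ (this follows from Lemma~\ref{orbit} applied to the composition factors of $M$, or directly from the action of $N_G(T)$ on weight spaces). Applying this to $M=\Delta(\lambda)$ gives the needed invariance without invoking saturation or Freudenthal's formula.
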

%A proof of this Proposition is given in \cite[2.1]{Stembridge}
%{\color{red} éventuellement ajouter preuve}

\begin{cor}\label{products_characters_bis}
	For $\lambda,\mu \in X^+$, we have 
	$$\ch L(\lambda)\cdot\chi(\mu)=\sum_{\nu\in X}m_{L(\lambda)}(\nu)\cdot\chi(\mu+\nu).$$
\end{cor}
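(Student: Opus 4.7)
The statement is the natural analogue of Proposition \ref{product_characters} with $\chi(\lambda)$ replaced by $\ch L(\lambda)$, so my plan is to adapt that proposition's proof with essentially no structural change. The key observation is that the argument for Proposition \ref{product_characters} uses only two features of the first factor: that it expands as a $\Z$-linear combination of the $e^\nu$ with $m_{\Delta(\lambda)}(\nu)$ as coefficients, and that those coefficients are $W$-invariant. For $\ch L(\lambda)$, the first feature holds by definition and the second is guaranteed by Lemma \ref{orbit}, so the same computation will go through verbatim.

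Concretely, I would expand $\ch L(\lambda) = \sum_{\nu\in X}m_{L(\lambda)}(\nu)e^\nu$ and apply Weyl's character formula (Theorem \ref{Weyl_character_formula}) to the second factor $\chi(\mu)$, obtaining
$$\ch L(\lambda)\cdot \chi(\mu) = \frac{\sum_{\nu\in X}\sum_{w\in W}m_{L(\lambda)}(\nu)\det(w)e^{\nu+w(\mu+\rho)}}{\sum_{w\in W}\det(w)e^{w(\rho)}}.$$
Then I would perform the change of summation variable $\nu \leftarrow w\nu$ in the numerator and invoke Lemma \ref{orbit} to rewrite $m_{L(\lambda)}(w\nu) = m_{L(\lambda)}(\nu)$. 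This lets the exponent combine into $w(\mu+\nu+\rho)$, so the outer sum over $\nu$ factors out and leaves each term with a quotient that is precisely the Weyl character $\chi(\mu+\nu)$ for the (possibly non-dominant) element $\mu+\nu \in X$.

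I do not anticipate any real obstacle: the proof is entirely parallel to that of Proposition \ref{product_characters}, and the sole input beyond Weyl's character formula is the $W$-invariance of weight multiplicities from Lemma \ref{orbit}, which is exactly what is needed for the change of variables to succeed. The only thing to be cautious about is that the right-hand side involves Weyl characters indexed by arbitrary weights in $X$, not only dominant ones; this is no issue because the Weyl character $\chi(\mu+\nu)$ has been defined for every element of $X$ right after Corollary \ref{Weyl_degree}.
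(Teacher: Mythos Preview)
Your argument is correct, but it takes a different route from the paper's proof. The paper does not rerun the Weyl character formula computation; instead it invokes Lemma \ref{basis_character} to write $\ch L(\lambda)=\sum_i a_i\chi(\lambda_i)$ as a $\Z$-linear combination of Weyl characters, applies Proposition \ref{product_characters} to each $\chi(\lambda_i)\chi(\mu)$ as a black box, and then recombines using $m_{L(\lambda)}(\nu)=\sum_i a_i m_{\Delta(\lambda_i)}(\nu)$. Your approach, by contrast, redoes the underlying computation: you expand $\ch L(\lambda)$ in the $e^\nu$, multiply through the Weyl character formula, and use the $W$-invariance of $m_{L(\lambda)}(\nu)$ from Lemma \ref{orbit} to push the $w$ through the substitution $\nu\mapsto w\nu$. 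Both are short and valid; your version is slightly more self-contained (it does not rely on Lemma \ref{basis_character}), while the paper's version is a pure linearity reduction to the already-stated Proposition \ref{product_characters}.
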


\begin{proof}
	Since $\ch L(\lambda)\in \Z[X]^W$, there exist $\lambda_1,\ldots,\lambda_n\in X^+$ and $a_1,\ldots,a_n\in \Z$ such that $\ch L(\lambda)=\sum\limits_{i=1}^na_i\chi(\lambda_i)$ (Lemma \ref{basis_character}). For $\nu\in X$, we have $m_{L(\lambda)}(\nu)=\sum\limits_{i=1}^na_im_{\Delta(\lambda_i)}(\nu)$. Using Proposition \ref{product_characters} in the second equality below, we get
	\begin{align*}
		\ch L(\lambda)\cdot\chi(\mu)&=\left(\sum_{i=1}^na_i\chi(\lambda_i)\right)\chi(\mu)=\sum_{i=1}^na_i\sum_{\nu\in X}m_{\Delta(\lambda_i)}(\nu)\cdot\chi(\mu+\nu)\\
		&=\sum_{\nu\in X}\left(\sum_{i=1}^na_im_{\Delta(\lambda_i)}(\nu)\right)  \cdot\chi(\mu+\nu)=\sum_{\nu\in X}m_{L(\lambda)}(\nu)\cdot\chi(\mu+\nu).\qedhere
	\end{align*}
\end{proof}

\subsubsection{Jantzen $p$-sum formula}

As previously claimed, the Weyl modules are not always irreducible in positive characteristic. Thus, it will be useful to compute their composition factors. An important tool for this computation is the so-called Jantzen $p$-sum formula. We will use it to compute the composition factors of Weyl modules with $p$-restricted highest weight.\\

Let $m\in \N^*$ be a positive integer. Recall that $p$ is a fixed prime. Let $a,b\in \N$ be such that $p\nmid b$ and $m=p^ab$. The \emph{$p$-adic valuation} of $n$ is $\nu_p(m):=a$.

\begin{prop}[Jantzen $p$-sum formula, {\cite[II 8.19]{Jantzen}}]\label{Jantzen_p_sum}
	Let $\lambda\in X^+$. There exists a filtration
	$$\Delta(\lambda)\supseteq \Delta(\lambda)^1\supseteq\Delta(\lambda)^2\supseteq\ldots$$
	such that 
	$$\sum_{i>0}\ch \Delta(\lambda)^i=\sum_{\alpha\in \Phi^+}\; \sum_{0<mp<(\lambda+\rho,\alpha^\vee)}\nu_p(mp)\chi(s_{\alpha,mp}\sbullet\lambda)$$
	and 
	$$\Delta(\lambda)/\Delta(\lambda)^1\cong L(\lambda).$$
\end{prop}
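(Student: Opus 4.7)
The approach is to construct the filtration from a contravariant bilinear form on an integral lift of $\Delta(\lambda)$, and to identify the character sum via Shapovalov's determinant formula.

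First, I would work over a discrete valuation ring $\mathcal{O}$ with residue field $k$ and fraction field $K$ of characteristic $0$, and consider the integral Weyl module $\Delta(\lambda)_{\mathcal{O}}:=\Ug_{\Z}^{-}v^+\otimes_{\Z}\mathcal{O}$, a free $\mathcal{O}$-module whose generic fibre $\Delta(\lambda)_K$ is the irreducible $G_K$-module $L_K(\lambda)$ (by semisimplicity in characteristic $0$) and whose reduction mod $p$ recovers $\Delta(\lambda)$. Using the antiautomorphism $\tau$ from Proposition \ref{antiautomorphism}, I would equip $\Delta(\lambda)_{\mathcal{O}}$ with the unique-up-to-scalar contravariant bilinear form $\langle\cdot,\cdot\rangle$, normalised by $\langle v^+,v^+\rangle=1$; it is nondegenerate over $K$ by irreducibility of the generic fibre and decomposes the weight spaces orthogonally.

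Next I would define
$$\Delta(\lambda)^i := \bigl(\{v\in\Delta(\lambda)_{\mathcal{O}}:\langle v,\Delta(\lambda)_{\mathcal{O}}\rangle\subseteq p^i\mathcal{O}\} + p\Delta(\lambda)_{\mathcal{O}}\bigr)\big/p\Delta(\lambda)_{\mathcal{O}},$$
a decreasing sequence of $G$-submodules of $\Delta(\lambda)$. A standard argument (the quotient $\Delta(\lambda)/\Delta(\lambda)^1$ carries an induced nondegenerate contravariant form and contains the image of $v^+$) identifies $\Delta(\lambda)^1$ with the radical of $\Delta(\lambda)$, so $\Delta(\lambda)/\Delta(\lambda)^1\cong L(\lambda)$ by Proposition \ref{radical}. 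Elementary divisor theory over $\mathcal{O}$ applied to the Gram matrix $G_\mu$ of $\langle\cdot,\cdot\rangle$ on each weight space then yields $\sum_{i>0}\dim_k\Delta(\lambda)^i_\mu=\nu_p(\det G_\mu)$, and summing over $\mu$ gives
$$\sum_{i>0}\ch\Delta(\lambda)^i \;=\; \sum_{\mu}\nu_p(\det G_\mu)\,e^\mu.$$

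The principal obstacle, and the deepest ingredient, will be the explicit computation of $\det G_\mu$ via Shapovalov's determinant formula (proved originally for Verma modules and transferred to the Weyl module quotient). It yields, up to a unit of $\mathcal{O}$,
$$\det G_\mu \;=\; \prod_{\alpha\in\Phi^+}\prod_{m\geq 1}\bigl((\lambda+\rho,\alpha^\vee)-m\bigr)^{m_{\Delta(\lambda)}(\mu+m\alpha)}.$$
Taking $\nu_p$ kills all terms with $p\nmid m$ and retains $\nu_p(m'p)$ for each $m=m'p$. The final step is a combinatorial reassembly: using the identity $s_{\alpha,m'p}\sbullet\lambda=\lambda-((\lambda+\rho,\alpha^\vee)-m'p)\alpha$, Weyl's character formula applied to the partial sums $\sum_\mu m_{\Delta(\lambda)}(\mu+m'p\alpha)e^\mu$, and Lemma \ref{action_characters} to discard contributions whose dominant representative lies on a reflection wall, one identifies the result term by term with $\sum_{\alpha\in\Phi^+}\sum_{0<m'p<(\lambda+\rho,\alpha^\vee)}\nu_p(m'p)\chi(s_{\alpha,m'p}\sbullet\lambda)$, the restriction on the range $0<m'p<(\lambda+\rho,\alpha^\vee)$ arising naturally from the interplay between the factor $(\lambda+\rho,\alpha^\vee)-m$ and the vanishing of the Weyl character on reflection hyperplanes.
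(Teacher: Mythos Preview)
The paper does not prove this proposition at all; it is stated with a citation to \cite[II 8.19]{Jantzen} and used as a black box throughout. Your sketch is a faithful outline of the standard proof found in Jantzen's book (integral form, contravariant form, Jantzen filtration via elementary divisors of the Gram matrix, and the determinant computation), so there is nothing in the paper to compare against.

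One small caveat on your outline: the identification $\Delta(\lambda)/\Delta(\lambda)^1\cong L(\lambda)$ does not quite follow from ``$\Delta(\lambda)^1$ equals the radical'' in the sense of Proposition~\ref{radical}. What the contravariant-form argument gives directly is that $\Delta(\lambda)^1$ is the radical of the reduced form, hence that the quotient carries a nondegenerate contravariant form and is therefore isomorphic to its contravariant dual; combined with the fact that it is a highest-weight quotient of $\Delta(\lambda)$ with highest weight $\lambda$ of multiplicity one, this forces it to be $L(\lambda)$. Also, the determinant formula you quote is not literally Shapovalov's Verma-module formula but its consequence for the finite-dimensional quotient $\Delta(\lambda)$; making that passage precise (and getting the exponents $m_{\Delta(\lambda)}(\mu+m\alpha)$ rather than partition-counting exponents) is exactly where the work lies in Jantzen's argument.
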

%A proof of this Proposition is given in \cite[II 8.19]{Jantzen}

\begin{notation}
	We set
	$$\JSF(\lambda):=\sum_{\alpha\in \Phi^+}\; \sum_{0<mp<(\lambda+\rho,\alpha^\vee)}\nu_p(mp)\chi(s_{\alpha,mp}\sbullet\lambda).$$
\end{notation}

\begin{rem}
	Observe that, for $\lambda\in X^+$ a $p$-restricted weight, we have
	$$(\lambda+\rho,\alpha^\vee)\leq p\sum_{i=1}^n(\omega_i,\alpha^\vee)=p(\rho,\alpha^\vee)\leq p(\rho,\alpha_h^\vee)=p(h-1).$$
	Therefore, if $p\geq h-1$, all the $p$-adic valuations in $\JSF(\lambda)$ are equal to $1$.
\end{rem}

\begin{rem}\label{Jantzen_2_comp}
	Let $\lambda\in X^+$. If there exists $\mu\in X^+$ such that $\JSF(\lambda)=\ch L(\mu)$, then $\Delta(\lambda)$ admits two composition factors, $L(\lambda)$ and $L(\mu)$. Since $\Delta(\lambda)/\rad\Delta(\lambda)\cong L(\lambda)$, it follows that $\Delta(\lambda)$ admits a unique composition series, given by $[L(\lambda),L(\mu)]$.
\end{rem}

\subsection{Linkage principle}
Another useful tool to compute the composition factors of a Weyl module is the Strong Linkage Principle. It allows us to show that Weyl modules with highest weight in the fundamental alcove are irreducible.

\begin{prop}[The Strong Linkage Principle, {\cite[II 6.13]{Jantzen}}]\label{strong_linkage}
	Let $\lambda,\mu\in X^+$ be dominant weights. If $$[\Delta(\lambda):L(\mu)]>0,$$ 
	then $\mu\uparrow\lambda$.
\end{prop}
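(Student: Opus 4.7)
The plan is to argue by induction on $\lambda$ with respect to the partial order $\leq$ restricted to the finite set $\{\nu \in X^+ : \nu \leq \lambda\}$. If $\mu = \lambda$, the claim $\mu \uparrow \lambda$ is immediate from the definition. For the base case $\lambda \in \widehat{C_1}$, Remark \ref{C1_highest_short_root} gives $(\lambda+\rho, \alpha^\vee) \leq p$ for every $\alpha \in \Phi^+$, so no pair $(\alpha, m)$ satisfies $0 < mp < (\lambda+\rho, \alpha^\vee)$. Hence $\JSF(\lambda) = 0$, and Proposition \ref{Jantzen_p_sum} forces $\Delta(\lambda) \cong L(\lambda)$; the only possibility is $\mu = \lambda$.

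For the inductive step I would assume $\mu < \lambda$. Since $\Delta(\lambda)/\Delta(\lambda)^1 \cong L(\lambda)$ and $\mu \neq \lambda$, the composition factor $L(\mu)$ must occur inside $\Delta(\lambda)^1$, so the coefficient of $\ch L(\mu)$ in $\JSF(\lambda) = \sum_{i \geq 1}\ch\Delta(\lambda)^i$, once $\JSF(\lambda)$ is expressed in the basis $\{\ch L(\nu) : \nu \in X^+\}$ of Lemma \ref{basis_character}, is strictly positive. I then rewrite $\JSF(\lambda)$ in terms of dominant Weyl characters. For each contributing pair $(\alpha, m)$ the weight $\nu := s_{\alpha, mp}\sbullet\lambda = \lambda + (mp - (\lambda+\rho, \alpha^\vee))\alpha$ satisfies $\nu < \lambda$ strictly. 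By Lemma \ref{action_characters}, either $\nu + \rho$ lies on a reflection hyperplane and $\chi(\nu) = 0$, or there is a unique $\lambda' \in X^+$ and $w \in W$ with $w \sbullet \nu = \lambda'$ and $\chi(\nu) = \det(w)\chi(\lambda')$. Collecting yields $\JSF(\lambda) = \sum_{\lambda' \in X^+} c_{\lambda'}\chi(\lambda')$ with $c_{\lambda'} \in \Z$.

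The main obstacle is to verify that every dominant $\lambda'$ appearing satisfies $\lambda' < \lambda$ and $\lambda' \uparrow \lambda$. That $\lambda' \neq \lambda$ follows from the direct computation $(\nu+\rho, \nu+\rho) = (\lambda+\rho, \lambda+\rho) + mp(\alpha, \alpha)(mp - (\lambda+\rho, \alpha^\vee)) < (\lambda+\rho, \lambda+\rho)$, which places $\nu+\rho$ in a different $W$-orbit from $\lambda+\rho$. The remaining properties $\lambda' \leq \lambda$ and $\lambda' \uparrow \lambda$ require a delicate combinatorial analysis of the element $w$ carrying $\nu+\rho$ into the dominant chamber: factoring $w^{-1}$ into simple reflections and composing with $s_{\alpha, mp}$ produces a sequence of affine reflections whose successive dot actions on $\lambda'$ form an increasing chain terminating at $\lambda$. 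This is the technical heart of the argument and gives both $\lambda' \uparrow \lambda$ and (as a consequence) $\lambda' \leq \lambda$, hence $\lambda' < \lambda$ by the norm inequality above.

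Once these properties are in hand, the conclusion is direct. From $\JSF(\lambda) = \sum_{\lambda' < \lambda} c_{\lambda'} \ch\Delta(\lambda')$, extracting the coefficient of $\ch L(\mu)$ and using the strict positivity established in the second paragraph forces some $\lambda' < \lambda$ in the sum with $[\Delta(\lambda') : L(\mu)] > 0$. The inductive hypothesis then gives $\mu \uparrow \lambda'$, and combining with $\lambda' \uparrow \lambda$ via the transitivity of $\uparrow$ (the partial-order remark following the definition) yields $\mu \uparrow \lambda$, completing the induction.
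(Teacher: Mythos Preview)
The paper does not give its own proof of this statement; it simply quotes the result from Jantzen \cite[II 6.13]{Jantzen}. Your approach---deducing the Strong Linkage Principle from the Jantzen $p$-sum formula (Proposition~\ref{Jantzen_p_sum})---is a legitimate alternative route (essentially Andersen's argument), and there is no circularity since Jantzen's proof of the sum formula does not rely on strong linkage.

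That said, the step you flag as ``the technical heart'' is genuinely where the difficulty lies, and your sketch does not quite discharge it. You assert that writing $w^{-1}$ as a product of simple reflections and composing with $s_{\alpha,mp}$ yields a sequence of affine reflections whose successive dot actions on $\lambda'$ form an \emph{increasing} chain up to $\lambda$. But a factorisation of an element of $W_p$ into reflections does not automatically produce an $\uparrow$-chain: the intermediate weights need not be comparable in the $\leq$ order, and the chain need not be monotone. What is actually required is a lemma of the following shape: if $\lambda\in X^+$, $\nu=s_{\alpha,mp}\sbullet\lambda<\lambda$, and $\lambda'\in X^+$ is the dominant $W$-conjugate of $\nu$ under the dot action, then $\lambda'\uparrow\lambda$. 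This is proved in Jantzen (see the material around \cite[II 6.4--6.10]{Jantzen}) by a careful inductive argument on the length of $w$, reducing at each step to a single simple reflection and checking that the new weight remains $\leq\lambda$ and linked. Your norm computation correctly shows $\lambda'\neq\lambda$, but $\lambda'\leq\lambda$ does not follow merely from $\nu<\lambda$ and $\lambda'\in W\sbullet\nu$: the dominant representative of a $W$-orbit is the \emph{largest} element of that orbit, so a priori $\lambda'\geq\nu$, which is the wrong direction. Once this lemma is properly established, the rest of your inductive argument is correct.

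A minor remark: framing the base case as ``$\lambda\in\widehat{C_1}$'' is slightly misaligned with induction on $\leq$; the true base case is $\mu=\lambda$, which you already handle.
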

%A proof of this Proposition is given in \cite[II 6.13]{Jantzen}.

\begin{prop}[{\cite[II 6.24]{Jantzen}}]\label{link_reverse}
	Let $\lambda \in X^+$ be a dominant weight. Suppose that $\mu\in X$ is maximal in the set $\{\nu\in X|\;\nu\uparrow\lambda,\; \nu\neq \lambda\}$ with respect to the ordering $\uparrow$.
	%in $\{\nu\in X|\; \nu\uparrow\lambda, \; \nu\neq \lambda\}$. 
	If $\mu\in X^+$ and $\mu\notin \{\lambda-p\alpha|\; \alpha\in \Phi^+\}$, then
	$$[\Delta(\lambda):L(\mu)]=1.$$
\end{prop}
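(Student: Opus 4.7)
The strategy is to pinch $[\Delta(\lambda):L(\mu)]$ between $1$ and $1$ by combining the Jantzen $p$-sum formula (Proposition~\ref{Jantzen_p_sum}) with the Strong Linkage Principle (Proposition~\ref{strong_linkage}).

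From Jantzen's filtration $\Delta(\lambda)\supseteq\Delta(\lambda)^1\supseteq\cdots$ and $\Delta(\lambda)/\Delta(\lambda)^1\cong L(\lambda)$, together with $\mu\neq\lambda$, one has $[\Delta(\lambda):L(\mu)]=[\Delta(\lambda)^1:L(\mu)]$. Writing $\JSF(\lambda)=\sum_\nu d_\nu\,\ch L(\nu)$, one gets $d_\mu=\sum_{i\geq 1}[\Delta(\lambda)^i:L(\mu)]$. The inclusions $\Delta(\lambda)^j\subseteq\Delta(\lambda)^1$ yield $[\Delta(\lambda)^1:L(\mu)]\leq d_\mu$, and any nonzero summand in $d_\mu$ forces $[\Delta(\lambda)^1:L(\mu)]\geq 1$, since composition factors of a submodule appear in any composition series of the ambient module. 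Hence it suffices to prove $d_\mu=1$.

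Using Lemma~\ref{basis_character}, I would also expand $\JSF(\lambda)=\sum_\nu c_\nu\,\chi(\nu)$ in the Weyl character basis. The unitriangular transition $\chi(\nu)=\ch L(\nu)+\sum_{\nu'<\nu}[\Delta(\nu):L(\nu')]\ch L(\nu')$ (Proposition~\ref{radical}) then gives
\begin{equation*}
d_\mu=c_\mu+\sum_{\nu>\mu}c_\nu\,[\Delta(\nu):L(\mu)].
\end{equation*}
Any $\nu$ with $c_\nu\neq 0$ lies in $W_p\sbullet\lambda\cap X^+$, hence satisfies $\nu\uparrow\lambda$ and $\nu\neq\lambda$; any $\nu$ with $[\Delta(\nu):L(\mu)]\neq 0$ satisfies $\mu\uparrow\nu$ by Strong Linkage. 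A nonzero correction term therefore demands $\mu\uparrow\nu\uparrow\lambda$ with $\nu\neq\lambda$ and $\nu>\mu$, contradicting the maximality of $\mu$. Consequently $d_\mu=c_\mu$.

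It remains to compute $c_\mu$. Maximality implies that $\mu$ is obtained from $\lambda$ by a single affine reflection: if $\mu=\mu_0<\mu_1<\cdots<\mu_t=\lambda$ is a minimal chain exhibiting $\mu\uparrow\lambda$, then $\mu_1\uparrow\lambda$ and $\mu\uparrow\mu_1$ with $\mu_1\neq\lambda$ force $\mu=\mu_1$ by maximality, so $t=1$ and $\mu=s_{\beta,np}\sbullet\lambda=\lambda-k\beta$ for a unique $\beta\in\Phi^+$, positive integer $n$, and $k=(\lambda+\rho,\beta^\vee)-np>0$ (uniqueness of $\beta$ uses that $\lambda-\mu$ is a positive multiple of a unique positive root in a reduced root system). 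Reducing every summand of
\begin{equation*}
\JSF(\lambda)=\sum_{\alpha\in\Phi^+}\sum_{0<mp<(\lambda+\rho,\alpha^\vee)}\nu_p(mp)\,\chi(s_{\alpha,mp}\sbullet\lambda)
\end{equation*}
to a Weyl character of a dominant weight via Lemma~\ref{action_characters}, the pair $(\alpha,m)=(\beta,n)$ contributes $\nu_p(np)\,\chi(\mu)$ directly. The main obstacle, and the heart of the argument, is to show that no other pair contributes to $c_\mu$: such an extra pair would satisfy $s_{\alpha,mp}\sbullet\lambda=w\sbullet\mu$ with $w\neq\id$, producing via $w^{-1}$ a weight strictly between $\mu$ and $\lambda$ in the $\uparrow$-order and thereby violating maximality, except in the boundary configuration $\lambda-\mu=p\alpha$ for some $\alpha\in\Phi^+$ (two reflection hyperplanes coinciding through $\lambda+\rho$). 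This case is precisely what the hypothesis $\mu\notin\{\lambda-p\alpha:\alpha\in\Phi^+\}$ excludes; the same analysis also forces $\nu_p(np)=1$, since a higher $p$-adic valuation would arise only from the same kind of coincidence. Therefore $c_\mu=1$, and hence $[\Delta(\lambda):L(\mu)]=1$.
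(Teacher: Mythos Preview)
The paper does not supply a proof of this proposition; it is quoted from \cite[II 6.24]{Jantzen}. Your overall strategy---bounding $[\Delta(\lambda):L(\mu)]$ above and below via the Jantzen sum formula and isolating the coefficient $c_\mu$ using Strong Linkage together with the maximality of $\mu$---is indeed the shape of Jantzen's argument, and your first two paragraphs are correct. The reduction $d_\mu=c_\mu$ is fine: for the sum $\sum_{\nu>\mu}c_\nu[\Delta(\nu):L(\mu)]$ one needs $\mu\uparrow\nu$ by Strong Linkage, and one needs $\nu\uparrow\lambda$, $\nu\neq\lambda$. The latter is not quite ``because $\nu\in W_p\sbullet\lambda$'' as you write; rather, it follows from $d_{\nu'}=0$ for all $\nu'$ not satisfying $\nu'\uparrow\lambda$, $\nu'\neq\lambda$ (these are composition multiplicities in $\Delta(\lambda)^1$) and unitriangularity.

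The genuine gap is in your final paragraph, where all the work lies. Your sentence ``producing via $w^{-1}$ a weight strictly between $\mu$ and $\lambda$ in the $\uparrow$-order'' does not hold up: if $s_{\alpha,mp}\sbullet\lambda=w\sbullet\mu$ with $w\neq\id$, then since $\mu+\rho$ is strictly dominant one has $w\sbullet\mu<\mu$, so $s_{\alpha,mp}\sbullet\lambda$ lies \emph{below} $\mu$, not between $\mu$ and $\lambda$. No weight strictly between $\mu$ and $\lambda$ in the $\uparrow$-order is produced by this observation, and maximality is not violated in the way you claim. Likewise, your assertion that $\nu_p(np)=1$ ``since a higher $p$-adic valuation would arise only from the same kind of coincidence'' is unsubstantiated: nothing you have written rules out $p\mid n$, and the hypothesis $\mu\notin\{\lambda-p\alpha:\alpha\in\Phi^+\}$ only says $(\lambda+\rho,\beta^\vee)-np\neq p$, which does not by itself control $\nu_p(n)$. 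Jantzen's actual argument at this point uses alcove geometry (the maximality of $\mu$ forces $\lambda$ and $\mu$ to lie in adjacent $W_p$-chambers separated by a single wall, and one analyses the possible coincidences of reflecting hyperplanes through $\lambda+\rho$); the excluded case $\lambda-\mu=p\alpha$ is precisely where two affine hyperplanes for different roots can meet at the midpoint, and the careful bookkeeping there is what your sketch omits.
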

%A proof of this Proposition is given in \cite[II 6.24]{Jantzen}.

\begin{lemma}\label{fundamental_alcove}
	For every $\lambda\in X^+\cap \widehat{C_1}$ we have $L(\lambda)\cong \Delta(\lambda)$.
\end{lemma}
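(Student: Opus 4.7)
The plan is to combine the Strong Linkage Principle with the fundamental domain property of $\overline{C_1}$ to show that $\Delta(\lambda)$ has a unique composition factor, namely $L(\lambda)$ itself.

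Let $\mu \in X^+$ with $[\Delta(\lambda) : L(\mu)] > 0$; I want to show $\mu = \lambda$. First, by Proposition \ref{strong_linkage}, $\mu \uparrow \lambda$. Unfolding the definition of $\uparrow$ gives two pieces of information simultaneously: on the one hand, $\mu \leq \lambda$ in the usual partial order on $X$; on the other hand, $\lambda = s_{\beta_t,m_tp}\cdots s_{\beta_1,m_1p} \sbullet \mu$ for some affine reflections in $W_p$, so $\mu$ and $\lambda$ lie in the same $W_p$-orbit under the dot action.

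Next, I apply Lemma \ref{C1} with the weight $\mu$ playing the role of the dominated weight: since $\lambda \in X^+ \cap \widehat{C_1}$ and $\mu \in X^+$ with $\mu \leq \lambda$, I get $\mu \in \widehat{C_1} \subseteq \overline{C_1}$. Now both $\mu$ and $\lambda$ lie in $\overline{C_1}$ and belong to the same $W_p$-orbit under the dot action, so Theorem \ref{W_simply_transitively} forces $\mu = \lambda$ (reading ``$\overline{C_1}$ is a fundamental domain'' in the standard strict sense, that each $W_p$-orbit meets $\overline{C_1}$ in exactly one point).

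To conclude, every composition factor of $\Delta(\lambda)$ is isomorphic to $L(\lambda)$. Since $\Delta(\lambda)$ is generated by a maximal vector of weight $\lambda$, its $\lambda$-weight space is one-dimensional, and each copy of $L(\lambda)$ in a composition series contributes exactly $1$ to $m_{\Delta(\lambda)}(\lambda)$. Hence $[\Delta(\lambda):L(\lambda)] = 1$, so the composition series has length one and $\Delta(\lambda) \cong L(\lambda)$. The only real content is the second paragraph: turning the linkage information into the statement that $\lambda$ and $\mu$ lie in a common $W_p$-orbit inside $\overline{C_1}$, at which point the fundamental domain property finishes the argument.
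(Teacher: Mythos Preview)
Your proof is correct and follows essentially the same approach as the paper: apply the Strong Linkage Principle to any composition factor $L(\mu)$ of $\Delta(\lambda)$, use Lemma~\ref{C1} to place $\mu$ in $\widehat{C_1}\subseteq\overline{C_1}$, and invoke the fundamental domain property (Theorem~\ref{W_simply_transitively}) to force $\mu=\lambda$. Your final paragraph, justifying $[\Delta(\lambda):L(\lambda)]=1$ via the one-dimensionality of the $\lambda$-weight space, spells out a detail the paper leaves implicit.
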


\begin{proof}
	Let $\mu \in X^+$ be such that $L(\mu)$ is a composition factor of $\Delta(\lambda)$. By the Strong Linkage Principle (Proposition \ref{strong_linkage}), we have $\mu \uparrow \lambda$, and in particular, $\mu\leq \lambda$, so $\mu \in \widehat{C_1}$ by \linebreak Lemma \ref{C1}. Moreover, $\mu\in W_p\sbullet \lambda$. Since $\overline{C_1}$ is a fundamental domain for the dot action of $W_p$ (Lemma \ref{W_simply_transitively}), we have $\mu\in \overline{C_1}\cap W_p\sbullet \lambda=\{\lambda\}$. We conclude that $\mu=\lambda$, therefore $\Delta(\lambda)$ is irreducible.
\end{proof}

\begin{lemma}\label{A2_fundamental}
Let $\lambda\in \widehat{C_1}\cap X^+$ and $\nu\in X^+$ such that $\nu \leq \lambda$. Then $L(\nu)\cong\Delta(\nu)$.
\end{lemma}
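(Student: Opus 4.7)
The plan is to combine the two lemmas that immediately precede this statement. By hypothesis $\nu \in X^+$ and $\nu \leq \lambda$ with $\lambda \in \widehat{C_1}\cap X^+$, so I would first invoke Lemma \ref{C1} to conclude that $\nu \in \widehat{C_1}$. This is precisely the situation in which Lemma \ref{fundamental_alcove} applies: any dominant weight lying in $\widehat{C_1}$ has an irreducible Weyl module.

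More explicitly, I would write: since $\nu \in X^+ \cap \widehat{C_1}$ by Lemma \ref{C1}, Lemma \ref{fundamental_alcove} yields $L(\nu) \cong \Delta(\nu)$, which is exactly the conclusion. No further computation is needed, as both inputs are already proved. The proof is a one-liner chaining of the two lemmas, and there is no real obstacle — the substantive work was already done in establishing Lemma \ref{C1} (which uses the inequality $(\alpha_i,\alpha_h^\vee)\geq 0$ together with the alcove description from Remark \ref{C1_highest_short_root}) and Lemma \ref{fundamental_alcove} (which uses the Strong Linkage Principle together with the fact that $\overline{C_1}$ is a fundamental domain for the dot action of $W_p$).
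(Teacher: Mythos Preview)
Your proof is correct and matches the paper's approach exactly: the paper's proof reads ``This is a direct consequence of Lemmas \ref{C1} and \ref{fundamental_alcove}.''
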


\begin{proof}
This is a direct consequence of Lemmas \ref{C1} and \ref{fundamental_alcove}.
\end{proof}

\begin{lemma}[{\cite[II 4.16]{Jantzen}}]\label{Weyl_tensor_Weyl}
	%Let $\lambda,\mu,\nu\in X^+$ be such that $\Delta(\lambda+\mu)$ admits the unique composition series $[L(\lambda+\mu), L(\nu)]$. The tensor product $\Delta(\lambda)\otimes \Delta(\mu)$ admits a submodule isomorphic to $\Delta(\lambda+\mu)$.
	Let $\lambda,\;\mu\in X^+$. The tensor product $\Delta(\lambda)\otimes \Delta(\mu)$ admits a submodule isomorphic to $\Delta(\lambda+\mu)$.
\end{lemma}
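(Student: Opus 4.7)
The plan is to exhibit a surjective $G$-module map from $\Delta(\lambda+\mu)$ onto the submodule generated by an obvious maximal vector in the tensor product, and then to force it to be injective by a round-trip argument through a Weyl filtration. Let $v_\lambda \in \Delta(\lambda)$ and $v_\mu \in \Delta(\mu)$ be maximal vectors for $B$ of weights $\lambda$ and $\mu$. Since $T$ acts diagonally via the product of characters and each positive root subgroup $X_\alpha$ fixes $v_\lambda$ and $v_\mu$ separately (because $U$ is unipotent and acts trivially on any maximal vector), the tensor $v_\lambda \otimes v_\mu$ is a maximal vector for $B$ of weight $\lambda+\mu$ in $\Delta(\lambda) \otimes \Delta(\mu)$. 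Let $N$ be the $G$-submodule generated by $v_\lambda \otimes v_\mu$. By Lemma \ref{Weyl_univ_prop} there is a surjection $\varphi \colon \Delta(\lambda+\mu) \twoheadrightarrow N$, and the task reduces to showing that $\varphi$ is injective.

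The crucial observation is that the weight $\lambda+\mu$ occurs with multiplicity exactly $1$ in $\Delta(\lambda) \otimes \Delta(\mu)$: if $\nu$ and $\eta$ are weights of $\Delta(\lambda)$ and $\Delta(\mu)$ with $\nu+\eta = \lambda+\mu$, then $\nu \leq \lambda$ and $\eta \leq \mu$ force equality in both, and each of the relevant weight spaces is one-dimensional. Now by Theorem \ref{Weyl_filtration} the tensor product admits a Weyl filtration $0 = M_0 \subsetneq M_1 \subsetneq \ldots \subsetneq M_n = \Delta(\lambda) \otimes \Delta(\mu)$ with $M_j/M_{j-1} \cong \Delta(\lambda_j)$. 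Each $\lambda_j$ is a weight of the tensor product, so $\lambda_j \leq \lambda+\mu$; on the other hand, $\lambda+\mu$ must occur as a weight of some $\Delta(\lambda_j)$, forcing $\lambda_j = \lambda+\mu$ for that index. A dimension count in the weight space $\lambda+\mu$, using that $\sum_j \dim \Delta(\lambda_j)_{\lambda+\mu}$ equals $1$, then pins down a unique such index $i$, with $(M_i)_{\lambda+\mu} = k \cdot (v_\lambda \otimes v_\mu)$ and $(M_{i-1})_{\lambda+\mu} = 0$. In particular $v_\lambda \otimes v_\mu \in M_i$, so $N \subseteq M_i$.

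I would finish by examining the composition
$$\Delta(\lambda+\mu) \xrightarrow{\varphi} N \hookrightarrow M_i \twoheadrightarrow M_i/M_{i-1} \cong \Delta(\lambda+\mu).$$
This is a $G$-equivariant endomorphism of $\Delta(\lambda+\mu)$ sending the highest weight vector to the image of $v_\lambda \otimes v_\mu$ in $M_i/M_{i-1}$, which is nonzero by the previous paragraph. Since $\Delta(\lambda+\mu)$ is generated by its highest weight vector, this endomorphism is surjective, hence bijective by finite-dimensionality, and consequently $\varphi$ is injective. The main obstacle conceptually is that one cannot simply arrange $\Delta(\lambda+\mu)$ to sit at the bottom of the Weyl filtration --- the \emph{maximal} weight naturally corresponds to a quotient, not a submodule --- but the round-trip composition bypasses this issue by exploiting only the existence of some index $i$ with $\lambda_i = \lambda+\mu$ together with the multiplicity-one property of the highest weight in the tensor product.
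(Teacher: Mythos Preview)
Your proof is correct. In the compiled paper this lemma is simply quoted from Jantzen with no proof, but the LaTeX source contains a commented-out argument (inside \verb|\iffalse...\fi|) that is worth comparing with yours.

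Both arguments begin identically: one takes the maximal vector $v_\lambda\otimes v_\mu$, invokes the universal property of $\Delta(\lambda+\mu)$ to get a map $\theta$ into the tensor product, and brings in the Weyl filtration of $\Delta(\lambda)\otimes\Delta(\mu)$. The divergence is in how injectivity is established. The commented-out proof argues by contradiction: it asserts a dichotomy ``either $\im\theta\cong L(\lambda+\mu)$ or $\im\theta\cong\Delta(\lambda+\mu)$'', assumes the first case, and then shows that $L(\lambda+\mu)$ would embed into some $\Delta(\nu_j)$ with $\nu_j=\lambda+\mu$, contradicting the fact that $L(\lambda+\mu)$ is the head but not a proper submodule of $\Delta(\lambda+\mu)$. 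That dichotomy, however, is not justified in general --- a Weyl module can have many composition factors and hence many intermediate quotients --- which is presumably why the proof was suppressed. Your round-trip argument through the filtration quotient $M_i/M_{i-1}\cong\Delta(\lambda+\mu)$ sidesteps this entirely: you never need to analyse the possible shapes of $\im\theta$, only to observe that the composite endomorphism of $\Delta(\lambda+\mu)$ is nonzero on the generating highest weight vector and therefore bijective. This is both cleaner and fully rigorous.
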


\subsection{An argument to count multiplicities}\label{argument}
%In this subsection, we describe an argument that we will use several times later to show that the tensor product of two simple modules has multiplicity. The first one is based on counting multiplicities of certain weights.
In this subsection, we provide an argument to compute the multiplicities of each composition factor of the tensor product of two simple modules. We will use it several times later to show that some tensor products of two simple modules have multiplicity.

\begin{argument}\label{argument1}
Let $\lambda,\mu\in X^+$ be dominant weights, and let $M=L(\lambda)\otimes L(\mu)$. A vector $v\otimes w\in M$ is a weight vector of weight $\nu$ if and only if $v$ is a weight vector in $L(\lambda)$ of weight $\nu_1$, $w$ is a weight vector in $L(\mu)$ of weight $\nu_2$ and $\nu_1+\nu_2=\nu$. Therefore, 
$$m_M(\nu)=\sum_{\substack{\nu_1,\nu_2\in X\\ \nu_1+\nu_2=\nu}}m_{L(\lambda)}(\nu_1)m_{L(\mu)}(\nu_2).$$

Suppose that $\nu_1, \ldots, \nu_s\in X^+$ is the complete list of the dominant weights corresponding to all composition factors of $M$ (with multiplicity). For every weight $\eta\in X$, we have $$m_M(\eta)=\sum_{i=1}^sm_{L(\nu_i)}(\eta).$$

We compute the $\nu_i$'s as follows. We set $\nu_1=\lambda+\mu$.
Suppose that we have already $\nu_1,\ldots,\nu_t$ for $t<s$. Let $\eta\in X^+$ be such that 
$$m_M(\eta)>\sum_{i=1}^tm_{L(\nu_i)}(\eta)\quad \text{ and }\quad m_M(\nu)=\sum_{i=1}^tm_{L(\nu_i)}(\nu)\quad \text{for every dominant weight } \nu> \eta.$$
It follows that $m_{L(\nu_i)}(\nu)=0$ for every $i>t$ and every $\nu>\eta$. In particular, $\nu_i\ngtr \eta$ for every $i>t$. Moreover, there exists $i>t$ such that $m_{L(\nu_i)}(\eta)>0$, thus $\nu_i\geq \eta$. Therefore, we deduce that $\nu_i=\eta$ for some $i>t$, and we can choose $\nu_{t+1}=\eta$.
\end{argument}

\cleardoublepage
\section{Properties of tensor products}
In this section, we establish some properties of tensor products. We state Steinberg's tensor product theorem (see \cite[II 3.16]{Jantzen} for a proof), which allows us to restrict our attention to tensor products of irreducible modules with $p$-restricted highest weight in order to answer our question.

\subsection{Steinberg's tensor product theorem}

\begin{thm}[{\cite[Theorem 9.4.3]{Springer_LAG}}]\label{Frob_endo}
	The Frobenius endomorphism $k\to k:c\mapsto c^p$ induces a group endomorphism $F:G\to G$ given by $x_\alpha(c)\mapsto x_\alpha(c^p)$ for all $\alpha\in \Phi,\; c\in k$ and $F(t)=t^p$ for $t\in T$.
\end{thm}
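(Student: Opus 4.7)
The plan is to exhibit $F$ as the restriction to $G$ of the entrywise Frobenius on $\GL(V_k)$. The two key inputs are: (i) the Chevalley lattice construction from Section \ref{Chevalley}, which provides a basis of $V_k$ in which every element of $\Ug_{\Z}$ acts by a matrix with entries in the image of $\Z \to k$; and (ii) the characteristic-$p$ identity $(a+b)^p = a^p + b^p$.

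First I would fix an ordered $\Z$-basis of the lattice $M \subseteq V$ and use it to identify $\GL(V_k)$ with a matrix group. The entries of $x_\alpha(c) = \sum_{i \geq 0} c^i \cdot e_\alpha^i/i!$ (a finite sum, by nilpotency of $e_\alpha$) are then polynomials in $c$ with coefficients in the prime field $\F_p \subseteq k$. I would then define $\phi \colon \GL(V_k) \to \GL(V_k)$ by raising every matrix entry to the $p$-th power; the Freshman's Dream gives $\left(\sum_k a_{ik} b_{kj}\right)^p = \sum_k a_{ik}^p b_{kj}^p$, so $\phi(AB) = \phi(A)\phi(B)$ and $\phi$ is a group endomorphism of $\GL(V_k)$. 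For a typical entry $P(c) = \sum a_i c^i$ of $x_\alpha(c)$ with $a_i \in \F_p$, one computes $P(c)^p = \sum a_i^p c^{ip} = \sum a_i c^{ip} = P(c^p)$, which is the corresponding entry of $x_\alpha(c^p)$. Hence $\phi(x_\alpha(c)) = x_\alpha(c^p)$, and since $G$ is generated by the root elements, $\phi$ restricts to the desired endomorphism $F \colon G \to G$.

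For the second assertion $F(t) = t^p$ on $t \in T$, I would use that $M$ can be chosen to admit a weight basis (since the weight decomposition of $V$ restricts to an integral decomposition of $M$). Each $t \in T$ is then represented in this basis by a diagonal matrix with entries $\lambda_i(t) \in k^*$; hence $\phi(t)$ is diagonal with entries $\lambda_i(t)^p$, which coincides with the matrix of $t^p$. The main mild obstacle is really just articulating the entrywise Frobenius cleanly in a basis that simultaneously diagonalises $T$ and makes the root elements polynomial in $c$ with prime-field coefficients; once the Chevalley lattice construction of Section \ref{Chevalley} is invoked to produce such a basis, both formulas drop out by direct calculation.
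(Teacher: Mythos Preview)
The paper does not prove this theorem; it simply cites it from Springer's book. Your argument is correct and is the standard one: the entrywise Frobenius $\phi$ on $\GL(V_k)$ is a group endomorphism by the Freshman's Dream, the Chevalley lattice guarantees that the matrix of each $e_\alpha^i/i!$ has entries in the prime field so that $\phi(x_\alpha(c)) = x_\alpha(c^p)$, and the weight decomposition of $M$ makes $T$ act diagonally so that $\phi(t) = t^p$.
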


%A proof of this Theorem is given in \cite[9.4.3]{Springer_LAG}.

\begin{notation}
	Let $\phi:G\to \GL(M)$ be a representation. We denote by $M^{(p^i)}$ the vector space $M$ with $G$-action corresponding to the representation $\phi\circ F^i:G\to \GL(M)$ where $F$ is the group endomorphism described in Theorem \ref{Frob_endo}.
\end{notation}

\begin{prop}[{\cite[Proposition 16.6]{Testerman_Gunter}}]
	For every $\lambda\in X^+$, we have an isomorphism of $G$-modules $L(p\lambda)\cong L(\lambda)^{(p)}$.
\end{prop}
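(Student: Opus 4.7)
The plan is to show that $L(\lambda)^{(p)}$ is an irreducible $G$-module whose highest weight is $p\lambda$ (with multiplicity one), and then invoke the classification of irreducible $G$-modules by their highest weight to conclude $L(\lambda)^{(p)} \cong L(p\lambda)$. There are two things to verify: the weight computation (which identifies the highest weight of the Frobenius twist) and the preservation of irreducibility under the twist. Neither is hard; the subtlety lies mainly in handling the action of $T$ correctly.

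First I would compute the weights of $L(\lambda)^{(p)}$. If $v \in L(\lambda)_\mu$, then by definition the twisted action of $t \in T$ sends $v$ to $F(t) v = t^p v = \mu(t^p) v = \mu(t)^p v = (p\mu)(t) v$, using that $\mu$ is a rational character of $T$ and the description of $F|_T$ from Theorem \ref{Frob_endo}. Hence the weights of $L(\lambda)^{(p)}$ are exactly $\{p\mu : \mu \text{ a weight of } L(\lambda)\}$, with the same multiplicities as in $L(\lambda)$. In particular, $p\lambda$ is the highest weight of $L(\lambda)^{(p)}$ and satisfies $m_{L(\lambda)^{(p)}}(p\lambda) = m_{L(\lambda)}(\lambda) = 1$.

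Next I would establish irreducibility. A subspace $W \subseteq L(\lambda)$ is a $G$-submodule of $L(\lambda)^{(p)}$ if and only if $F(g) W \subseteq W$ for every $g \in G$, i.e.\ if and only if $W$ is stable under the image $F(G)$. Because $k$ is algebraically closed of characteristic $p$, the map $c \mapsto c^p$ is bijective on $k$, so $t \mapsto t^p$ is surjective on $T$ and $x_\alpha(c) \mapsto x_\alpha(c^p)$ is surjective on each root subgroup $X_\alpha$. Since $G$ is generated by the $X_\alpha$ (and, say, $T$), it follows that $F(G) = G$. Therefore the submodules of $L(\lambda)^{(p)}$ are exactly the submodules of $L(\lambda)$, and irreducibility of $L(\lambda)$ transfers directly.

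Combining both points, $L(\lambda)^{(p)}$ is an irreducible $G$-module of highest weight $p\lambda$, so by the classification theorem we have $L(\lambda)^{(p)} \cong L(p\lambda)$. The only place one has to be slightly careful is the surjectivity of $F$ on $G$-points, which relies in an essential way on $k$ being algebraically closed; this is the sole nontrivial ingredient in the argument.
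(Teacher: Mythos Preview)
Your argument is correct and is the standard one. Note, however, that the paper does not give its own proof of this proposition: it is stated with a citation to \cite[Proposition 16.6]{Testerman_Gunter} and used as a black box. So there is nothing in the paper to compare your proof against beyond observing that your approach matches the usual textbook argument (compute the weights of the twist, use surjectivity of $F$ on $k$-points to transfer irreducibility, then invoke the highest-weight classification).
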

%A proof of this Proposition is given in \cite[16.6]{Testerman_Gunter}.

\begin{thm}[Steinberg's tensor product theorem]\label{Steinberg}
	Let $\lambda=\lambda_0+p\lambda_1+\ldots+p^n\lambda_n \in X^+$ be a dominant weight such that $\lambda_i$ is $p$-restricted for all $i\in\{0,\ldots,n\}$. We have an isomorphism
	$$L(\lambda)\cong L(\lambda_0)\otimes L(\lambda_1)^{(p)}\otimes\cdots \otimes L(\lambda_n)^{(p^n)}.$$
\end{thm}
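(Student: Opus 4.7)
The plan is to proceed by induction on $n$. The case $n=0$ is vacuous, and for the inductive step I set $\nu := \lambda_1 + p\lambda_2 + \cdots + p^{n-1}\lambda_n$, so that $\lambda = \lambda_0 + p\nu$. The inductive hypothesis applied to $\nu$ together with the monoidal nature of the Frobenius twist gives
$$L(\nu)^{(p)} \cong L(\lambda_1)^{(p)} \otimes L(\lambda_2)^{(p^2)} \otimes \cdots \otimes L(\lambda_n)^{(p^n)},$$
so using the proposition $L(p\nu) \cong L(\nu)^{(p)}$ cited just before the statement, the theorem reduces to the single key case: for $\lambda_0 \in X^+$ $p$-restricted and $\nu \in X^+$ arbitrary,
$$L(\lambda_0 + p\nu) \cong L(\lambda_0) \otimes L(\nu)^{(p)}.$$

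For this case I would first analyze the weights of $V := L(\lambda_0) \otimes L(\nu)^{(p)}$. Since $L(\nu)^{(p)}$ has as weights exactly $p$ times those of $L(\nu)$ (the torus action factors through the $p$-th power map via $F$), every weight of $V$ is a sum $\eta + p\xi$ with $\eta$ a weight of $L(\lambda_0)$ and $\xi$ a weight of $L(\nu)$. Writing $\eta = \lambda_0 - \sum c_i \alpha_i$ and $\xi = \nu - \sum c'_i \alpha_i$ with $c_i, c'_i \in \N$, the equation $\eta + p\xi = \lambda_0 + p\nu$ forces $c_i + p c'_i = 0$ and hence $\eta = \lambda_0$, $\xi = \nu$; thus $\lambda_0 + p\nu$ is strictly maximal among the weights of $V$ with $m_V(\lambda_0 + p\nu) = 1$. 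In particular, $v_{\lambda_0} \otimes w_\nu$ is a maximal vector for $B$ of weight $\lambda_0 + p\nu$, and Lemma \ref{Weyl_univ_prop} together with Proposition \ref{radical} shows that the $G$-submodule it generates admits $L(\lambda_0 + p\nu)$ as a composition factor.

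The crux, and main obstacle, is to show that $V$ is itself irreducible. My plan is to restrict the action of $G$ to its first Frobenius kernel $G_1 = \ker F$, on which $L(\nu)^{(p)}$ acts trivially by definition of the Frobenius twist. For $\lambda_0$ $p$-restricted, a foundational result on Frobenius kernels states that $L(\lambda_0)|_{G_1}$ is an irreducible $G_1$-module with $\End_{G_1}(L(\lambda_0)) = k$; consequently $V|_{G_1}$ is $L(\lambda_0)|_{G_1}$-isotypic with multiplicity space
$$\Hom_{G_1}(L(\lambda_0), L(\lambda_0) \otimes L(\nu)^{(p)}) \;\cong\; \Hom_{G_1}(L(\lambda_0), L(\lambda_0)) \otimes L(\nu)^{(p)} \;\cong\; L(\nu)^{(p)}$$
as $G$-modules (the $G_1$-action on the Hom space is trivial, so the $G$-action factors through $G/G_1$ and sees $L(\nu)^{(p)}$ as $L(\nu)$ pulled back via $F$). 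A Clifford-theoretic argument then puts $G$-submodules of $V$ in bijection with $G$-submodules of $L(\nu)^{(p)}$; since $L(\nu)^{(p)}$ is simple, so is $V$, and combining this with the previous paragraph yields $V \cong L(\lambda_0 + p\nu)$. The main difficulty lies precisely in establishing the $G_1$-irreducibility of $L(\lambda_0)|_{G_1}$ for $p$-restricted $\lambda_0$ and the Clifford-type identification, both of which require the theory of Frobenius kernels and distribution algebras not developed in the preliminaries of this thesis, which is why citing \cite[II 3.16]{Jantzen} is the pragmatic choice here.
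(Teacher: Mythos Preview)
The paper does not give its own proof of this theorem: it simply states the result and refers the reader to \cite[II 3.16]{Jantzen}. So there is nothing to compare your attempt against here.

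That said, your sketch follows exactly the standard route taken in Jantzen: reduce by induction to the case $\lambda=\lambda_0+p\nu$ with $\lambda_0$ $p$-restricted, observe that $\lambda_0+p\nu$ is the unique maximal weight of $L(\lambda_0)\otimes L(\nu)^{(p)}$ with multiplicity one, and then use the restriction to the Frobenius kernel $G_1$ together with Curtis' theorem (that $L(\lambda_0)|_{G_1}$ remains simple for $p$-restricted $\lambda_0$) and a Clifford-theoretic multiplicity-space argument to conclude irreducibility. You are also right that the hard input is the $G_1$-simplicity of $L(\lambda_0)$, which requires the machinery of Frobenius kernels and hyperalgebras not set up in this thesis; this is precisely why the paper defers to Jantzen rather than proving it.
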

%A proof of this Theorem is given in \cite[II 3.16]{Jantzen}.\\

\subsection{Reduction to $p$-restricted highest weights}

%The following results are due to Jonathan Gruber \cite[Lemma 4.12 and Theorem 5.1]{article_jonathan}

\begin{lemma}[{\cite[Lemma 4.12]{article_jonathan}}]\label{completely_reducible}
	Let $\lambda,\mu \in X^+$ be dominant weights. If the $G$-module $L(\lambda)\otimes L(\mu)$ is multiplicity-free, then it is completely reducible.
\end{lemma}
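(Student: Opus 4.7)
The plan is to combine two facts that have already been established earlier in the preliminaries: contravariant self-duality is preserved by tensor products, and any contravariantly self-dual multiplicity-free module is completely reducible.

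First I would observe that each factor $L(\lambda)$ and $L(\mu)$ is contravariantly self-dual by Remark \ref{irred_cont_self}, since irreducible $G$-modules are contravariantly self-dual. Then, using Remark \ref{cont_tens_prod}, which states that $(M\otimes N)^\tau \cong M^\tau \otimes N^\tau$, I would deduce that
\[
(L(\lambda)\otimes L(\mu))^\tau \cong L(\lambda)^\tau \otimes L(\mu)^\tau \cong L(\lambda)\otimes L(\mu),
\]
so the tensor product $L(\lambda)\otimes L(\mu)$ is itself contravariantly self-dual.

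Finally, I would invoke Lemma \ref{coselfdual_reducible}: every contravariantly self-dual module that is multiplicity-free is completely reducible. Applied to $L(\lambda)\otimes L(\mu)$, whose multiplicity-freeness is the hypothesis of the lemma, this immediately yields the conclusion that $L(\lambda)\otimes L(\mu)$ is completely reducible.

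Since the entire argument reduces to chaining together Remark \ref{irred_cont_self}, Remark \ref{cont_tens_prod}, and Lemma \ref{coselfdual_reducible}, there is no real obstacle here; the main content of the statement lies in Lemma \ref{coselfdual_reducible}, which has already been imported from \cite{Jonathan_these}. The proof is essentially one line once those references are in place.
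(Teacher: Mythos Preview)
Your proof is correct and follows exactly the same approach as the paper: use Remarks \ref{cont_tens_prod} and \ref{irred_cont_self} to show that $L(\lambda)\otimes L(\mu)$ is contravariantly self-dual, then conclude by Lemma \ref{coselfdual_reducible}.
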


\begin{proof}
	Using Remarks \ref{cont_tens_prod} and \ref{irred_cont_self}, we have
	$$(L(\lambda)\otimes L(\mu))^\tau\cong L(\lambda)^\tau\otimes L(\mu)^\tau \cong L(\lambda)\otimes L(\mu).$$ 
	We conclude by Lemma \ref{coselfdual_reducible}.
\end{proof}
%A proof of this Lemma is given in \cite[4.12]{article_jonathan}.
%{\color{red}éventuellement ajouter preuve}
%preuve ?

\begin{thm}[{\cite[Theorem A]{article_jonathan}}]\label{p_restricted}
	 Let $\lambda,\mu\in X^+$ be $p$-restricted dominant weights. If the $G$-module $L(\lambda)\otimes L(\mu)$ is completely reducible, then all its composition factors are $p$-restricted.
\end{thm}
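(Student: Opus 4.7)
The plan is to argue by contradiction. Suppose $L(\nu)$ is a composition factor of $L(\lambda)\otimes L(\mu)$ with $\nu$ not $p$-restricted. Write $\nu = \nu_0 + p\nu'$, where $\nu_0 \in X^+$ is the unique $p$-restricted weight with $\nu - \nu_0 \in pX$; since the fundamental-weight coefficients of $\nu$ are non-negative integers, $\nu' \in X^+$ is automatically dominant, and the assumption that $\nu$ is not $p$-restricted forces $\nu' \neq 0$.

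By complete reducibility, $L(\nu)$ appears as a $G$-direct summand of $L(\lambda) \otimes L(\mu)$. Steinberg's theorem (Theorem~\ref{Steinberg}, applied iteratively to the full $p$-adic expansion of $\nu$) gives $L(\nu) \cong L(\nu_0) \otimes L(\nu')^{(p)}$. Hence there is a $G$-equivariant embedding
\[
\iota \colon L(\nu_0) \otimes L(\nu')^{(p)} \hookrightarrow L(\lambda) \otimes L(\mu).
\]

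The key step, and the main obstacle, is to derive a contradiction from this embedding under the assumption $\nu' \neq 0$. My strategy is to restrict to the Frobenius kernel $G_1 := \ker F \triangleleft G$ and analyse the resulting $G_1$-structure. Since $\lambda, \mu, \nu_0$ are all $p$-restricted, the modules $L(\lambda), L(\mu), L(\nu_0)$ remain simple as $G_1$-modules, while the restriction of $L(\nu')^{(p)}$ to $G_1$ is the trivial module of dimension $d := \dim L(\nu') \geq 2$. Thus $\iota$ induces a $G_1$-embedding $L(\nu_0)^{\oplus d} \hookrightarrow L(\lambda) \otimes L(\mu)$. Complete reducibility as a $G$-module then forces the $L(\nu_0)$-isotypic component of $L(\lambda) \otimes L(\mu)$ to take the shape $L(\nu_0) \otimes N^{(p)}$ for some completely reducible $G$-module $N$, in which $L(\nu')$ must appear as a composition factor.

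The delicate step, which I expect to be the hardest, is showing that this $G$-module $N$ can only have the trivial module $L(0)$ as a composition factor when $\lambda$ and $\mu$ are $p$-restricted. I would attack this via a $T$-weight analysis of the multiplicity space $\Hom_{G_1}(L(\nu_0), L(\lambda) \otimes L(\mu)) \cong (L(\nu_0)^\tau \otimes L(\lambda) \otimes L(\mu))^{G_1}$: the $T$-weights appearing in this $G_1$-fixed subspace must lie in $pX$, and the $p$-restrictedness of $\lambda, \mu, \nu_0$ should constrain them tightly enough --- using the bound $(\eta,\alpha^\vee) \leq (\lambda,\alpha_h^\vee)+(\mu,\alpha_h^\vee)$ on weights $\eta$ of $L(\lambda) \otimes L(\mu)$ from Remark~\ref{highest_short_root}, together with the $W$-invariance of weight multiplicities --- to force any dominant weight of $N$ to equal $0$. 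This contradicts $\nu' \neq 0$ being a composition factor of $N$, completing the proof. Carrying out this final weight analysis cleanly is the heart of Gruber's argument in \cite{article_jonathan}.
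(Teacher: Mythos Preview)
The paper does not prove this statement: it is quoted verbatim as \cite[Theorem A]{article_jonathan} and used as a black box, with no argument given. There is therefore no proof in the paper to compare your proposal against.

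For what it is worth, your sketch follows the natural line of attack (Steinberg decomposition, restriction to the Frobenius kernel $G_1$, and analysis of the $G/G_1$-module structure on $\Hom_{G_1}(L(\nu_0),L(\lambda)\otimes L(\mu))$), and this is indeed close in spirit to Gruber's actual argument in \cite{article_jonathan}. You correctly flag the delicate point: showing that the $T$-weights of the $G_1$-invariants are constrained to force $\nu'=0$. In Gruber's proof this step is not a pure weight bound of the kind you describe but goes through a structural result about how $G_1T$-socles interact with tensor products; your proposed approach via the inequality $(\eta,\alpha^\vee)\leq(\lambda,\alpha_h^\vee)+(\mu,\alpha_h^\vee)$ alone is too coarse to rule out all nonzero $\nu'$, so as written the final step has a genuine gap.
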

%See \cite[Theorem A]{article_jonathan}.

\begin{cor}\label{composition_p}
	Let $\lambda,\mu\in X^+$ be $p$-restricted dominant weights. If the $G$-module $L(\lambda)\otimes L(\mu)$ is multiplicity-free, then all its composition factors are $p$-restricted.
\end{cor}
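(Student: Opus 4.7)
The plan is essentially a two-line chaining of the two results immediately preceding the statement. By Lemma \ref{completely_reducible}, if $L(\lambda)\otimes L(\mu)$ is multiplicity-free, then it is completely reducible (this uses only that tensor products of irreducibles are contravariantly self-dual together with Lemma \ref{coselfdual_reducible}, and crucially does \emph{not} require the hypothesis that $\lambda,\mu$ be $p$-restricted).

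Once complete reducibility is established, I would invoke Theorem \ref{p_restricted} directly: since $\lambda$ and $\mu$ are $p$-restricted by hypothesis and $L(\lambda)\otimes L(\mu)$ is completely reducible, every composition factor is $p$-restricted. Writing this out, the proof is:
\begin{quote}
By Lemma \ref{completely_reducible}, the module $L(\lambda)\otimes L(\mu)$ is completely reducible. Since $\lambda$ and $\mu$ are $p$-restricted, Theorem \ref{p_restricted} then implies that all composition factors of $L(\lambda)\otimes L(\mu)$ are $p$-restricted.
\end{quote}

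There is no real obstacle here: the corollary is explicitly designed as the combination of the preceding lemma and theorem. The only thing to check is that the hypotheses line up, namely that the $p$-restrictedness of $\lambda$ and $\mu$ is preserved from the assumption of the corollary into the assumption of Theorem \ref{p_restricted}, which it is verbatim. Thus nothing more delicate than a direct citation is needed.
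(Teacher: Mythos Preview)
Your proof is correct and matches the paper's own proof exactly: the paper simply states that the corollary is a direct consequence of Lemma~\ref{completely_reducible} and Theorem~\ref{p_restricted}, which is precisely the two-step chaining you wrote out.
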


\begin{proof}
	This is a direct consequence of Lemma \ref{completely_reducible} and Theorem \ref{p_restricted}.
\end{proof}

\begin{cor}\label{sum_p_res}
	Let $\lambda,\mu \in X^+$ be $p$-restricted weights. If $\lambda+\mu$ is not $p$-restricted, then $L(\lambda)\otimes L(\mu)$ has multiplicity.
\end{cor}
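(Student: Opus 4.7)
The plan is to argue by contradiction using Corollary \ref{composition_p}. Suppose that $L(\lambda)\otimes L(\mu)$ is multiplicity-free. Since both $\lambda$ and $\mu$ are $p$-restricted, Corollary \ref{composition_p} would then force every composition factor $L(\nu)$ of $L(\lambda)\otimes L(\mu)$ to satisfy the condition that $\nu$ is $p$-restricted. So it suffices to exhibit a composition factor $L(\nu)$ with $\nu$ not $p$-restricted, and the natural candidate is $\nu=\lambda+\mu$.

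To see that $L(\lambda+\mu)$ actually is a composition factor, I would take highest weight vectors $v\in L(\lambda)$ and $w\in L(\mu)$ (each $B$-maximal, of weight $\lambda$ and $\mu$ respectively) and note that $v\otimes w$ is a maximal vector for $B$ of weight $\lambda+\mu$ in $M:=L(\lambda)\otimes L(\mu)$. Moreover, $\lambda+\mu$ is the unique maximal weight of $M$ (any weight of $M$ is of the form $\nu_1+\nu_2$ with $\nu_i$ a weight of $L(\lambda),L(\mu)$, hence $\nu_1+\nu_2\le \lambda+\mu$), and it occurs with multiplicity one since
\[
m_M(\lambda+\mu)=m_{L(\lambda)}(\lambda)\,m_{L(\mu)}(\mu)=1.
\]
Hence in any composition series of $M$ exactly one factor is isomorphic to $L(\lambda+\mu)$, i.e. $[M:L(\lambda+\mu)]=1$.

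Now combine the two observations: $L(\lambda+\mu)$ is a composition factor of $M$, but $\lambda+\mu$ is not $p$-restricted by hypothesis. This contradicts Corollary \ref{composition_p}, which says that a multiplicity-free tensor product of two simple modules with $p$-restricted highest weights has only $p$-restricted composition factors. Therefore $L(\lambda)\otimes L(\mu)$ cannot be multiplicity-free, i.e.\ it has multiplicity.

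There is no real obstacle here; the only mildly subtle point is the justification that $L(\lambda+\mu)$ genuinely appears as a composition factor, which follows from the standard maximal-vector argument together with the multiplicity-one observation above. Everything else is an immediate invocation of Corollary \ref{composition_p}.
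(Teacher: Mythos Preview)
Your proof is correct and follows exactly the same approach as the paper: observe that $L(\lambda+\mu)$ is a composition factor of $L(\lambda)\otimes L(\mu)$ and then invoke Corollary~\ref{composition_p}. You simply spell out the highest-weight-vector justification for this composition factor, which the paper takes as known.
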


\begin{proof}
	We know that $L(\lambda+\mu)$ is a composition factor of $L(\lambda)\otimes L(\mu)$. Thus we conclude by Corollary \ref{composition_p}.
\end{proof}

\begin{prop} \label{padic}
		Let $\lambda=\lambda_0+p\lambda_1+\ldots+p^n\lambda_n,\;\mu=\mu_0+p\mu_1+\ldots+p^n\mu_n\in X^+$ be dominant weights such that $\lambda_i,\mu_i$ are $p$-restricted for all $i\in\{0,\ldots,n\}$. Then $L(\lambda)\otimes L(\mu)$ is multiplicity-free if and only if $L(\lambda_i)\otimes L(\mu_i)$ is multiplicity-free for all $i\in\{0,\ldots,n\}$.
\end{prop}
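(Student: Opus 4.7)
My plan is to apply Steinberg's tensor product theorem (Theorem \ref{Steinberg}) to both factors and then induct on $n$. By Steinberg,
\[
L(\lambda) \otimes L(\mu) \cong L(\lambda_0) \otimes L(\mu_0) \otimes \bigl(L(\lambda') \otimes L(\mu')\bigr)^{(p)} \;=\; M_0 \otimes M'^{(p)},
\]
where $\lambda' := \lambda_1 + p\lambda_2 + \cdots + p^{n-1}\lambda_n$, $\mu' := \mu_1 + \cdots + p^{n-1}\mu_n$, $M_i := L(\lambda_i) \otimes L(\mu_i)$, and $M' := L(\lambda') \otimes L(\mu')$. Since the Frobenius twist is an exact functor sending $L(\nu)$ to $L(p\nu)$, it preserves composition-factor multiplicities, so $M'^{(p)}$ is multiplicity-free if and only if $M'$ is. By the inductive hypothesis (the base case $n=0$ being trivial), $M'$ is multiplicity-free if and only if each $M_i$ for $i \geq 1$ is multiplicity-free. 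Thus the proposition reduces to a two-factor statement: $M_0 \otimes M'^{(p)}$ is multiplicity-free if and only if both $M_0$ and $M'^{(p)}$ are.

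For the forward direction of this two-factor statement, I assume $M_0$ and $M'$ are both multiplicity-free. Then Lemma \ref{completely_reducible} and Theorem \ref{p_restricted} give $M_0 \cong \bigoplus_{\alpha \in A} L(\alpha)$ with $A$ a set of distinct $p$-restricted dominant weights, and similarly $M' \cong \bigoplus_{\beta \in B} L(\beta)$ for $B$ a set of distinct dominant weights, so $M'^{(p)} \cong \bigoplus_{\beta \in B} L(p\beta)$. Then
\[
M_0 \otimes M'^{(p)} \;\cong\; \bigoplus_{(\alpha,\beta) \in A \times B} L(\alpha) \otimes L(p\beta) \;\cong\; \bigoplus_{(\alpha,\beta) \in A \times B} L(\alpha + p\beta),
\]
the last isomorphism using Steinberg's theorem since each $\alpha$ is $p$-restricted. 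Uniqueness of the $p$-adic expansion makes $(\alpha,\beta) \mapsto \alpha + p\beta$ injective on $A \times B$, so the decomposition is multiplicity-free.

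For the reverse direction I argue by contrapositive, using the composition-factor formula
\[
[M_0 \otimes M'^{(p)} : L(\xi)] \;=\; \sum_{\alpha' \in X^+} \bigl[M_0 : L(\xi^{(0)} + p\alpha')\bigr] \cdot \bigl[L(\alpha') \otimes M' : L(\xi^>)\bigr],
\]
where $\xi = \xi^{(0)} + p\xi^>$ is the decomposition with $\xi^{(0)}$ $p$-restricted. This formula is obtained by combining composition series of $M_0$ and $M'^{(p)}$ and noting that, writing $\alpha = \alpha^{(0)} + p\alpha'$ with $\alpha^{(0)}$ $p$-restricted, Steinberg gives $L(\alpha) \otimes L(p\beta) \cong L(\alpha^{(0)}) \otimes (L(\alpha') \otimes L(\beta))^{(p)}$, whose composition factors are the $L(\alpha^{(0)} + p\gamma)$ with multiplicities $[L(\alpha') \otimes L(\beta) : L(\gamma)]$. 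If $M_0$ is not multiplicity-free, pick $\alpha_*$ with $[M_0 : L(\alpha_*)] \geq 2$, write $\alpha_* = \alpha_*^{(0)} + p\alpha_*'$, and take $\xi := \alpha_* + p(\lambda' + \mu')$; then the $\alpha' = \alpha_*'$ term contributes $[M_0 : L(\alpha_*)] \cdot 1 \geq 2$, because $\alpha_*' + \lambda' + \mu'$ is the highest weight of $L(\alpha_*') \otimes M'$ and occurs with composition-factor multiplicity $1$. If instead $M'$ has some $[M' : L(\beta_*)] \geq 2$, write $\lambda_0 + \mu_0 = s + pt$ with $s$ $p$-restricted and take $\xi := (\lambda_0 + \mu_0) + p\beta_*$; then the $\alpha' = t$ term contributes $[M_0 : L(\lambda_0 + \mu_0)] \cdot [L(t) \otimes M' : L(t + \beta_*)] \geq 1 \cdot [M' : L(\beta_*)] \geq 2$, using that $\lambda_0 + \mu_0$ is the top composition factor of $M_0$ (multiplicity $1$) and that $L(t + \beta_*)$ is the top composition factor of $L(t) \otimes L(\beta_*)$ with multiplicity $1$. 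In both cases every term of the sum is non-negative, so $[M_0 \otimes M'^{(p)} : L(\xi)] \geq 2$.

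The main obstacle is the reverse direction: namely deriving the composition-factor formula cleanly and choosing $\xi$ so that one term of the sum is forced to be at least $2$ while the remaining terms, being non-negative, cannot cancel this contribution. The forward direction and the induction scaffolding are routine once Steinberg's theorem, Lemma \ref{completely_reducible}, and Theorem \ref{p_restricted} are invoked.
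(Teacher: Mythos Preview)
Your proof is correct and follows essentially the same approach as the paper: Steinberg's tensor product theorem, Lemma~\ref{completely_reducible}, Theorem~\ref{p_restricted}, and uniqueness of the $p$-adic expansion. The paper handles all $n+1$ factors simultaneously rather than inducting, and it dispatches the reverse direction in a single sentence (``clearly, if some $(L(\lambda_i)\otimes L(\mu_i))^{(p^i)}$ has multiplicity then so does the full tensor product''), whereas your composition-factor formula and explicit choices of $\xi$ make that step fully rigorous.
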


\begin{proof}
	First, using Steinberg's tensor product theorem (Theorem \ref{Steinberg}), we have an isomorphism 
	$$L(\lambda)\otimes L(\mu)\cong \bigotimes_{i=0}^n( L(\lambda_i)^{(p^i)}\otimes L(\mu_i)^{(p^i)})\cong \bigotimes_{i=0}^n(L(\lambda_i)\otimes L(\mu_i))^{(p^i)}.$$
	Clearly, if there exists $i\in \{0,\ldots,n\}$ such that $L(\lambda_i)\otimes L(\mu_i)$ has multiplicity, then \linebreak$(L(\lambda_i)\otimes L(\mu_i))^{(p^i)}$ has multiplicity, thus $L(\lambda)\otimes L(\mu)$ has multiplicity.
	
	Now suppose that $L(\lambda_i)\otimes L(\mu_i)$ is multiplicity-free for all $i\in\{0,\ldots,n\}$. By Lemma \ref{completely_reducible}\linebreak and Corollary \ref{composition_p}, for each $i\in\{0,\ldots,n\}$, there exist $\nu_i^1,\ldots,\nu_i^{m_i}\in X^+$ distinct and \linebreak$p$-restricted such that
	$$L(\lambda_i)\otimes L(\mu_i) \cong \bigoplus_{j=1}^{m_i}L(\nu_i^j).$$
	Therefore, we have 
	$$L(\lambda)\otimes L(\mu)\cong\bigotimes_{i=0}^n\bigoplus_{j=1}^{m_i}L(\nu_i^j)^{(p^i)}\cong\bigoplus_{\vec j}\bigotimes_{i=0}^nL(\nu_i^{j_i})^{(p^i)},$$
	where $\vec j=(j_0,\ldots,j_n)$ runs over $\bigtimes\limits_{i=0}^n\{1,\ldots,m_i\}$. Since all the weights $\nu_i^j$ are $p$-restricted, we can use Steinberg's tensor product theorem again to get 
	$$L(\lambda)\otimes L(\mu)\cong\bigoplus_{\vec j}L(\sum_{i=0}^np^i\nu_i^{j_i}).$$
	By uniqueness of the $p$-adic expansion of a weight, we conclude that $L(\lambda)\otimes L(\mu)$ is multiplicity-free.
\end{proof}

By this proposition, in order to classify multiplicity-free tensor products of simple \linebreak$G$-modules, we may restrict our attention to the study of the tensor products of simple modules with $p$-restricted highest weights.

\cleardoublepage
\section{Connections between characteristic $0$ and positive characteristic}
In this section, we show some links between multiplicity-free tensor products in characteristic $0$ and in positive characteristic. In particular, we show that in positive characteristic, multiplicity-free tensor products and completely reducible tensor products are closely related. This will allow us to classify multiplicity-free tensor products for $p=2$ and $G$ of type $A_n$ (see section \ref{An_p2}).

\begin{notation}
	For $\lambda \in X^+$, we denote by $L_{\C}(\lambda)$ the irreducible $G_{\C}$-module of highest weight $\lambda$ (over $\C$). Recall that $\ch L_{\C}(\lambda)=\chi(\lambda)$.
\end{notation}

\begin{prop}\label{char0}
	Let $\lambda,\mu \in X^+$ be dominant weights such that $\Delta(\lambda)\cong L(\lambda)$ and \linebreak$\Delta(\mu)\cong L(\mu)$. Suppose that $\Delta(\nu)\cong L(\nu)$ for all dominant weights $\nu\in X^+$ such that $L(\nu)$ is a composition factor of $L(\lambda)\otimes L(\mu)$. Then $L(\lambda)\otimes L(\mu)$ is multiplicity-free if and only if $L_{\C}(\lambda)\otimes L_{\C}(\mu)$ is multiplicity-free.
\end{prop}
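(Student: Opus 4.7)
My plan is to compare the characters on both sides and use the fact that, under the given hypotheses, every relevant irreducible character is a Weyl character, so the expansions in the $\Z$-basis $\{\chi(\nu)\mid \nu\in X^+\}$ must agree.

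First, I would note that since $\Delta(\lambda)\cong L(\lambda)$ and $\Delta(\mu)\cong L(\mu)$, we have $\ch L(\lambda)=\chi(\lambda)$ and $\ch L(\mu)=\chi(\mu)$. Therefore
\[
\ch\bigl(L(\lambda)\otimes L(\mu)\bigr)=\chi(\lambda)\,\chi(\mu).
\]
On the characteristic $0$ side, the identity $\ch L_{\C}(\nu)=\chi(\nu)$ for every $\nu\in X^+$ gives
\[
\ch\bigl(L_{\C}(\lambda)\otimes L_{\C}(\mu)\bigr)=\chi(\lambda)\,\chi(\mu).
\]

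Next, I would expand each side in the basis $\{\chi(\nu)\mid \nu\in X^+\}$ of $\Z[X]^W$ (Lemma \ref{basis_character}). Over $\C$, the category of $G_{\C}$-modules is semisimple, so there exist non-negative integers $c_\nu$ with
\[
L_{\C}(\lambda)\otimes L_{\C}(\mu)\cong \bigoplus_{\nu\in X^+}L_{\C}(\nu)^{\oplus c_\nu},
\]
hence $\chi(\lambda)\chi(\mu)=\sum_{\nu}c_\nu\,\chi(\nu)$. In positive characteristic, a composition series for $L(\lambda)\otimes L(\mu)$ yields
\[
\ch\bigl(L(\lambda)\otimes L(\mu)\bigr)=\sum_{\nu\in X^+}\bigl[L(\lambda)\otimes L(\mu):L(\nu)\bigr]\ch L(\nu).
\]
Here is where the hypothesis does the work: for each composition factor $L(\nu)$ of $L(\lambda)\otimes L(\mu)$ we have $\Delta(\nu)\cong L(\nu)$, so $\ch L(\nu)=\chi(\nu)$, and the displayed sum becomes $\sum_{\nu}[L(\lambda)\otimes L(\mu):L(\nu)]\,\chi(\nu)$.

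Finally, comparing the two expansions and invoking uniqueness in Lemma \ref{basis_character}, I conclude that $[L(\lambda)\otimes L(\mu):L(\nu)]=c_\nu$ for every $\nu\in X^+$. Therefore all these multiplicities are at most $1$ on one side if and only if they are on the other, which gives the equivalence. The only subtle point, and the place where the hypothesis is essential, is step (iii): without $\Delta(\nu)\cong L(\nu)$ for the relevant $\nu$'s, the expansion of $\ch(L(\lambda)\otimes L(\mu))$ in irreducible characters would not directly match the Weyl-character expansion, and the comparison with the characteristic $0$ decomposition would break down.
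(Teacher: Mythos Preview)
Your proof is correct and follows essentially the same approach as the paper: both use the hypotheses to identify $\ch L(\lambda)$, $\ch L(\mu)$, and $\ch L(\nu)$ (for the relevant composition factors) with Weyl characters, then compare expansions of $\chi(\lambda)\chi(\mu)$ in the basis $\{\chi(\nu)\mid\nu\in X^+\}$ via Lemma~\ref{basis_character} to conclude that the composition multiplicities in characteristic $p$ coincide with those in characteristic $0$. One minor remark: you refer to ``step (iii)'' without having labeled your steps, so you may want to rephrase that sentence for clarity.
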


\begin{proof}
	We show that the composition factors in characteristic $p$ and in characteristic $0$ are the same. Let $\nu_1,\ldots,\nu_m\in X^+$ be distinct weights and $k_1,\ldots,k_m\in \Z_{>0}$ be such that $\ch( L(\lambda)\otimes L(\mu))=k_1\ch L(\nu_1)+\ldots+k_m\ch L(\nu_m)$. By assumption, we have
	\begin{align*}
		\chi(\lambda)\chi(\mu)&=\ch L(\lambda)\ch L(\mu)=\ch( L(\lambda)\otimes L(\mu))=k_1\ch L(\nu_1)+\ldots+k_m\ch L(\nu_m)\\
		&=k_1\chi(\nu_1)+\ldots+k_m\chi(\nu_m).
	\end{align*}
	By uniqueness of the composition factors and linear independence of the Weyl characters (Lemma \ref{basis_character}), it follows that $L_{\C}(\lambda)\otimes L_{\C}(\mu)$ is multiplicity-free if and only if $k_i=1$ for all $i\in \{1,\ldots,m\}$ if and only if $L(\lambda)\otimes L(\mu)$ is multiplicity-free.
\end{proof}

\begin{cor}\label{char0_bis}
	Let $\lambda,\mu \in X^+$ be dominant weights such that $\Delta(\lambda)\cong L(\lambda)$ and $\Delta(\mu)\cong L(\mu)$. If $\Delta(\nu)\cong L(\nu)$ for all dominant weights $\nu\leq \lambda+\mu$, then $L(\lambda)\otimes L(\mu)$ is multiplicity-free if and only if $L_{\C}(\lambda)\otimes L_{\C}(\mu)$ is multiplicity-free.
\end{cor}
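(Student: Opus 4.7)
The plan is to deduce this corollary directly from Proposition \ref{char0} by checking that its hypothesis is automatically satisfied under the stronger assumption here. Specifically, I will show that every dominant weight $\nu$ such that $L(\nu)$ is a composition factor of $L(\lambda)\otimes L(\mu)$ satisfies $\nu\leq \lambda+\mu$, so that the blanket assumption ``$\Delta(\nu)\cong L(\nu)$ for all $\nu\leq\lambda+\mu$'' already implies the hypothesis of Proposition \ref{char0}.

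First I would recall that if $v\in L(\lambda)$ and $w\in L(\mu)$ are weight vectors of weight $\nu_1$ and $\nu_2$ respectively, then $v\otimes w$ is a weight vector of weight $\nu_1+\nu_2$ in $L(\lambda)\otimes L(\mu)$. Since $\nu_1\leq \lambda$ and $\nu_2\leq \mu$ by definition of the highest weight, every weight $\eta$ of $L(\lambda)\otimes L(\mu)$ satisfies $\eta\leq \lambda+\mu$. In particular, if $L(\nu)$ is a composition factor of the tensor product, then $\nu$ appears as a weight of $L(\lambda)\otimes L(\mu)$ (being the highest weight of the corresponding subquotient), so $\nu\leq\lambda+\mu$.

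With this observation, the hypothesis of Proposition \ref{char0} is satisfied: for every $\nu\in X^+$ such that $L(\nu)$ is a composition factor of $L(\lambda)\otimes L(\mu)$ we have $\nu\leq\lambda+\mu$, and by assumption this forces $\Delta(\nu)\cong L(\nu)$. Applying Proposition \ref{char0} then yields the equivalence between multiplicity-freeness of $L(\lambda)\otimes L(\mu)$ and of $L_{\C}(\lambda)\otimes L_{\C}(\mu)$.

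There is no real obstacle here; the statement is essentially a convenient reformulation of Proposition \ref{char0} in which one only needs to check the range of weights $\nu\leq \lambda+\mu$ rather than tracking which composition factors actually occur. The only step that requires a (trivial) argument is the bound on composition factors, which is immediate from the description of the weights of a tensor product.
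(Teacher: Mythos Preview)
Your proof is correct and follows exactly the same approach as the paper: you observe that any composition factor $L(\nu)$ of $L(\lambda)\otimes L(\mu)$ has $\nu\leq\lambda+\mu$, so the hypothesis of Proposition~\ref{char0} is automatically verified, and then you apply that proposition.
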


\begin{proof}
	If $L(\nu)$ is a composition factor of $L(\lambda)\otimes L(\nu)$, then $\nu$ is dominant and $\nu\leq \lambda+\mu$. Thus we can apply Proposition \ref{char0}
\end{proof}

\begin{cor}\label{sum_C1}
	Let $\lambda,\mu \in X^+$ be such that $\lambda+\mu\in \widehat{C_1}$. The module $L(\lambda)\otimes L(\mu)$ is multiplicity-free if and only if $L_{\C}(\lambda)\otimes L_{\C}(\mu)$ is multiplicity-free.
\end{cor}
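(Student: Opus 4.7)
The plan is to reduce this directly to Corollary \ref{char0_bis}. That corollary asks for three things: $\Delta(\lambda) \cong L(\lambda)$, $\Delta(\mu) \cong L(\mu)$, and $\Delta(\nu) \cong L(\nu)$ for every dominant weight $\nu \leq \lambda + \mu$. So the whole task is verifying these irreducibility hypotheses from the single assumption $\lambda + \mu \in \widehat{C_1}$.

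First I would show that $\lambda$ and $\mu$ themselves both lie in $\widehat{C_1}$. Using the reformulation of $\widehat{C_1}$ from Remark \ref{C1_highest_short_root}, the condition $\lambda + \mu \in \widehat{C_1}$ translates to $(\lambda + \mu + \rho, \alpha_h^\vee) \leq p$ together with strict positivity of $(\lambda + \mu + \rho, \alpha^\vee)$ for $\alpha \in \Pi$. Since $\mu \in X^+$, Remark \ref{highest_short_root} gives $(\mu, \alpha_h^\vee) \geq 0$, so
\[
(\lambda + \rho, \alpha_h^\vee) \leq (\lambda + \mu + \rho, \alpha_h^\vee) \leq p,
\]
and the positivity of $(\lambda + \rho, \alpha^\vee)$ for $\alpha \in \Pi$ is automatic from $\lambda \in X^+$. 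Hence $\lambda \in \widehat{C_1}$, and symmetrically $\mu \in \widehat{C_1}$. By Lemma \ref{fundamental_alcove} this already gives $\Delta(\lambda) \cong L(\lambda)$ and $\Delta(\mu) \cong L(\mu)$.

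Next, for any dominant weight $\nu \leq \lambda + \mu$, Lemma \ref{A2_fundamental} applied with $\lambda + \mu$ in place of $\lambda$ gives $\Delta(\nu) \cong L(\nu)$. All three hypotheses of Corollary \ref{char0_bis} are therefore satisfied, and the conclusion follows immediately.

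There is essentially no obstacle here; the only mild subtlety is that one should not try to deduce $\lambda \in \widehat{C_1}$ from Lemma \ref{C1} by writing $\lambda \leq \lambda + \mu$, since in general a dominant weight $\mu$ need not be a non-negative integer combination of simple roots, so the partial order does not directly compare $\lambda$ with $\lambda + \mu$. Instead one uses the positivity $(\mu, \alpha_h^\vee) \geq 0$ directly on the defining inequality of $\widehat{C_1}$, as above.
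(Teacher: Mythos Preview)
Your proof is correct and follows essentially the same route as the paper: reduce to Corollary \ref{char0_bis} by showing that every dominant $\nu \le \lambda+\mu$ lies in $\widehat{C_1}$ (Lemmas \ref{C1} and \ref{fundamental_alcove}, packaged as Lemma \ref{A2_fundamental}). Your explicit verification that $\lambda,\mu\in\widehat{C_1}$ via the inequality $(\lambda+\rho,\alpha_h^\vee)\le(\lambda+\mu+\rho,\alpha_h^\vee)\le p$ is in fact more careful than the paper's one-line proof, and your closing remark about why one cannot simply invoke $\lambda\le\lambda+\mu$ is well taken.
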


\begin{proof}
	By Lemmas \ref{C1} and \ref{fundamental_alcove}, Corollary \ref{char0_bis} applies in case $\lambda+\mu\in \widehat{C_1}$.
\end{proof}

The next theorem allows us to find the explicit decomposition of some tensor products as a direct sum of irreducible modules. Then we prove a more general version which allows us to conclude that some tensor products of irreducible modules are multiplicity-free without computing the explicit decomposition.

\begin{thm}\label{complete_red_mult_free}
	Let $\lambda,\mu\in X^+$ be $p$-restricted dominant weights such that the following hold:
	\begin{enumerate}[label = \emph{(\arabic*)}] 
		\item $L(\lambda)\cong\Delta(\lambda)$,
		\item $L(\mu)\cong \Delta(\mu)$,
		\item $L(\lambda)\otimes L(\mu)$ is completely reducible and
		\item $L_{\C}(\lambda)\otimes L_{\C}(\mu)$ is multiplicity-free.
	\end{enumerate}
	Then $L(\lambda)\otimes L(\mu)$ is multiplicity-free. Moreover, if we have the decomposition 
	$$L_{\C}(\lambda)\otimes L_{\C}(\mu)\cong\bigoplus_{i=1}^mL_{\C}(\nu_m)$$ for distinct dominant weights $\nu_1,\ldots,\nu_m\in X^+$, then $L(\nu_i)\cong \Delta(\nu_i)$ for all $i\in \{1,\ldots,m\}$ and 
	$$L(\lambda)\otimes L(\mu)\cong \bigoplus_{i=1}^mL(\nu_m).$$
\end{thm}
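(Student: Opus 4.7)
My plan is to combine a character identity coming from characteristic $0$ with the Weyl filtration of $L(\lambda)\otimes L(\mu)$, and then exploit complete reducibility to force each factor $\Delta(\nu_i)$ to be irreducible. First, from $(1)$, $(2)$ I would write $\ch L(\lambda)=\chi(\lambda)$ and $\ch L(\mu)=\chi(\mu)$, so that
$$\ch\bigl(L(\lambda)\otimes L(\mu)\bigr)=\chi(\lambda)\chi(\mu).$$
On the other hand, $(4)$ gives $L_{\C}(\lambda)\otimes L_{\C}(\mu)\cong\bigoplus_{i=1}^{m} L_{\C}(\nu_i)$ with the $\nu_i$'s distinct, which, upon taking characters, yields $\chi(\lambda)\chi(\mu)=\sum_{i=1}^{m}\chi(\nu_i)$. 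Hence $\ch\bigl(L(\lambda)\otimes L(\mu)\bigr)=\sum_{i=1}^{m}\chi(\nu_i)$.

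Next, by Theorem \ref{Weyl_filtration} applied to $\Delta(\lambda)\otimes\Delta(\mu)=L(\lambda)\otimes L(\mu)$, there is a Weyl filtration
$$0=M_0\subseteq M_1\subseteq\cdots\subseteq M_N=L(\lambda)\otimes L(\mu)$$
with $M_k/M_{k-1}\cong\Delta(\xi_k)$ for some dominant weights $\xi_k\in X^+$. Summing characters gives $\sum_{k=1}^{N}\chi(\xi_k)=\sum_{i=1}^{m}\chi(\nu_i)$, so by the linear independence of Weyl characters (Lemma \ref{basis_character}) we get $N=m$ and $(\xi_1,\ldots,\xi_m)$ is a permutation of $(\nu_1,\ldots,\nu_m)$; in particular the $\xi_k$'s are pairwise distinct.

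Now I would prove by induction on $k$ that $\Delta(\xi_k)\cong L(\xi_k)$. For $k=1$, $\Delta(\xi_1)=M_1$ is a submodule of the completely reducible module $L(\lambda)\otimes L(\mu)$, hence completely reducible by Proposition \ref{submodule_comp_red}. Since $\Delta(\xi_1)$ has $L(\xi_1)$ as its unique simple quotient (Proposition \ref{radical}), a non-trivial direct sum decomposition would produce several simple quotients, a contradiction; thus $\Delta(\xi_1)\cong L(\xi_1)$. For the inductive step, $\Delta(\xi_k)=M_k/M_{k-1}$ embeds in the quotient $(L(\lambda)\otimes L(\mu))/M_{k-1}$, which is itself completely reducible by Proposition \ref{submodule_comp_red} (a quotient of a completely reducible module is completely reducible). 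Applying Proposition \ref{submodule_comp_red} once more, $\Delta(\xi_k)$ is completely reducible, and the same uniqueness-of-simple-quotient argument yields $\Delta(\xi_k)\cong L(\xi_k)$.

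Finally, I would read off the conclusion: each $\Delta(\nu_i)\cong L(\nu_i)$, and the composition factors of $L(\lambda)\otimes L(\mu)$ are exactly the pairwise distinct $L(\nu_i)$'s, each with multiplicity $1$. Together with $(3)$, this forces
$$L(\lambda)\otimes L(\mu)\cong\bigoplus_{i=1}^{m}L(\nu_i),$$
which is in particular multiplicity-free. The delicate step is the iterative use of complete reducibility: one must pass from $L(\lambda)\otimes L(\mu)$ to the quotient $(L(\lambda)\otimes L(\mu))/M_{k-1}$ in order to access $\Delta(\xi_k)$ as a submodule of a completely reducible module, and deduce from there that $\Delta(\xi_k)$ is itself simple. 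Everything else is character bookkeeping.
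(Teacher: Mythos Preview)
Your proof is correct, but it proceeds by a different route than the paper. The paper argues via tilting modules: since $L(\lambda)\cong\Delta(\lambda)$ and $L(\mu)\cong\Delta(\mu)$, both are tilting (Lemma \ref{tilting_irreducible}), so $M=L(\lambda)\otimes L(\mu)$ is tilting and decomposes as $\bigoplus_i T(\nu_i)$ (Proposition \ref{tilting}); complete reducibility then forces each indecomposable tilting summand $T(\nu_i)$ to be irreducible, whence $T(\nu_i)\cong L(\nu_i)\cong\Delta(\nu_i)$, and Proposition \ref{char0} finishes. You instead take the Weyl filtration of $\Delta(\lambda)\otimes\Delta(\mu)$ directly (Theorem \ref{Weyl_filtration}), match its subquotients $\Delta(\xi_k)$ to the $\Delta(\nu_i)$ by the character identity, and then argue inductively that each $\Delta(\xi_k)$, being a submodule of the completely reducible quotient $M/M_{k-1}$, is itself completely reducible and hence simple (since a Weyl module has a unique simple quotient). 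Your approach is slightly more elementary in that it avoids the classification of indecomposable tilting modules and the implication ``tilting $+$ completely reducible $\Rightarrow$ irreducible''; the paper's approach is shorter because that machinery is already in place and yields the direct-sum decomposition in one stroke rather than via an inductive filtration argument.
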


\begin{proof}
	To simplify the notation, we set $M:=L(\lambda)\otimes L(\mu)$. 
	
	Since $L(\lambda)$ and $L(\mu)$ are tilting modules, $M$ must be a tilting module. Thus there exist $\nu_1,\ldots,\nu_m\in X^+$ such that $M\cong T(\nu_1)\oplus\ldots\oplus T(\nu_m)$ (Proposition \ref{tilting}). Since $M$ is completely reducible, $T(\nu_i)$ must be completely reducible for every $i\in\{1,\ldots,m\}$. Therefore, $T(\nu_i)\cong L(\nu_i)\cong \Delta(\nu_i)$ for every $i\in\{1,\ldots,m\}$, so all dominant weights $\nu$ such that $L(\nu)$ is a composition factors of $M$ satisfy $\Delta(\nu)\cong L(\nu)$. Therefore, we can conclude by Proposition \ref{char0}\linebreak (and its proof).
\end{proof}

\begin{thm}\label{comp_red_0}
	Let $\lambda,\mu\in X^+$ be $p$-restricted dominant weights such that the following hold:
	\begin{enumerate}[label = \emph{(\arabic*)}] 
		\item $L(\lambda)\otimes L(\mu)$ is completely reducible and
		\item $L_{\C}(\lambda)\otimes L_{\C}(\mu)$ is multiplicity-free.
	\end{enumerate}
	Then $L(\lambda)\otimes L(\mu)$ is multiplicity-free.
\end{thm}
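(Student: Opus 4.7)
The plan is to embed $L(\lambda)\otimes L(\mu)$ inside $\nabla(\lambda)\otimes\nabla(\mu)$, observe that by assumption (1) this image lies in the socle, and then show that the socle of $\nabla(\lambda)\otimes\nabla(\mu)$ is itself multiplicity-free using its good filtration together with assumption (2). Since any submodule of a multiplicity-free module is multiplicity-free, the result will follow.

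Proposition \ref{radical} gives embeddings $L(\lambda)\hookrightarrow\nabla(\lambda)$ and $L(\mu)\hookrightarrow\nabla(\mu)$, and tensoring over $k$ is exact, so we obtain an embedding of $G$-modules $L(\lambda)\otimes L(\mu)\hookrightarrow\nabla(\lambda)\otimes\nabla(\mu)$. Since $L(\lambda)\otimes L(\mu)$ is semisimple by assumption (1), this embedding factors through $\soc(\nabla(\lambda)\otimes\nabla(\mu))$. Next, by Theorem \ref{Weyl_filtration}, $\Delta(\lambda)\otimes\Delta(\mu)$ admits a Weyl filtration; applying the functor $(-)^\tau$ and using $\Delta(\nu)^\tau\cong\nabla(\nu)$ yields a good filtration $0=V_0\subseteq V_1\subseteq\cdots\subseteq V_n=\nabla(\lambda)\otimes\nabla(\mu)$ with $V_i/V_{i-1}\cong\nabla(\nu_i)$ for some $\nu_1,\ldots,\nu_n\in X^+$. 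Comparing characters and using the linear independence of Weyl characters (Lemma \ref{basis_character}), the multiplicity with which $\nabla(\nu)$ appears among the $V_i/V_{i-1}$ equals the coefficient of $\chi(\nu)$ in $\chi(\lambda)\chi(\mu)$, which is exactly $[L_\C(\lambda)\otimes L_\C(\mu):L_\C(\nu)]$, and this lies in $\{0,1\}$ by assumption (2).

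To bound the socle I would apply $\Hom_G(L(\kappa),-)$ to each short exact sequence $0\to V_{i-1}\to V_i\to \nabla(\nu_i)\to 0$. Since $\soc\nabla(\nu_i)\cong L(\nu_i)$ by Proposition \ref{radical}, the term $\Hom_G(L(\kappa),\nabla(\nu_i))$ is one-dimensional when $\kappa=\nu_i$ and zero otherwise; a short induction on $i$ then gives
\[
\dim\Hom_G\bigl(L(\kappa),\nabla(\lambda)\otimes\nabla(\mu)\bigr)\;\leq\;\#\{i:\nu_i=\kappa\}\;\leq\;1.
\]
Since any nonzero map from the simple module $L(\kappa)$ is injective and therefore lands in the socle, this dimension equals $[\soc(\nabla(\lambda)\otimes\nabla(\mu)):L(\kappa)]$ by Schur's lemma, so $\soc(\nabla(\lambda)\otimes\nabla(\mu))$ is multiplicity-free and hence so is its submodule $L(\lambda)\otimes L(\mu)$. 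The main subtlety of the argument is the identification of the good-filtration multiplicities of $\nabla(\lambda)\otimes\nabla(\mu)$ with the characteristic-$0$ decomposition numbers, which is exactly what allows assumption (2) to be transported from characteristic $0$ to characteristic $p$.
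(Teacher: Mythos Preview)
Your proof is correct and is essentially the contravariant dual of the paper's argument. The paper works with the \emph{surjection} $\Delta(\lambda)\otimes\Delta(\mu)\twoheadrightarrow L(\lambda)\otimes L(\mu)$: it pushes a Weyl filtration of the source forward and, using complete reducibility of the target, shows that each successive quotient of the image is either $L(\nu_i)$ or $0$, so that $L(\lambda)\otimes L(\mu)$ embeds in $\bigoplus_i L(\nu_i)$. You instead use the \emph{injection} $L(\lambda)\otimes L(\mu)\hookrightarrow\nabla(\lambda)\otimes\nabla(\mu)$, land in the socle by complete reducibility, and bound the socle multiplicities via $\Hom_G(L(\kappa),-)$ along a good filtration. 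Both routes transport assumption~(2) to positive characteristic through the identity of filtration multiplicities with the characteristic-$0$ decomposition numbers; your socle/Hom formulation is perhaps slightly more conceptual, while the paper's quotient construction is a touch more hands-on, but neither gains anything substantial over the other.
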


\begin{proof}
	We know that we have a surjection $\Delta(\lambda)\to L(\lambda)$ and a surjection $\Delta(\mu)\to L(\mu)$. By right exactness of tensor products, we get a surjection 
	$$\phi :\Delta(\lambda)\otimes\Delta(\mu)\to L(\lambda)\otimes\Delta(\mu)\to L(\lambda)\otimes L(\mu).$$
	Using Theorem \ref{Weyl_filtration}, we fix a Weyl filtration
	$$0=V_0\subseteq V_1\subseteq\ldots\subseteq V_m=\Delta(\lambda)\otimes\Delta(\mu).$$
	Thus there exist $\nu_1,\ldots,\nu_m\in X^+$ such that $V_i/V_{i-1}\cong \Delta(\nu_i)$ for $i=1,\ldots, m$. In particular, we have 
	$$\chi(\lambda)\chi(\mu)=\sum_{i=1}^m\chi(\nu_i).$$
	Since $L_{\C}(\lambda)\otimes L_{\C}(\mu)$ is multiplicity-free, we deduce that $\nu_i\neq \nu_j$ for all $i\neq j$.
	
	For $i\in \{1,\ldots,m\}$, we set $W_i:=\phi(V_i)$ and we denote by $\phi_i:V_i\to W_i$ the restriction and corestriction of the map $\phi$. In particular, $\phi_i$ is surjective for all $i\in \{1,\ldots,m\}$. By construction, we have a filtration 
	$$0=W_0\subseteq W_1\subseteq\ldots \subseteq W_m=L(\lambda)\otimes L(\mu)$$
	of $L(\lambda)\otimes L(\mu)$.
	
	\iffalse
	For $i\in \{1,\ldots,m\}$, let $\iota_i:V_{i-1}\to V_i$ be the inclusion map. We construct inductively a filtration $(W_i)_{i=0}^m$ of $L(\lambda)\otimes L(\mu)$ using the given Weyl filtration $(V_i)_{i=0}^m$. Our base case is $W_m:=L(\lambda)\otimes L(\mu)$ and $\phi_m:=\phi:V_m\to W_m$ the surjection previously defined. We define $W_{m-1}:=\phi_m\circ \iota_m(V_{m-1})$ and $\phi_{m-1}:V_{m-1}\to W_{m-1}$ the corestriction of $\phi_m\circ \iota_m$. Thus we have the following situation:
	$$\begin{tikzcd}
		{V_m} & {W_m} \\
		{V_{m-1}} & {W_{m-1}.}
		\arrow["{\phi_m}", two heads, from=1-1, to=1-2]
		\arrow["{\phi_{m-1}}", two heads, from=2-1, to=2-2]
		\arrow["{\iota_m}", hook, from=2-1, to=1-1]
		\arrow[hook, from=2-2, to=1-2]
	\end{tikzcd}$$
	We continue inductively using the same procedure. Given $W_i$ and $\phi_i:V_i\to W_i$ surjective, we define $W_{i-1}:=\phi_i\circ\iota_i(V_{i-1})$ and $\phi_{i-1}:V_{i-1}\to W_{i-1}$ the corestriction of $\phi_i\circ\iota_i$.
	$$\begin{tikzcd}
		{V_i} & {W_i} \\
		{V_{i-1}} & {W_{i-1}.}
		\arrow["{\phi_i}", two heads, from=1-1, to=1-2]
		\arrow["{\phi_{i-1}}", two heads, from=2-1, to=2-2]
		\arrow["{\iota_i}", hook, from=2-1, to=1-1]
		\arrow[hook, from=2-2, to=1-2]
	\end{tikzcd}$$	
	This procedure will stop since $\phi_1\circ \iota_1(V_0)=0=W_0$. Therefore, we get a filtration
	$$0=W_0\subseteq W_1\subseteq\ldots \subseteq W_m=L(\lambda)\otimes L(\mu)$$
	of $L(\lambda)\otimes L(\mu)$.
	\fi
	
	Now we identify the quotients $W_i/W_{i-1}$ for $i=1,\ldots,m$. Let $\iota_i:V_{i-1}\to V_i$ be the inclusion map. We have the following situation:
	$$\begin{tikzcd}
		0 & {V_{i-1}} & {V_i} & {\Delta(\nu_i)} & 0 \\
		0 & {W_{i-1}} & {W_i} & {W_i/W_{i-1}} & 0.
		\arrow[from=1-1, to=1-2]
		\arrow["{\iota_i}", hook, from=1-2, to=1-3]
		\arrow[two heads, from=1-3, to=1-4]
		\arrow[from=1-4, to=1-5]
		\arrow[from=2-1, to=2-2]
		\arrow[hook, from=2-2, to=2-3]
		\arrow["{\theta_i}",two heads, from=2-3, to=2-4]
		\arrow[from=2-4, to=2-5]
		\arrow["{\phi_{i-1}}"', two heads, from=1-2, to=2-2]
		\arrow["{\phi_i}"', two heads, from=1-3, to=2-3]
		%\arrow["{\psi_i}"', dashed, from=1-4, to=2-4]
	\end{tikzcd}$$
	Since $\theta_i\circ \phi_i\circ \iota_i=0$, i.e. $V_{i-1}\subseteq \ker (\theta_i\circ \phi_i)$, there exists a unique map $\psi_i :V_i/V_{i-1}\to W_i/W_{i-1}$ such that the following diagram commutes:
	$$\begin{tikzcd}
		0 & {V_{i-1}} & {V_i} & {\Delta(\nu_i)} & 0 \\
		0 & {W_{i-1}} & {W_i} & {W_i/W_{i-1}} & 0.
		\arrow[from=1-1, to=1-2]
		\arrow["{\iota_i}", hook, from=1-2, to=1-3]
		\arrow[two heads, from=1-3, to=1-4]
		\arrow[from=1-4, to=1-5]
		\arrow[from=2-1, to=2-2]
		\arrow[hook, from=2-2, to=2-3]
		\arrow["{\theta_i}",two heads, from=2-3, to=2-4]
		\arrow[from=2-4, to=2-5]
		\arrow["{\phi_{i-1}}"', two heads, from=1-2, to=2-2]
		\arrow["{\phi_i}"', two heads, from=1-3, to=2-3]
		\arrow["{\psi_i}"', dashed, from=1-4, to=2-4]
	\end{tikzcd}$$
	In particular, $\psi_i$ is surjective because $\theta_i$ and $\phi_i$ are surjective.
	By Proposition \ref{submodule_comp_red}, $W_i$ is completely reducible and so $W_i/W_{i-1}$ is completely reducible. Therefore, $\rad \Delta(\nu_i)\subseteq \ker (\psi_i)$,\linebreak so $\psi_i$ factors as
	$$\begin{tikzcd}
		{\Delta(\nu_i)} & {W_i/W_{i-1}} \\
		{L(\nu_i)}
		\arrow["{\psi_i}", from=1-1, to=1-2]
		\arrow[from=1-1, to=2-1]
		\arrow["{\bar\psi_i}"', from=2-1, to=1-2]
	\end{tikzcd}$$
	Therefore, $W_i/W_{i-1}$ is a quotient of $L(\nu_i)$, so either $W_i/W_{i-1}\cong L(\nu_i)$ or $W_i/W_{i-1}=0$. We deduce that $L(\lambda)\otimes L(\nu)$ is isomorphic to a submodule of $\bigoplus\limits_{i=1}^mL(\nu_i)$, and in particular it is multiplicity-free since all the $\nu_i$'s are distinct.
\end{proof}

\cleardoublepage

\section{$\SL_2$}

In this section, we establish the classification of multiplicity-free tensor products of simple $\SL_2(k)$-modules. Recall that $\SL_2(k)$ has root system $\Phi$ of type $A_1$, so $\Phi=\{\alpha,-\alpha\}$. The fundamental weight $\omega$ satisfies $\alpha=2\omega$. Since all weights are integer multiples of $\omega$, we will identify the set of weights with $\Z$. Under this identification, $\omega$ corresponds to $1$, the positive root $\alpha$ to $2$, and dominant weights are in bijection with $\N$. Moreover, we have $\rho=\frac12\alpha=\omega$ and it corresponds to $1$.\\

Since there exists a unique positive root in $\Phi$, alcoves are in bijection with $\Z$, with 
$$C_n=\{\lambda\in X_{\R}|\; (n-1)p<(\lambda+\rho,\alpha^\vee)<np\}.$$
Using the identification previously described, we identify $X_{\R}$ with $\R$. Thus we get 
$$C_n=\{\lambda\in\R|\; (n-1)p<\lambda+1<np\}.$$
In particular, there exists a unique $p$-restricted alcove, the fundamental alcove $C_1$, and by Lemma \ref{fundamental_alcove}, $L(\lambda)\cong\Delta(\lambda)$ for all $p$-restricted dominant weights $\lambda\in X^+$.\\

We start by computing the Weyl characters and the decomposition of the product of two such characters.

\begin{lemma}\label{mult_sl2}
	Let $\lambda\in X^+$. Then 
	$$\chi(\lambda)=\sum_{i=0}^\lambda e^{\lambda-2i}.$$
\end{lemma}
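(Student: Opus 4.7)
The plan is to apply Weyl's character formula (Theorem \ref{Weyl_character_formula}) directly, exploiting the fact that for $\SL_2$ the Weyl group $W = \{1, s_\alpha\}$ has only two elements, so both sums in the formula reduce to two terms each.

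First I would compute the denominator. Since $\rho$ corresponds to $1$ and $s_\alpha(\rho) = -\rho$ corresponds to $-1$, we get
$$\sum_{w \in W} \det(w) e^{w(\rho)} = e^{1} - e^{-1}.$$
Next, for the numerator, with $\lambda + \rho$ corresponding to $\lambda + 1$ and $s_\alpha(\lambda+\rho) = -(\lambda+1)$, we obtain
$$\sum_{w \in W} \det(w) e^{w(\lambda + \rho)} = e^{\lambda + 1} - e^{-(\lambda + 1)}.$$
So Weyl's character formula gives $\chi(\lambda) = \frac{e^{\lambda+1} - e^{-(\lambda+1)}}{e^{1} - e^{-1}}$.

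The remaining step is to verify the identity
$$\frac{e^{\lambda+1} - e^{-(\lambda+1)}}{e^{1} - e^{-1}} = \sum_{i=0}^{\lambda} e^{\lambda - 2i}$$
in $\Z[X]$. I would do this by multiplying both sides by $e^{1} - e^{-1}$ and showing that the right-hand side telescopes:
$$(e^{1} - e^{-1}) \sum_{i=0}^{\lambda} e^{\lambda - 2i} = \sum_{i=0}^{\lambda} e^{\lambda - 2i + 1} - \sum_{i=0}^{\lambda} e^{\lambda - 2i - 1},$$
and after reindexing the second sum (say, $j = i+1$), all interior terms cancel, leaving $e^{\lambda + 1} - e^{-(\lambda+1)}$, as desired.

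There is essentially no obstacle here; the only care needed is to treat $\Z[X]$ as a formal group ring (so that one divides only after verifying the factorisation in the ring, or equivalently works with the telescoping identity directly in $\Z[X]$ as above, avoiding genuine division).
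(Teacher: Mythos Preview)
Your proposal is correct and matches the paper's proof essentially line for line: both apply Weyl's character formula to obtain $\chi(\lambda) = \dfrac{e^{\lambda+1}-e^{-(\lambda+1)}}{e^{1}-e^{-1}}$ and then verify the claimed identity by multiplying the sum $\sum_{i=0}^{\lambda} e^{\lambda-2i}$ by $e^{1}-e^{-1}$ and observing the telescoping cancellation.
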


\begin{proof}
	We show this result using Weyl's character formula (Theorem \ref{Weyl_character_formula}). We have 
	\begin{align*}
		(e^1-e^{-1})\sum_{i=0}^\lambda e^{\lambda-2i}&=\sum_{i=0}^\lambda e^{\lambda+1-2i}-\sum_{i=0}^\lambda e^{\lambda-1-2i}\\
		&=e^{\lambda+1}+\sum_{i=1}^\lambda e^{\lambda+1-2i}-e^{-\lambda-1}-\sum_{i=0}^{\lambda-1} e^{\lambda-1-2i}\\
		&=e^{\lambda+1}-e^{-\lambda-1}.
	\end{align*}
Therefore \begin{ceqn}
	\begin{equation*}
	\chi(\lambda)=\frac{e^{\lambda+1}-e^{-\lambda-1}}{e^1-e^{-1}}=\sum_{i=0}^\lambda e^{\lambda-2i}.\qedhere
	\end{equation*}
\end{ceqn}
\end{proof}

\begin{prop}[Clebsch-Gordan formula]\label{character_sl2}
	For $\lambda,\mu \in X^+$ with $\lambda\geq \mu$, we have $$\chi(\lambda)\chi(\mu)=\chi(\lambda+\mu)+\chi(\lambda+\mu-2)+\ldots+\chi(\lambda-\mu+2)+\chi(\lambda-\mu).$$
\end{prop}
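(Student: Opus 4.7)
The plan is to combine Proposition \ref{product_characters} with the explicit weight structure of $\Delta(\lambda)$ from Lemma \ref{mult_sl2}, and then eliminate the surplus terms using the $W$-symmetry of Weyl characters recorded in Lemma \ref{action_characters}.

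First I would apply Proposition \ref{product_characters} and read off the weight multiplicities of $\Delta(\lambda)$ from Lemma \ref{mult_sl2}: the weights of $\Delta(\lambda)$ are exactly $\lambda,\lambda-2,\ldots,-\lambda$, each with multiplicity $1$. Under the identification of $X$ with $\Z$, this yields
\[
\chi(\lambda)\chi(\mu)=\sum_{\nu\in X}m_{\Delta(\lambda)}(\nu)\,\chi(\mu+\nu)=\sum_{i=0}^{\lambda}\chi(\mu+\lambda-2i).
\]

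Next I would split this sum at $i=\mu$. For $i=0,1,\ldots,\mu$ the exponent $\mu+\lambda-2i$ runs through $\lambda+\mu,\lambda+\mu-2,\ldots,\lambda-\mu$; since $\lambda\geq \mu$ these values are all non-negative and thus dominant, so this initial block is exactly the right-hand side of the desired identity.

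It remains to show that the tail $\sum_{i=\mu+1}^{\lambda}\chi(\mu+\lambda-2i)$ vanishes. The exponents appearing here are $\lambda-\mu-2,\lambda-\mu-4,\ldots,\mu-\lambda$, a list symmetric about $-1$. Since $W=\{1,s_\alpha\}$ and the non-trivial element satisfies $s_\alpha\sbullet \nu=-\nu-2$, Lemma \ref{action_characters}(1) yields $\chi(-\nu-2)=-\chi(\nu)$ for every $\nu\in X$, so the symmetric pairs $\chi(-1+j)$ and $\chi(-1-j)$ cancel. If $\lambda-\mu$ is odd, the list also contains the central value $-1$, and $\chi(-1)=0$ by Lemma \ref{action_characters}(2) since $-1\in D\setminus X^+$. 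In either parity the tail is zero, and the Clebsch--Gordan formula follows. I expect no real obstacle: the only small subtlety is the pairing around $-1$, which is handled transparently by the two clauses of Lemma \ref{action_characters}.
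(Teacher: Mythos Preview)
Your proof is correct, but the paper takes a shorter route. The paper simply observes that $\chi(\lambda)\chi(\mu)=\chi(\mu)\chi(\lambda)$ and applies Proposition \ref{product_characters} to expand over the weights of $\Delta(\mu)$ rather than $\Delta(\lambda)$. Since $\mu\leq\lambda$, the weights $\mu,\mu-2,\ldots,-\mu$ shifted by $\lambda$ land in $\{\lambda-\mu,\ldots,\lambda+\mu\}\subseteq X^+$ directly, so no tail appears and no cancellation argument is needed. Your approach expands over the larger module $\Delta(\lambda)$ and then kills the surplus terms via the $W$-antisymmetry of $\chi$; this works fine and illustrates how Lemma \ref{action_characters} absorbs non-dominant contributions, but the commutativity trick buys the result in one line.
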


\begin{proof}
	By Proposition \ref{product_characters} and Lemma \ref{mult_sl2}, we have
		\begin{ceqn}
		\begin{equation*}
			\chi(\lambda)\chi(\mu)=\chi(\mu)\chi(\lambda)=\sum_{i=0}^\mu \chi(\lambda+\mu-2i).\qedhere
		\end{equation*}
	\end{ceqn}
\end{proof}

We are now ready to state the main result of this section.

\begin{prop}\label{sl2}
	Let $\lambda,\mu\in X^+$ be $p$-restricted dominant weights. Then $L(\lambda)\otimes L(\mu)$ is multiplicity-free if and only if $\lambda+\mu$ is $p$-restricted.
\end{prop}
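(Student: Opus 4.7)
The proof will proceed in two directions, both of which follow essentially from results already established in the preliminaries.

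For the ``only if'' direction, I would simply invoke Corollary \ref{sum_p_res}: if $\lambda + \mu$ is not $p$-restricted, then $L(\lambda) \otimes L(\mu)$ has multiplicity, regardless of the type of the root system. This handles one direction immediately.

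For the ``if'' direction, assume $\lambda + \mu$ is $p$-restricted, i.e.\ $\lambda + \mu < p$ under our identification. The plan is to verify that $\lambda + \mu \in \widehat{C_1}$ and then transfer multiplicity-freeness from characteristic zero. Since $\lambda, \mu \geq 0$ and $\lambda + \mu + 1 \leq p$, we indeed have $0 < (\lambda + \mu + \rho, \alpha^\vee) = \lambda + \mu + 1 \leq p$, so $\lambda + \mu \in \widehat{C_1}$. By Corollary \ref{sum_C1}, it suffices to show that $L_{\C}(\lambda) \otimes L_{\C}(\mu)$ is multiplicity-free. But this follows immediately from the Clebsch--Gordan formula (Proposition \ref{character_sl2}): assuming without loss of generality $\lambda \geq \mu$, the decomposition
\[
\chi(\lambda)\chi(\mu) = \chi(\lambda + \mu) + \chi(\lambda + \mu - 2) + \ldots + \chi(\lambda - \mu)
\]
involves pairwise distinct dominant weights $\lambda + \mu, \lambda + \mu - 2, \ldots, \lambda - \mu$, since they form a strictly decreasing sequence. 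Hence $L_{\C}(\lambda) \otimes L_{\C}(\mu)$ is multiplicity-free, and the conclusion follows.

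There is no serious obstacle in this argument: the classification for $\SL_2$ is clean precisely because the fundamental alcove is the unique $p$-restricted alcove, so every $p$-restricted weight automatically satisfies $\Delta(\nu) \cong L(\nu)$ by Lemma \ref{fundamental_alcove}, and the Clebsch--Gordan decomposition is manifestly multiplicity-free in characteristic zero. The main point is to observe that $p$-restrictedness of $\lambda + \mu$ is exactly the condition needed to place it in $\widehat{C_1}$, allowing Corollary \ref{sum_C1} to apply.
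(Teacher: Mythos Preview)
Your proof is correct and follows essentially the same route as the paper: both directions use Corollary~\ref{sum_p_res} and Corollary~\ref{sum_C1} together with the Clebsch--Gordan formula (Proposition~\ref{character_sl2}). You supply a bit more detail in verifying $\lambda+\mu\in\widehat{C_1}$ and in spelling out why the Clebsch--Gordan decomposition is multiplicity-free, but the argument is the same.
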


\begin{proof}
	The "only if" direction is a direct consequence of Corollary \ref{sum_p_res}. For the "if" direction, observe that $L_{\C}(\lambda)\otimes L_{\C}(\mu)$ is multiplicity-free for all $\lambda,\mu\in X^+$ by Proposition \ref{character_sl2}. If $\lambda+\mu<p$, then $\lambda+\mu\in \widehat{C_1}$, and $L(\lambda)\otimes L(\mu)$ is multiplicity-free by Corollary \ref{sum_C1}. 
	%By Proposition \ref{character_sl2}, $L_{\C}(\lambda)\otimes L_{\C}(\mu)$ is multiplicity-free for all $\lambda,\mu\in X^+$, thus $L(\lambda)\otimes L(\mu)$ is multiplicity-free.
	 %observe that for all dominant weights $\nu\leq \lambda+\mu$, we have $\nu$ in the upper closure of the fundamental alcove (i.e. $\nu$ is $p$-restricted), hence $\Delta(\nu)=L(\nu)$ by Lemma \ref{fundamental_alcove}. Therefore, by Proposition \ref{character_sl2}, we get 
%	\begin{align*}
%		\ch L(\lambda)\otimes L(\mu)&=\chi(\lambda)\chi(\mu)=\chi(\lambda+\mu)+\chi(\lambda+\mu-2)+\ldots+\chi(\lambda-\mu+2)+\chi(\lambda-\mu)\\
%		&=\ch L(\lambda+\mu)+\ch L(\lambda+\mu-2)+\ldots+\ch L(\lambda-\mu+2)+\ch L(\lambda-\mu).
%	\end{align*}
%	In particular $L(\lambda)\otimes L(\mu)$ is multiplicity-free.
\end{proof}

Finally, we state the classification theorem for $\SL_2(k)$.

\begin{thm}
	Let $\lambda=\lambda_0+p\lambda_1+\ldots+p^n\lambda_n,\;\mu=\mu_0+p\mu_1+\ldots+p^n\mu_n\in X^+$ be dominant weights with $\lambda_i,\mu_i$ $p$-restricted for all $i\in\{0,\ldots,n\}$. Then $L(\lambda)\otimes L(\mu)$ is multiplicity-free if and only if $\lambda_i+\mu_i$ is $p$-restricted for all $i\in\{0,\ldots,n\}$.
\end{thm}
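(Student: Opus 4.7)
The proof is essentially a direct combination of two results already established in the excerpt, so there is no real obstacle. The plan is to invoke Proposition \ref{padic} to reduce the statement to a question about each pair $(\lambda_i,\mu_i)$ of $p$-restricted components separately, and then apply Proposition \ref{sl2} to each such pair.

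More concretely, I would proceed as follows. By Proposition \ref{padic} (which relies on Steinberg's tensor product theorem and the uniqueness of the $p$-adic expansion of a dominant weight), the tensor product $L(\lambda)\otimes L(\mu)$ is multiplicity-free if and only if $L(\lambda_i)\otimes L(\mu_i)$ is multiplicity-free for every $i\in\{0,\ldots,n\}$. Since each $\lambda_i$ and $\mu_i$ is $p$-restricted by assumption, Proposition \ref{sl2} applies to each factor and tells us that $L(\lambda_i)\otimes L(\mu_i)$ is multiplicity-free if and only if $\lambda_i+\mu_i$ is $p$-restricted. Combining these two equivalences yields the statement.

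The only thing worth writing out carefully is the chain of "if and only if" statements, to make clear that both directions follow from applying Proposition \ref{padic} and Proposition \ref{sl2} in sequence. No new computation is needed, and neither the characteristic nor the rank enters beyond what is already handled in the cited results.
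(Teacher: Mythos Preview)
Your proposal is correct and follows exactly the same approach as the paper: reduce via Proposition \ref{padic} to the $p$-restricted components and then apply Proposition \ref{sl2} to each pair.
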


\begin{proof}
	By Proposition \ref{padic}, $L(\lambda)\otimes L(\mu)$ is multiplicity-free if and only if $L(\lambda_i)\otimes L(\mu_i)$ is multiplicity-free for all $i\in\{0,\ldots,n\}$. We conclude by Proposition \ref{sl2}.
\end{proof}

\cleardoublepage

\section{$\SL_3$}

In this section, we establish the classification of multiplicity-free tensor products of simple $\SL_3$-modules with $p$-restricted highest weight.
We fix $G=\SL_3(k)$ with root system $\Phi$ of type $A_2$ and $\Pi=\{\alpha_1,\alpha_2\}$ a base of $\Phi$. With respect to this base, we have $\rho =\alpha_1+\alpha_2$.
For $\lambda=x\omega_1+y\omega_2\in X_{\R}$, we write $\lambda=(x,y)$. In particular, we have $\alpha_1=(2,-1)$, $\alpha_2=(-1,2)$ and $\rho=(1,1)$.

 We will prove the following theorem:

\begin{thm}\label{SL3}
	Let $\lambda=(a,b),\mu=(c,d)\in X^+$ be non-zero $p$-restricted dominant weights. Up to the reordering of $\lambda$ and $\mu$, the module $L(\lambda)\otimes L(\mu)$ is multiplicity-free if and only if one of the following holds:
	\begin{enumerate}[label = \emph{(\arabic*)}] 
		\item $b=d=0$ and $a+c<p$,
		\item $a=c=0$ and $b+d<p$,
		\item $b=c=0$ and $a+d<p-1$ or $(a,d)\in \{(p-1,1),(1,p-1)\}$,
		%\item $b=c=0$ and $(a,d)\in \{(p-1,1),(1,p-1)\}$,
		\item $b=0$ and $a+c+d<p-1$,
		\item $a=0$ and $b+c+d<p-1$,
		\item $b=0$, $c+d=p-1$, $a+c<p$ and $a<c+2$,
		\item $a=0$, $c+d=p-1$, $b+d<p$ and $b<d+2$,
		\item $b=0$, $c+d>p-1$, $a+c<p$ and $a+d<p$,
		\item $a=0$, $c+d>p-1$, $b+c<p$ and $b+d<p$ or
		\item $a+b<p-1$, $c+d=p-1$, $a+b+c<p$ and $a+b+d<p$.
	\end{enumerate}
\end{thm}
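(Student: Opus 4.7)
The strategy is to attack both directions of the equivalence case-by-case, organized by the position of $\lambda+\mu$ relative to the fundamental alcove $\widehat{C_1}$. When $\lambda+\mu\in\widehat{C_1}$, every dominant $\nu\leq\lambda+\mu$ lies in $\widehat{C_1}$ by Lemma~\ref{C1}, so $L(\nu)\cong\Delta(\nu)$ by Lemma~\ref{fundamental_alcove} and Corollary~\ref{sum_C1} reduces the question to characteristic~$0$. When $\lambda+\mu\notin\widehat{C_1}$ but the tensor product is still multiplicity-free, the plan is to invoke Theorem~\ref{comp_red_0}: establish complete reducibility of $L(\lambda)\otimes L(\mu)$ separately, then observe that $L_{\C}(\lambda)\otimes L_{\C}(\mu)$ is multiplicity-free in characteristic~$0$ by the $A_2$ case of Stembridge's classification (for instance, whenever one of $\lambda,\mu$ is supported on a single fundamental weight). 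The characteristic-$0$ inputs can be extracted from Proposition~\ref{product_characters} combined with the explicit weights of $\Delta(\lambda)$ given by Lemma~\ref{-1} and Proposition~\ref{Cavallin}.

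For the ``if'' direction, cases (1), (2), (4), (5), and the bulk of (3) all have $\lambda+\mu\in\widehat{C_1}$ (the numerical conditions $a+c<p$, $b+d<p$, $a+c+d<p-1$, etc., are exactly $(\lambda+\mu+\rho,\alpha_h^\vee)\leq p$), so the verification is reduced to the characteristic-$0$ statement. The remaining cases -- the boundary points $(a,d)\in\{(p-1,1),(1,p-1)\}$ of (3) and the configurations (6)--(10) -- require Theorem~\ref{comp_red_0}. Here I would build the Weyl filtration of $\Delta(\lambda)\otimes\Delta(\mu)$ guaranteed by Theorem~\ref{Weyl_filtration}, read off the dominant highest weights $\nu$ of its layers from Proposition~\ref{product_characters} applied to the known weights of $\Delta(\lambda)$, and check one-by-one that each such $\nu$ lies in $\widehat{C_1}$. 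By Lemma~\ref{fundamental_alcove} the corresponding layers $\Delta(\nu)$ are then irreducible, giving complete reducibility.

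For the ``only if'' direction I would proceed by contrapositive. If $(\lambda,\mu)$ falls outside all ten listed configurations, one of two mechanisms produces a repeated composition factor. First, if $\lambda+\mu$ fails to be $p$-restricted, Corollary~\ref{sum_p_res} is immediate and eliminates many pairs. Otherwise, I would identify a specific dominant weight $\eta<\lambda+\mu$ whose composition multiplicity in the tensor product is at least two, and verify this using Argument~\ref{argument1}: compute $m_M(\eta)$ via the weights of $L(\lambda)$ and $L(\mu)$, subtract the contributions of composition factors already accounted for (via Freudenthal's formula or Proposition~\ref{Cavallin} applied to the relevant Weyl modules), and conclude that $[M:L(\eta)]\geq 2$. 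When both $L(\lambda),L(\mu)$ happen to lie in $\widehat{C_1}$ they are tilting, so $M$ is tilting by Theorem~\ref{tensor_tilting}, and Lemma~\ref{argument_tilting} gives a quicker obstruction: any composition factor $L(\eta)$ with $\eta\notin\widehat{C_1}$ immediately forces multiplicity.

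The main obstacle will be the fine-grained boundary analysis in cases (6)--(10), where $c+d=p-1$ places $\mu$ exactly on the upper wall of $\widehat{C_1}$ and the Weyl filtration of $\Delta(\lambda)\otimes\Delta(\mu)$ is long and parameter-dependent. The subtle point is that the precise numerical conditions in each case (for example $a<c+2$ and $a+c<p$ in (6), or the three simultaneous bounds $a+b<p-1$, $a+b+c<p$, $a+b+d<p$ in (10)) are exactly what is needed for every layer $\Delta(\nu)$ of the filtration to satisfy $\nu\in\widehat{C_1}$, and failing any one of them produces a $\nu$ outside $\widehat{C_1}$ which, via Jantzen's $p$-sum formula (Proposition~\ref{Jantzen_p_sum}) and the Strong Linkage Principle (Proposition~\ref{strong_linkage}), creates a repeated composition factor. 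I would tabulate, for each of (6)--(10) and their defining parameters $(a,b,c,d)$, the full list of $\nu$'s appearing, so that both directions can be verified uniformly from the same bookkeeping.
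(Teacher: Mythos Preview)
Your plan breaks down at the heart of the classification---cases (6)--(10)---for two independent reasons.

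First, in case (10) both $\lambda=(a,b)$ and $\mu=(c,d)$ have $a\cdot b\cdot c\cdot d\neq 0$, so by Theorem~\ref{char_0_A2} the characteristic-zero tensor product $L_{\C}(\lambda)\otimes L_{\C}(\mu)$ is \emph{not} multiplicity-free. Hence Theorem~\ref{comp_red_0} is simply inapplicable there, and no reduction to characteristic zero can work. The paper instead writes $\ch L(a,b)$ as a difference $\ch(L(a,0)\otimes L(0,b))-\ch(L(a-1,0)\otimes L(0,b-1))$ (Lemma~\ref{diff_tensor}) and bootstraps from the already-computed decompositions in cases (6)--(9); this is a genuinely positive-characteristic phenomenon with no characteristic-zero analogue.

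Second, even in cases (6)--(9) your proposed route to complete reducibility fails. You plan to check that every layer $\Delta(\nu)$ in the Weyl filtration of $\Delta(\lambda)\otimes\Delta(\mu)$ has $\nu\in\widehat{C_1}$ and is therefore irreducible. But already $\nu=\lambda+\mu$ lies in $\widehat{C_2}$ in all of these cases (the defining inequalities force $(\lambda+\mu+\rho,\alpha_h^\vee)>p$), so $\Delta(\lambda+\mu)$ is not irreducible and the argument collapses at the first step. Worse, in (8)--(9) one has $\mu\in C_2$, so $L(\mu)\not\cong\Delta(\mu)$ and the Weyl filtration of $\Delta(\lambda)\otimes\Delta(\mu)$ does not describe $L(\lambda)\otimes L(\mu)$ at all. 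What the paper actually does is an explicit character computation: since $\mu\in\widehat{C_2}$ one has $\ch L(\mu)=\chi(\mu)-\chi(\mu-\Theta(\mu)\rho)$ by Remark~\ref{characters_alcove2}, one expands the product with $\chi(\lambda)=\ch L(\lambda)$ via Proposition~\ref{product_characters} and Lemma~\ref{weights_a0}, and then regroups the resulting alternating sum of Weyl characters into pairs $\chi(\nu)-\chi(\nu-\Theta(\nu)\rho)=\ch L(\nu)$ with $\nu\in\widehat{C_2}$. The combinatorics of this regrouping---and its failure when the numerical bounds are violated---is where the precise conditions in (6)--(10) enter, and is the real content of Propositions~\ref{acd_p-1}, \ref{acd_p} and~\ref{abcd}. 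Complete reducibility is obtained only \emph{a posteriori} from multiplicity-freeness via Lemma~\ref{completely_reducible}, not the other way around.
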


 We start by recalling some facts about the structure of those simple modules. Then we consider the relation between characters of simple modules and Weyl characters. Finally, we will establish a sequence of propositions which yield the classification.\\

%e fix $\Phi$ a root system of type $A_2$ with base $\Pi=\{\alpha_1,\alpha_2\}$. With respect to this base, we have $\rho =\alpha_1+\alpha_2$.
%	For $\lambda=x\omega_1+y\omega_2\in X_{\R}$, we write $\lambda=(x,y)$. In particular, we have $\alpha_1=(2,-1)$, $\alpha_2=(-1,2)$ and $\rho=(1,1)$.

\subsection{Alcoves}
In this subsection, we describe the $p$-restricted alcoves of a root system of type $A_2$. There are two such alcoves which we define to be the \emph{fundamental alcove}
$$C_1:=\{\lambda\in X_{\R}|\;(\lambda+\rho,\alpha_1^\vee)>0,\;(\lambda+\rho,\alpha_2^\vee)>0,\;(\lambda+\rho,(\alpha_1+\alpha_2)^\vee)<p\}$$
and the \emph{second alcove}
$$C_2:=\{\lambda\in X_{\R}|\;(\lambda+\rho,\alpha_1^\vee)<p,\;(\lambda+\rho,\alpha_2^\vee)<p,\;(\lambda+\rho,(\alpha_1+\alpha_2)^\vee)>p\}.$$
Therefore, all $p$-restricted dominant weights belong to $\widehat{C_1}\sqcup\widehat{C_2}$. Using the notation previously defined, we get
$$\widehat{C_1}\cap X^+=\{(a,b)\in \N^2| \; a+b\leq p-2\}$$
and
$$\widehat{C_2}\cap X^+=\{(a,b)\in \N^2|\; a<p,\; b<p, \; a+b> p-2\}.$$
Furthermore, we define the walls
\begin{ceqn}
\begin{align*}
	&F_{1,2}:=\overline{C_1}\cap \overline{C_2}=\{(x,y)\in [-1,p-1]^2|\; x+y=p-2\},\\
	&F_{2,3}:=\{\lambda\in \overline{C_2}|\; (\lambda+\rho,\alpha_1^\vee)=p\},\\
	&F_{2,3'}:=\{\lambda\in \overline{C_2}|\; (\lambda+\rho,\alpha_2^\vee)=p\}.
\end{align*}

Thus we get 
\begin{align*}
	&F_{1,2}\cap X^+=\{(a,b)\in \N^2|\; a+b=p-2\},\\
	&F_{2,3}\cap X^+=\{(p-1,b)\in \N^2|\; b\leq p-1\},\\
	&F_{2,3'}\cap X^+=\{(a,p-1)\in \N^2|\; a\leq p-1\}.
\end{align*}
\end{ceqn}

\begin{rem}
	Following the labelling of the alcoves from \cite{SL3_5}, we have $F_{2,i}=\overline{C_2}\cap \overline{C_i}$ for $i=3,3'$.
\end{rem}

Let us illustrate these alcoves with a picture. The blue arrows form the root system. The black arrows are the fundamental weights, generating the weight lattice (in black). The region containing the dominant weights is coloured in green. The red triangles are the walls of the alcoves.

\begin{center}
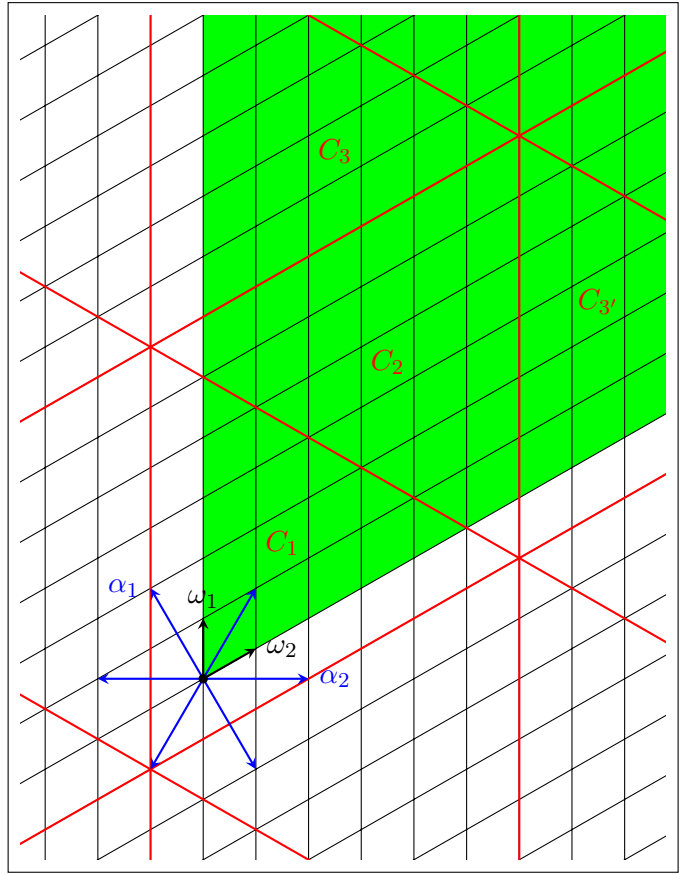

\begin{tikzpicture}[framed,baseline={(current bounding box.center)},scale=0.8]
	\clip (-3,-3) rectangle (7.6,11);
	
	\pgftransformcm{cos(30)}{sin(30)}{cos(90)}{sin(90)}{\pgfpoint{0cm}{0cm}}
	
	\fill[green,opacity=.3] (0,0) rectangle (24,24);
	
	\draw[black, thin] (-24,-24) grid[step=1cm] (24,24);
	
	\draw[red,thick, shift={(-1,-1)}, scale=7] (-16,-16) grid (24,24);
	\foreach \x in {-4,-3,...,5}{
		\foreach \y in {-4,-3,...,5}{
			\draw[red, thick]  (-1+7*\x,-1+7*\y) -- (-1+7*\x+7,-1+7*\y-7);
		}
	}
	
	\draw [blue,thick,-stealth](0,0) -- (2,-1) node[right]{$\alpha_2$};
	\draw [blue,thick,-stealth](0,0) -- (-1,2) node[left]{$\alpha_1$};
	\draw [blue,thick,-stealth](0,0) -- (1,1);
	\draw [blue,thick,-stealth](0,0) -- (-2,1);
	\draw [blue,thick,-stealth](0,0) -- (1,-2);
	\draw [blue,thick,-stealth](0,0) -- (-1,-1);
	
	\draw [thick,-stealth](0,0) -- (1,0) node[right]{$\omega_2$};
	\draw [thick,-stealth](0,0) -- (0,1) node[above]{$\omega_1$};
	
	\node[red]  at (3.5,3.5){$C_2$};
	\node[red]  at (1.5,1.5){$C_1$};
	\node[red]  at (2.5,7.5){$C_3$};
	\node[red]  at (7.5,2.5){$C_{3'}$};
	
	%\node[scale=.7] at (1,0){$\lambda_1$};
	
	\node[draw,circle,fill,scale=.3] at (0,0){};
	
	%\draw [thick,-stealth](0,0) -- (1,1);
\end{tikzpicture}
\captionof{figure}{Alcoves for $A_2$ and $p=7$.}
\end{center}

%\begin{lemma}\label{A2_fundamental}
	%Let $\lambda\in \widehat{C_1}\cap X^+$ and $\nu\in X^+$ such that $\nu \leq \lambda$. Then $\nu \in \widehat{C_1}$. In particular, $L(\nu)=\Delta(\nu)$.
%\end{lemma}

%\begin{proof}
	%Let $\lambda=(a,b)$ and let $x,y\in \N$ be such that $\nu=\lambda-x\alpha_1-y\alpha_2$. Thus we get $$\nu=(a-2x+y,b+x-2y)=:(c,d).$$ 
	%By hypothesis, $\nu$ is dominant, thus $c\geq 0$ and $d\geq 0$. Moreover, $c+d=a+b-x-y\leq a+b\leq p-2$. Therefore, $\nu \in \widehat{C_1}\cap X^+$.
	%The last observation is a direct consequence of Lemma \ref{fundamental_alcove}.
	%This is a direct consequence of Lemmas \ref{C1} and \ref{fundamental_alcove}.
%\end{proof}

\subsection{Structure of Weyl modules and weights in irreducible modules}
In this subsection, we compute the composition factors of Weyl modules with $p$-restricted highest weight and the multiplicity of certain weights in irreducible modules with $p$-restricted highest weight.
%we give some results about the structure of irreducible and Weyl modules, in particular about the multiplicity of certain weights and about composition factors in Weyl modules.

\begin{lemma}\label{Weyl_A2}
	Let $\lambda=(a,b)\in X^+$ be a $p$-restricted dominant weight. 
	\begin{itemize}
		\item If $a+b<p-1$ (i.e. $\lambda\in \widehat{C_1}$), then $\Delta(\lambda)\cong L(\lambda)$.
		\item If $a=p-1$ or $b=p-1$ (i.e. $\lambda\in F_{2,3}$ or $\lambda\in F_{2,3'}$), then $\Delta(\lambda)\cong L(\lambda)$.
		\item Else (i.e. if $\lambda\in C_2$), then $\Delta(\lambda)$ admits exactly two composition factors, $L(\lambda)$ and $L(\lambda-(a+b+2-p)\rho)$.
	\end{itemize}
In particular, $T(\lambda)$ is irreducible if and only if $\lambda\in \widehat{C_1}\cup F_{2,3}\cup F_{2,3'}$.
%In each of the three cases $a+b<p-1$, $a=p-1$ and $b=p-1$ (that is, if $\lambda\in \widehat{C_1}\cup F_{2,3}\cup F_{2,4}$), we have $\Delta(\lambda)=L(\lambda)$ (so the Weyl module is irreducible). Otherwise (that is, if $\lambda\in C_2$), $\Delta(\lambda)$ admits exactly two composition factors, $L(\lambda)$ and $L(\lambda-(a+b+2-p)\rho).$In particular, $\Delta(\lambda)\neq L(\lambda)$ for all $\lambda\in C_2$. 
\end{lemma}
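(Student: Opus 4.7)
The plan is to apply the Jantzen $p$-sum formula (Proposition \ref{Jantzen_p_sum}) to compute $\JSF(\lambda)$ and read off the composition factors of $\Delta(\lambda)$. The positive roots of $A_2$ are $\alpha_1,\alpha_2,\alpha_1+\alpha_2$, with pairings $(\lambda+\rho,\alpha_1^\vee)=a+1$, $(\lambda+\rho,\alpha_2^\vee)=b+1$, $(\lambda+\rho,(\alpha_1+\alpha_2)^\vee)=a+b+2$. Since $\lambda$ is $p$-restricted, $a+1\leq p$ and $b+1\leq p$, so the inequality $0<mp<a+1$ (resp.\ $<b+1$) has no solution in positive integers, and $\alpha_1,\alpha_2$ contribute nothing. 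For $\alpha_1+\alpha_2$, since $a+b\leq 2p-4$ we have $a+b+2<2p$, so only $m=1$ can contribute, and this happens precisely when $a+b+2>p$, i.e.\ $a+b\geq p-1$.

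When $m=1$ is the contributing index, using $\alpha_1+\alpha_2=\rho$ one computes
\[
s_{\alpha_1+\alpha_2,p}\sbullet\lambda \;=\; \lambda-(a+b+2-p)\rho \;=\; (p-b-2,\,p-a-2),
\]
and $\nu_p(p)=1$, so $\JSF(\lambda)=\chi(p-b-2,\,p-a-2)$ whenever $a+b\geq p-1$, and $\JSF(\lambda)=0$ otherwise.

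Now I split into the three cases. If $a+b<p-1$, then $\lambda\in\widehat{C_1}$ and $\Delta(\lambda)\cong L(\lambda)$ is immediate from Lemma \ref{fundamental_alcove}. If $a=p-1$ (the case $b=p-1$ being symmetric), then the candidate weight is $(p-b-2,-1)$; adding $\rho$ gives $(p-b-1,0)\in X^+$, so it lies in $D$, but its second coordinate is negative so it lies in $D\setminus X^+$. By Lemma \ref{action_characters}(2), $\chi(p-b-2,-1)=0$, hence $\JSF(\lambda)=0$ and the Jantzen filtration forces $\Delta(\lambda)^1=0$, so $\Delta(\lambda)\cong L(\lambda)$. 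Finally, if $\lambda\in C_2$, then $a,b\leq p-2$ and $a+b\geq p-1$, so both coordinates of $(p-b-2,p-a-2)$ are non-negative and their sum $2p-4-(a+b)\leq p-3<p-1$ places this weight in $\widehat{C_1}\cap X^+$. By Lemma \ref{fundamental_alcove}, $\chi(p-b-2,p-a-2)=\ch L(p-b-2,p-a-2)=\ch L(\lambda-(a+b+2-p)\rho)$, so $\JSF(\lambda)=\ch L(\lambda-(a+b+2-p)\rho)$. Remark \ref{Jantzen_2_comp} then gives exactly the two composition factors claimed.

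The last statement follows by combining the three cases with Lemma \ref{tilting_irreducible}: $T(\lambda)$ is irreducible precisely when $\Delta(\lambda)\cong L(\lambda)$, which by the case analysis above corresponds exactly to $\lambda\in\widehat{C_1}\cup F_{2,3}\cup F_{2,3'}$, recalling that $F_{2,3}\cap X^+=\{(p-1,b):0\leq b\leq p-1\}$ and $F_{2,3'}\cap X^+=\{(a,p-1):0\leq a\leq p-1\}$. The only mild subtlety is the boundary bookkeeping (checking $\lambda\in C_2$ really means $a,b\leq p-2$ so that $s\sbullet\lambda\in X^+$ in the third case, versus $a=p-1$ or $b=p-1$ in the second case where $s\sbullet\lambda$ falls on a wall and is killed by Lemma \ref{action_characters}); no deep argument beyond the Jantzen sum formula is needed.
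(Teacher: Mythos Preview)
Your proof is correct and follows essentially the same approach as the paper: apply the Jantzen $p$-sum formula, observe that only the root $\alpha_1+\alpha_2$ with $m=1$ can contribute, and then split into cases according to whether $s_{\alpha_1+\alpha_2,p}\sbullet\lambda$ lands in $D\setminus X^+$ (wall case, giving $\JSF(\lambda)=0$) or in $\widehat{C_1}\cap X^+$ (open $C_2$ case, giving a single irreducible character), concluding with Lemma~\ref{tilting_irreducible} for the tilting statement. One minor slip: the bound ``$a+b\leq 2p-4$'' is not valid in general (if $a=b=p-1$ then $a+b=2p-2$), but the conclusion ``only $m=1$ can contribute'' still holds since $a+b+2\leq 2p$ forces $mp<2p$, hence $m=1$.
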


\begin{proof}
	We use the Jantzen $p$-sum formula (Proposition \ref{Jantzen_p_sum}) to show this result.
	\begin{itemize}
		\item If $\lambda \in \widehat{C_1}$, then $\Delta(\lambda)\cong L(\lambda)$ by Lemma \ref{fundamental_alcove}.
		\item If $\lambda \in \widehat{C_2}$, then
		$$\JSF(\lambda)=\chi(s_{\alpha_1+\alpha_2,p}\sbullet \lambda)=\chi(\lambda-(a+b+2-p)\rho)=\chi(p-b-2,p-a-2).$$
		If $a=p-1$ or $b=p-1$, then $\JSF(\lambda)=0$ because $s_{\alpha_1+\alpha_2,p}\sbullet \lambda\in D\setminus X^+$ and $\Delta(\lambda)$ is irreducible. Else, $\Delta(\lambda)$ admits the unique composition series 
		$$[L(\lambda),L(\lambda-(a+b+2-p)\rho)].$$
	\end{itemize}
The last claim follows directly from Lemma \ref{tilting_irreducible}.
\end{proof}
We can also prove Lemma \ref{Weyl_A2} using Proposition \ref{link_reverse}, see for example \cite[Lemma \nolinebreak2.1.4]{these_Nathan}.

To simplify the notation we define the map 
	$$\Theta:\begin{array}{rcl}
		X&\to &\Z\\
		(a,b)&\mapsto&a+b+2-p.
	\end{array}$$

\begin{rem}\label{characters_alcove2}
	By the proof of Lemma \ref{Weyl_A2}, we have
	$$\ch L(\lambda)=\chi(\lambda)-\chi(\lambda-\Theta(\lambda)\rho)$$
	for all dominant weights $\lambda\in \widehat{C_2}$.
	%By Lemma \ref{Weyl_A2}, if $\lambda=(a,b)\in X^+\cap C_2$, then $\Delta(\lambda)$ admits the composition series 
	%$$[L(\lambda),L(\lambda-\Theta(\lambda)\rho)].$$ 
	%In particular, we have the following relation on characters :
%	$$\ch L(\lambda)=\chi(\lambda)-\chi(\lambda-\Theta(\lambda)\rho).$$
%	Moreover, for $\lambda\in F_{2,3}\cup F_{2,3'}$ we have $\lambda-\Theta(\lambda)\rho\in D\setminus X^+$, therefore $\chi(\lambda-\Theta(\lambda)\rho)=0$ by Lemma \ref{action_characters}. By Lemma \ref{Weyl_A2}, we get 
%	$\ch L(\lambda)=\chi(\lambda)=\chi(\lambda)-\chi(\lambda-\Theta(\lambda)\rho)$. We conclude that the relation 
%	$$\ch L(\lambda)=\chi(\lambda)-\chi(\lambda-\Theta(\lambda)\rho)$$
%	is true for all dominant weights $\lambda\in \widehat{C_2}$.
\end{rem}

\begin{lemma}\label{C2_theta}
	Let $\lambda\in X^+$ be $p$-restricted. Then $\lambda\in \widehat{C_2}$ if and only if $\Theta(\lambda)\geq 1$.
\end{lemma}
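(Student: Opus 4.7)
The plan is to unpack the definitions on both sides and compare. Write $\lambda=(a,b)=a\omega_1+b\omega_2$, so that $\lambda+\rho=(a+1)\omega_1+(b+1)\omega_2$. Since the fundamental weights form the dual basis to the simple coroots, I compute
\[
(\lambda+\rho,\alpha_1^\vee)=a+1,\quad (\lambda+\rho,\alpha_2^\vee)=b+1,\quad (\lambda+\rho,(\alpha_1+\alpha_2)^\vee)=a+b+2.
\]
The $p$-restrictedness hypothesis gives $0\le a,b\le p-1$.

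Next I unpack $\widehat{C_2}$. By the definition of the upper closure of an alcove applied to $C_2=C_n$ with $n_{\alpha_1}=n_{\alpha_2}=1$ and $n_{\alpha_1+\alpha_2}=2$, a point $\lambda\in X_{\R}$ lies in $\widehat{C_2}$ exactly when
\[
0<(\lambda+\rho,\alpha_1^\vee)\le p,\quad 0<(\lambda+\rho,\alpha_2^\vee)\le p,\quad p<(\lambda+\rho,(\alpha_1+\alpha_2)^\vee)\le 2p.
\]
Substituting the values above, the first two chains read $1\le a+1\le p$ and $1\le b+1\le p$, both of which follow immediately from $0\le a,b\le p-1$. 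The third chain reads $p<a+b+2\le 2p$; the upper bound $a+b+2\le 2p$ is also automatic since $a,b\le p-1$.

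Hence, for a $p$-restricted dominant weight, the condition $\lambda\in \widehat{C_2}$ is equivalent to the single inequality $a+b+2>p$, that is, $\Theta(\lambda)=a+b+2-p\ge 1$. There is no essential obstacle here; the proof is just a definition check, and the only thing to notice is that the $p$-restrictedness hypothesis is exactly what eliminates the other alcove inequalities.
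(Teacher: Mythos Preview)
Your proof is correct and is exactly the definition check the paper intends: the paper's own proof is the one-line remark that this ``is a direct consequence of the definitions of $\Theta$ and $\widehat{C_2}$,'' and you have simply written out those definitions explicitly. In fact the paper has already recorded $\widehat{C_2}\cap X^+=\{(a,b)\in \N^2\mid a<p,\ b<p,\ a+b>p-2\}$, from which $\Theta(\lambda)\ge 1\iff a+b+2-p\ge 1\iff a+b>p-2$ is immediate.
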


\begin{proof}
	This is a direct consequence of the definitions of $\Theta$ and $\widehat{C_2}$.
\end{proof}

\iffalse
\begin{rem}\label{A2_tilting}
	Let $\lambda\in C_2\cap X^+$. By Lemma \ref{Weyl_A2}, $T(\lambda)$ is not irreducible.
\end{rem}
\fi

\begin{lemma}[{\cite[1.35]{Testerman_AMS}}]\label{-11}
	Let $\lambda=(a,b)\in X^+$ be $p$-restricted with $a\neq 0$ and $b\neq 0$. Then
	$$m_{L(\lambda)}(\lambda-\alpha_1-\alpha_2)=\begin{cases}
		1 &\text{if } a+b=p-1,\\
		2 &\text{otherwise.}
	\end{cases}$$
\end{lemma}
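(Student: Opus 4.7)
The plan is to determine $m_{L(\lambda)}(\lambda-\alpha_1-\alpha_2)$ in two steps: first compute the multiplicity in the Weyl module $\Delta(\lambda)$, and then, whenever $\Delta(\lambda)$ is reducible, subtract the contribution of the additional composition factor supplied by Lemma \ref{Weyl_A2}.

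For the first step, I would apply Proposition \ref{Cavallin} with $J=\{1,2\}$ and $c_1=c_2=1$ (this is permitted because $a,b\ge 1$), giving $\lambda'=\omega_1+\omega_2=\rho$ and $\mu'=0$. Hence $m_{\Delta(\lambda)}(\lambda-\alpha_1-\alpha_2)=m_{\Delta(\rho)}(0)$. Since $\Delta(\rho)$ is the $8$-dimensional adjoint representation of $\sln_3(\C)$, with the six roots each of multiplicity $1$ and the zero weight of multiplicity $\dim\h=2$, this value equals $2$ for every $\lambda=(a,b)$ with $a,b\ge 1$.

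Next I would split into cases using Lemma \ref{Weyl_A2}. If $\lambda\in\widehat{C_1}\cup F_{2,3}\cup F_{2,3'}$, then $\Delta(\lambda)\cong L(\lambda)$ and the answer is simply $2$; one checks that in each of these subcases $a+b\ne p-1$ (in $\widehat{C_1}$ one has $a+b\le p-2$, while $a=p-1$ or $b=p-1$ together with $a,b\ge 1$ forces $a+b\ge p$), which is consistent with the claim. Otherwise $\lambda$ lies in the open second alcove $C_2$, and by Lemma \ref{Weyl_A2} the module $\Delta(\lambda)$ has unique composition series $[L(\lambda),L(\mu')]$ with $\mu'=\lambda-\Theta(\lambda)\rho=(p-b-2,p-a-2)$, so
\[
m_{L(\lambda)}(\lambda-\alpha_1-\alpha_2)=2-m_{L(\mu')}(\lambda-\alpha_1-\alpha_2).
\]

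The main step remaining is to evaluate this last multiplicity, and the key algebraic fact is the identity $\alpha_1+\alpha_2=\rho$ in type $A_2$, which gives the short computation
\[
\mu'-(\lambda-\alpha_1-\alpha_2)=(p-a-b-1)\rho=(p-a-b-1)(\alpha_1+\alpha_2).
\]
If $a+b=p-1$, this difference vanishes, so $\lambda-\alpha_1-\alpha_2=\mu'$ is the highest weight of $L(\mu')$ and contributes multiplicity $1$; the subtraction yields the desired value $1$. If instead $a+b\ge p$, the difference is a strictly negative multiple of $\alpha_1+\alpha_2$, so $\lambda-\alpha_1-\alpha_2\not\le\mu'$ and hence is not a weight of $L(\mu')$; the subtraction yields $2$. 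Since $\lambda\in C_2$ combined with $a,b\ge 1$ forces $a+b\ge p-1$, these two subcases exhaust all possibilities, and the only real obstacle is the bookkeeping needed to ensure that every $p$-restricted dominant weight with $a,b\ge 1$ is covered exactly once by the alcove decomposition.
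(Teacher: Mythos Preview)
Your proof is correct. Note that the paper does not give its own proof of this lemma; it simply cites \cite[1.35]{Testerman_AMS}. Your argument is a clean self-contained proof using only the tools already developed in the paper (Proposition~\ref{Cavallin} for the Weyl module multiplicity and Lemma~\ref{Weyl_A2} for the composition structure of $\Delta(\lambda)$), together with the key type-$A_2$ identity $\alpha_1+\alpha_2=\rho$, which makes the comparison of $\lambda-\alpha_1-\alpha_2$ with the second composition factor $\mu'=\lambda-\Theta(\lambda)\rho$ immediate.
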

%A proof of this Lemma is given in \cite[1.35]{Testerman_AMS}.\\

\iffalse
\begin{lemma}\label{-11_bis}
	Let $\lambda \in X^+$ be a $p$-restricted weight of the form $\lambda=(a,0)$ or $\lambda = (0,a)$ with $a\neq 0$. Then
	$$m_{L(\lambda)}(\lambda-\alpha_1-\alpha_2)=1.$$
\end{lemma}

\begin{proof}
	We consider the case $\lambda=(a,0)$ (the other one is symmetric). 
	Observe that 
	$$s_{\alpha_2}(\lambda-\alpha_1-\alpha_2)=\lambda-\alpha_1,$$
	therefore by Lemmas \ref{orbit} and \ref{-1} we have
	$$m_{L(\lambda)}(\lambda-\alpha_1-\alpha_2)=m_{L(\lambda)}(\lambda-\alpha_1)=1.$$
\end{proof}
\fi

\begin{lemma}\label{weights_a0}
	Let $\lambda=(a,0)\in X^+$ with $a<p$. Then
	$$m_{L(\lambda)}(\lambda-i\alpha_1-j\alpha_2)=\begin{cases}
		1 \quad \text{ if } \; 0\leq j\leq i\leq a,\\
		0 \quad \text{ else.}
	\end{cases}$$
\end{lemma}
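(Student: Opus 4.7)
The plan is to reduce the computation to a straightforward count in characteristic zero. First, I would verify that $L(\lambda)\cong \Delta(\lambda)$: since $a<p$, either $\lambda=(a,0)\in \widehat{C_1}$ (when $a\leq p-2$, since then $a+b=a<p-1$) or $a=p-1$, so in both cases Lemma \ref{Weyl_A2} applies. Consequently $\ch L(\lambda)=\chi(\lambda)=\ch L_{\C}(\lambda)$ by Weyl's character formula, and it suffices to compute the weight multiplicities of $\lambda-i\alpha_1-j\alpha_2$ in the characteristic-zero irreducible module $L_{\C}(\lambda)$.

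Next, I would use the standard identification $L_{\C}((a,0))\cong S^aL_{\C}(\omega_1)$, where $L_{\C}(\omega_1)$ is the three-dimensional natural $\SL_3(\C)$-module. Fixing a weight basis $v_1,v_2,v_3$ of $L_{\C}(\omega_1)$ with respective weights $\omega_1$, $\omega_1-\alpha_1$, and $\omega_1-\alpha_1-\alpha_2$, a basis of $S^aL_{\C}(\omega_1)$ is given by the monomials $v_1^{n_0}v_2^{n_1}v_3^{n_2}$ over all triples $(n_0,n_1,n_2)\in \N^3$ with $n_0+n_1+n_2=a$. The weight of such a monomial is
$$n_0\omega_1+n_1(\omega_1-\alpha_1)+n_2(\omega_1-\alpha_1-\alpha_2)=\lambda-(n_1+n_2)\alpha_1-n_2\alpha_2.$$

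Finally, I would match the weight: for a vector of weight $\lambda-i\alpha_1-j\alpha_2$, the equations $n_1+n_2=i$ and $n_2=j$ together with $n_0+n_1+n_2=a$ admit the unique solution $(n_0,n_1,n_2)=(a-i,\,i-j,\,j)$, whose entries are nonnegative integers if and only if $0\leq j\leq i\leq a$. This yields multiplicity exactly one in this range and zero outside, as claimed. There is no real obstacle; the only substantive step is the reduction $L(\lambda)\cong \Delta(\lambda)$, which is what allows the transfer of the computation from positive characteristic back to characteristic zero and is guaranteed by the hypothesis $a<p$ combined with Lemma \ref{Weyl_A2}.
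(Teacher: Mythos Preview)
Your proof is correct and takes a genuinely different route from the paper's. Both arguments begin with the same reduction $L(\lambda)\cong\Delta(\lambda)$ via Lemma~\ref{Weyl_A2}, but then diverge: the paper computes $\dim\Delta(\lambda)=\binom{a+2}{2}$ by Weyl's degree formula, observes that the set $A=\{\lambda-i\alpha_1-j\alpha_2:0\le j\le i\le a\}$ is saturated with highest weight $\lambda$ (hence every weight in $A$ occurs with multiplicity at least one by Proposition~\ref{saturated}), and concludes by the cardinality match $|A|=\dim\Delta(\lambda)$. You instead realise $L_{\C}(a,0)$ concretely as $S^aL_{\C}(\omega_1)$ and read off the weight multiplicities from the monomial basis. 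Your approach is more explicit and self-contained---it avoids invoking the saturated-set property and the degree formula---and it is in fact the same technique the paper itself employs later for $\Sp_4$ (see Lemmas~\ref{B2_weight_a0} and~\ref{B2_weight_0b}). The paper's argument, on the other hand, is slightly more conceptual and does not require knowing an explicit model for the representation.
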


\begin{proof}
	First, observe that for $a<p$, we have $\Delta(\lambda)\cong L(\lambda)$ by Lemma \ref{Weyl_A2}. 
	Recall that $\rho=\omega_1+\omega_2$. Using Weyl's degree formula (Corollary \ref{Weyl_degree}) with $(\alpha_1,\alpha_1)=2$, we have 
	$$\dim L(\lambda)=\frac{(\lambda+\rho,\alpha_1)(\lambda+\rho,\alpha_2)(\lambda+\rho,\alpha_1+\alpha_2)}{(\rho,\alpha_1)(\rho,\alpha_2)(\rho,\alpha_1+\alpha_2)}=\frac{(a+1)\cdot 1\cdot (a+2)}{1\cdot 1\cdot 2}=\frac{(a+2)(a+1)}{2}.$$
	Observe that $A=\{\lambda-i\alpha_1-j\alpha_2\}_{0\leq j\leq i\leq a}$ is saturated with highest weight $\lambda$. Thus, by Proposition \ref{saturated}, we have $m_{L(\lambda)}(\nu)=m_{\Delta(\lambda)}(\nu)\geq 1$ for all $\nu\in A$. Moreover, $$|A| =\sum_{i=0}^a\sum_{j=0}^i1=\sum_{i=0}^a(i+1)=\sum_{i=1}^{a+1}i=\frac{(a+2)(a+1)}{2}=\dim L(\lambda).$$
	Therefore, $m_{L(\lambda)}(\nu)= 1$ for all $\nu\in A$, as claimed.
	%Let us denote by $W:=L(1,0)$ the natural module, whose weights are $\mu_0=(1,0),\mu_1=\mu_0-\alpha_1,\mu_2=\mu_0-\alpha_1-\alpha_2$ with associated weight vector $v_0,v_1,v_2$ respectively (each of those weights ). Then the symmetric power $S^a W$ has basis 
\end{proof}

\subsection{The classification}

In this subsection, we prove several propositions which yield the classification of \linebreak multiplicity-free tensor products of simple $\SL_3(k)$-modules with $p$-restricted highest weight \linebreak(Theorem \ref{SL3}). We start by stating a theorem from Stembridge (\cite[Theorem 1.1.A]{Stembridge}) which classifies multiplicity-free tensor products of simple $\SL_3(\C)$-modules.

\begin{thm}%[{ \cite[Theorem 1.1.A]{Stembridge}}]
	\label{char_0_A2}
	Let $(a,b),(c,d)\in X^+$ be dominant weights. Then $L_{\C}(a,b) \otimes L_{\C}(c,d)$ is multiplicity-free if and only if $a\cdot b\cdot c\cdot d=0$.
\end{thm}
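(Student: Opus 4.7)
The plan is to prove both directions of the equivalence using Proposition~\ref{product_characters} together with explicit information about weight multiplicities of simple $\SL_3(\C)$-modules.

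For the ``if'' direction, I would first use symmetry to reduce to a single case. Duality for $\SL_3(\C)$ gives $L_\C(a,b)^* \cong L_\C(b,a)$, and since multiplicities in $L \otimes M$ and in $L^* \otimes M^*$ coincide, the four cases $a=0$, $b=0$, $c=0$, $d=0$ all reduce (after possibly swapping the two factors) to the situation $\lambda = (a,0)$ with $\mu = (c,d)$ arbitrary. By Lemma~\ref{weights_a0}, every weight of $L_\C(a,0)$ occurs with multiplicity~$1$, so Proposition~\ref{product_characters} yields
$$\chi(a,0)\,\chi(c,d) \;=\; \sum_{0 \leq j \leq i \leq a} \chi\bigl(c + a - 2i + j,\; d + i - 2j\bigr).$$
Using Lemma~\ref{action_characters}(2), terms whose argument lies on a dot-action reflection hyperplane vanish; by Lemma~\ref{action_characters}(1), each of the remaining terms reduces to a signed dominant Weyl character. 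The crux is then a combinatorial check that the surviving dominant characters are pairwise distinct and all enter with coefficient $+1$, which can be carried out directly by comparing, for each pair $(i,j)$, the dot-orbit representative of $(c + a - 2i + j,\, d + i - 2j)$.

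For the ``only if'' direction, assume $a, b, c, d \geq 1$. I would show that the composition factor $L_\C(a+c-1, b+d-1)$, corresponding to $\eta := \lambda + \mu - \alpha_1 - \alpha_2$, appears with multiplicity at least~$2$. Applying Proposition~\ref{product_characters} and expanding in the basis $\{\chi(\sigma) \mid \sigma \in X^+\}$ via Lemma~\ref{action_characters}, one obtains
$$\bigl[L_\C(\lambda) \otimes L_\C(\mu) : L_\C(\eta)\bigr] \;=\; \sum_{w \in W}(-1)^{l(w)}\, m_{L_\C(\lambda)}\bigl(w \sbullet \eta - \mu\bigr),$$
with the convention that $m_{L_\C(\lambda)}(\nu) = 0$ whenever $\nu$ is not a weight. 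The contribution from $w = e$ is $m_{L_\C(\lambda)}(\lambda - \alpha_1 - \alpha_2)$, which equals~$2$ in characteristic~$0$: Kostant's partition function shows that $\alpha_1 + \alpha_2$ admits exactly two decompositions as a sum of positive roots, namely $\alpha_1 + \alpha_2$ and $(\alpha_1 + \alpha_2)$ (this is the characteristic-$0$ analogue of Lemma~\ref{-11}).

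The main obstacle, and the technical heart of the argument, will be ruling out contributions from the other five Weyl group elements. For each $w \neq e$, one computes $w \sbullet \eta - \mu$ explicitly and verifies that, under the hypothesis $a, b, c, d \geq 1$, at least one coordinate is forced to be more negative than the extremal weights of $L_\C(\lambda)$ allow (the weight polytope being the hexagon with vertices in the $W$-orbit of $\lambda$). Hence $m_{L_\C(\lambda)}(w \sbullet \eta - \mu) = 0$ for all $w \neq e$, so the multiplicity of $L_\C(\eta)$ is exactly~$2$, producing the desired multiplicity in the tensor product.
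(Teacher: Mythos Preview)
Your ``if'' direction is exactly the paper's approach: the reduction by duality to $\lambda=(a,0)$ followed by Proposition~\ref{product_characters} and Lemma~\ref{weights_a0} is precisely what is carried out in Lemma~\ref{A2_acd_char0}. There the ``combinatorial check'' you allude to is done in full: one shows that the sums over $j<\max\{0,i-c\}$ and $j>\min\{i,d\}$ cancel via $s_{\alpha_1}$ and $s_{\alpha_2}$ respectively, leaving a sum of pairwise distinct dominant $\chi$'s. You should be aware that this step is not entirely trivial and occupies about a page of computation.

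Your ``only if'' direction is correct but takes a different route from the paper. You use the Racah--Brauer alternating-sum formula over $W$, whereas the paper uses Argument~\ref{argument1}: compute $m_M(\lambda+\mu-\alpha_1-\alpha_2)=6$ directly from Lemma~\ref{-11}, identify the composition factors $L(\lambda+\mu)$, $L(\lambda+\mu-\alpha_1)$, $L(\lambda+\mu-\alpha_2)$ by weight-space dimension, and bound their contribution to this weight space by $2+1+1$, forcing $[M:L(\lambda+\mu-\rho)]\ge 2$. The paper's approach avoids having to check the five nontrivial $w\in W$ individually and stays entirely within the elementary framework already set up; your approach is more systematic and generalises better, but you do need to actually verify those five vanishings (your sketch ``at least one coordinate is forced to be more negative'' is plausible but not a proof).

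One small correction: your justification $m_{L_\C(\lambda)}(\lambda-\rho)=2$ via Kostant's partition function is garbled --- you list the same decomposition twice, and in any case Kostant's partition function gives Verma module multiplicities, not simple ones. What you need is either the characteristic-zero case of Lemma~\ref{-11} (which follows from Freudenthal's formula, Theorem~\ref{Freudenthal}), or Kostant's multiplicity formula together with the observation that the correction terms vanish for $a,b\ge 1$.
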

%This is a particular case of \cite[Theorem 1.1.A]{Stembridge}.

Before moving on to the classification, recall that $L(a,b)^*\cong L(b,a)$ for all $a,b\in \N$ and observe that $L(\lambda)\otimes L(\mu)$ is multiplicity-free if and only if $(L(\lambda)\otimes L(\mu))^*\cong L(\lambda)^*\otimes L(\mu)^*$ is multiplicity-free. This allows us to treat several cases simultaneously.

\subsubsection{$L(a,0)\otimes L(c,0)$}

\begin{prop}\label{ac}
	Let $\lambda=(a,0),\mu=(c,0)\in X^+$ be $p$-restricted. Then $L(\lambda)\otimes L(\mu)$ is multiplicity-free if and only if $a+c<p$.
\end{prop}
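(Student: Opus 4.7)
The plan is to prove both implications using results already established in the paper; no new computation is really needed.

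For the \emph{only if} direction, suppose $a+c\geq p$. Then $\lambda+\mu=(a+c,0)$ has first coordinate $\geq p$, so $\lambda+\mu$ is not $p$-restricted. Since $\lambda$ and $\mu$ are themselves $p$-restricted, Corollary~\ref{sum_p_res} immediately gives that $L(\lambda)\otimes L(\mu)$ has multiplicity. This takes care of one direction in a single line.

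For the \emph{if} direction, assume $a+c<p$. Then $\lambda+\mu=(a+c,0)$ satisfies $(a+c)+0\leq p-2$, i.e.\ $\lambda+\mu\in\widehat{C_1}$. By Corollary~\ref{sum_C1}, it therefore suffices to prove that the characteristic-zero tensor product $L_{\C}(a,0)\otimes L_{\C}(c,0)$ is multiplicity-free. But this is immediate from Stembridge's classification (Theorem~\ref{char_0_A2}): with $\lambda=(a,0)$ and $\mu=(c,0)$ we have $b=d=0$, so the product $a\cdot b\cdot c\cdot d=0$, and hence the tensor product is multiplicity-free over $\C$.

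Combining the two directions yields the proposition. There is essentially no obstacle here: the only slightly subtle point is noticing that the hypothesis $a+c<p$ places $\lambda+\mu$ in the closure of the fundamental alcove (not just the second alcove), which is exactly what is required to invoke Corollary~\ref{sum_C1} and reduce to the characteristic-zero classification.
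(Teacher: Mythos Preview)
Your proof has a gap in the ``if'' direction. You claim that $a+c<p$ implies $(a+c)+0\leq p-2$, i.e.\ $\lambda+\mu\in\widehat{C_1}$. But $a+c<p$ with $a+c\in\N$ only gives $a+c\leq p-1$, and the boundary case $a+c=p-1$ is not excluded. When $a+c=p-1$, the weight $\lambda+\mu=(p-1,0)$ has coordinate sum $p-1>p-2$, so it lies in $\widehat{C_2}$ (more precisely on the wall $F_{2,3}$), not in $\widehat{C_1}$. Hence Corollary~\ref{sum_C1} does not apply, and your argument does not cover this case.

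The paper's proof handles this by working with Corollary~\ref{char0_bis} rather than Corollary~\ref{sum_C1}. One checks via Lemma~\ref{Weyl_A2} that $\Delta(\lambda)\cong L(\lambda)$, $\Delta(\mu)\cong L(\mu)$, and $\Delta(\lambda+\mu)\cong L(\lambda+\mu)$ (the last because $(a+c,0)$ lies either in $\widehat{C_1}$ or, when $a+c=p-1$, on $F_{2,3}$). Any other dominant weight $\nu<\lambda+\mu$ satisfies $\nu\in\widehat{C_1}$ (since subtracting a nonzero sum of simple roots strictly decreases the coordinate sum), so $\Delta(\nu)\cong L(\nu)$ by Lemma~\ref{A2_fundamental}. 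Then Corollary~\ref{char0_bis} reduces to characteristic zero, and Theorem~\ref{char_0_A2} finishes as you indicated. Your argument is correct for $a+c\leq p-2$; you just need this extra observation to cover $a+c=p-1$.
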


\begin{proof}
	The "only if" direction is a direct consequence of Corollary \ref{sum_p_res}.
	
	Suppose that $a+c<p$. By Lemma \ref{Weyl_A2}, we have $\Delta(\lambda)\cong L(\lambda)$ and $\Delta(\mu)\cong L(\mu)$. If $\nu\leq \lambda+\mu$ is a dominant weight, then either $\nu=\lambda+\mu$ or $\nu \in \widehat{C_1}$. We have $\Delta(\lambda+\mu)\cong L(\lambda+\mu)$\linebreak by Lemma \ref{Weyl_A2} and $\Delta(\nu)\cong L(\nu)$ for all $\nu \in \widehat{C_1}$ by Lemma \ref{A2_fundamental}. Thus we can apply \linebreak Corollary \ref{char0_bis}. By Theorem \ref{char_0_A2}, $L_{\C}(\lambda)\otimes L_{\C}(\mu)$ is multiplicity-free, thus $L(\lambda)\otimes L(\mu)$ is multiplicity-free.
\end{proof}

\begin{rem}
	By duality, $L(0,b)\otimes L(0,d)$ is multiplicity-free if and only if $b+d<p$.
\end{rem}

\subsubsection{$L(a,0)\otimes L(0,d)$}
\begin{prop}\label{ad}
	Let $\lambda=(a,0),\mu=(0,d)\in X^+$ be $p$-restricted with $a,d\neq 0$. Then $L(\lambda)\otimes L(\mu)$ is multiplicity-free if and only if $a+d<p-1$, or $(a,d)\in\{(p-1,1),(1,p-1)\}.$
\end{prop}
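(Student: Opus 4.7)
First, I observe that since $a, d \in \{1, \ldots, p-1\}$, both weights $(a, 0)$ and $(0, d)$ lie in $\widehat{C_1} \cup F_{2,3} \cup F_{2,3'}$; Lemma \ref{Weyl_A2} thus yields $\Delta(\lambda) \cong L(\lambda)$ and $\Delta(\mu) \cong L(\mu)$, and Lemma \ref{tilting_irreducible} ensures both are tilting modules. Consequently $M := L(\lambda) \otimes L(\mu)$ is a tilting module.

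Next, I would derive the character identity
$$\chi(a, 0)\,\chi(0, d) = \sum_{k=0}^{\min(a, d)} \chi(a - k, d - k),$$
by plugging the weight multiplicities of $L(a, 0) \cong \Delta(a, 0)$ from Lemma \ref{weights_a0} into Proposition \ref{product_characters} and collapsing the non-dominant Weyl characters that appear using Lemma \ref{action_characters}(2). (Equivalently, this is the Pieri rule for $\SL_3$, which can also be read off from the classical decomposition of $L_{\C}(a, 0) \otimes L_{\C}(0, d)$.) Since each $\Delta(a - k, d - k)$ has $L(a - k, d - k)$ as its unique simple quotient, this identity implies that $L(a - k, d - k)$ is a composition factor of $M$ for every $k = 0, 1, \ldots, \min(a, d)$.

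For the sufficiency direction: if $a + d < p - 1$ then $\lambda + \mu \in \widehat{C_1}$, so Corollary \ref{sum_C1} combined with Stembridge's result (Theorem \ref{char_0_A2}) applied to $L_{\C}(a, 0) \otimes L_{\C}(0, d)$ (whose parameters satisfy $a \cdot 0 \cdot 0 \cdot d = 0$) gives multiplicity-freeness. For $(a, d) = (p - 1, 1)$, the character identity specialises to $\chi(p - 1, 0)\,\chi(0, 1) = \chi(p - 1, 1) + \chi(p - 2, 0)$; since $(p - 1, 1) \in F_{2,3}$ and $(p - 2, 0) \in \widehat{C_1}$, Lemma \ref{Weyl_A2} identifies each summand with $\ch$ of a simple module, and these two simples being distinct, $M \cong L(p - 1, 1) \oplus L(p - 2, 0)$ is multiplicity-free. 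The case $(1, p - 1)$ follows by applying the duality $(-)^{*}$.

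For the necessity direction, suppose $a + d \geq p - 1$ and $(a, d) \notin \{(p - 1, 1), (1, p - 1)\}$. I will exhibit $k \in \{0, 1, \ldots, \min(a, d)\}$ with $(a - k, d - k) \in C_2 \cap X^+$; then $T(a - k, d - k)$ is not irreducible by the last assertion of Lemma \ref{Weyl_A2}, and since $L(a - k, d - k)$ is a composition factor of the tilting module $M$, Lemma \ref{argument_tilting} forces $M$ to have multiplicity. If $a, d \leq p - 2$, take $k = 0$: both coordinates are at most $p - 2$ and their sum is at least $p - 1$. If $a = p - 1$, the hypothesis forces $d \geq 2$, and $k = 1$ works since $(p - 2, d - 1)$ has both coordinates at most $p - 2$ and sum $p + d - 3 \geq p - 1$; the case $d = p - 1$ is symmetric. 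The main obstacle I anticipate is the bookkeeping needed to establish the character identity from first principles via Proposition \ref{product_characters}, since one must track a number of non-dominant weights that either vanish by Lemma \ref{action_characters}(2) or cancel against each other after reflection into the dominant chamber.
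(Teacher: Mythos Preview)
Your proof is correct but follows a genuinely different line from the paper's. The paper never writes down the Pieri identity $\chi(a,0)\chi(0,d)=\sum_{k=0}^{\min(a,d)}\chi(a-k,d-k)$ at this stage; instead it treats the cases separately by weight counting (Argument~\ref{argument1}). For $\lambda+\mu\in C_2$ it invokes Lemma~\ref{argument_tilting} directly with $\eta=\lambda+\mu$; for $a=p-1,\ d>1$ it computes $m_M(\lambda+\mu-\alpha_1-\alpha_2)=3$ and $m_{L(\lambda+\mu)}(\lambda+\mu-\alpha_1-\alpha_2)=2$ via Lemmas~\ref{-1}, \ref{-11} and~\ref{weights_a0} to extract the composition factor $L(p-2,d-1)\in C_2$; and for $(a,d)=(p-1,1)$ it again counts weights to show every composition factor other than $L(\lambda+\mu)$ has highest weight in $\widehat{C_1}$, then appeals to Proposition~\ref{char0}.

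Your approach front-loads the work into the character identity (which the paper only establishes later, as the special case $c=0$ of Lemma~\ref{A2_acd_char0}) and then reads everything off uniformly: the composition factors $L(a-k,d-k)$ are visible because the transition matrix from Weyl characters to simple characters is unitriangular with non-negative entries, the $(p-1,1)$ case is immediate since both summands lie in $F_{2,3}\cup\widehat{C_1}$, and the necessity direction reduces to a short case split locating some $(a-k,d-k)\in C_2$. This is more systematic and avoids the ad hoc weight-multiplicity computations; the price is that the Pieri identity itself needs the cancellation argument you flag as the main obstacle. The paper's route is more elementary at each step but less unified.
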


\begin{proof}
	 To simplify the notation, we set $M:=L(\lambda)\otimes L(\mu)$. 
	 
	 By Lemma \ref{Weyl_A2}, $L(\lambda)$ and $L(\mu)$ are tilting modules. By Theorem \ref{tensor_tilting}, $M$ is a tilting module. We know that $L(\lambda+\mu)$ is a composition factor of $M$. If $\lambda+\mu \in C_2$, then we conclude that $M$ has multiplicity using Lemma \ref{argument_tilting}.\\

	 If $a+d<p-1$ (i.e. if $\lambda+\mu\in \widehat{C_1}$), we apply Corollary \ref{sum_C1} and Theorem \ref{char_0_A2} to conclude that $M$ is multiplicity-free.\\

	Suppose $a=p-1$ and $d>1$ (in particular, $\lambda+\mu \in F_{2,3}$). We use Lemmas \ref{-1} and \ref{weights_a0} and Argument \ref{argument1} to compute
	\begin{align*}
		&m_M(\lambda+\mu)=1, &&m_M(\lambda+\mu-\alpha_1)=1,&&m_M(\lambda+\mu-\alpha_2)=1,\\
		&m_M(\lambda+\mu-\alpha_1-\alpha_2)=3.
	\end{align*}
	Since $m_{L(\lambda+\mu)}(\lambda+\mu-\alpha_1)=m_{L(\lambda+\mu)}(\lambda+\mu-\alpha_2)=1$ (Lemma \ref{-1}), we conclude that\linebreak $L(\lambda+\mu-\alpha_1)$ and $L(\lambda+\mu-\alpha_2)$ are not composition factors of $M$.
	\iffalse
	By Lemma \ref{weights_a0}, we have $m_{L(\lambda)}(\lambda-\alpha_1-\alpha_2)=m_{L(\mu)}(\mu-\alpha_1-\alpha_2)=1.$ 
	Therefore, using Lemma \ref{weights_a0}, we have
	\begin{align*}
		&m_M(\lambda+\mu-\alpha_1-\alpha_2)=m_{L(\lambda)}(\lambda)\cdot m_{L(\mu)}(\mu-\alpha_1-\alpha_2)+m_{L(\lambda)}(\lambda-\alpha_1-\alpha_2)\cdot m_{L(\mu)}(\mu)+\\
		&m_{L(\lambda)}(\lambda-\alpha_1)\cdot m_{L(\mu)}(\mu-\alpha_2)+m_{L(\lambda)}(\lambda-\alpha_2)\cdot m_{L(\mu)}(\mu-\alpha_1)=1+1+1+0=3.
	\end{align*}
	\fi
 Moreover,\linebreak $m_{L(\lambda+\mu)}(\lambda+\mu-\alpha_1-\alpha_2)=2$ by Lemma \ref{-11}, thus $L(\lambda+\mu-\alpha_1-\alpha_2)$ is a composition factor of $M$. But $\lambda+\mu-\alpha_1-\alpha_2=(p-2,d-1)\in C_2$, so $T(\lambda+\mu-\alpha_1-\alpha_2)$ is not irreducible by Lemma \ref{Weyl_A2}. By Lemma \ref{argument_tilting}, we conclude that $M$ has multiplicity.
	The case $a>1,\; d=p-1$ is symmetric.\\
	
	Finally consider the case $a=p-1,\; d=1$ (in particular, $\lambda+\mu \in F_{2,3}$). Again, we have 
		\begin{align*}
		&m_M(\lambda+\mu)=1,&&m_M(\lambda+\mu-\alpha_1)=1,&&m_M(\lambda+\mu-\alpha_2)=1.
	\end{align*}
	Therefore, $L(\lambda+\mu)$ is a composition factor, and all other composition factors have highest weight $\nu$ with $\nu\leq \lambda+\mu-\alpha_1-\alpha_2$, $\nu\leq \lambda+\mu-2\alpha_1$ or $\nu\leq \lambda+\mu-2\alpha_2$. In particular, $\nu\in \widehat{C_1}$. By Lemma \ref{Weyl_A2}, this implies that for all dominant weights $\nu$ such that $L(\nu)$ is a composition factor of $M$, we have $L(\nu)\cong \Delta(\nu)$. Using Proposition \ref{char0} and Theorem \ref{char_0_A2}, we conclude that $M$ is multiplicity-free.
	The case $a=1,\; d=p-1$ is symmetric
\end{proof}

\subsubsection{$L(a,0)\otimes L(c,d)$}

\begin{prop}\label{acd_1}
		Let $\lambda=(a,0),\mu=(c,d)\in X^+$ be $p$-restricted with $a,c,d\neq 0$ and $a+c+d<p-1$ (i.e. $\lambda+\mu\in \widehat{C_1}$). Then $L(\lambda)\otimes L(\mu)$ is multiplicity-free.
\end{prop}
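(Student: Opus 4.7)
The plan is to reduce to the characteristic $0$ case and invoke Stembridge's classification, which is possible precisely because the hypothesis $a+c+d<p-1$ places $\lambda+\mu$ in the upper closure of the fundamental alcove.

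First I would verify that $\lambda+\mu=(a+c,d)\in \widehat{C_1}$. Using the description of $\widehat{C_1}\cap X^+$ from the alcove discussion, the only non-trivial condition to check is $(\lambda+\mu+\rho,(\alpha_1+\alpha_2)^\vee)\leq p$, which amounts to $a+c+d+2\leq p$; this is exactly the hypothesis. The two conditions $(\lambda+\mu+\rho,\alpha_i^\vee)>0$ for $i=1,2$ hold because $\lambda+\mu$ is dominant.

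Next, I would apply Corollary \ref{sum_C1}: since $\lambda+\mu\in \widehat{C_1}$, the tensor product $L(\lambda)\otimes L(\mu)$ is multiplicity-free if and only if $L_{\C}(\lambda)\otimes L_{\C}(\mu)$ is multiplicity-free. (Equivalently, by Lemmas \ref{C1} and \ref{A2_fundamental}, every dominant $\nu\leq \lambda+\mu$ satisfies $\Delta(\nu)\cong L(\nu)$, so Corollary \ref{char0_bis} applies and the composition factors of the tensor product over $k$ and over $\C$ agree.)

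Finally, I would invoke Stembridge's Theorem \ref{char_0_A2}: since $\lambda=(a,0)$ has second coordinate zero, the product $a\cdot 0\cdot c\cdot d=0$, so $L_{\C}(\lambda)\otimes L_{\C}(\mu)$ is multiplicity-free. Combining this with the previous step yields that $L(\lambda)\otimes L(\mu)$ is multiplicity-free. There is no substantive obstacle here; the argument is the same as in Proposition \ref{ac}, with the only difference being that the hypothesis is now formulated as membership in $\widehat{C_1}$ rather than a one-dimensional inequality on $a+c$.
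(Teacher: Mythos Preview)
Your proposal is correct and follows exactly the paper's approach: the paper's proof is the single line ``This is a direct consequence of Corollary~\ref{sum_C1} and Theorem~\ref{char_0_A2},'' which is precisely what you unpack. Your additional verification that $\lambda+\mu\in\widehat{C_1}$ and the explicit check $a\cdot 0\cdot c\cdot d=0$ are fine elaborations of the same argument.
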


\begin{proof}
	This is a direct consequence of Corollary \ref{sum_C1} and Theorem \ref{char_0_A2}.
\end{proof}

\iffalse
\begin{rem}
	By duality, for $\lambda=(0,b),\mu=(c,d)\in X^+$ $p$-restricted with $b,c,d\neq 0$ and $b+c+d<p-1$, then $L(\lambda)\otimes L(\mu)$ is multiplicity-free.
\end{rem}
\fi

\begin{prop}\label{acd_2}
	Let $\lambda=(a,0),\mu=(c,d)\in X^+$ be $p$-restricted with $a,c,d\neq 0$, \linebreak$a+c+d\geq p-1$ and $L(\mu)\cong \Delta(\mu)$ (i.e. $\lambda+\mu \notin \widehat{C_1}$ and $\mu\in \widehat{C_1}\cup F_{2,3}\cup F_{2,3'}$). Then $L(\lambda)\otimes L(\mu)$ has multiplicity.
\end{prop}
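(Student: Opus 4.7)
The core tool is the tilting structure of $M := L(\lambda)\otimes L(\mu)$. Since $\lambda = (a,0)$, Lemma \ref{Weyl_A2} gives $L(\lambda) \cong \Delta(\lambda)$, while $L(\mu) \cong \Delta(\mu)$ by hypothesis; hence by Theorem \ref{tensor_tilting}, $M = \Delta(\lambda)\otimes\Delta(\mu)$ is a tilting module and $\ch M = \chi(\lambda)\chi(\mu)$. Thanks to Lemma \ref{argument_tilting}, it suffices to exhibit a composition factor $L(\eta)$ of $M$ for which $T(\eta)$ is not irreducible; by Lemma \ref{Weyl_A2}, any $\eta$ in the interior $C_2\cap X^+$ of the second alcove has this property.

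The plan is to locate such an $\eta$ by a case analysis on the position of $\lambda+\mu$. If $\lambda+\mu$ lies in the interior of $C_2$, take $\eta := \lambda+\mu$ itself, since it is always a composition factor of $M$ (being its highest weight). Otherwise $\lambda+\mu$ lies on the wall $F_{2,3}$ (so $a+c=p-1$), on the wall $F_{2,3'}$ (so $d=p-1$), or strictly outside $\widehat{C_2}$ (so $a+c\geq p$); in each sub-case I choose $\eta := \lambda+\mu - i\alpha_1 - j\alpha_2$ with $0\leq j\leq i\leq a$ so that $\eta\in C_2\cap X^+$. The hypotheses $c,d\geq 1$, $a+c+d\geq p-1$, and $\mu\in\widehat{C_1}\cup F_{2,3}\cup F_{2,3'}$ provide the inequalities that make this possible; for instance, when $a+c=p-1$ and $\mu\in\widehat{C_1}$, the bound $c+d\leq p-2$ combined with $c\geq 1$ forces $d\leq p-3$, so the choice $\eta := \lambda+\mu-\alpha_1 = (p-3,d+1)$ lies in the interior of $C_2$.

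To verify that $L(\eta)$ actually appears as a composition factor of $M$, I apply Argument \ref{argument1}: the identity $m_M(\nu) = \sum_{\nu_1+\nu_2=\nu} m_{L(\lambda)}(\nu_1)\, m_{L(\mu)}(\nu_2)$, together with the explicit weight multiplicity formulas of Lemmas \ref{-1}, \ref{-11}, and \ref{weights_a0}, allows me to compare $m_M(\eta)$ with the contributions $m_{L(\nu)}(\eta)$ of composition factors $L(\nu)$ with $\nu > \eta$ identified at previous steps of the descent. A strict inequality forces $L(\eta)$ to be a new composition factor. In the model sub-case above, a direct calculation yields $m_M(\eta) = 2 > 1 = m_{L(\lambda+\mu)}(\eta)$, which suffices.

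The principal obstacle is the sub-case where $\lambda+\mu$ is not $p$-restricted ($a+c\geq p$): there, the descent must traverse a wall before reaching $C_2$, so the suitable pair $(i,j)$ depends more delicately on $(a,c,d)$. The most natural candidate is $\eta := \lambda+\mu-(a+c-p+1)\alpha_1 = (2p-2-a-c,\, d+a+c-p+1)$, for which the combined estimates $a\leq p-1$ and $c+d\leq p-2$ (or the analogous wall conditions) give $a+c+d\leq 2p-3$, placing $\eta$ inside $C_2$. The non-trivial verification is that the coefficient of $\chi(\eta)$ in the Weyl-character expansion $\chi(\lambda)\chi(\mu) = \sum_{0\leq j\leq i\leq a}\chi(\lambda+\mu-i\alpha_1-j\alpha_2)$ (Proposition \ref{product_characters} and Lemma \ref{weights_a0}) remains strictly positive once one reduces each non-dominant term to the fundamental domain via Lemma \ref{action_characters}; this is where the detailed bookkeeping is needed.
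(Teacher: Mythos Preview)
Your overall framework---tilting structure of $M$, then Lemma~\ref{argument_tilting} applied to a composition factor $L(\eta)$ with $\eta\in C_2$---matches the paper's, and your treatment of the cases $\lambda+\mu\in C_2$ and $a+c=p-1$, $\mu\in\widehat{C_1}$ is exactly what the paper does.

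The gap is in the two remaining sub-cases, where you are working much harder than necessary. When $a+c\geq p$, the weight $\lambda+\mu$ itself is not $p$-restricted, so Corollary~\ref{sum_p_res} finishes immediately; there is no need to descend to $C_2$ or to analyse cancellations in the Weyl-character expansion. The ``detailed bookkeeping'' you flag as the principal obstacle is simply not required. Likewise, when $d=p-1$ (and $a+c<p$), the paper does not look for $\eta\in C_2$ at all: using Argument~\ref{argument1} and Lemma~\ref{-1} one checks $m_M(\lambda+\mu-\alpha_1)=2>1=m_{L(\lambda+\mu)}(\lambda+\mu-\alpha_1)$, so $L(\lambda+\mu-\alpha_1)=L(a+c-2,p)$ is a composition factor; since its highest weight is not $p$-restricted, Corollary~\ref{composition_p} gives the multiplicity. (The paper also notes that $d=p-2$ together with $L(\mu)\cong\Delta(\mu)$ forces $c=p-1$, hence $a+c\geq p$, so this boundary value is already absorbed in the first shortcut.)

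In short: your outline is sound, but the part you left unfinished is precisely the part that should be dispatched first, in one line each, via the ``non-$p$-restricted composition factor'' criterion rather than by locating a weight in $C_2$.
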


\begin{proof}
	To simplify the notation, we set $M:=L(\lambda)\otimes L(\mu)$. By assumption $\Delta(\lambda)\cong L(\lambda)$ and $\Delta(\mu)\cong L(\mu)$, in particular these two modules are tilting modules. By Theorem \ref{tensor_tilting}, $M$ is a tilting module.\\
	
	If $a+c\geq p$, we conclude directly by Corollary \ref{sum_p_res} that $M$ has multiplicity. So we can restrict our attention to the case $a+c < p$.
	
	Observe that for $d=p-2$, the condition $\Delta(\lambda)\cong L(\lambda)$ forces $c=p-1$ (Lemma \ref{Weyl_A2}), so $a+c\geq p$ and $M$ has multiplicity.\\

	Suppose that $d= p-1$. By Lemma \ref{-1} and Argument \ref{argument1}, we have
		\begin{align*}
		&m_M(\lambda+\mu)=1,
		&&m_M(\lambda+\mu-\alpha_1)=2.
	\end{align*}
	By Lemma \ref{-1}, we have
	$m_{L(\lambda+\mu)}(\lambda+\mu-\alpha_1)=1$, so we deduce that $L(\lambda+\mu-\alpha_1)$ is a composition factor of $M$. But $\lambda+\mu-\alpha_1=(a+c-2,p)$ is not $p$-restricted, thus $M$ has multiplicity by Corollary \ref{composition_p}.\\
	
	Suppose that $a+c<p-1$ and $d<p-1$. By assumption, $\lambda+\mu\in C_2$, and $T(\lambda+\mu)$ is not irreducible by Lemma \ref{Weyl_A2}. Therefore, by Lemma \ref{argument_tilting}, $M$ has multiplicity. \\
	
	Finally, consider the case $a+c=p-1,\; d< p-2$. Again, we have 
		\begin{align*}
		&m_M(\lambda+\mu)=1, &&m_M(\lambda+\mu-\alpha_1)=2,
	\end{align*}
	and $L(\lambda+\mu-\alpha_1)$ is a composition factor of $M$. Observe that $\lambda+\mu-\alpha_1=(p-3,d+1)\in C_2$, so $T(\lambda+\mu-\alpha_1)$ is not irreducible. We conclude by Lemma \ref{argument_tilting} that $M$ has multiplicity.
\end{proof}

\begin{prop}\label{acd_3}
	Let $\lambda=(a,0),\mu=(c,d)\in X^+$ be $p$-restricted with $a,c,d\neq 0$. If $d+\min(a,c)\geq p$, then $L(\lambda)\otimes L(\mu)$ has multiplicity.
\end{prop}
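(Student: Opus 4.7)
My plan is to exhibit a composition factor of $M := L(\lambda)\otimes L(\mu)$ whose highest weight is not $p$-restricted, and then conclude by Corollary \ref{composition_p}.

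First, if $a+c\geq p$, then $\lambda+\mu=(a+c,d)$ is already not $p$-restricted, and since $L(\lambda+\mu)$ is a composition factor of $M$, Corollary \ref{composition_p} finishes this case. So I will assume $a+c<p$. The hypothesis $d+\min(a,c)\geq p$ then gives $\min(a,c)\geq p-d$, and combining this with the trivial inequality $a+c\geq 2\min(a,c)$ yields the crucial bound
$$a+c\;\geq\; 2(p-d).$$
Setting $k:=p-d$, the candidate weight
$$\eta_k \;:=\; \lambda+\mu - k\alpha_1 \;=\; \bigl(a+c-2(p-d),\; p\bigr)$$
is dominant by the above inequality, but its second coordinate equals $p$, hence $\eta_k$ is not $p$-restricted. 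It remains to show that $L(\eta_k)$ is a composition factor of $M$.

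For this I will apply Argument \ref{argument1} inductively at the weights $\lambda+\mu-r\alpha_1$ for $r=0,1,\ldots,k$, noting that $k\leq \min(a,c)$. For each such $r$, by Lemma \ref{weights_a0} the weights of $L(\lambda)=L(a,0)$ are of the form $\lambda-i\alpha_1-j\alpha_2$ with $0\leq j\leq i\leq a$; in a splitting $\nu_1+\nu_2=\lambda+\mu-r\alpha_1$ with $\nu_1$ a weight of $L(\lambda)$ and $\nu_2$ a weight of $L(\mu)$, the requirement $\nu_2\leq \mu$ forces $j=0$ and $0\leq i\leq r$. Since $r\leq \min(a,c)$, Lemma \ref{-1} gives $m_{L(\lambda)}(\lambda-i\alpha_1)=m_{L(\mu)}(\mu-(r-i)\alpha_1)=1$ for each such $i$, yielding $m_M(\lambda+\mu-r\alpha_1)=r+1$. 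In parallel, I check that the dominant weights $\nu$ in the interval $\lambda+\mu-r\alpha_1\leq \nu\leq \lambda+\mu$ are exactly $\eta_s:=\lambda+\mu-s\alpha_1$ for $s=0,1,\ldots,r$: writing $\nu=\lambda+\mu-s_1\alpha_1-s_2\alpha_2$ with $s_1,s_2\geq 0$, the lower bound condition forces $s_2=0$ and $s_1\leq r$. For each $s<k$ the weight $\eta_s$ is $p$-restricted, so Lemma \ref{-1} gives $m_{L(\eta_s)}(\eta_s-(r-s)\alpha_1)=1$; together with the trivial contribution $m_{L(\eta_r)}(\eta_r)=1$ at the top, Argument \ref{argument1} produces the recursion $\sum_{s=0}^{r}[M:L(\eta_s)]=r+1$, and induction on $r$ yields $[M:L(\eta_r)]=1$ for every $r=0,1,\ldots,k$. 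Taking $r=k$ shows that $L(\eta_k)$ is a composition factor of $M$, and Corollary \ref{composition_p} concludes.

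The principal technical point will be closing the induction off cleanly at $r=k$: although $\eta_k$ itself is not $p$-restricted so Lemma \ref{-1} cannot be applied to it, the weight $\eta_k$ contributes to $m_M(\lambda+\mu-k\alpha_1)$ only through its own highest-weight term $m_{L(\eta_k)}(\eta_k)=1$, and the clean description of the interval of dominant weights above $\lambda+\mu-r\alpha_1$ (forcing the $\alpha_2$-component to vanish) rules out stray composition factors that might otherwise disrupt the count.
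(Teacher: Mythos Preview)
Your proof is correct and follows essentially the same approach as the paper's own proof: reduce to the case $a+c<p$, then use Argument~\ref{argument1} together with Lemma~\ref{-1} to show inductively that $[M:L(\lambda+\mu-s\alpha_1)]=1$ for $0\leq s\leq p-d$, and conclude via Corollary~\ref{composition_p} since $\lambda+\mu-(p-d)\alpha_1=(a+c-2(p-d),p)$ is not $p$-restricted. Your version is in fact slightly more explicit about why only the weights $\eta_s$ appear in the relevant interval and why the induction closes at $r=k$.
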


\begin{proof}
	To simplify the notation, we set $M:=L(\lambda)\otimes L(\mu)$. 
	
	If $a+c\geq p$, we conclude directly using Corollary \ref{sum_p_res}. Thus we can assume $a+c<p$.
	We show inductively that $[M:L(\lambda+\mu-s\alpha_1)]=1$ for $0\leq s \leq p-d\leq \min(a,c)$.
	For $0\leq s \leq \min(a,c)$, using Lemma \ref{-1}, we have
	$$m_M(\lambda+\mu-s\alpha_1)=\sum_{i=0}^sm_{L(\lambda)}(\lambda-i\alpha_1)\cdot m_{L(\mu)}(\mu-(s-i)\alpha_1)=s+1.$$
	Moreover, if $0\leq i<s\leq p-d$, then $\lambda+\mu-i\alpha_1=(a+c-2i,d+i)$ is $p$-restricted and by Lemma \ref{-1}, we have
	$$\sum_{i=0}^{s-1}m_{L(\lambda+\mu-i\alpha_1)}(\lambda+\mu-s\alpha_1)=s.$$
	We know that $[M:L(\lambda+\mu)]=1$, and combining the two previous equations, we conclude inductively that $[M:L(\lambda+\mu-s\alpha_1)]=1$ for $0\leq s \leq p-d$.
	In particular, $L(\lambda+\mu-(p-d)\alpha_1)$ is a composition factor of $M$. But $\lambda+\mu-(p-d)\alpha_1=(a+c-2(p-d),p)$ is not $p$-restricted. We conclude by Corollary \ref{composition_p} that $M$ has multiplicity.
\end{proof}

\begin{prop}\label{acd_p-1}
		Let $\lambda=(a,0),\mu=(c,d)\in X^+$ be $p$-restricted with $a,c,d\neq 0$,\linebreak $c+d=p-1$ and $a+c < p$ (in particular, $\mu\in \widehat{C_2}$). Then $L(\lambda)\otimes L(\mu)$ is multiplicity-free if and only if $a<c+2$.
\end{prop}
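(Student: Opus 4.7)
The plan is to compute $\ch L(\lambda)\ch L(\mu)$ via Weyl filtrations and then decompose into simple characters, tracking multiplicities through a combinatorial cancellation argument between two $\Delta$-filtrations.

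First I would observe that $\lambda = (a,0)$ with $a\leq p-2$ (from $a+c<p$, $c\geq 1$) lies in $\widehat{C_1}$, so $\Delta(\lambda)\cong L(\lambda)$ by Lemma \ref{Weyl_A2} and $\ch L(\lambda) = \chi(\lambda)$. Since $c+d = p-1$ with $c, d\geq 1$, the weight $\mu$ lies in the interior of $C_2$ with $\Theta(\mu) = 1$, and $\mu-\rho = (c-1, d-1)\in\widehat{C_1}$; by Lemma \ref{Weyl_A2} the Weyl module $\Delta(\mu)$ has composition factors $L(\mu), L(\mu-\rho)$ with $L(\mu-\rho) = \Delta(\mu-\rho)$, so $\ch L(\mu) = \chi(\mu) - \chi(\mu-\rho)$. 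Therefore
\begin{equation*}
\ch L(\lambda)\ch L(\mu) = \chi(\lambda)\chi(\mu) - \chi(\lambda)\chi(\mu-\rho).
\end{equation*}
By Theorem \ref{Weyl_filtration} combined with the characteristic-zero Pieri rule (which is multiplicity-free by Theorem \ref{char_0_A2}), $\Delta(\lambda)\otimes\Delta(\mu)$ has a Weyl filtration indexed by $S_1 := \{(s,t,u)\in\N^3 : s+t+u = a,\ t\leq c,\ u\leq d\}$ with factors $\Delta(c+s-t, d+t-u)$; similarly $\Delta(\lambda)\otimes\Delta(\mu-\rho)$ has Weyl factors $\Delta(c-1+s-t, d-1+t-u)$ indexed by $S_2$ (with $c, d$ replaced by $c-1, d-1$). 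The hypotheses $a+c<p$ and $c+d = p-1$ ensure that all these highest weights are $p$-restricted dominant.

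Next I would apply Lemma \ref{Weyl_A2} to each Weyl character: for $\xi = (c+s-t, d+t-u)$ one has $\Theta(\xi) = s-u+1$, and $\chi(\xi) = \ch L(\xi)$ if $\xi\in \widehat{C_1}\cup F_{2,3}\cup F_{2,3'}$, whereas $\chi(\xi) = \ch L(\xi) + \ch L(c+u-t-1, d+t-s-1)$ if $\xi$ is in the interior of $C_2$. The key observation is that the involution $\sigma: (s, t, u)\mapsto (u, t, s)$ sends the ``extra'' factor $L(c+u-t-1, d+t-s-1)$ arising from $S_1$ to the ``primary'' factor from $\Delta(\lambda)\otimes\Delta(\mu-\rho)$ at $\sigma(s,t,u)$, and likewise interchanges primaries-from-$S_1$ with extras-from-$S_2$. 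These paired contributions cancel whenever $\sigma$ respects the domain $S_2$, namely when $t\leq c-1$ and $s\leq d-1$; surviving contributions therefore come exactly from triples with $t = c$ or $s\geq d$.

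For the ``only if'' direction ($a\geq c+2$), I would exhibit two triples in $S_1$ sharing a common simple factor which both survive cancellation: take $(s, t, u) = (a-c-1, c, 1)$, which lies in $S_1$ and yields $\xi = (a-c-1, p-2)$ in the interior of $C_2$ with $\Theta = a-c-1\geq 1$, contributing the extra $L(0, p-a+c-1)$; and take $(s, t, u) = (0, c, a-c+1)$, which lies in $S_1$ (since $a-c+1\leq d = p-1-c$ is equivalent to $a\leq p-2$), contributing the primary $L(0, p-a+c-1)$. Both triples have $t = c$, so their $\sigma$-images have $t = c > c-1$ and lie outside $S_2$, whence neither contribution cancels and $[L(\lambda)\otimes L(\mu) : L(0, p-a+c-1)]\geq 2$.

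For the ``if'' direction ($a\leq c+1$), the strategy is to show that no surviving terms give multiplicity above one. When $a\leq c+1$ a direct check rules out triples in $S_1^{int}$ with $t = c$ (the only candidates have $s+u\leq 1$ with $s\geq u$, and those yield $\xi\in F_{2,3'}$, hence not in the $C_2$ interior) and with $s\geq d$ (which would force $2c+1\geq p-1$, incompatible with $a+c<p$ and $a = c+1$); hence every extra from $S_1$ cancels against a primary from $S_2$ via $\sigma$, and symmetrically for primaries of $S_1$ versus extras of $S_2$. The main obstacle is then to verify that after all these cancellations, the leftover primary contributions from $S_1$ and $S_2$ assemble into a sum of distinct simple characters each with coefficient exactly $1$; this requires a careful case analysis to rule out accidental coincidences between the uncanceled primaries, particularly at the boundary $a = c+1$, where divisibility conditions of the form $3t \equiv 0 \pmod{c}$ can produce candidate coincidences that must be checked to be individually matched.
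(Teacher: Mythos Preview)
Your setup and the ``only if'' direction are on the right track, with one arithmetic slip: the triple $(0,c,a-c+1)$ has $s+t+u=a+1$, not $a$; you want $(0,c,a-c)$, which indeed yields the primary $\Delta(0,p-1+c-a)=L(0,p-1+c-a)$. With that fix, a direct check shows that $L(0,p-1+c-a)$ is contributed exactly twice from $S_1$ (once as a primary at $(0,c,a-c)$, once as an extra at $(a-c-1,c,1)$) and not at all from $S_2$, so $[M:L(0,p-1+c-a)]=2$ for $a\geq c+2$.

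However, your cancellation mechanism for the ``if'' direction has a genuine error. The claim that $\sigma$ interchanges primaries from $S_1$ with extras from $S_2$ is false: the primary at $(s,t,u)\in S_1$ is $L(c+s-t,d+t-u)$ while the extra at $\sigma(s,t,u)=(u,t,s)\in S_2$ (when it exists) is $L(c-1+s-t,d-1+t-u)$, and these never coincide. The correct pairing for that direction is $(s,t,u)\mapsto(u-1,t,s+1)$, not $\sigma$. Consequently your description of the surviving terms (``$t=c$ or $s\geq d$'') is not the right bookkeeping, and the closing remark about divisibility conditions $3t\equiv 0\pmod c$ is a symptom of the framework not closing cleanly. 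The ``if'' direction as written is therefore not a proof.

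The paper avoids all of this by telescoping at the Weyl-character level \emph{before} decomposing into simples. Using Lemma~\ref{weights_a0} and Proposition~\ref{product_characters} directly, and the fact that $\rho=\alpha_1+\alpha_2$, the difference of the two double sums collapses to
\[
\ch M=\sum_{i=0}^{a}\Bigl(\chi(\mu+\lambda-i\alpha_1)-\chi\bigl(\mu+\lambda-i\alpha_1-(a+1-i)\rho\bigr)\Bigr),
\]
and since $\Theta(\mu+\lambda-i\alpha_1)=a+1-i$, for $a\leq c$ every $\mu+\lambda-i\alpha_1$ lies in $\widehat{C_2}$ and each bracket is simply $\ch L(\mu+\lambda-i\alpha_1)$ by Remark~\ref{characters_alcove2}; the case $a=c+1$ needs one extra application of Lemma~\ref{action_characters}. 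For $a\geq c+2$ the same formula is pushed further and yields explicit coefficients~$2$. This is both shorter and bypasses the involution bookkeeping entirely.
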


\begin{proof}
		We set $M:=L(\lambda)\otimes L(\mu)$. 
		
		By Remark \ref{characters_alcove2}, we have $\ch L(\mu)=\chi(\mu)-\chi(\mu-\rho)$ and $\ch L(\lambda)=\chi(\lambda)$. Therefore, by Proposition \ref{product_characters}, we have
		\begin{align*}
			\ch M&=\chi(\lambda)(\chi(\mu)-\chi(\mu-\rho))=\sum_{\nu\in X}m_{\Delta(\lambda)}(\nu)(\chi(\mu+\nu)-\chi(\mu+\nu-\rho)).
		\end{align*}
	
	Now we use Lemma \ref{weights_a0} to rewrite this sum. We get 
	\begin{align}
		\ch M&=\sum_{\nu\in X}m_{\Delta(\lambda)}(\nu)(\chi(\mu+\nu)-\chi(\mu+\nu-\rho))\nonumber\\
		&=\sum_{i=0}^a\sum_{j=0}^i\chi(\mu+\lambda-i\alpha_1-j\alpha_2)-\chi(\mu+\lambda-i\alpha_1-j\alpha_2-\rho)\nonumber\\
		&=\sum_{i=0}^a\sum_{j=0}^i\chi(\mu+\lambda-i\alpha_1-j\alpha_2)-\sum_{i=0}^a\sum_{j=0}^i\chi(\mu+\lambda-i\alpha_1-j\alpha_2-\rho)\nonumber\\
		&=\sum_{i=0}^a\sum_{j=0}^i\chi(\mu+\lambda-i\alpha_1-j\alpha_2)-\sum_{i=1}^{a+1}\sum_{j=1}^i\chi(\mu+\lambda-i\alpha_1-j\alpha_2)\nonumber\\
		&=\sum_{i=0}^a\chi(\mu+\lambda-i\alpha_1)-\sum_{j=1}^{a+1}\chi(\mu+\lambda-(a+1)\alpha_1-j\alpha_2)\nonumber\\
		&=\sum_{i=0}^a\chi(\mu+\lambda-i\alpha_1)-\sum_{j=0}^{a}\chi(\mu+\lambda-(a+1)\alpha_1-(a+1-j)\alpha_2)\nonumber\\
		&=\sum_{i=0}^a\chi(\mu+\lambda-i\alpha_1)-\chi(\mu+\lambda-(a+1)\alpha_1-(a+1-i)\alpha_2)\nonumber\displaybreak\\
		&=\sum_{i=0}^a\chi(\mu+\lambda-i\alpha_1)-\chi(\mu+\lambda-i\alpha_1-(a+1-i)\rho)\nonumber\\
		&=\sum_{i=0}^a\chi(\mu+\lambda-i\alpha_1)-\chi(\mu+\lambda-i\alpha_1-\Theta(\mu+\lambda-i\alpha_1)\rho),\label{eq200}
	\end{align}
where in the last equality, we used that, for $i\in\{0,\ldots,a\}$, we have $$\mu+\lambda-i\alpha_1=(a+c-2i,d+i),$$ so $\Theta(\mu+\lambda-i\alpha_1)=a+1-i$. Now we dissociate several cases.\\

First, suppose that $a\leq c$. In this case, for $i\in\{0,\ldots,a\}$ we have $0\leq a+c-2i<p$ and $0<d+i\leq d+c < p$, therefore $\mu+\lambda-i\alpha_1$ is dominant and $p$-restricted. Moreover, $$\Theta(\mu+\lambda-i\alpha_1)=a+1-i\geq 1.$$ 
By Lemma \ref{C2_theta}, we have $\mu+\lambda-i\alpha_1\in \widehat{C_2}$ for all $i\in\{0,\ldots,a\}$. Using Remark \ref{characters_alcove2}, we get
\begin{align}
		\ch M &=\sum_{i=0}^a\chi(\mu+\lambda-i\alpha_1)-\chi(\mu+\lambda-i\alpha_1-\Theta(\mu+\lambda-i\alpha_1)\rho)\nonumber\\
		&=\sum_{i=0}^a\ch L(\mu+\lambda-i\alpha_1).\label{eq203}
\end{align}
Thus we conclude that $M$ is multiplicity-free.\\

Suppose that $a=c+1$. Using line \eqref{eq200}, we have
\begin{align*}
	\ch M=&\sum_{i=0}^c\Big(\chi(\mu+\lambda-i\alpha_1)-\chi(\mu+\lambda-i\alpha_1-\Theta(\mu+\lambda-i\alpha_1)\rho)\Big)\\
	&\hspace{4em}+\chi(\mu+\lambda-a\alpha_1)-\chi(\mu+\lambda-a\alpha_1-\Theta(\mu+\lambda-a\alpha_1)\rho).
\end{align*}
In this case, for $i\in\{0,\ldots,c\}$ we have $0\leq a+c-2i<p$ and $0<d+i\leq d+c < p$ so $\mu+\lambda-i\alpha_1$ is dominant and $p$-restricted. Moreover, 
$$\Theta(\mu+\lambda-i\alpha_1)=a+1-i\geq 2,$$ 
so by Lemma \ref{C2_theta}, $\mu+\lambda-i\alpha_1\in \widehat{C_2}$ for all $i\in\{0,\ldots,c\}$. By Remark \ref{characters_alcove2}, we have
\begin{ceqn}
\begin{equation}
	\chi(\mu+\lambda-i\alpha_1)-\chi(\mu+\lambda-i\alpha_1-\Theta(\mu+\lambda-i\alpha_1)\rho)=\ch L(\mu+\lambda-i\alpha_1).\label{eq1}
\end{equation}
Moreover, $$\mu+\lambda-a\alpha_1=(-1,d+a)\in D\setminus X^+,$$ 
so by Lemma \ref{action_characters} we have 
\begin{equation}
	\chi(\mu+\lambda-a\alpha_1)=0.\label{eq2}
\end{equation} 
\end{ceqn}
Finally, $$\mu+\lambda-a\alpha_1-\Theta(\mu+\lambda-a\alpha_1)\rho =\mu+\lambda-a\alpha_1-\rho=(-2,d+a-1)=(-2,p-1).$$ 
Using again Lemma \ref{action_characters}, we get
\begin{align}
\chi(-2,p-1)&=-\chi(s_{\alpha_1}\sbullet(-2,p-1))=-\chi(0,p-2)=-\ch L(0,p-2)\nonumber\\&=-\ch L(\lambda+\mu-a\alpha_1-\alpha_2)\label{eq3}
\end{align}
where in the last equality, we used that
$\lambda+\mu-a\alpha_1-\alpha_2=(0,p-2)\in X^+$
is $p$-restricted and Lemma \ref{Weyl_A2}. Combining lines \eqref{eq1}, \eqref{eq2} and \eqref{eq3}, we conclude that
\begin{align}
\ch M=&\sum_{i=0}^c\Big(\chi(\mu+\lambda-i\alpha_1)-\chi(\mu+\lambda-i\alpha_1-\Theta(\mu+\lambda-i\alpha_1)\rho)\Big)\nonumber\\
&\hspace{4em}+\chi(\mu+\lambda-a\alpha_1)-\chi(\mu+\lambda-a\alpha_1-\Theta(\mu+\lambda-a\alpha_1)\rho)\nonumber\\
=&\ch L(\lambda+\mu-a\alpha_1-\alpha_2)+\sum_{i=0}^c\ch L(\mu+\lambda-i\alpha_1),\label{eq230}
\end{align}
and $M$ is multiplicity-free.\\

Finally, we consider the case $a\geq c+2$. We show that $M$ has multiplicity. Using line \eqref{eq200}, we have 
\begin{align}
	\ch M=&\sum_{i=0}^c\Big(\chi(\mu+\lambda-i\alpha_1)-\chi(\mu+\lambda-i\alpha_1-\Theta(\mu+\lambda-i\alpha_1)\rho)\Big)\label{eq4}\\
	&\hspace{4em}+\sum_{i=c+1}^a\Big(\chi(\mu+\lambda-i\alpha_1)-\chi(\mu+\lambda-i\alpha_1-\Theta(\mu+\lambda-i\alpha_1)\rho)\Big).\label{eq5}
\end{align}
By the same argument as in the previous case (see line \eqref{eq1}), we write \eqref{eq4} as 
\begin{equation}
	\sum_{i=0}^c\Big(\chi(\mu+\lambda-i\alpha_1)-\chi(\mu+\lambda-i\alpha_1-\Theta(\mu+\lambda-i\alpha_1)\rho)\Big)=\sum_{i=0}^c\ch L(\mu+\lambda-i\alpha_1).\label{eq6}
\end{equation}
We rewrite the sum of line \eqref{eq5} by making the expression of the weights explicit. We get
\begin{align}
	&\sum_{i=c+1}^a\Big(\chi(\mu+\lambda-i\alpha_1)-\chi(\mu+\lambda-i\alpha_1-\Theta(\mu+\lambda-i\alpha_1)\rho)\Big)\nonumber\\
	&=\sum_{i=c+1}^a\Big(\chi(a+c-2i,d+i)-\chi(c-i-1,d+2i-a-1)\Big)\nonumber\\
	&=\sum_{i=1}^{a-c}\Big(\chi(a-c-2i,d+c+i)-\chi(-i-1,d+2c+2i-a-1)\Big)\nonumber\\
	&=\sum_{i=1}^{a-c}\chi(a-c-2i,p-1+i)\label{eq7}\\
	&\hspace{4em}-\sum_{i=1}^{a-c}\chi(-i-1,d+2c+2i-a-1)\label{eq8}.
\end{align}
We show that the sum in line \eqref{eq7} is equal to zero. 
%To do so we will dissociate the cases $a-c$ odd and even.\\
Using Lemma \ref{action_characters}, we have
\begin{align*}
	&\sum_{i=1}^{a-c}\chi(a-c-2i,p-1+i)\\
	&\qquad=-\sum_{i=1}^{a-c}\chi(s_{\alpha_1}\sbullet(a-c-2i,p-1+i))\\
	&\qquad=-\sum_{i=1}^{a-c}\chi(-a+c+2i-2,p-i+a-c)\displaybreak\\
	&\qquad=-\sum_{i=1}^{a-c}\chi(-a+c+2(a-c+1-i)-2,p-(a-c+1-i)+a-c)\\
	&\qquad=-\sum_{i=1}^{a-c}\chi(a-c-2i,p-1+i)
\end{align*}
\iffalse
Suppose first that $a-c$ is even. Then, using Lemma \ref{action_characters} in the second equality, we have 
\begin{align*}
&\sum_{i=1}^{a-c}\chi(a-c-2i,p-1+i)\\
&=\sum_{i=1}^{\frac{a-c}2}\Big(\chi(a-c-2i,p-1+i)+\chi(c-2-a+2i,p+a-c-i)\Big)\\
&=\sum_{i=1}^{\frac{a-c}2}\Big(\chi(a-c-2i,p-1+i)-\chi(s_{\alpha_1}\sbullet(c-2-a+2i,p+a-c-i))\Big)\\
&=\sum_{i=1}^{\frac{a-c}2}\Big(\chi(a-c-2i,p-1+i)-\chi(a-c-2i,p-1+i)\Big)=0.
\end{align*}\\
%$s_{\alpha_1}\sbullet(c-2-a+2i,p+a-c-i)=(c-2-a+2i,p+a-c-i)-(c-1-a+2i)(2,-1)=(a-c-2i,p-1+i)$

If $a-c$ is odd, observe that $(-1,p-1+\frac{a-c+1}2)\in D\setminus X^+$, therefore $\chi(-1,p-1+\frac{a-c+1}2)=0$ by Lemma \ref{action_characters}. Thus we get
\begin{align*}
	&\sum_{i=1}^{a-c}\chi(a-c-2i,p-1+i)\\
	&=\sum_{i=1}^{\frac{a-c-1}2}\Big(\chi(a-c-2i,p-1+i)+\chi(c-2-a+2i,p+a-c-i)\Big)\\
	&\qquad\qquad+\chi(-1,p-1+\frac{a-c+1}2)\\
	&=\sum_{i=1}^{\frac{a-c-1}2}\Big(\chi(a-c-2i,p-1+i)-\chi(s_{\alpha_1}\sbullet(c-2-a+2i,p+a-c-i))\Big)+0\\
	&=\sum_{i=1}^{\frac{a-c-1}2}\Big(\chi(a-c-2i,p-1+i)-\chi(a-c-2i,p-1+i)\Big)=0.
\end{align*}
\fi
Therefore, 
\begin{ceqn}
\begin{align}
	\sum_{i=1}^{a-c}\chi(a-c-2i,p-1+i)=0.\label{eq9}
\end{align}
\end{ceqn}\\

Now, let us study line \eqref{eq8}. Using Lemma \ref{action_characters}, we have
\begin{align*}
	&\sum_{i=1}^{a-c}\chi(-i-1,d+2c+2i-a-1)=-\sum_{i=1}^{a-c}\chi(s_{\alpha_1}\sbullet(-i-1,d+2c+2i-a-1))\\
	&=-\sum_{i=1}^{a-c}\chi(i-1,d+2c+i-a-1)=-\sum_{i=0}^{a-c-1}\chi(i,p-1+c-a+i).
\end{align*}
Observe that $(i,p-1+c-a+i)\in X^+$ is $p$-restricted for all $i\in\{0,\ldots,a-c-1\}$. We dissociate the cases $a-c$ even and odd.\\

If $a-c$ is even, we get
\begin{align*}
	&\sum_{i=0}^{a-c-1}\chi(i,p-1+c-a+i)=\sum_{i=0}^{\frac{a-c}2-1}\chi(a-c-1-i,p-2-i)+\chi(i,p-1+c-a+i)\\
&=\sum_{i=0}^{\frac{a-c}2-1}\chi(a-c-1-i,p-2-i)\\
&\qquad\qquad\qquad+\chi((a-c-1-i,p-2-i)-\Theta(a-c-1-i,p-2-i)\rho).
\end{align*}
For $i\in \{0,\ldots,\frac{a-c}{2}-1\}$, we have 
$$(a-c-1-i)+(p-2-i)\geq a-c-1+p-(a-c-2)=p+1,$$ thus $(a-c-1-i,p-2-i)\in C_2$. This implies that $$(a-c-1-i,p-2-i)-\Theta(a-c-1-i,p-2-i)\rho \in C_1,$$ 
and in particular, by Lemma \ref{Weyl_A2} we have \begin{align*}
	&\chi((a-c-1-i,p-2-i)-\Theta(a-c-1-i,p-2-i)\rho)\\
	&\qquad\qquad=\ch L((a-c-1-i,p-2-i)-\Theta(a-c-1-i,p-2-i)\rho).
\end{align*}
Therefore, by Remark \ref{characters_alcove2}, we have
\begin{align}
	&\sum_{i=0}^{\frac{a-c}2-1}\chi(a-c-1-i,p-2-i)+\chi((a-c-1-i,p-2-i)-\Theta(a-c-1-i,p-2-i)\rho)\nonumber\\
	&=\sum_{i=0}^{\frac{a-c}2-1}\ch L(a-c-1-i,p-2-i)\nonumber\\
	&\qquad\qquad\qquad+2\chi((a-c-1-i,p-2-i)-\Theta(a-c-1-i,p-2-i)\rho)\nonumber\displaybreak\\
	&=\sum_{i=0}^{\frac{a-c}2-1}\ch L(a-c-1-i,p-2-i)\nonumber\\
	&\qquad\qquad\qquad+2\ch L((a-c-1-i,p-2-i)-\Theta(a-c-1-i,p-2-i)\rho)\nonumber\\
	&=\sum_{i=0}^{\frac{a-c}2-1}\ch L(a-c-1-i,p-2-i)+2\ch L(i,p-1+c-a+i)\nonumber\\
	&=\sum_{i=1}^{\frac{a-c}2}\ch L(a-c-i,p-1-i)+2\ch L(i-1,p-2+c-a+i)\nonumber\\
	&=\sum_{i=1}^{\frac{a-c}2}\ch L(\lambda+\mu-c\alpha_1-i\rho)+2\ch L(\lambda+\mu-c\alpha_1-(a+d-i+2-p)\rho).\label{eq10}
\end{align}\\

Similarly, if $a-c$ is odd, we get %à vérifier
\begin{align*}
	&\sum_{i=0}^{a-c-1}\chi(i,p-1+c-a+i)\\
	&=\sum_{i=0}^{\frac{a-c-1}2-1}\Big(\chi(i,p-1+c-a+i)+\chi(a-c-1-i,p-2-i)\Big)\\
	&\hspace{4em}+\chi(\frac{a-c-1}{2},p-1+\frac{c-a-1}2).
\end{align*}
Since $(\frac{a-c-1}{2},p-1+\frac{c-a-1}2)\in C_1$, we get, as in the even case,
\begin{align}
	&\sum_{i=0}^{\frac{a-c-1}2-1}\Big(\chi(i,p-1+c-a+i)+\chi(a-c-1-i,p-2-i)\Big)\nonumber\\
	&\qquad\qquad+\chi(\frac{a-c-1}{2},p-1+\frac{c-a-1}2)\nonumber\\
	&=\sum_{i=0}^{\frac{a-c-1}2-1}\ch L(a-c-1-i,p-2-i)+2\ch L(i,p-1+c-a+i)\nonumber\\
	&\qquad\qquad+\ch L(\frac{a-c-1}{2},p-1+\frac{c-a-1}2)\nonumber\\
	&=\sum_{i=1}^{\frac{a-c-1}2}\ch L(\lambda+\mu-c\alpha_1-i\rho)+2\ch L(\lambda+\mu-c\alpha_1-(a+d-i+2-p)\rho)\nonumber\\
	&\qquad\qquad+\ch L(\lambda+\mu-c\alpha_1-\frac{a-c+1}{2}\rho).\label{eq11}
\end{align}
Combining lines \eqref{eq6}, \eqref{eq9} and \eqref{eq10} (respectively \eqref{eq11}), we find, if $a-c$ is even:
\begin{align*}
	\ch M&=\sum_{i=0}^c\ch L(\mu+\lambda-i\alpha_1)\\
	&+\sum_{i=1}^{\frac{a-c}2}\ch L(\lambda+\mu-c\alpha_1-i\rho)+2\ch L(\lambda+\mu-c\alpha_1-(a+d-i+2-p)\rho), 
\end{align*}
and if $a-c$ is odd:
\begin{align*}
	\ch M&=\ch L(\lambda+\mu-c\alpha_1-\frac{a-c+1}{2}\rho)+\sum_{i=0}^c\ch L(\mu+\lambda-i\alpha_1)\\
	&+\sum_{i=1}^{\frac{a-c-1}2}\ch L(\lambda+\mu-c\alpha_1-i\rho)+2\ch L(\lambda+\mu-c\alpha_1-(a+d-i+2-p)\rho).
\end{align*}
In both cases, the second sum is non-empty since $a\geq c+2$ by assumption, and we deduce that $M$ has multiplicity.
\end{proof}

\begin{prop}\label{acd_p}
		Let $\lambda=(a,0),\mu=(c,d)\in X^+$ be $p$-restricted with $a,c,d\neq 0$,\linebreak $c+d>p-1$, $a+c < p$ and $a+d<p$ (in particular, $\mu\in C_2$). Then $L(\lambda)\otimes L(\mu)$ is multiplicity-free.
\end{prop}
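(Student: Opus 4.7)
The plan is to follow the character-theoretic strategy of Proposition \ref{acd_p-1}: compute $\ch M=\ch L(\lambda)\cdot \ch L(\mu)$ explicitly as a signed sum of Weyl characters and then reorganise the result as a sum of irreducible characters, each appearing with multiplicity one. Since $\lambda=(a,0)\in \widehat{C_1}\cup F_{2,3}$, Lemma \ref{Weyl_A2} gives $\ch L(\lambda)=\chi(\lambda)$. The assumptions $a\ge 1$, $a+c<p$, $a+d<p$ force $c,d\le p-2$, and combined with $c+d>p-1$ they place $\mu$ in the open alcove $C_2$, so Remark \ref{characters_alcove2} yields $\ch L(\mu)=\chi(\mu)-\chi(\mu-\Theta\rho)$ with $\Theta:=c+d+2-p\ge 2$.

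I would then expand both products via Proposition \ref{product_characters} and the weights of $\Delta(\lambda)=L(\lambda)$ supplied by Lemma \ref{weights_a0}. Setting $\nu_{i,j}:=\lambda+\mu-i\alpha_1-j\alpha_2=(a+c-2i+j,\,d+i-2j)$ and exploiting $\rho=\alpha_1+\alpha_2$ to reindex the second product, this gives
\[ \ch M=\sum_{0\le j\le i\le a}\chi(\nu_{i,j})\;-\;\sum_{\Theta\le j\le i\le a+\Theta}\chi(\nu_{i,j}). \]
When $\Theta\le a$, the common indexing cancels and one is left with a signed sum over a symmetric difference; when $\Theta>a$, the two sums are supported on disjoint index sets and no cancellation occurs.

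The target decomposition is $\ch M=\sum_{(k,l)\in S}\ch L(\eta_{k,l})$, where $S=\{(k,l):0\le k\le c,\,0\le l\le d,\,k+l\le a\}$ and $\eta_{k,l}:=(c+a-2k-l,\,d+k-l)$. The hypotheses $a+c<p$ and $a+d<p$ ensure each $\eta_{k,l}$ is dominant and $p$-restricted, and one checks the $\eta_{k,l}$'s are pairwise distinct, so this expansion will be multiplicity-free. For $\eta_{k,l}\in \widehat{C_1}$ one uses $\ch L(\eta_{k,l})=\chi(\eta_{k,l})$; for $\eta_{k,l}\in \widehat{C_2}$ (characterised by $\Theta(\eta_{k,l})=a+\Theta-k-2l\ge 1$), Remark \ref{characters_alcove2} yields $\ch L(\eta_{k,l})=\chi(\eta_{k,l})-\chi(\eta_{k,l}-\Theta(\eta_{k,l})\rho)$. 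A direct computation identifies $\eta_{k,l}-\Theta(\eta_{k,l})\rho=(p-2-d-k+l,\,p-2-c+2k+l-a)$, which coincides with a weight appearing in the expansion of $\chi(\lambda)\chi(\mu-\Theta\rho)$; so after matching indices via the substitution $(k,l)\leftrightarrow(k,a-k-l)$ the two sides will be built from the same pool of Weyl characters.

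The hard part will be the combinatorial bookkeeping of the reduction: non-dominant Weyl characters $\chi(\nu_{i,j})$ in either sum must be handled via Lemma \ref{action_characters}, so they either vanish (when $\nu_{i,j}+\rho$ lies on a reflection hyperplane, i.e., has a zero coordinate or coordinates summing to zero) or are brought to dominant form by a Weyl element of the appropriate sign. The analysis will likely split into subcases governed by the relative sizes of $a$, $\Theta$ and $c$, $d$, analogous to the case split $a\le c$, $a=c+1$, $a\ge c+2$ used in Proposition \ref{acd_p-1}. In each subcase the "extra" signed contributions in the symmetric difference should either cancel in pairs or be killed by Lemma \ref{action_characters}, leaving precisely $\sum_{(k,l)\in S}\ch L(\eta_{k,l})$, which is manifestly multiplicity-free.
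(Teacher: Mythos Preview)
Your overall strategy is the same as the paper's: expand $\ch L(\lambda)\cdot(\chi(\mu)-\chi(\mu-\Theta\rho))$ using Lemma~\ref{weights_a0}, then reorganise into a sum of irreducible characters via Remark~\ref{characters_alcove2}. However, there are two concrete issues.

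First, you miss the crucial simplification that the hypotheses force $a<c$ and $a<d$ (if $a\ge c$ then $a+d\ge c+d\ge p$, contradiction; similarly for $d$). With this in hand, every weight $\nu_{i,j}=\lambda+\mu-i\alpha_1-j\alpha_2$ for $0\le j\le i\le a$ is automatically dominant and $p$-restricted, so no appeal to Lemma~\ref{action_characters} is needed and no case split on the relative sizes of $a,c,d$ arises. The only case distinction the paper makes is $a<\Theta$ versus $a\ge\Theta$.

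Second, your stated target decomposition is only correct in the case $a<\Theta$. Translating your $(k,l)$ into the paper's $(i,j)$ via $i=k+l$, $j=l$, your index set $S$ becomes $\{(i,j):0\le j\le i\le a\}$ (the constraints $k\le c$, $l\le d$ are vacuous since $a<c,d$). But when $a\ge\Theta$, the paper shows (see also Corollary~\ref{character_acd}) that the correct answer is
\[
\ch M=\sum_{0\le j\le i\le a}\ch L(\nu_{i,j})-\sum_{\Theta\le j\le i\le a}\ch L(\nu_{i,j}),
\]
i.e.\ one needs the additional constraint $j<\Theta$ (equivalently $l<\Theta$ in your coordinates). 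Your set $S$ has $\binom{a+2}{2}$ elements, which is strictly larger than the correct count when $a\ge\Theta$, so the identity you aim for cannot hold as stated in that case.
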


\begin{proof}
	We set $M:=L(\lambda)\otimes L(\mu)$, and $m:=\Theta(\mu)=c+d+2-p\geq 2$. Observe that our hypotheses imply $a<c$ and $a<d$. 
	
		By Remark \ref{characters_alcove2} and Lemma \ref{Weyl_A2}, we have $\ch L(\mu)=\chi(\mu)-\chi(\mu-m\rho)$ and $\ch L(\lambda)=\chi(\lambda)$. Therefore, by Proposition \ref{product_characters}, we have
	\begin{align*}
		\ch M&=\ch L(\lambda)\cdot \ch L(\mu)=\chi(\lambda)\cdot(\chi(\mu)-\chi(\mu-m\rho))\\
		&=\sum_{\nu\in X}m_{\Delta(\lambda)}(\nu)(\chi(\mu+\nu)-\chi(\mu+\nu-m\rho)).
	\end{align*}
	
	We use Lemma \ref{weights_a0} to rewrite this sum. We get 
	\begin{align}
		\ch M&=\sum_{\nu\in X}m_{\Delta(\lambda)}(\nu)(\chi(\mu+\nu)-\chi(\mu+\nu-m\rho))\nonumber\\
		&=\sum_{i=0}^a\sum_{j=0}^i\chi(\mu+\lambda-i\alpha_1-j\alpha_2)-\chi(\mu+\lambda-i\alpha_1-j\alpha_2-m\rho)\nonumber\\
		&=\sum_{i=0}^a\sum_{j=0}^i\chi(\mu+\lambda-i\alpha_1-j\alpha_2)-\sum_{i=0}^a\sum_{j=0}^i\chi(\mu+\lambda-i\alpha_1-j\alpha_2-m\rho)\nonumber\\
		&=\sum_{i=0}^a\sum_{j=0}^i\chi(\mu+\lambda-i\alpha_1-j\alpha_2)-\sum_{i=m}^{a+m}\sum_{j=m}^i\chi(\mu+\lambda-i\alpha_1-j\alpha_2).\label{eq400}
	\end{align}

For $0\leq j\leq i\leq a$, we have 
\begin{ceqn}
\begin{align}
	\mu+\lambda-i\alpha_1-j\alpha_2=(a+c+j-2i,d+i-2j)\label{acd_poids}
\end{align} 
with 
$$0<c-a\leq c+a-2i\leq c+a+j-2i\leq c+a -i\leq c+a<p$$ 
and
$$0<d-a\leq d-j\leq d+i-2j\leq d+i\leq d+a <p.$$
Therefore, 
\begin{align}
	\mu+\lambda-i\alpha_1-j\alpha_2\in X^+ \text{ is } p\text{-restricted for all } 0\leq j\leq i\leq a.\label{acd_prestricted}\end{align} 
Moreover, using line \eqref{acd_poids}, we have 
\begin{align}
	\Theta(\lambda+\mu-i\alpha_1-j\alpha_2)=(a+c+j-2i)+(d+i-2j)+2-p=m+a-i-j,\label{acd_theta}
\end{align}
and if $0\leq j\leq i \leq \min\{a,m-1\}$, then 
$$\Theta(\lambda+\mu-i\alpha_1-j\alpha_2)=m+a-i-j\geq 1.$$
Using line \eqref{acd_prestricted} and Lemma \ref{C2_theta}, we deduce that
\begin{align}
	\lambda+\mu-i\alpha_1-j\alpha_2\in \widehat{C_2} \quad \text{ for all }0\leq j\leq i \leq \min\{a,m-1\}. \label{acd_secondalcove}
\end{align}
\end{ceqn}\\

We dissociate the cases $a<m$ and $a\geq m$. \\
If $a<m$, using line \eqref{eq400}, we have
	\begin{align*}
	\ch M&=\sum_{i=0}^a\sum_{j=0}^i\chi(\mu+\lambda-i\alpha_1-j\alpha_2)-\sum_{i=m}^{a+m}\sum_{j=m}^i\chi(\mu+\lambda-i\alpha_1-j\alpha_2)\\
	&=\sum_{i=0}^a\sum_{j=0}^i\chi(\mu+\lambda-i\alpha_1-j\alpha_2)-\sum_{j=m}^{a+m}\sum_{i=j}^{a+m}\chi(\mu+\lambda-i\alpha_1-j\alpha_2)\\
	&=\sum_{i=0}^a\sum_{j=0}^i\chi(\mu+\lambda-i\alpha_1-j\alpha_2)-\sum_{i=m}^{a+m}\sum_{j=i}^{a+m}\chi(\mu+\lambda-j\alpha_1-i\alpha_2)\\
	&=\sum_{i=0}^a\sum_{j=0}^i\chi(\mu+\lambda-i\alpha_1-j\alpha_2)-\sum_{i=0}^{a}\sum_{j=i}^{a}\chi(\mu+\lambda-j\alpha_1-i\alpha_2-m\rho)\\	
	&=\sum_{i=0}^a\sum_{j=0}^i\chi(\mu+\lambda-i\alpha_1-j\alpha_2)-\sum_{i=0}^{a}\sum_{j=0}^{i}\chi(\mu+\lambda-(a-j)\alpha_1-(a-i)\alpha_2-m\rho)\\
	&=\sum_{i=0}^a\sum_{j=0}^i\chi(\mu+\lambda-i\alpha_1-j\alpha_2)-\chi(\mu+\lambda-(a-j)\alpha_1-(a-i)\alpha_2-m\rho)\\
	&=\sum_{i=0}^a\sum_{j=0}^i\chi(\mu+\lambda-i\alpha_1-j\alpha_2)-\chi(\mu+\lambda-i\alpha_1-j\alpha_2-(m+a-i-j)\rho)\\
	&=\sum_{i=0}^a\sum_{j=0}^i\chi(\mu+\lambda-i\alpha_1-j\alpha_2)-\chi(\mu+\lambda-i\alpha_1-j\alpha_2-\Theta(\mu+\lambda-i\alpha_1-j\alpha_2)\rho),
\end{align*}
where in the last equality, we use line \eqref{acd_theta}. Since $a\leq m-1$, we can use line \eqref{acd_secondalcove} and \linebreak Remark \ref{characters_alcove2} to conclude that
	\begin{align}
	\ch M
	&=\sum_{i=0}^a\sum_{j=0}^i\chi(\mu+\lambda-i\alpha_1-j\alpha_2)-\chi(\mu+\lambda-i\alpha_1-j\alpha_2-\Theta(\mu+\lambda-i\alpha_1-j\alpha_2)\rho)\nonumber\\
	&=\sum_{i=0}^a\sum_{j=0}^i\ch L(\mu+\lambda-i\alpha_1-j\alpha_2).\label{eq201}
\end{align}
In particular, $M$ is multiplicity-free.\\

If $a\geq m$, using line \eqref{eq400}, we have
	\begin{align}
	\ch M=&\sum_{i=0}^a\sum_{j=0}^i\chi(\mu+\lambda-i\alpha_1-j\alpha_2)-\sum_{i=m}^{a+m}\sum_{j=m}^i\chi(\mu+\lambda-i\alpha_1-j\alpha_2)\nonumber\\
	=&\sum_{i=0}^{m-1}\sum_{j=0}^i\chi(\mu+\lambda-i\alpha_1-j\alpha_2)+\sum_{i=m}^a\sum_{j=0}^{m-1}\chi(\mu+\lambda-i\alpha_1-j\alpha_2)\nonumber\\
	&\hspace{1em}-\sum_{i=a+1}^{a+m}\sum_{j=m}^{i}\chi(\mu+\lambda-i\alpha_1-j\alpha_2)\nonumber\\
	=&\sum_{i=0}^{m-1}\sum_{j=0}^i\chi(\mu+\lambda-i\alpha_1-j\alpha_2)+\sum_{i=m}^a\sum_{j=0}^{m-1}\chi(\mu+\lambda-i\alpha_1-j\alpha_2)\nonumber\\
	&\hspace{1em}-\sum_{j=a+1}^{a+m}\sum_{i=j}^{a+m}\chi(\mu+\lambda-i\alpha_1-j\alpha_2)-\sum_{j=m}^{a}\sum_{i=a+1}^{a+m}\chi(\mu+\lambda-i\alpha_1-j\alpha_2)\nonumber\\
	=&\sum_{i=0}^{m-1}\sum_{j=0}^i\chi(\mu+\lambda-i\alpha_1-j\alpha_2)-\sum_{i=0}^{m-1}\sum_{j=i}^{m-1}\chi(\mu+\lambda-j\alpha_1-i\alpha_2-(a+1)\rho)\label{eq101}\\
	&\hspace{1em}+\sum_{i=m}^a\sum_{j=0}^{m-1}\chi(\mu+\lambda-i\alpha_1-j\alpha_2)-\sum_{i=m}^{a}\sum_{j=0}^{m-1}\chi(\mu+\lambda-(j+a+1)\alpha_1-i\alpha_2).\label{eq102}
\end{align}
Let us study line \eqref{eq101}. We have
\begin{align*}
	&\sum_{i=0}^{m-1}\sum_{j=0}^i\chi(\mu+\lambda-i\alpha_1-j\alpha_2)-\sum_{i=0}^{m-1}\sum_{j=i}^{m-1}\chi(\mu+\lambda-j\alpha_1-i\alpha_2-(a+1)\rho)\\
	&=\sum_{i=0}^{m-1}\sum_{j=0}^i\chi(\mu+\lambda-i\alpha_1-j\alpha_2)\\
	&\hspace{3em}-\sum_{i=0}^{m-1}\sum_{j=0}^{i}\chi(\mu+\lambda-(m-1-j)\alpha_1-(m-1-i)\alpha_2-(a+1)\rho)\\
	&=\sum_{i=0}^{m-1}\sum_{j=0}^i\chi(\mu+\lambda-i\alpha_1-j\alpha_2)-\chi(\mu+\lambda-i\alpha_1-j\alpha_2-(a+m-i-j)\rho)\\
	&=\sum_{i=0}^{m-1}\sum_{j=0}^i\chi(\mu+\lambda-i\alpha_1-j\alpha_2)-\chi(\mu+\lambda-i\alpha_1-j\alpha_2-\Theta(\mu+\lambda-i\alpha_1-j\alpha_2)\rho),
\end{align*}
where in the last equality, we use line \eqref{acd_theta}. Now we can use line \eqref{acd_secondalcove} and Remark \ref{characters_alcove2} to conclude that
\begin{align}
	&\sum_{i=0}^{m-1}\sum_{j=0}^i\chi(\mu+\lambda-i\alpha_1-j\alpha_2)-\chi(\mu+\lambda-i\alpha_1-j\alpha_2-\Theta(\mu+\lambda-i\alpha_1-j\alpha_2)\rho)\nonumber\\
	&=\sum_{i=0}^{m-1}\sum_{j=0}^i\ch L(\mu+\lambda-i\alpha_1-j\alpha_2).\label{eq103}
\end{align}

Now we study line \eqref{eq102}. We have
\begin{align*}
	&\sum_{i=m}^a\sum_{j=0}^{m-1}\chi(\mu+\lambda-i\alpha_1-j\alpha_2)-\sum_{i=m}^{a}\sum_{j=0}^{m-1}\chi(\mu+\lambda-(j+a+1)\alpha_1-i\alpha_2)\\
	&=\sum_{i=m}^a\sum_{j=0}^{m-1}\chi(\mu+\lambda-i\alpha_1-j\alpha_2)-\sum_{i=m}^{a}\sum_{j=0}^{m-1}\chi(\mu+\lambda-((m-1-j)+a+1)\alpha_1-i\alpha_2)\\
	&=\sum_{i=0}^{a-m}\sum_{j=0}^{m-1}\chi(\mu+\lambda-(i+m)\alpha_1-j\alpha_2)-\sum_{i=0}^{a-m}\sum_{j=0}^{m-1}\chi(\mu+\lambda-(m+a-j)\alpha_1-(i+m)\alpha_2)\\
	&=\sum_{i=0}^{a-m}\sum_{j=0}^{m-1}\chi(\mu+\lambda-(i+m)\alpha_1-j\alpha_2)\\
	&\qquad\qquad-\sum_{i=0}^{a-m}\sum_{j=0}^{m-1}\chi(\mu+\lambda-(m+a-j)\alpha_1-((a-m-i)+m)\alpha_2)\\
	&=\sum_{i=0}^{a-m}\sum_{j=0}^{m-1}\chi(\mu+\lambda-(i+m)\alpha_1-j\alpha_2)-\sum_{i=0}^{a-m}\sum_{j=0}^{m-1}\chi(\mu+\lambda-(m+a-j)\alpha_1-(a-i)\alpha_2)\\
	&=\sum_{i=0}^{a-m}\sum_{j=0}^{m-1}\chi(\mu+\lambda-(i+m)\alpha_1-j\alpha_2)-\chi(\mu+\lambda-(m+a-j)\alpha_1-(a-i)\alpha_2)\\
	&=\sum_{i=0}^{a-m}\sum_{j=0}^{m-1}\chi(\mu+\lambda-(i+m)\alpha_1-j\alpha_2)-\chi(\mu+\lambda-(i+m)\alpha_1-j\alpha_2-(a-i-j)\rho)\\
	&=\sum_{i=0}^{a-m}\sum_{j=0}^{m-1}\chi(\mu+\lambda-(i+m)\alpha_1-j\alpha_2)\\
	&\qquad\qquad-\chi(\mu+\lambda-(i+m)\alpha_1-j\alpha_2-\Theta(\mu+\lambda-(i+m)\alpha_1-j\alpha_2)\rho),
\end{align*}
where in the last equality, we use line \eqref{acd_theta}.
For $0\leq i\leq a-m,\; 0\leq j\leq m-1$, we have
$$\Theta(\mu+\lambda-(i+m)\alpha_1-j\alpha_2)=a-i-j\geq 1.$$
Therefore, using line \eqref{acd_prestricted} and Lemma \ref{C2_theta}, we deduce that $\mu+\lambda-(i+m)\alpha_1-j\alpha_2\in \widehat{C_2}$ for all $0\leq i\leq a-m,\; 0\leq j\leq m-1$.
By Remark \ref{characters_alcove2}, we conclude that 
\begin{align}
	&\sum_{i=0}^{a-m}\sum_{j=0}^{m-1}\chi(\mu+\lambda-(i+m)\alpha_1-j\alpha_2)\nonumber\\
	&\hspace{4em}-\chi(\mu+\lambda-(i+m)\alpha_1-j\alpha_2-\Theta(\mu+\lambda-(i+m)\alpha_1-j\alpha_2)\rho)\nonumber\\
	&=\sum_{i=0}^{a-m}\sum_{j=0}^{m-1}\ch L(\mu+\lambda-(i+m)\alpha_1-j\alpha_2).\label{eq104}
\end{align}

We conclude by combining lines \eqref{eq103} and \eqref{eq104} to obtain
\begin{align}
	\ch M& =\sum_{i=0}^{m-1}\sum_{j=0}^i\ch L(\mu+\lambda-i\alpha_1-j\alpha_2)+\sum_{i=0}^{a-m}\sum_{j=0}^{m-1}\ch L(\mu+\lambda-(i+m)\alpha_1-j\alpha_2)\nonumber\\
	&=\sum_{i=0}^{m-1}\sum_{j=0}^i\ch L(\mu+\lambda-i\alpha_1-j\alpha_2)+\sum_{i=m}^{a}\sum_{j=0}^{m-1}\ch L(\mu+\lambda-i\alpha_1-j\alpha_2).\label{eq202}
\end{align}
In  particular, $M$ is multiplicity-free.
\end{proof}

\begin{rem}
	By duality, for $\lambda=(0,b),\mu=(c,d)\in X^+$ with $0<b,c,d<p$, the tensor product $L(\lambda)\otimes L(\mu)$ is multiplicity-free if and only if one of the following holds:
	\begin{enumerate}[label = (\arabic*)] 
		\item $b+c+d<p-1$,
		\item $c+d=p-1$, $b+d<p$ and $b<d+2$ or
		\item $c+d>p-1$, $b+c<p$ and $b+d<p$. 
	\end{enumerate}
\end{rem}

At this step, let us make the following observation, which will be useful in the next proposition.

\begin{cor}\label{character_acd}
	Let $\lambda=(a,0),\mu=(c,d)\in X^+$ be $p$-restricted with $a,c,d\neq 0$, $c+d\geq p-1$, $a+c < p$ and $a+d<p$ (in particular $\mu\in C_2$). Let $m:=\Theta(\mu)$. Then 
	$$\ch(L(\lambda)\otimes L(\mu))=\sum_{i=0}^a\sum_{j=0}^i\ch L(\lambda+\mu-i\alpha_1-j\alpha_2)-\sum_{i=m}^a\sum_{j=m}^i\ch L(\lambda+\mu-i\alpha_1-j\alpha_2).$$
\end{cor}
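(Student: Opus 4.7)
My plan is to deduce this corollary directly from Propositions \ref{acd_p-1} and \ref{acd_p}, by showing that the double-sum expression stated here is simply a combinatorial rearrangement of the formulas obtained in those proofs. Set $M:=L(\lambda)\otimes L(\mu)$. I would split into the two cases $c+d=p-1$ (so $m=1$) and $c+d>p-1$ (so $m\geq 2$), which correspond respectively to Proposition \ref{acd_p-1} and Proposition \ref{acd_p}.

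First assume $c+d>p-1$, so $m\geq 2$. I would partition the triangular index set $\{(i,j):0\leq j\leq i\leq a\}$ according to whether $i\leq m-1$ or $i\geq m$, and in the latter case whether $j\leq m-1$ or $j\geq m$. This decomposes the first double sum in the corollary into the three disjoint pieces
$$\sum_{i=0}^{m-1}\sum_{j=0}^{i},\qquad \sum_{i=m}^{a}\sum_{j=0}^{m-1},\qquad \sum_{i=m}^{a}\sum_{j=m}^{i},$$
the last of which is exactly the second double sum subtracted in the statement. After this cancellation, what remains is precisely the right-hand side of equation \eqref{eq202}, which equals $\ch M$ when $a\geq m$. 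When $a<m$, the third piece (and the corollary's second sum) is empty, while the second piece is also empty, so the formula reduces to $\sum_{0\leq j\leq i\leq a}\ch L(\lambda+\mu-i\alpha_1-j\alpha_2)$, matching equation \eqref{eq201}.

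Now assume $c+d=p-1$, so $m=1$. The hypothesis $a+d<p$ together with $d=p-1-c$ yields $a<c+1$, i.e.\ $a\leq c$, which is exactly the regime treated by equation \eqref{eq203} in the proof of Proposition \ref{acd_p-1}; hence $\ch M=\sum_{i=0}^{a}\ch L(\lambda+\mu-i\alpha_1)$. On the corollary's side, the second double sum $\sum_{1\leq j\leq i\leq a}$ is a subset of $\sum_{0\leq j\leq i\leq a}$, and after cancellation only the terms with $j=0$ survive, giving the same expression.

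The main (and only real) work is the combinatorial bookkeeping in the $c+d>p-1$ case: one must carefully verify that the three-piece partition exhausts the first index set, and match the resulting sums with the two terms of \eqref{eq202}. I do not expect any serious obstacle, since all the hard character identities have already been done inside the proofs of Propositions \ref{acd_p-1} and \ref{acd_p}; the only subtlety is handling the boundary case $c+d=p-1$ uniformly, which is why I would treat it separately using the constraint $a\leq c$ forced by the hypothesis $a+d<p$.
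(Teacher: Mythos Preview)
Your proposal is correct and follows essentially the same approach as the paper's proof: both split into the cases $c+d=p-1$ (using line \eqref{eq203}) and $c+d>p-1$ (using lines \eqref{eq201} and \eqref{eq202}), and in each case perform the same combinatorial rearrangement of the triangular index set to match the claimed formula. The paper's proof is organized slightly differently (it treats the subcases $a<m$ and $a\geq m$ separately before the case $c+d=p-1$), but the content is identical.
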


\begin{proof}\
	\begin{itemize}
		\item If $c+d>p-1$ and $a<m$, then the second double sum is empty and we find the same result as in line \eqref{eq201} (in the proof of Proposition \ref{acd_p}).
		\item If $c+d>p-1$ and $a\geq m$, the result is a consequence of line \eqref{eq202} in the proof of Proposition \ref{acd_p} since we have
		\begin{align*}
			\ch(L(\lambda)\otimes L(\mu))&=\sum_{i=0}^{m-1}\sum_{j=0}^i\ch L(\mu+\lambda-i\alpha_1-j\alpha_2)+\sum_{i=m}^{a}\sum_{j=0}^{m-1}\ch L(\mu+\lambda-i\alpha_1-j\alpha_2)\\
			&=\sum_{i=0}^a\sum_{j=0}^i\ch L(\lambda+\mu-i\alpha_1-j\alpha_2)-\sum_{i=m}^a\sum_{j=m}^i\ch L(\lambda+\mu-i\alpha_1-j\alpha_2).
		\end{align*}
	\item If $c+d=p-1$, the condition $a+d<p$ is equivalent to $a\leq c$. Moreover, in this case $m=\Theta(\mu)=1$. Therefore, by line \eqref{eq203} in the proof of Proposition \ref{acd_p-1}, we get
	\begin{align*}
		\ch(L(\lambda)\otimes L(\mu))&=\sum_{i=0}^a\ch L(\mu+\lambda-i\alpha_1)\\
		&=\sum_{i=0}^a\sum_{j=0}^i\ch L(\lambda+\mu-i\alpha_1-j\alpha_2)-\sum_{i=1}^a\sum_{j=1}^i\ch L(\lambda+\mu-i\alpha_1-j\alpha_2)\\
		&=\sum_{i=0}^a\sum_{j=0}^i\ch L(\lambda+\mu-i\alpha_1-j\alpha_2)-\sum_{i=m}^a\sum_{j=m}^i\ch L(\lambda+\mu-i\alpha_1-j\alpha_2).
	\end{align*}
	\end{itemize}
Hence in all cases we are done.
\iffalse
	If $c+d>p-1$ and $a<m$, then the second double sum is empty and we find the same result as in line \eqref{eq201} (in the proof of Lemma \ref{acd_p}). \\
	
	If $a\geq m$, the result is a consequence of line \eqref{eq202} in the proof of Lemma \ref{acd_p} since we have
	\begin{align*}
		\ch(L(\lambda)\otimes L(\mu))&=\sum_{i=0}^{m-1}\sum_{j=0}^i\ch L(\mu+\lambda-i\alpha_1-j\alpha_2)+\sum_{i=m}^{a}\sum_{j=0}^{m-1}\ch L(\mu+\lambda-i\alpha_1-j\alpha_2)\\
		&=\sum_{i=0}^a\sum_{j=0}^i\ch L(\lambda+\mu-i\alpha_1-j\alpha_2)-\sum_{i=m}^a\sum_{j=m}^i\ch L(\lambda+\mu-i\alpha_1-j\alpha_2).
	\end{align*}
Finally, if $c+d=p-1$, the condition $a+d<p$ is equivalent to $a\leq c$. Moreover, in this case $m=\Theta(\mu)=1$. Therefore, by line \eqref{eq203} in the proof of Lemma \ref{acd_p-1}, we get
\begin{align*}
	\ch(L(\lambda)\otimes L(\mu))&=\sum_{i=0}^a\ch L(\mu+\lambda-i\alpha_1)\\
	&=\sum_{i=0}^a\sum_{j=0}^i\ch L(\lambda+\mu-i\alpha_1-j\alpha_2)-\sum_{i=1}^a\sum_{j=1}^i\ch L(\lambda+\mu-i\alpha_1-j\alpha_2)\\
	&=\sum_{i=0}^a\sum_{j=0}^i\ch L(\lambda+\mu-i\alpha_1-j\alpha_2)-\sum_{i=m}^a\sum_{j=m}^i\ch L(\lambda+\mu-i\alpha_1-j\alpha_2).
\end{align*}
Hence in all cases we are done.
\fi
\end{proof}

\subsubsection{$L(a,b)\otimes L(c,d)$}
\begin{lemma}\label{diff_tensor}
	Let $\lambda=(a,b)\in X^+$ be $p$-restricted with $a,b\neq 0$ and $a+b<p-1$ (i.e. $\lambda\in \widehat{C_1}$). Then $$\ch(L(a,0)\otimes L(0,b))-\ch(L(a-1,0)\otimes L(0,b-1))=\ch L(a,b).$$
\end{lemma}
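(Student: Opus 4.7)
The plan is to reduce the identity to a computation with Weyl characters. Since $a,b\geq 1$ and $a+b<p-1$, each of the weights $(a,b),(a,0),(0,b),(a-1,0),(0,b-1)$ lies in $\widehat{C_1}\cap X^+$ (all satisfy the single non-trivial bound $x+y\leq p-2$). By Lemma \ref{fundamental_alcove} the corresponding Weyl modules are simple, so $\ch L(\nu)=\chi(\nu)$ for each of these weights, and the tensor-product character identity to prove becomes the purely symbolic statement
\[
\chi(a,0)\chi(0,b)-\chi(a-1,0)\chi(0,b-1)=\chi(a,b).
\]

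Next, I would expand both products on the left using Proposition \ref{product_characters} with the weight multiplicities of $\Delta(a,0)\cong L(a,0)$ supplied by Lemma \ref{weights_a0}. This gives
\[
\chi(a,0)\chi(0,b)=\sum_{0\leq j\leq i\leq a}\chi\bigl((a,b)-i\alpha_1-j\alpha_2\bigr),
\]
and, applying the same formula to the second product and reindexing $(i,j)\mapsto(i-1,j-1)$ together with $(a-1,b-1)+\alpha_1+\alpha_2=(a,b)$ (since $\alpha_1+\alpha_2=\rho$),
\[
\chi(a-1,0)\chi(0,b-1)=\sum_{1\leq j\leq i\leq a}\chi\bigl((a,b)-i\alpha_1-j\alpha_2\bigr).
\]
The indexing sets differ exactly in the slice $j=0$, so the difference telescopes to $\sum_{i=0}^{a}\chi\bigl((a,b)-i\alpha_1\bigr)$.

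It then remains to show that $\sum_{i=1}^{a}\chi\bigl((a,b)-i\alpha_1\bigr)=0$, leaving only the $i=0$ term $\chi(a,b)$. The plan here is to pair $i$ with $a+1-i$: a short direct calculation with the dot action gives $s_{\alpha_1}\sbullet\bigl((a,b)-i\alpha_1\bigr)=(a,b)-(a+1-i)\alpha_1$, so by Lemma \ref{action_characters}(1) the corresponding Weyl characters are negatives of each other. Pairs $\{i,a+1-i\}$ with $i\neq a+1-i$ cancel; when $a$ is odd there is a single fixed index $i=(a+1)/2$, for which the weight is $(-1,b+(a+1)/2)\in D\setminus X^+$, and Lemma \ref{action_characters}(2) forces its Weyl character to vanish. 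This finishes the proof. No step looks like a real obstacle; the only care needed is in the shift of indices and the sign-pairing argument at the end.
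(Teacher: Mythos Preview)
Your proof is correct and follows essentially the same route as the paper's: reduce to a Weyl-character identity via irreducibility in $\widehat{C_1}$, expand both products using Proposition \ref{product_characters} together with Lemma \ref{weights_a0}, telescope to $\sum_{i=0}^a\chi(\lambda-i\alpha_1)$, and kill the $i\geq 1$ terms by the $s_{\alpha_1}$-pairing $i\leftrightarrow a+1-i$. The only cosmetic difference is that the paper writes the cancellation uniformly as $\frac12\sum_{i=1}^a\bigl(\chi(\lambda-i\alpha_1)+\chi(\lambda-(a+1-i)\alpha_1)\bigr)$ rather than splitting into the even/odd cases for $a$, and it cites Lemma \ref{Weyl_A2} instead of Lemma \ref{fundamental_alcove} for the irreducibility.
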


\begin{proof}
	First, note that $(a-1,0)+(0,b-1)=\lambda-\rho$. By Lemma \ref{Weyl_A2}, we have \linebreak$L(a-1,0)\cong\Delta(a-1,0)$, $L(a,0)\cong \Delta(a,0)$, $L(0,b-1)\cong\Delta(0,b-1)$, $L(0,b)\cong\Delta(0,b)$ and $L(\lambda)\cong\Delta(\lambda)$. By Proposition \ref{product_characters} and Lemma \ref{weights_a0}, and using Lemma \ref{action_characters} in the seventh equality below, we get
	\begin{align*}
		&\ch(L(a,0)\otimes L(0,b))-\ch(L(a-1,0)\otimes L(0,b-1))=\chi(a,0)\chi(0,b)-\chi(a-1,0)\chi(0,b-1)\\
		&\qquad\qquad=\sum_{i=0}^a\sum_{j=0}^i\chi(\lambda-i\alpha_1-j\alpha_2)-\sum_{i=0}^{a-1}\sum_{j=0}^i\chi(\lambda-\rho-i\alpha_1-j\alpha_2)\\
		&\qquad\qquad=\sum_{i=0}^a\sum_{j=0}^i\chi(\lambda-i\alpha_1-j\alpha_2)-\sum_{i=1}^a\sum_{j=1}^i\chi(\lambda-i\alpha_1-j\alpha_2)\\
		&\qquad\qquad=\sum_{i=0}^a\chi(\lambda-i\alpha_1)\\
		&\qquad\qquad=\chi(\lambda)+\sum_{i=1}^a\chi(\lambda-i\alpha_1)\\
		&\qquad\qquad=\chi(\lambda)+\frac12\sum_{i=1}^a\chi(\lambda-i\alpha_1)+\chi(\lambda-(a+1-i)\alpha_1)\\
		&\qquad\qquad=\chi(\lambda)+\frac12\sum_{i=1}^a\chi(\lambda-i\alpha_1)-\chi(s_{\alpha_1}\sbullet(\lambda-(a+1-i)\alpha_1))\\
		&\qquad\qquad=\chi(\lambda)+\frac12\sum_{i=1}^a\chi(\lambda-i\alpha_1)-\chi(\lambda-i\alpha_1)\\
		&\qquad\qquad=\chi(\lambda)\\
		&\qquad\qquad=\ch L(a,b).\qedhere
	\end{align*}
\end{proof}

\begin{prop}\label{abcd}
	Let $\lambda=(a,b),\;\mu=(c,d)\in X^+$ be two $p$-restricted weights with \linebreak$a,b,c,d\neq 0$. 
	Then $L(\lambda)\otimes L(\mu)$ is multiplicity-free if and only if one of the following holds:
	\begin{enumerate}[label = \emph{(\arabic*)}] 
		\item $a+b=p-1,\; c+d<p-1,\; a+c+d<p$ and $b+c+d<p$ or
		\item $a+b<p-1,\; c+d=p-1,\; a+b+c<p$ and $a+b+d<p$.
	\end{enumerate}
\end{prop}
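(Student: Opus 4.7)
Set $M := L(\lambda) \otimes L(\mu)$. The duality $L(x,y)^* \cong L(y,x)$ swaps conditions (1) and (2) via $(a,b,c,d) \leftrightarrow (c,d,a,b)$, so in each direction I need only handle (1). My overall plan is to expand $\ch M$ as a sum of Weyl characters through Corollary \ref{products_characters_bis} and then regroup using Remark \ref{characters_alcove2} and Lemma \ref{Weyl_A2}: namely, for $\eta \in \widehat{C_1} \cup F_{2,3} \cup F_{2,3'}$ one has $\chi(\eta) = \ch L(\eta)$, for $\eta \in \widehat{C_2}$ strictly one has $\ch L(\eta) = \chi(\eta) - \chi(\eta - \Theta(\eta)\rho)$, and for $\eta$ lying on a reflection hyperplane $\chi(\eta) = 0$ by Lemma \ref{action_characters}.

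For the ``if'' direction under condition (1), since $c+d \leq p-2$ we have $\mu \in \widehat{C_1}$ and $\ch L(\mu) = \chi(\mu)$, while $a+b = p-1$ with $a,b \geq 1$ forces $\Theta(\lambda) = 1$ and $\ch L(\lambda) = \chi(\lambda) - \chi(a-1, b-1)$. Then Corollary \ref{products_characters_bis} yields
$$\ch M = \sum_{\nu \in X}\bigl(m_{\Delta(\lambda)}(\nu) - m_{\Delta(\lambda-\rho)}(\nu)\bigr)\,\chi(\mu+\nu),$$
where the weight multiplicities can be read off from Lemma \ref{-1}, Lemma \ref{-11}, Freudenthal's formula (Theorem \ref{Freudenthal}), and Proposition \ref{Cavallin}. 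The arithmetic hypotheses $a+c+d < p$ and $b+c+d < p$, combined with $c+d \leq p-2$, are precisely what I need to guarantee that every surviving dominant weight $\mu+\nu$ lies in $\widehat{C_1} \cup F_{2,3} \cup F_{2,3'}$, so that each $\chi(\mu+\nu)$ coincides with $\ch L(\mu+\nu)$; the remaining terms either vanish on walls or are folded back via Lemma \ref{action_characters}. A careful term-by-term check then shows that the resulting irreducible characters are pairwise distinct, so $M$ is multiplicity-free.

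For the ``only if'' direction, I split exhaustively based on the values of $a+b$ and $c+d$. If both $a+b \leq p-2$ and $c+d \leq p-2$, then $L(\lambda)$ and $L(\mu)$ are both tilting by Lemma \ref{tilting_irreducible}, so $M$ is tilting: if $\lambda+\mu \in \widehat{C_1}$, Corollary \ref{sum_C1} combined with Theorem \ref{char_0_A2} (which fails since $abcd \neq 0$) yields multiplicity; otherwise $\lambda+\mu \in C_2$, $T(\lambda+\mu)$ is reducible by Lemma \ref{Weyl_A2}, and Lemma \ref{argument_tilting} applies. If $a+b = p-1$ but $a+c+d \geq p$ or $b+c+d \geq p$ (with $c+d \leq p-2$), I repeat the character computation of the previous paragraph, now with the bounds violated, and exhibit either a non-$p$-restricted composition factor (whence Corollary \ref{composition_p} gives multiplicity) or a factor with multiplicity $\geq 2$. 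The cases $a+b = p-1$ with $c+d \geq p-1$, and $a+b \geq p$ (splitting further on whether $\lambda \in F_{2,3} \cup F_{2,3'}$ or $\lambda \in C_2$ strictly with $\Theta(\lambda) \geq 2$), proceed analogously: in the wall cases $L(\lambda)$ is tilting and Lemma \ref{argument_tilting} applies, while in the interior cases the character computation produces an unwanted repeated factor.

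The principal difficulty will be the bookkeeping in the ``if'' direction: tracking the double sums arising from $\chi(\lambda)\chi(\mu) - \chi(\lambda-\rho)\chi(\mu)$ and verifying that, after cancellation and repeated application of Lemma \ref{action_characters}, every surviving Weyl character converts to a \emph{distinct} $\ch L(\eta)$. This is expected to mirror the delicate index manipulations already carried out in Propositions \ref{acd_p-1} and \ref{acd_p}; the new ingredient is the extra subtraction coming from $\ch L(\lambda) = \chi(\lambda) - \chi(\lambda-\rho)$, which introduces additional terms that must be paired off with care against those produced by $\chi(\lambda)\chi(\mu)$.
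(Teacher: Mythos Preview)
Your symmetry reduction is by swapping the tensor factors $\lambda\leftrightarrow\mu$, not by duality; $L(x,y)^*\cong L(y,x)$ sends $(a,b,c,d)\mapsto(b,a,d,c)$, which does \emph{not} interchange (1) and (2).

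In the ``only if'' direction your Case~1 (both $a+b\leq p-2$ and $c+d\leq p-2$) has a gap: if $\lambda+\mu$ lands on $F_{2,3}$ or $F_{2,3'}$ then $T(\lambda+\mu)$ \emph{is} irreducible and Lemma~\ref{argument_tilting} does not fire. The paper never invokes tilting here: whenever $a+b\neq p-1$ and $c+d\neq p-1$ it computes directly via Argument~\ref{argument1} and Lemmas~\ref{-1}, \ref{-11} that $m_M(\lambda+\mu-\rho)=6$ while the three higher composition factors account for at most $4$, forcing $[M:L(\lambda+\mu-\rho)]\geq 2$ regardless of which alcove $\lambda+\mu$ occupies. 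Your gap is patchable (e.g.\ if $a+c=p-1$ then $\lambda+\mu-\alpha_2=(p,b+d-2)$ is a non-$p$-restricted composition factor), but you have not done it.

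In the ``if'' direction your plan differs substantively from the paper's. You propose to expand $\ch M=\sum_\nu m_{L(\lambda)}(\nu)\,\chi(\mu+\nu)$ under condition~(1); this requires \emph{all} weight multiplicities of $L(\lambda)$ with $a+b=p-1$, and the lemmas you cite supply only a few specific ones, so the ``careful term-by-term check'' you defer is in fact the entire content. The paper instead works under condition~(2), so that $\lambda\in\widehat{C_1}$, and applies Lemma~\ref{diff_tensor} to write $\ch L(a,b)=\ch(L(a,0)\otimes L(0,b))-\ch(L(a-1,0)\otimes L(0,b-1))$. This reduces $\ch M$ to a signed combination of characters of products $L(a',0)\otimes L(\mu')$ already computed in Corollary~\ref{character_acd}; a short index manipulation then collapses everything to $\ch M=\sum_{i=0}^a\sum_{k=0}^b\ch L(\lambda+\mu-i\alpha_1-k\alpha_2)$. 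Your direct route is not wrong in principle, but without the reduction via Lemma~\ref{diff_tensor} the bookkeeping you anticipate would be substantially longer than the paper's argument.
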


%(in particular, $\lambda\in C_2$ and $\mu\in C_1$)

\begin{proof}
	We set $M:=L(\lambda)\otimes L(\mu)$. \\
	
	If $a+c\geq p$ or $b+d\geq p$, then $M$ has multiplicity by Corollary \ref{sum_p_res}. Therefore, we can assume $a+c<p$ and $b+d<p$. In particular, it cannot happen that $a+b\geq p-1$ and $c+d>p-1$.\\
	
	Suppose that $a+b\neq p-1$ and $c+d\neq p-1$. Using Argument \ref{argument1}, we show that $L(\lambda+\mu-\alpha_1-\alpha_2)$ has multiplicity at least 2 in $M$. Using Lemmas \ref{-1} and \ref{-11}, we have
	\begin{align*}
		&m_M(\lambda+\mu)=1,
		&&m_M(\lambda+\mu-\alpha_1)=2,
		&&m_M(\lambda+\mu-\alpha_2)=2,\\
		&m_M(\lambda+\mu-\alpha_1-\alpha_2)=6.
	\end{align*}
	
	Using Lemmas \ref{-1} and \ref{-11} again, we have
	\begin{align*}
		&m_{L(\lambda+\mu)}(\lambda+\mu-\alpha_1)=1,&&m_{L(\lambda+\mu)}(\lambda+\mu-\alpha_2)=1.
	\end{align*}
	
	Therefore, $L(\lambda+\mu-\alpha_1)$ and $L(\lambda+\mu-\alpha_2)$ are composition factors of $M$.
	Using \linebreak Lemmas \ref{-1} and \ref{-11} a third time, we get 
	\begin{align*}
		&m_{L(\lambda+\mu)}(\lambda+\mu-\alpha_1-\alpha_2)\leq 2, &&m_{L(\lambda+\mu-\alpha_1)}(\lambda+\mu-\alpha_1-\alpha_2)=1,\\
		&m_{L(\lambda+\mu-\alpha_2)}(\lambda+\mu-\alpha_1-\alpha_2)=1.
	\end{align*}
	
	We conclude that $L(\lambda+\mu-\alpha_1-\alpha_2)$ is a composition factor with multiplicity 
	$$[M:L(\lambda+\mu-\alpha_1-\alpha_2)]\geq 6-1-1-2=2.$$ 
	Therefore, $M=L(\lambda)\otimes L(\mu)$ has multiplicity.\\
	
	Suppose that $a+b=c+d=p-1$. Since we assume $a+c<p$ and $b+d<p$, we only need to consider the case $\lambda+\mu=(p-1,p-1)$. Using the same reasoning as in the previous case, we deduce that $L(\lambda+\mu-\alpha_1)$ is a composition factor of $M$. But $\lambda+\mu-\alpha_1=(p-3,p)$ is not $p$-restricted. Therefore, by Corollary \ref{composition_p}, $M$ has multiplicity.\\ 
	
	Let us consider the case $a+b<p-1,\; c+d=p-1,\; a+b+d\geq p$ (the other remaining cases where we claim that $M$ has multiplicity are symmetric). Note that $b+c+d\geq p$, therefore  $b+d+\min(a,c)\geq p$.
	We show inductively that $[M:L(\lambda+\mu-s\alpha_1)]=1$ for $0\leq s \leq p-b-d\leq \min(a,c)$.
	For $0\leq s \leq \min(a,c)$, using Lemma \ref{-1}, we have:
	$$m_M(\lambda+\mu-s\alpha_1)=\sum_{i=0}^sm_{L(\lambda)}(\lambda-i\alpha_1)\cdot m_{L(\mu)}(\mu-(s-i)\alpha_1)=s+1.$$
	Moreover, if $i< p-b-d$, then $\lambda+\mu-i\alpha_1=(a+c-2i,b+d+i)$ is $p$-restricted and we have
	$$\sum_{i=0}^{s-1}m_{L(\lambda+\mu-i\alpha_1)}(\lambda+\mu-s\alpha_1)=s.$$
	
	We know that $[M:L(\lambda+\mu)]=1$, and combining the two previous equations, we \linebreak conclude inductively that $[M:L(\lambda+\mu-s\alpha_1)]=1$ for $0\leq s \leq p-b-d$.
	\linebreak In particular, $L(\lambda+\mu-(p-b-d)\alpha_1)$ is a composition factor of $M$. But \linebreak$\lambda+\mu-(p-b-d)\alpha_1=(a+c-2(p-b-d),p)$ is not $p$-restricted. We conclude by Corollary \ref{composition_p} that $M$ has multiplicity.\\

Now we consider the cases where we claim that $M$ is multiplicity-free. Up to the reordering of the weights and up to symmetry, we can suppose $a+b<p-1, \;c+d=p-1$ and $c\geq d$. Thus the conditions in the statement of the proposition are equivalent to a single one: $a+b+c<p$. In particular we have $a<d\leq c$ and $b<d\leq c$.
We show that $M$ is multiplicity-free by showing the following equality of characters:
$$\ch M=\sum_{i=0}^a\sum_{j=0}^b\ch L(\lambda+\mu-i\alpha_1-j\alpha_2).$$

Let $0\leq i\leq a,\; 0\leq j\leq b$. We claim that $\lambda+\mu-i\alpha_1-j\alpha_2\in \widehat{C_2}$. We have
$$\lambda+\mu-i\alpha_1-j\alpha_2=(a+c-2i+j,b+d-2j+i)$$
with
\begin{ceqn}
	\begin{align*}
		&a+c-2i+j\leq a+c+b<p,\\
		&b+d-2j+i\leq b+d+a\leq a+b+c< p\qquad\text{and}\\
		&(a+c-2i+j)+(b+d-2j+i)=a+b+c+d-i-j\geq c+d =p-1.
	\end{align*}
\end{ceqn}
Thus $\lambda+\mu-i\alpha_1-j\alpha_2\in \widehat{C_2}$ for all $0\leq i\leq a,\; 0\leq j\leq b$.

By Lemma \ref{diff_tensor}, we have
\begin{align*}
	\ch M&= \ch L(a,b)\ch L(c,d)\\
	&=\left(\ch(L(a,0)\otimes L(0,b))-\ch(L(a-1,0)\otimes L(0,b-1))\right)\ch L(c,d)\\
	&=\ch(L(a,0)\otimes L(0,b)\otimes L(c,d))-\ch(L(a-1,0)\otimes L(0,b-1)\otimes L(c,d))\\
	&=\ch L(a,0)\ch(L(0,b)\otimes L(c,d))-\ch L(a-1,0)\ch(L(0,b-1)\otimes L(c,d)).
\end{align*}
Since $c+d=p-1,\;b+d<p$ and $b\leq d$, we can use line \eqref{eq203} in the proof of Lemma \ref{acd_p-1}\linebreak to express $\ch(L(0,b)\otimes L(c,d))$ and $\ch(L(0,b-1)\otimes L(c,d))$. We set $\lambda_b:=(0,b)$ and \linebreak$\lambda_{b-1}:=(0,b-1)$. We get
$$\ch(L(0,b)\otimes L(c,d))=\sum_{k=0}^b\ch L(\lambda_b+\mu-k\alpha_2),$$
and 
$$\ch(L(0,b-1)\otimes L(c,d))=\sum_{k=0}^{b-1}\ch L(\lambda_{b-1}+\mu-k\alpha_2).$$
Observe that $\lambda_b+\mu-k\alpha_2=(c+k,b+d-2k)$. For $0\leq k\leq b$, we have 
\begin{ceqn}
	\begin{align}
		&0<c\leq c+k\leq c+b<p-1,\nonumber\\
		 &0<d-b\leq b+d-2k\leq b+d<p-1 \qquad \text{and}\nonumber\\ 
		 &(c+k)+(b+d-2k)\geq c+d=p-1.\label{eq700}
	\end{align}

Therefore $\lambda_b+\mu-k\alpha_2\in C_2$ for all $k\in \{0,\ldots,b\}$.
Moreover, we have
\begin{align}
	&a+c+k\leq a+b+c<p\qquad\text{and}\label{eq701}\\
	&a+b+d-2k\leq a+b+d\leq a+b+c<p\label{eq702}.
\end{align}
Similarly, for all $0\leq k\leq b-1$, we have $\lambda_{b-1}+\mu-k\alpha_2=(c+k,b-1+d-2k)\in C_2$ and
\begin{align}
	&(c+k)+(b-1+d-2k)\geq p-1, \label{eq703}\\
	&(a-1)+c+k\leq a+b+c-2<p\qquad\text{and}\label{eq704}\\
	&(a-1)+b-1+d-2k\leq a+b+d-2\leq a+b+c-2<p\label{eq705}.
\end{align}
\end{ceqn}

%We have that for all $k\in \{0,\ldots,b\}$ the weight $\lambda_b+\mu-k\alpha_2=(c+k,b+d-2k)$ is $p$-restricted and belongs to the second alcove. The same is true for $\lambda_{b-1}+\mu-k\alpha_2$ and $k\in \{0,\ldots,b-1\}$.\\

Let $m:=\Theta(\lambda_b+\mu)=b+c+d+2-p=b+1$. Observe that $\Theta(\lambda_b+\mu-k\alpha_2)=m-k$ and $\Theta(\lambda_{b-1}+\mu-k\alpha_2)=m-k-1$. Lines \eqref{eq700}, \eqref{eq701}, \eqref{eq702}, \eqref{eq703}, \eqref{eq704} and \eqref{eq705} allow us to use Corollary \ref{character_acd} in the fourth equality below, to get
\begin{align*}
	\ch M=&\ch L(a,0)\ch(L(0,b)\otimes L(c,d))-\ch L(a-1,0)\ch(L(0,b-1)\otimes L(c,d))\\
	=&\ch L(a,0)\left(\sum_{k=0}^b\ch L(\lambda_b+\mu-k\alpha_2)\right)-\ch L(a-1,0)\left(\sum_{k=0}^{b-1}\ch L(\lambda_{b-1}+\mu-k\alpha_2)\right)\\
	=&\left(\sum_{k=0}^b\ch L(a,0)\ch L(\lambda_b+\mu-k\alpha_2)\right)-\left(\sum_{k=0}^{b-1}\ch L(a-1,0)\ch L(\lambda_{b-1}+\mu-k\alpha_2)\right)\\
	=&\sum_{k=0}^b\sum_{i=0}^a\sum_{j=0}^i\ch L(\lambda+\mu-i\alpha_1-(j+k)\alpha_2)\\
	&\qquad-\sum_{k=0}^b\sum_{i=m-k}^a\sum_{j=m-k}^i\ch L(\lambda+\mu-i\alpha_1-(j+k)\alpha_2)\\
	&\qquad-\sum_{k=0}^{b-1}\sum_{i=0}^{a-1}\sum_{j=0}^i\ch L(\lambda+\mu-\rho-i\alpha_1-(j+k)\alpha_2)\\
	&\qquad+\sum_{k=0}^{b-1}\sum_{i=m-k-1}^{a-1}\sum_{j=m-k-1}^i\ch L(\lambda+\mu-\rho-i\alpha_1-(j+k)\alpha_2)\\
	=&\sum_{k=0}^b\sum_{i=0}^a\sum_{j=0}^i\ch L(\lambda+\mu-i\alpha_1-(j+k)\alpha_2)\\
	&\qquad-\sum_{k=0}^b\sum_{i=m-k}^a\sum_{j=m-k}^i\ch L(\lambda+\mu-i\alpha_1-(j+k)\alpha_2)\\
	&\qquad-\sum_{k=0}^{b-1}\sum_{i=1}^{a}\sum_{j=1}^i\ch L(\lambda+\mu-i\alpha_1-(j+k)\alpha_2)\\
	&\qquad+\sum_{k=0}^{b-1}\sum_{i=m-k}^{a}\sum_{j=m-k}^i\ch L(\lambda+\mu-i\alpha_1-(j+k)\alpha_2).
\end{align*}
To simplify the notation, we set $Z(i,j):=\ch L(\lambda+\mu-i\alpha_1-j\alpha_2)$. Thus we get
\begin{align*}
	\ch M=&\sum_{k=0}^b\sum_{i=0}^a\sum_{j=0}^i Z(i,j+k)-\sum_{k=0}^b\sum_{i=m-k}^a\sum_{j=m-k}^i Z(i,j+k)\\
	&\hspace{3em}-\sum_{k=0}^{b-1}\sum_{i=1}^{a}\sum_{j=1}^iZ(i,j+k)+\sum_{k=0}^{b-1}\sum_{i=m-k}^{a}\sum_{j=m-k}^iZ(i,j+k)\\
	=&\sum_{k=0}^{b-1}Z(0,k)+\sum_{k=0}^{b-1}\sum_{i=1}^aZ(i,k)+\sum_{i=0}^a\sum_{j=0}^iZ(i,j+b)-\sum_{i=m-b}^a\sum_{j=m-b}^i Z(i,j+b)\displaybreak\\
	=&\sum_{k=0}^{b-1}\sum_{i=0}^aZ(i,k)+\sum_{i=0}^aZ(i,b)+\sum_{i=1}^a\sum_{j=1}^iZ(i,j+b)-\sum_{i=m-b}^a\sum_{j=m-b}^i Z(i,j+b)\\
	=&\sum_{k=0}^{b}\sum_{i=0}^aZ(i,k)+\sum_{i=1}^a\sum_{j=1}^iZ(i,j+b)-\sum_{i=m-b}^a\sum_{j=m-b}^i Z(i,j+b).
\end{align*}

Finally, observe that $m-b=1$. This allows us to conclude that 
\begin{align}
	\ch M &=\sum_{k=0}^{b}\sum_{i=0}^aZ(i,k)+\sum_{i=1}^a\sum_{j=1}^iZ(i,j+b)-\sum_{i=1}^a\sum_{j=1}^i Z(i,j+b)\nonumber\\
	&=\sum_{k=0}^{b}\sum_{i=0}^aZ(i,k)\nonumber\\
	&=\sum_{k=0}^{b}\sum_{i=0}^a \ch L(\lambda+\mu -i\alpha_1-k\alpha_2).\label{eqabcd}
\end{align}
In particular, $M=L(\lambda)\otimes L(\mu)$ is multiplicity-free.
\end{proof}

Now Theorem \ref{SL3} follows from the previous sequence of propositions. 

\subsection{Decomposition of multiplicity-free tensor products}
Since multiplicity-free tensor products of simple modules are completely reducible \linebreak(Lemma \ref{complete_red_mult_free}), we can specify the structure of those modules. For some of the cases, we still need to compute the decomposition in characteristic zero, which we do in the next lemma.

\begin{lemma}\label{A2_acd_char0}
	Let $\lambda=(a,0),\mu=(c,d)\in X^+$. Then
	$$\chi(\lambda)\chi(\mu)=\sum_{i=0}^{a}\sum_{j=\max\{0,i-c\}}^{\min\{i,d\}}\chi(\lambda+\mu-i\alpha_1-j\alpha_2).$$
	Moreover, all weights appearing in the sum are dominant.
\end{lemma}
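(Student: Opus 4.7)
The plan is to expand $\chi(\lambda)\chi(\mu)$ via Proposition~\ref{product_characters}, which requires the weight multiplicities of $\Delta(a,0)$. Following the argument from the proof of Lemma~\ref{weights_a0}, one finds that $m_{\Delta(a,0)}(\lambda - i\alpha_1 - j\alpha_2) = 1$ for $0 \leq j \leq i \leq a$ and $0$ otherwise; the key inputs are Proposition~\ref{saturated} together with Weyl's degree formula giving $\dim \Delta(a,0) = (a+1)(a+2)/2$, neither of which requires the hypothesis $a < p$ used in Lemma~\ref{weights_a0}. Setting $\nu_{i,j} := \lambda + \mu - i\alpha_1 - j\alpha_2$, whose $(\omega_1,\omega_2)$-coordinates are $(a+c-2i+j,\, d+i-2j)$, this yields
\[
\chi(\lambda)\chi(\mu) = \sum_{i=0}^{a} \sum_{j=0}^{i} \chi(\nu_{i,j}).
\]

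The next step is to show that the terms with $j > d$ or $j < i-c$ cancel, reducing the double sum to the claimed range. A direct calculation gives the reflection identities
\[
s_{\alpha_2}\sbullet \nu_{i,j} = \nu_{i,\, i+d+1-j}, \qquad s_{\alpha_1}\sbullet \nu_{i,j} = \nu_{a+c+j+1-i,\, j}.
\]
I would split the bad pairs into $A = \{(i,j): j > d\}$ and $B = \{(i,j): j \leq d,\ j < i-c\}$; these are disjoint (by the $j > d$ vs.\ $j \leq d$ dichotomy) and their union is the full bad set. The map $(i,j) \mapsto (i,\, i+d+1-j)$ is a well-defined involution on $A$ (since $d+1 \leq j \leq i$ implies $d+1 \leq i+d+1-j \leq i$), and $(i,j) \mapsto (a+c+j+1-i,\, j)$ is an involution on $B$ (since $c+j+1 \leq i \leq a$ implies $c+j+1 \leq a+c+j+1-i \leq a$, while $j$ is unchanged). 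By Lemma~\ref{action_characters}(1), paired terms contribute opposite signs to $\chi$ and so cancel. At a fixed point, $s_{\alpha_k}\sbullet \nu = \nu$ forces $\chi(\nu) = -\chi(\nu) = 0$. Thus the sums over $A$ and $B$ both vanish, and only the terms with $\max(0,i-c) \leq j \leq \min(i,d)$ survive.

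It remains to verify the dominance claim: for $\max(0,i-c) \leq j \leq \min(i,d)$, the first coordinate satisfies $a+c-2i+j \geq a-i \geq 0$ (using $j \geq i-c$ and $i \leq a$), and the second coordinate satisfies $d+i-2j \geq 0$ by a case split on $i \leq d$ versus $i \geq d$. The main technical obstacle is the bookkeeping of the involutions: one must arrange the partition of the bad set so that no doubly-bad pair (satisfying both $j > d$ and $j < i-c$) is counted twice. The asymmetric definition of $B$ with the extra constraint $j \leq d$ is precisely what ensures $A \cap B = \emptyset$ and allows the two reflection involutions to be applied independently, without the need to analyse their interaction on the overlap.
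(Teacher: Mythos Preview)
Your proof is correct and follows essentially the same route as the paper's: both start from the full double sum $\sum_{i=0}^a\sum_{j=0}^i\chi(\nu_{i,j})$ obtained via Proposition~\ref{product_characters}, make the same asymmetric split of the bad set (the paper's lines (eqA) and (eqB) are exactly your $B$ and $A$ respectively), and cancel each part using $s_{\alpha_1}$ and $s_{\alpha_2}$ respectively. Your phrasing in terms of sign-reversing involutions is a clean way to package what the paper does by explicit reindexing, and your remark that the saturation-plus-dimension argument behind Lemma~\ref{weights_a0} gives the $\Delta(a,0)$ weight multiplicities without the hypothesis $a<p$ is a worthwhile clarification, since Proposition~\ref{product_characters} needs $m_{\Delta(\lambda)}$ rather than $m_{L(\lambda)}$.
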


\begin{proof}
	By Proposition \ref{product_characters} and Lemma \ref{weights_a0}, we get
	\begin{align}
		\chi(\lambda)\chi(\mu)=&\sum_{i=0}^a\sum_{j=0}^i\chi(\lambda+\mu-i\alpha_1-j\alpha_2)\nonumber\\
		=&\sum_{i=0}^{a}\sum_{j=\max\{0,i-c\}}^{\min\{i,d\}}\chi(\lambda+\mu-i\alpha_1-j\alpha_2)\label{eqC}\\
		&\qquad+ \sum_{i=0}^a\sum_{j=0}^{\min\{i-c-1,d\}}\chi(\lambda+\mu-i\alpha_1-j\alpha_2)\label{eqA}\\
		&\qquad+\sum_{i=0}^a\sum_{j=d+1}^i\chi(\lambda+\mu-i\alpha_1-j\alpha_2)\label{eqB}.
	\end{align}
Observe that 
\begin{ceqn}
	\begin{align}
		\lambda+\mu-i\alpha_1-j\alpha_2=(a+c-2i+j,d+i-2j).\label{eq600}
	\end{align}
\end{ceqn}
In particular, for $i\leq a$ and $j\geq i-c$, we have
$$a+c-2i+j\geq a-i\geq 0,$$
and for $i\geq 0$ and $j\leq \min\{i,d\}$, we have
$$d+i-2j\geq (d-j)+(i-j)\geq 0.$$
Therefore, all weights appearing in line \ref{eqC} are dominant.\\

We show that lines \eqref{eqA} and \eqref{eqB} are equal to zero.
We start by line \eqref{eqB}. Using \linebreak Lemma \ref{action_characters}, and line \eqref{eq600} in the second equality below, we have
\begin{align*}
	\sum_{j=d+1}^i\chi(\lambda+\mu-i\alpha_1-j\alpha_2)&=-\sum_{j=d+1}^i\chi(s_{\alpha_2}\sbullet(\lambda+\mu-i\alpha_1-j\alpha_2))\\
	&=-\sum_{j=d+1}^i\chi(\lambda+\mu-i\alpha_1-j\alpha_2-(d+i-2j+1)\alpha_2)\\
	&=-\sum_{j=d+1}^i\chi(\lambda+\mu-i\alpha_1-(d+i-j+1)\alpha_2)\\
	&=-\sum_{j=d+1}^i\chi(\lambda+\mu-i\alpha_1-(d+i-(d+1+i-j)+1)\alpha_2)\\
	&=-\sum_{j=d+1}^i\chi(\lambda+\mu-i\alpha_1-j\alpha_2).
\end{align*}
Therefore, 
$$\sum_{j=d+1}^i\chi(\lambda+\mu-i\alpha_1-j\alpha_2)=0$$
and 
$$\sum_{i=0}^a\sum_{j=d+1}^i\chi(\lambda+\mu-i\alpha_1-j\alpha_2)=0.$$

Now we work on line \eqref{eqA}. If $i<c+1$, then $i-c-1<0$. Thus we have
\begin{align}
	\sum_{i=0}^a\sum_{j=0}^{\min\{i-c-1,d\}}\chi(\lambda+\mu-i\alpha_1-j\alpha_2)&=\sum_{i=c+1}^a\sum_{j=0}^{\min\{i-c-1,d\}}\chi(\lambda+\mu-i\alpha_1-j\alpha_2)\nonumber\\
	&=\sum_{j=0}^{\min\{a-c-1,d\}}\sum_{i=c+1+j}^a\chi(\lambda+\mu-i\alpha_1-j\alpha_2)\label{eqD}.
\end{align}
We fix $j$ and work on the second sum in line \eqref{eqD}. Using Lemma \ref{action_characters}, and line \eqref{eq600} in the second equality below, we get
\begin{align*}
	&\sum_{i=c+1+j}^a\chi(\lambda+\mu-i\alpha_1-j\alpha_2)\\
	&\hspace{6em}=-\sum_{i=c+1+j}^a\chi(s_{\alpha_1}\sbullet(\lambda+\mu-i\alpha_1-j\alpha_2))\displaybreak\\
	&\hspace{6em}=-\sum_{i=c+1+j}^a\chi(\lambda+\mu-i\alpha_1-j\alpha_2-(a+c-2i+j+1)\alpha_1)\\
	&\hspace{6em}=-\sum_{i=c+1+j}^a\chi(\lambda+\mu-(a+c-i+j+1)\alpha_1-j\alpha_2)\\
	&\hspace{6em}=-\sum_{i=c+1+j}^a\chi(\lambda+\mu-(a+c-(a+c+1+j-i)+j+1)\alpha_1-j\alpha_2)\\
	&\hspace{6em}=-\sum_{i=c+1+j}^a\chi(\lambda+\mu-i\alpha_1-j\alpha_2).
\end{align*}
Therefore, 
$$\sum_{i=c+1+j}^a\chi(\lambda+\mu-i\alpha_1-j\alpha_2)=0$$
so
$$\sum_{j=0}^{\min\{a-c-1,d\}}\sum_{i=c+1+j}^a\chi(\lambda+\mu-i\alpha_1-j\alpha_2)=0$$
and 
\begin{ceqn}
	\begin{equation*}
		\sum_{i=0}^a\sum_{j=0}^{\min\{i-c-1,d\}}\chi(\lambda+\mu-i\alpha_1-j\alpha_2)=0.\qedhere
	\end{equation*}
\end{ceqn}
%$$\sum_{i=0}^a\sum_{j=0}^{\min\{i-c-1,d\}}\chi(\lambda+\mu-i\alpha_1-j\alpha_2)=0.$$
\end{proof}

\iffalse
\begin{rem}
	Lemma \ref{A2_acd_char0} together with the argument in the proof of Proposition \ref{abcd} which shows that $[M:L(\lambda+\mu-\alpha_1-\alpha_2)]\geq 2$ which remains valid in characteristic zero for any weights $(a,b),(c,d)\in X^+$ with $a\cdot b\cdot c\cdot d\neq 0$ provide a proof of Theorem \ref{char_0_A2}. 
\end{rem}
\fi
\begin{rem}
	Let $(a,b),\;(c,d)\in X^+$ be dominant weights. Lemma \ref{A2_acd_char0} shows that \linebreak$L_{\C}(\lambda)\otimes L_{\C}(\mu)$ is multiplicity-free if $a\cdot b\cdot c\cdot d=0$. Moreover, using Argument \ref{argument1} like in the proof of Proposition \ref{abcd}, one can prove that $[L_{\C}(\lambda)\otimes L_{\C}(\mu):L_{\C}(\lambda+\mu-\alpha_1-\alpha_2)]=2$ if $a\cdot b\cdot c\cdot d\neq 0$. These two facts provide a proof of Theorem \ref{char_0_A2}.
\end{rem}

\begin{rem}
	The computations in Lemma \ref{A2_acd_char0} can be done using the Littlewood-Richardson rule, see \cite[Theorem III]{Littlewood} or \cite[Proposition 15.25]{Fulton_Harris} for this specific case.
\end{rem}

\begin{cor}
	Let $\lambda=(a,b),\mu=(c,d) \in X^+$ be non-zero and $p$-restricted such that $L(\lambda)\otimes L(\mu)$ is multiplicity-free. We have the following decompositions:
		\begin{enumerate}[label = \emph{(\arabic*)}] 
		\item If $b=d=0$, $a+c<p$ and $a\leq c$, then
		$$L(\lambda)\otimes L(\mu)\cong \bigoplus_{i=0}^a L(\mu+\lambda-i\alpha_1).$$
		%\item $a=c=0$ and $b+d<p$,
		\item If $b=c=0$, $a\leq d$ and $a+d<p-1$ or $(a,d)=(1,p-1)$, then
		$$L(\lambda)\otimes L(\mu)\cong \bigoplus_{i=0}^a L(\mu+\lambda-i\rho).$$
		%\item $b=c=0$ and $(a,d)\in \{(p-1,1),(1,p-1)\}$,
		\item If $b=0$ and $a+c+d<p-1$, then
		$$L(\lambda)\otimes L(\mu)\cong \bigoplus_{i=0}^a\bigoplus_{j=\max\{0,i-c\}}^{\min\{i,d\}} L(\mu+\lambda-i\alpha_1-j\alpha_2).$$
		%{\color{red} dans ces trois cas, faire le calcul en caractéristique 0}
		%\item $a=0$ and $b+c+d<p-1$,
		\item If $b=0$, $c+d=p-1$, $a+c<p$ and $a\leq c$, then
		$$L(\lambda)\otimes L(\mu)\cong \bigoplus_{i=0}^a L(\mu+\lambda-i\alpha_1).$$
		\item If $b=0$, $c+d=p-1$, $a+c<p$ and $a=c+1$, then
		$$L(\lambda)\otimes L(\mu)\cong L(\lambda+\mu-a\alpha_1-\alpha_2)\oplus\bigoplus_{i=0}^c L(\mu+\lambda-i\alpha_1).$$
		%\item $a=0$, $c+d=p-1$, $b+d<p$ and $b<d+2$,
		\item If $b=0$, $c+d>p-1$, $a+c<p$ and $a+d<p$, then
		$$L(\lambda)\otimes L(\mu)\cong \bigoplus_{i=0}^{a}\bigoplus_{j=0}^{\min\{i,\Theta(\mu)-1\}} L(\mu+\lambda-i\alpha_1-j\alpha_2).$$
		%\item $a=0$, $c+d>p-1$, $b+c<p$ and $b+d<p$ or
		\item If $a\cdot b\cdot c\cdot d\neq 0$, $a+b<p-1$, $c+d=p-1$, $a+b+c<p$ and $a+b+d<p$, then
		$$L(\lambda)\otimes L(\mu)\cong \bigoplus_{k=0}^{b}\bigoplus_{i=0}^a  L(\lambda+\mu -i\alpha_1-k\alpha_2).$$
	\end{enumerate}
\end{cor}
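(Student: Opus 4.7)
The plan is to reduce the corollary to a verification of character identities. Since each of the seven tensor products $L(\lambda)\otimes L(\mu)$ is by hypothesis multiplicity-free, Lemma \ref{completely_reducible} shows it is completely reducible, so it decomposes as $\bigoplus_\nu L(\nu)^{n_\nu}$ with $n_\nu \in \{0,1\}$. By the linear independence of irreducible characters (Lemma \ref{basis_character}), identifying the summands $\nu$ is equivalent to expressing $\ch L(\lambda)\cdot\ch L(\mu)$ as a sum of distinct $\ch L(\nu)$, and the decomposition in each case will then be read off immediately.

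I would first handle cases (4), (5), (6), and (7), which require no new computation: the necessary identities were derived in the course of proving Propositions \ref{acd_p-1}, \ref{acd_p}, and \ref{abcd}. Explicitly, case (4) corresponds to line \eqref{eq203}, case (5) to line \eqref{eq230}, case (6) to lines \eqref{eq201} and \eqref{eq202} combined via the uniform expression $\min\{i,\Theta(\mu)-1\}$ (which agrees with $i$ when $a<\Theta(\mu)$ and with $\Theta(\mu)-1$ when $a\geq\Theta(\mu)$), and case (7) to line \eqref{eqabcd}. In each of these, the displayed character identity is already written as a sum of characters of distinct simple modules $L(\nu)$ with $\Delta(\nu)\cong L(\nu)$, so the stated direct sum formula follows.

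For cases (1), (2), and (3), I would invoke the characteristic-zero decomposition given in Lemma \ref{A2_acd_char0} and transfer it to characteristic $p$. In case (3), the condition $a+c+d<p-1$ places $\lambda+\mu$ inside $\widehat{C_1}$, so Lemma \ref{C1} together with Lemma \ref{fundamental_alcove} ensures that $L(\nu)\cong\Delta(\nu)$ for every weight $\nu\leq\lambda+\mu$, whence Corollary \ref{char0_bis} produces the desired decomposition from the characteristic-zero formula. In case (1), the hypothesis $d=0$ with $a\leq c$ collapses the index range in Lemma \ref{A2_acd_char0} to $j=0$, leaving the single sum $\sum_{i=0}^a \chi(\lambda+\mu-i\alpha_1)$; the weights $\lambda+\mu-i\alpha_1=(a+c-2i,i)$ all lie in $\widehat{C_1}\cup F_{2,3}\cup F_{2,3'}$, so Lemma \ref{Weyl_A2} gives $\Delta(\nu)\cong L(\nu)$ and Proposition \ref{char0} applies. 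Case (2) is entirely analogous: $c=0$ and $a\leq d$ force $j=i$ in the inner sum of Lemma \ref{A2_acd_char0}, producing $\sum_{i=0}^a \chi(\lambda+\mu-i\rho)$, and all weights $(a-i,d-i)$ lie in $\widehat{C_1}$ (or, for $(a,d)=(1,p-1)$, also on $F_{2,3'}$), so the same transfer argument works.

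The only real obstacle is bookkeeping: one must verify that the index ranges in Lemma \ref{A2_acd_char0} genuinely collapse as claimed in cases (1) and (2), and that the two subcases of Proposition \ref{acd_p} are correctly merged into the single formula of case (6). Both checks are mechanical, so beyond them the corollary is a direct transcription of identities already established in the preceding sections.
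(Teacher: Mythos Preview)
Your proposal is correct and follows essentially the same approach as the paper: cases (4)--(7) are read off directly from the character identities established at lines \eqref{eq203}, \eqref{eq230}, \eqref{eq201}/\eqref{eq202}, and \eqref{eqabcd}, while cases (1)--(3) are obtained from the characteristic-zero decomposition in Lemma \ref{A2_acd_char0} together with the observation that all relevant Weyl modules are simple (Proposition \ref{char0} or Corollary \ref{char0_bis}). The only cosmetic difference is that the paper cites Propositions \ref{ac}, \ref{ad}, \ref{acd_1} for this last step, whereas you re-verify the irreducibility of the $\Delta(\nu)$ directly; both arguments are equivalent.
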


\begin{proof}\
	\begin{enumerate}[label = (\arabic*)] 
		\item This follows from Lemma \ref{A2_acd_char0} and Proposition \ref{ac}.
		\item This follows from Lemma \ref{A2_acd_char0} and Proposition \ref{ad}.
		\item This follows from Lemma \ref{A2_acd_char0} and Proposition \ref{acd_1}.
		\item This follows from line \eqref{eq203} in the proof of Proposition \ref{acd_p-1}.
		\item This follows from line \eqref{eq230} in the proof of Proposition \ref{acd_p-1}.
		\item This follows from lines \eqref{eq201} and \eqref{eq202} in the proof of Proposition \ref{acd_p}.
		\item This follows from line \eqref{eqabcd} in the proof of Proposition \ref{abcd}.\qedhere
	\end{enumerate}
\end{proof}

%Combining the above Propositions, we obtain the following Theorem which provides a classification of multiplicity-free tensor product of simple modules with $p$-restricted highest weight.

\cleardoublepage

\section{$\Sp_4$}
In this section, we establish a number of results for the classification of multiplicity-free tensor products of simple $\Sp_4$-modules with $p$-restricted highest weight.
We fix $G=\Sp_4(k)$ with root system $\Phi$ of type $B_2=C_2$ and $\Pi=\{\alpha_1,\alpha_2\}$ a base of $\Phi$ with $\frac{(\alpha_1,\alpha_1)}{(\alpha_2,\alpha_2)}=2$. With respect to this base, we have $\rho =\frac32\alpha_1+2\alpha_2$.
We assume $p\geq 5$, so that there exist weights inside the alcoves. Since the Coxeter number of a root system $\Phi$ of type $B_2$ is $h=4$, we have in particular $p\geq h$.
For $\lambda=x\omega_1+y\omega_2\in X$, we write $\lambda=(x,y)$. In particular, we have $\alpha_1=(2,-2),\;\alpha_2=(-1,2)$ and $\rho=(1,1)$.

\iffalse
In this section, $G$ is an algebraic group of type $B_2$, and we assume $p\geq 5$, so that there exists weights inside the alcoves. Since the Coxeter number of a root system $\Phi$ of type $B_2$ is $h=4$, we have in particular $p\geq h$.

\begin{notation}
	Let $\Phi$ be a root system of type $B_2=C_2$. We fix $\Pi=\{\alpha_1,\alpha_2\}$ a base of this root system, with $\frac{(\alpha_1,\alpha_1)}{(\alpha_2,\alpha_2)}=2$. With respect to this base, we have $\rho =\frac32\alpha_1+2\alpha_2$.
	For $\lambda=x\omega_1+y\omega_2\in X$, we write $\lambda=(x,y)$. In particular, we have $\alpha_1=(2,-2),\;\alpha_2=(-1,2)$ and $\rho=(1,1)$.
\end{notation}
\fi
	
In this section, we will also use Euclidean coordinates several times. We fix an orthogonal basis $(\epsilon_1,\epsilon_2)$ of $\R^2$ with $\epsilon_1=\omega_1=\alpha_1+\alpha_2$ and $\epsilon_2=\alpha_2=2\omega_2-\omega_1$. With respect to this basis, we have $\alpha_1=(1,-1),\;\alpha_2=(0,1),\;\alpha_1+\alpha_2=(1,0),\;\alpha_1+2\alpha_2=(1,1)$ and $\rho=(\frac 32, \frac 12)$.
	Since the notation $(\;,\;)$ might be confusing, we will always explicitly state when we use Euclidean coordinates. If it is not mentioned, then it means that we use coordinates with respect to the fundamental weights basis. In particular, coordinates with respect to the fundamental weights are used in all statements of propositions in the classification (section \ref{B2_classification}).

\subsection{Alcoves}
In this subsection, we describe the four $p$-restricted alcoves of a root system of type $B_2$. We start by defining a numeration of those alcoves.

\begin{defn}
	We set
	
	\begin{align*}
		&C_1:=\{\lambda\in X_{\R}|\; (\lambda+\rho,\alpha_1^\vee)>0, (\lambda+\rho,\alpha_2^\vee)>0,(\lambda+\rho,(\alpha_1+\alpha_2)^\vee)<p\},\\
		&C_2:=\{\lambda\in X_{\R}|\; (\lambda+\rho,\alpha_2^\vee)>0, (\lambda+\rho,(\alpha_1+\alpha_2)^\vee)>p,(\lambda+\rho,(\alpha_1+2\alpha_2)^\vee)<p\},\\
		&C_3:=\{\lambda\in X_{\R}|\; (\lambda+\rho,\alpha_2^\vee)<p, (\lambda+\rho,(\alpha_1+\alpha_2)^\vee)<2p,(\lambda+\rho,(\alpha_1+2\alpha_2)^\vee)>p\},\\
		&C_4:=\{\lambda\in X_{\R}|\; (\lambda+\rho,\alpha_1^\vee)<p, (\lambda+\rho,\alpha_2^\vee)<p,(\lambda+\rho,(\alpha_1+\alpha_2)^\vee)>2p\}.
	\end{align*}
We called $C_1$ the \emph{fundamental alcove}, $C_2$ the \emph{second alcove}, $C_3$ the \emph{third alcove} and $C_4$ the \emph{fourth alcove}.
	
	We also set 
	\begin{ceqn}
	
	\begin{align*}
		F_{i,i+1}:=\overline{C_{i}}\cap \overline{C_{i+1}}
	\end{align*}
	for $i=1,2,3$, i.e. $F_{i,i+1}$ is the wall between the alcove $C_i$ and $C_{i+1}$.
	Finally, we set
	 \begin{align*}
		&F_{3,5}:=\{\lambda\in \overline{C_3}|\; (\lambda+\rho,\alpha_2^\vee)=p\},\\
		&F_{4,6}:=\{\lambda\in \overline{C_4}|\; (\lambda+\rho,\alpha_1^\vee)=p\} \text{ and }\\
		&F_{4,7}:=\{\lambda\in \overline{C_4}|\; (\lambda+\rho,\alpha_2^\vee)=p\}.
	\end{align*}
\end{ceqn}

\end{defn}
We set $C_5:=s_{\alpha_2,p}\sbullet C_3$, $C_6:=s_{\alpha_1,p}\sbullet C_4$ and $C_7:=s_{\alpha_2,p}\sbullet C_4$. Then $F_{i,j}=\overline{C_{i}}\cap \overline{C_{j}}$ for $(i,j)\in \{(3,5),(4,6),(4,7)\}$.\\

Using coordinates with respect to the fundamental weights, we have
\begin{align*}
	&\widehat{C_1}\cap X^+=\{(a,b)\in \N^2| \; 2a+b\leq p-3\},\\
	&\widehat{C_2}\cap X^+=\{(a,b)\in \N^2| \; 2a+b> p-3,\; a+b\leq p-2\},\\
	&\widehat{C_3}\cap X^+=\{(a,b)\in \N^2| \; a+b> p-2,\; 2a+b\leq 2p-3,\; b\leq p-1\},\\
	&\widehat{C_4}\cap X^+=\{(a,b)\in \N^2| \; 2a+b> 2p-3,\;a\leq p-1,\; b\leq p-1\}.
\end{align*}

Using Euclidean coordinates, we have
\begingroup
\addtolength{\jot}{-0.49em}
\begin{align*}
	&X=\{(a,b)\in (\tfrac12\Z)^2|\; a+b\in \Z\},\\
	&\widehat{C_1}\cap X^+=\{(a,b)\in (\tfrac12\N)^2|\; a+b\in \N,\; a\leq \frac{p-3}{2},\; b\leq a\},\\
	&\widehat{C_2}\cap X^+=\{(a,b)\in (\tfrac12\N)^2|\; a+b\in \N,\; a>\frac{p-3}{2},\; a+b\leq p-2\},\\
	&\widehat{C_3}\cap X^+=\{(a,b)\in (\tfrac12\N)^2|\; a+b\in \N, \; a+b>p-2,\; a\leq p-\frac32,\; b\leq \frac{p-1}{2}\},\\
	&\widehat{C_4}\cap X^+=\{(a,b)\in (\tfrac12\N)^2|\; a+b\in \N,\; a> p-\frac32,\; b\leq \frac{p-1}{2},\; a-b\leq p-1\}.
\end{align*}
\endgroup
Moreover, a weight $(a,b)\in X$ is dominant if and only if $0\leq b\leq a$, and $(a,b)\in D$ if and only if $-\frac{1}{2}\leq b\leq a+1$.\\

Let us illustrate these alcoves with a picture. The blue arrows form the root system. The black arrows are the fundamental weights, generating the weight lattice (in black). The region containing the dominant weights is coloured in green. The red triangles are the walls of the alcoves.

\begin{center}
\begin{tikzpicture}[framed,baseline={(current bounding box.center)},scale=0.8]
	\clip (-3,-5) rectangle (7,14);
	
	\pgftransformcm{cos(45)}{sin(45)}{sqrt(2)*cos(90)}{sqrt(2)*sin(90)}{\pgfpoint{0cm}{0cm}}
	
	\fill[green,opacity=.3] (0,0) rectangle (24,24);
	
	\draw[black, thin] (-24,-24) grid[step=1cm] (24,24);
	
	\draw[red,thick, shift={(-1,-1)}, scale=7] (-16,-16) grid (24,24);
	\foreach \x in {-4,-3,...,5}{
		\foreach \y in {-4,-3,...,5}{
			\draw[red, thick]  (-1+7*\x,-1+7*\y) -- (-1+7*\x+7,-1+7*\y-7);
		}
	}
	
	\foreach \x in {-4,-3,...,5}{
		\foreach \y in {-4,-3,...,5}{
			\draw[red, thick]  (-1+7*\x,-1+7*\y) -- (-1+7*\x+14,-1+7*\y-7);
		}
	}
	\node[draw,circle,fill,scale=.4] at (0,0){};
	\draw [blue,thick,-stealth](0,0) -- (0,1);
	\draw [blue,thick,-stealth](0,0) -- (0,-1);
	\draw [blue,thick,-stealth](0,0) -- (2,-1) node[right]{$\alpha_2$};
	\draw [blue,thick,-stealth](0,0) -- (-2,1);
	\draw [blue,thick,-stealth](0,0) -- (2,-2);
	\draw [blue,thick,-stealth](0,0) -- (-2,2) node[left]{$\alpha_1$};
	\draw [blue,thick,-stealth](0,0) -- (-2,0);
	\draw [blue,thick,-stealth](0,0) -- (2,0);
	
	\node[red] at (1.5,2.5){$C_2$};
	\node[red]  at (1.5,0.5){$C_1$};
	\node[red]  at (3.5,2.5){$C_3$};
	\node[red]  at (4.5,4.5){$C_4$};
	\node[red]  at (7.5,0.5){$C_5$};
	\node[red]  at (0.5,7.5){$C_6$};
	\node[red]  at (7.5,3.5){$C_7$};
	
	\draw [thick,-stealth](0,0) -- (1,0) node[right]{$\omega_2$};
	\draw [thick,-stealth](0,0) -- (0,1) node[above]{$\omega_1$};
	%\draw [thick,-stealth](0,0) -- (1,1);
\end{tikzpicture}

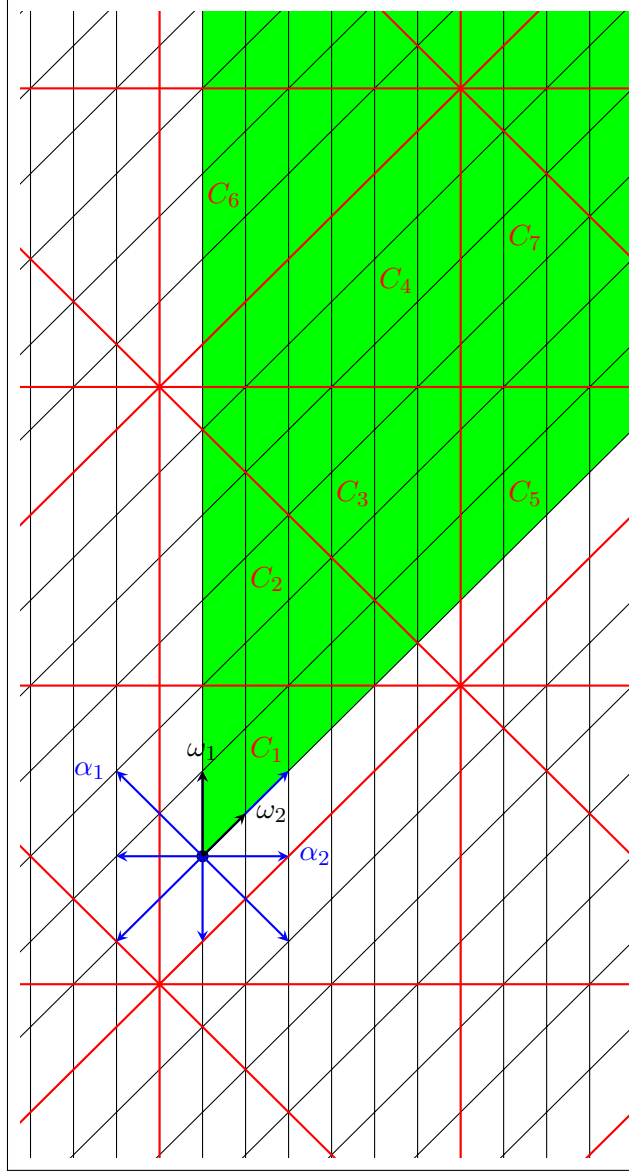
\captionof{figure}{Alcoves for $B_2$ and $p=7$.}\pagebreak
\end{center}

\begin{rem}\label{B2_symmetry}
	In Euclidean coordinates, we have
	\begin{enumerate}[label = (\arabic*)] 
		\item $s_{\alpha_1}\sbullet (a,b)=(b-1,a+1)$,
		\item $s_{\alpha_2}\sbullet (a,b)=(a,-b-1)$,
		\item $s_{\alpha_1+\alpha_2,p}\sbullet(a,b)=(p-3-a,b)$,
		\item $s_{\alpha_1+2\alpha_2,p}\sbullet(a,b)=(a-m,b-m)$ with $m=(a+b+2-p)$.
	\end{enumerate}
\end{rem}

\begin{lemma}\label{B2_order}
	Let $\lambda=(a,b),\mu=(c,d)\in X$ with $\mu\leq \lambda$. Then $c+d\leq a+b$.
\end{lemma}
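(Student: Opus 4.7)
The plan is to unwind the definition of the partial order $\leq$ on $X$ and just read off the coordinates. Recall from the setup of Section~6 that coordinates $(x,y)$ denote the weight $x\omega_1 + y\omega_2$, and that in these coordinates the simple roots are $\alpha_1 = (2,-2)$ and $\alpha_2 = (-1,2)$.

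By definition of $\leq$, the assumption $\mu \leq \lambda$ means that there exist $n_1, n_2 \in \mathbb{N}$ such that
\[
\lambda - \mu = n_1 \alpha_1 + n_2 \alpha_2.
\]
Substituting the coordinate expressions for the simple roots gives
\[
(a-c,\; b-d) = n_1(2,-2) + n_2(-1,2) = (2n_1 - n_2,\; -2n_1 + 2n_2).
\]

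From this I would simply add the two coordinates: adding the components yields
\[
(a+b) - (c+d) = (2n_1 - n_2) + (-2n_1 + 2n_2) = n_2 \geq 0,
\]
which is exactly the claim. There is no real obstacle here; the only thing to be careful about is to use the $\omega_i$-coordinates consistently with the rest of Section~6 (rather than the Euclidean coordinates $(\epsilon_1,\epsilon_2)$ introduced immediately afterwards), which is already the convention stated for propositions in the classification.
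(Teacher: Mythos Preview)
Your proof is correct and is essentially identical to the paper's own argument: both write $\lambda-\mu=n_1\alpha_1+n_2\alpha_2$ with $n_1,n_2\in\mathbb{N}$, compute the coordinates, and observe that $(a+b)-(c+d)=n_2\geq 0$.
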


\begin{proof}
	Let $s,t\in \N$ be such that $\mu=\lambda-s\alpha_1-t\alpha_2$. We have $\mu=(a-2s+t,b-2t+2s)=:(c,d)$ and $c+d=a+b-t\leq a+b$.
	%The result is trivial if $\lambda=\mu$. Observe that $\lambda-\alpha_1=(a-2,b+2)$ and \linebreak$(a-2)+(b+2)=a+b$. Also, $\lambda-\alpha_2=(a+1,b-2)$ and $(a+1)+(b-2)=a+b-1$. Thus the result follows by induction.
\end{proof}

\subsection{Structure of Weyl modules and weights in irreducible modules}
In this subsection, we compute the composition factors of Weyl modules with $p$-restricted highest weight and the multiplicity of certain weights in irreducible modules with $p$-restricted highest weight.

\begin{lemma}\label{Weyl_B2}
	%structure des irréductibles et des weyl pour B2
	Let $\lambda=(a,b)\in X^+$ be a $p$-restricted dominant weight.
	\begin{itemize}
	\item If $\lambda\in C_1$, then $\Delta(\lambda)\cong L(\lambda)$. 
	\item If $\lambda\in C_i$ for $i\in \{2,3,4\}$, then $\Delta(\lambda)$ admits the unique composition series $[L(\lambda),L(\mu)]$ where $\mu\in C_{i-1}$ is the unique weight linked to $\lambda$.
	\item If $\lambda\in F_{i,j}$ with $(i,j)\neq (4,7)$ then $\Delta(\lambda)\cong L(\lambda)$. 
	\item If $\lambda\in F_{4,7}\setminus F_{4,6}$, then $\Delta(\lambda)$ admits the unique composition series $[L(\lambda),L(\mu)]$ where $\mu\in F_{3,5}$ is the unique weight linked to $\lambda$.
	\end{itemize}
In particular, $T(\lambda)$ is irreducible if and only if $\lambda\in \widehat{C_1}\cup F_{2,3}\cup F_{3,4}\cup F_{3,5}\cup F_{4,6}$.
\end{lemma}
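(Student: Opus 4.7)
My plan is to argue case by case by direct computation of the Jantzen $p$-sum (Proposition \ref{Jantzen_p_sum}). Since $p \geq 5 \geq h = 4$, the bound in the Remark following the definition of $\JSF$ forces every $p$-adic valuation $\nu_p(mp)$ appearing in $\JSF(\lambda)$ to equal $1$, so $\JSF(\lambda)$ is simply the sum of Weyl characters $\chi(s_{\alpha,mp}\sbullet \lambda)$ over positive roots $\alpha \in \Phi^+$ and integers $m \geq 1$ with $0 < mp < (\lambda+\rho, \alpha^\vee)$. I would use the explicit formulas $(\lambda+\rho,\alpha_1^\vee) = a+1$, $(\lambda+\rho,\alpha_2^\vee) = b+1$, $(\lambda+\rho,(\alpha_1+\alpha_2)^\vee) = 2a+b+3$, $(\lambda+\rho,(\alpha_1+2\alpha_2)^\vee) = a+b+2$, together with the explicit reflections from Remark \ref{B2_symmetry}, to enumerate exactly which pairs $(\alpha,m)$ contribute and to compute $s_{\alpha,mp}\sbullet \lambda$. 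The case $\lambda \in \widehat{C_1}$, which covers the interior of $C_1$ and also the wall $F_{1,2}$, is immediate from Lemma \ref{fundamental_alcove}.

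For the wall cases $F_{2,3}$, $F_{3,4}$, $F_{3,5}$, $F_{4,6}$, I would show $\JSF(\lambda) = 0$ by the vanishing criterion of Lemma \ref{action_characters}: each surviving summand $\chi(s_{\alpha,mp}\sbullet \lambda)$ either has $s_{\alpha,mp}\sbullet \lambda \in D \setminus X^+$ (in which case $\chi = 0$ directly), or is taken by some element of $W$ via the dot action onto such a weight, or onto a weight lying on a reflection hyperplane of $W$, again forcing $\chi = 0$. In each wall case this cancellation is forced by the defining equation of the wall: e.g.\ on $F_{2,3}$ one has $a+b+2 = p$ so the $(\alpha_1+2\alpha_2)$-term disappears, and the only other surviving contribution $s_{\alpha_1+\alpha_2,p}\sbullet \lambda$ has first coordinate $-1$ and so is in $D\setminus X^+$. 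With $\JSF(\lambda) = 0$, Proposition \ref{Jantzen_p_sum} immediately gives $\Delta(\lambda) \cong L(\lambda)$.

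For $\lambda$ in the interior of $C_i$ with $i \in \{2,3,4\}$, I would argue that the analogous computation leaves exactly one non-vanishing Weyl character, namely $\chi(\mu)$, where $\mu \in C_{i-1}$ is the reflection of $\lambda$ across the wall $F_{i-1,i}$. The Strong Linkage Principle (Proposition \ref{strong_linkage}) ensures that any composition factor $L(\nu)$ of $\Delta(\lambda)$ satisfies $\nu \uparrow \lambda$; in the $B_2$ alcove geometry the only such dominant $\nu < \lambda$ lying in an alcove lower than $C_i$ is precisely this $\mu$. Combined with $\Delta(\lambda)/\Delta(\lambda)^1 \cong L(\lambda)$ and induction on $i$ (so $\Delta(\mu) \cong L(\mu)$ already when $\mu$ lies in $C_1$ or on a wall other than $F_{4,7}$; when $\mu \in C_{i-1}$ with $i-1 \geq 2$, the inductive hypothesis describes its composition factors), Remark \ref{Jantzen_2_comp} gives the claimed unique composition series $[L(\lambda), L(\mu)]$. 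The case $\lambda \in F_{4,7}\setminus F_{4,6}$ is handled identically: here the analysis leaves $\JSF(\lambda) = \chi(\mu)$ with $\mu$ the reflection of $\lambda$ onto $F_{3,5}$, and by the previous wall case $\Delta(\mu) \cong L(\mu)$. Finally, the statement about $T(\lambda)$ is a direct consequence of Lemma \ref{tilting_irreducible}: in every alcove/wall where $\Delta(\lambda) \not\cong L(\lambda)$, $T(\lambda)$ fails to be irreducible.

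The main obstacle will be the sheer amount of bookkeeping: for each of the four alcove interiors and each of the walls one must enumerate all pairs $(\alpha,m)$ meeting the JSF inequality, compute the image $s_{\alpha,mp}\sbullet\lambda$ in both $\omega$- and Euclidean coordinates to detect non-dominance or incidence with a reflection hyperplane, and then justify the resulting cancellations. The $B_2$ case is richer than $A_2$ (four positive roots, four $p$-restricted alcoves, five walls), and care is required in particular on $F_{4,7}\setminus F_{4,6}$, where two reflection terms survive but combine, by Lemma \ref{action_characters}, into a single $\chi(\mu)$ with $\mu$ on the wall $F_{3,5}$.
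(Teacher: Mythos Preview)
Your approach is the same as the paper's: Jantzen $p$-sum formula case by case, Lemma~\ref{action_characters} for the vanishing/sign changes, Remark~\ref{Jantzen_2_comp} for the two-factor conclusion, and Lemma~\ref{tilting_irreducible} for the tilting statement. One point needs sharpening, however. For $\lambda\in C_3$ (and likewise $C_4$) the JSF does \emph{not} reduce to a single surviving Weyl character $\chi(\mu)$: one obtains
\[
\JSF(\lambda)=\chi(s_{\alpha_1+\alpha_2,p}\sbullet\lambda)+\chi(s_{\alpha_1+2\alpha_2,p}\sbullet\lambda)=-\chi(\lambda_1)+\chi(\lambda_2),
\]
two non-zero Weyl characters. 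It is the inductively established identity $\chi(\lambda_2)=\ch L(\lambda_2)+\chi(\lambda_1)$ from the $C_2$ case that collapses this to $\ch L(\lambda_2)$, at which point Remark~\ref{Jantzen_2_comp} applies; for $C_4$ three Weyl characters survive and combine to $\ch L(\lambda_3)$ via the $C_3$ identity. Relatedly, your Strong Linkage remark that ``the only such dominant $\nu<\lambda$ \ldots\ is precisely this $\mu$'' is false: below $\lambda\in C_i$ there are $i-1$ linked dominant weights, and Strong Linkage alone does not exclude the lower ones---it is the equality $\JSF(\lambda)=\ch L(\mu)$ that does. With this adjustment to how the induction is used, your plan matches the paper exactly.
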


\begin{proof}
	We prove this lemma using Jantzen $p$-sum formula (Proposition \ref{Jantzen_p_sum}). For a weight $\lambda\in C_i$, we write $\lambda_j$ for the unique weight in $(W_p\sbullet \lambda) \cap C_j$. We will use Remark \ref{Jantzen_2_comp} and Lemma \ref{action_characters} several times without further reference.

	\begin{itemize}%[nosep]
		\item If $\lambda\in \widehat{C_1}$ then $\Delta(\lambda)\cong L(\lambda)$ by Lemma \ref{fundamental_alcove}.
		\item If $\lambda \in C_2$, then 
		$$\JSF(\lambda)=\chi(s_{\alpha_1+\alpha_2,p}\sbullet \lambda)=\chi(\lambda_1)=\ch L(\lambda_1).$$
		Therefore, $\chi(\lambda_2)=\ch L(\lambda_2)+\ch L(\lambda_1)$ and $\ch L(\lambda_2)=\chi(\lambda_2)-\chi(\lambda_1)$.
		\item If $\lambda \in C_3$, then
		$$\JSF(\lambda)=\chi(s_{\alpha_1+\alpha_2,p}\sbullet \lambda)+\chi(s_{\alpha_1+2\alpha_2,p}\sbullet\lambda)=-\chi(\lambda_1)+\chi(\lambda_2)=\ch L(\lambda_2).$$
		Therefore, $\chi(\lambda_3)=\ch L(\lambda_3)+\ch L(\lambda_2)$ and $\ch L(\lambda_3)=\chi(\lambda_3)-\chi(\lambda_2)+\chi(\lambda_1)$.
		\item If $\lambda\in C_4$, then
		\begin{align*}
			\JSF(\lambda)&=\chi(s_{\alpha_1+\alpha_2,p}\sbullet \lambda)+\chi(s_{\alpha_1+\alpha_2,2p}\sbullet\lambda)+\chi(s_{\alpha_1+2\alpha_2,p}\sbullet \lambda)=\chi(\lambda_1)+\chi(\lambda_3)-\chi(\lambda_2)\\
			&=\ch L(\lambda_3).
		\end{align*}
		\item If $\lambda\in F_{2,3}$, then $$\JSF(\lambda)=\chi(s_{\alpha_1+\alpha_2,p}\sbullet \lambda)=0$$
		because $s_{\alpha_1+\alpha_2,p}\sbullet \lambda\in D\setminus X^+$. Thus $\Delta(\lambda)$ is irreducible.
		\item If $\lambda\in F_{3,4}$, then 
		$$\JSF(\lambda)=\chi(s_{\alpha_1+\alpha_2,p}\sbullet \lambda)+\chi(s_{\alpha_1+2\alpha_2,p}\sbullet\lambda)=0$$
		because $s_{\alpha_1+2\alpha_2,p}\sbullet\lambda\in D\setminus X^+$ and $s_{\alpha_1}s_{\alpha_1+\alpha_2,p}\sbullet \lambda\in D\setminus X^+$. Thus $\Delta(\lambda)$ is irreducible.
		\item If $\lambda\in F_{3,5}$, then 
		$$\JSF(\lambda)=\chi(s_{\alpha_1+\alpha_2,p}\sbullet \lambda)+\chi(s_{\alpha_1+2\alpha_2,p}\sbullet\lambda)=-\chi(s_{\alpha_1}s_{\alpha_1+\alpha_2,p}\sbullet \lambda)+\chi(s_{\alpha_1+2\alpha_2,p}\sbullet\lambda)=0$$
		because $s_{\alpha_1}s_{\alpha_1+\alpha_2,p}\sbullet \lambda=s_{\alpha_1+2\alpha_2,p}\sbullet\lambda$. Thus $\Delta(\lambda)$ is irreducible.
		\item If $\lambda\in F_{4,6}$, then
		\begin{align*}
			\JSF(\lambda)&=\chi(s_{\alpha_1+\alpha_2,p}\sbullet \lambda)+\chi(s_{\alpha_1+\alpha_2,2p}\sbullet\lambda)+\chi(s_{\alpha_1+2\alpha_2,p}\sbullet \lambda)\\
			&=\chi(s_{\alpha_1+\alpha_2,2p}\sbullet\lambda)-\chi(s_{\alpha_2}s_{\alpha_1+2\alpha_2,p}\sbullet \lambda)=0
		\end{align*}
		where we used that $s_{\alpha_1+\alpha_2}s_{\alpha_1+\alpha_2,p}\sbullet \lambda\in D\setminus X^+$ in the second equality and \linebreak$s_{\alpha_1+\alpha_2,2p}\sbullet\lambda=s_{\alpha_2}s_{\alpha_1+2\alpha_2,p}\sbullet \lambda$ in the last equality. Thus $\Delta(\lambda)$ is irreducible.
		\item If $\lambda\in F_{4,7}\setminus F_{4,6}$, then
		\begin{align*}
			\JSF(\lambda)&=\chi(s_{\alpha_1+\alpha_2,p}\sbullet \lambda)+\chi(s_{\alpha_1+\alpha_2,2p}\sbullet\lambda)+\chi(s_{\alpha_1+2\alpha_2,p}\sbullet \lambda)\\
			&=\chi(s_{\alpha_1}s_{\alpha_1+\alpha_2}s_{\alpha_1+\alpha_2,p}\sbullet \lambda)+\chi(s_{\alpha_1+\alpha_2,2p}\sbullet\lambda)-\chi(s_{\alpha_2}s_{\alpha_1+2\alpha_2,p}\sbullet \lambda)\\
			&=\chi(s_{\alpha_1+\alpha_2,2p}\sbullet\lambda)
	\end{align*}
		where we used that $s_{\alpha_1}s_{\alpha_1+\alpha_2}s_{\alpha_1+\alpha_2,p}\sbullet \lambda=s_{\alpha_2}s_{\alpha_1+2\alpha_2,p}\sbullet \lambda$ in the last equality. Now observe that $s_{\alpha_1+\alpha_2,2p}\sbullet\lambda\in F_{3,5}$, so $\chi(s_{\alpha_1+\alpha_2,2p}\sbullet\lambda)=\ch L(s_{\alpha_1+\alpha_2,2p}\sbullet\lambda)$ by one of the previous cases and we are done.
	\end{itemize}
The last claim follows directly from Lemma \ref{tilting_irreducible}.
\end{proof}
%\pagebreak
%A proof of t}his Proposition is given in \cite{Jantzen_B2}.

Combining Lemma \ref{Weyl_B2} and Remark \ref{B2_symmetry}, we get the following remark.

\begin{rem}\label{B2_character_symmetry}
	In this remark, we use Euclidean coordinates. Let $\lambda=(a,b)\in X^+$. We set $m:=a+b+2-p$.
	\begin{itemize}
		\item If $\lambda \in \widehat{C_2}$, then $\ch L(\lambda)=\chi(a,b)-\chi(p-3-a,b)$.
		\item If $\lambda \in C_3$, then $\chi(a,b)=\ch L(a,b)+\ch L(a-m,b-m)$.
	\end{itemize}
\end{rem}

\begin{lemma}[{\cite[1.35]{Testerman_AMS}.}]\label{B2_-11}
	%multiplicité x-11
	Let $\lambda=(a,b)\in X^+$ be $p$-restricted with $a\neq 0$ and $b\neq 0$. Then
	$$m_{L(\lambda)}(\lambda-\alpha_1-\alpha_2)=\begin{cases}
		1 &\text{if } \;2a+b+2\equiv 0\bmod p,\\
		2 &\text{otherwise.}
	\end{cases}$$
\end{lemma}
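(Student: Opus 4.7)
The strategy is to first compute $m_{\Delta(\lambda)}(\mu) = 2$ where $\mu := \lambda - \alpha_1 - \alpha_2 = \lambda - \omega_1$, and then determine the correction arising when one passes to $L(\lambda)$. Proposition \ref{Cavallin} applied with $J = \{1,2\}$ and $c_1 = c_2 = 1$ reduces the computation in the Weyl module to $m_{\Delta(\rho)}(\omega_2)$, since $\rho - \alpha_1 - \alpha_2 = \omega_2$. An application of Freudenthal's formula (Theorem \ref{Freudenthal}) at the weight $\omega_2$ yields $m_{\Delta(\rho)}(\omega_2) = 2$: only the positive roots $\alpha_1$, $\alpha_2$ and $\alpha_1+\alpha_2$ contribute, as the translates $\omega_2+\alpha$ lie in the orbit $W\rho$ (with multiplicity $1$ each in $\Delta(\rho)$), whereas $\omega_2+(\alpha_1+2\alpha_2)$ is not a weight of $\Delta(\rho)$.

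Next, Lemma \ref{Weyl_B2} (and the formulae derived in its proof) express $\ch L(\lambda) = \chi(\lambda) + \sum_i \varepsilon_i\, \chi(\nu_i)$ as an alternating sum of Weyl characters, where the weights $\nu_i$ are dot-reflections of $\lambda$ across affine hyperplanes $H_{\beta, mp}$ with $\beta \in \{\alpha_1+\alpha_2,\alpha_1+2\alpha_2\}$ and $m \in \{1, 2\}$. Hence
$$
m_{L(\lambda)}(\mu) = 2 + \sum_i \varepsilon_i\, m_{\Delta(\nu_i)}(\mu),
$$
and it suffices to determine for which $\nu_i$ we have $\mu \leq \nu_i$, and what the resulting multiplicity is. Writing $\nu_i - \mu = (\nu_i - \lambda) + \omega_1$ and using that $\nu_i - \lambda$ is a nonzero integer multiple of $\beta$, an alcove-by-alcove check shows that $\mu \leq \nu_i$ can only happen for reflections $s_{\alpha_1+\alpha_2, mp}$ within the $p$-restricted range; for these, $\nu_i - \mu = (mp - 2a - b - 2)\omega_1$, and the alcove bounds for $\widehat{C_2}, \widehat{C_3}, \widehat{C_4}$ and for the wall $F_{4,7} \setminus F_{4,6}$ force $\nu_i = \mu$ whenever $\mu \leq \nu_i$. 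Thus each nontrivial correction contributes exactly $-1$, and such a correction occurs precisely when $mp = 2a + b + 2$ for some $m \in \{1, 2\}$, i.e., when $2a + b + 2 \equiv 0 \pmod p$.

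The main obstacle is the case analysis ruling out spurious corrections: one must check that reflections through $H_{\alpha_1+2\alpha_2, p}$ (which appear in the expansion for $\lambda \in \widehat{C_3}$ and $\widehat{C_4}$) produce weights $\nu_i$ with $\nu_i - \mu \notin \Z_{\geq 0}\alpha_1 + \Z_{\geq 0}\alpha_2$, and similarly for the walls $F_{2,3}, F_{3,4}, F_{3,5}, F_{4,6}$ (where $\Delta(\lambda) = L(\lambda)$) that the congruence cannot hold. Both reductions follow from the fact that $\omega_1$ is not colinear with $\alpha_1 + 2\alpha_2$, together with the explicit bounds $a, b \leq p-1$ defining $p$-restrictedness.
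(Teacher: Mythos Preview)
The paper does not prove this lemma; it simply cites \cite[1.35]{Testerman_AMS}. Your argument supplies an independent proof tailored to the $B_2$ case, using the explicit decomposition $\ch L(\lambda)=\sum_i \varepsilon_i\,\chi(\lambda_i)$ available from Lemma~\ref{Weyl_B2}, and it is correct.

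One point of precision: your description of the $\nu_i$ as single ``dot-reflections of $\lambda$ across affine hyperplanes'' is not quite accurate for $\lambda\in C_3$ or $C_4$, where some of the $\lambda_i$ in the character formula arise from \emph{iterated} reflections (e.g.\ $\lambda_1=s_{\alpha_1+\alpha_2,p}\sbullet s_{\alpha_1+2\alpha_2,p}\sbullet\lambda$ for $\lambda\in C_3$). What saves your argument is that in every alcove only the \emph{immediate predecessor} $\lambda_{i-1}$ --- which is indeed a single reflection $s_{\alpha_1+\alpha_2,mp}\sbullet\lambda$ of $\lambda$ when $\lambda\in C_2$, $C_4$, or $F_{4,7}\setminus F_{4,6}$ --- can possibly satisfy $\mu\leq\lambda_{i-1}$; the deeper terms $\lambda_j$ ($j<i-1$) always have $\lambda_j-\mu\notin\Z_{\geq 0}\alpha_1+\Z_{\geq 0}\alpha_2$ because they subtract at least one copy of $\alpha_1+2\alpha_2$ in addition to $\alpha_1+\alpha_2$. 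Since this first correction term always carries sign $-1$ in the alternating sum, and since the alcove inequalities force $\lambda_{i-1}=\mu$ whenever $\mu\leq\lambda_{i-1}$, your conclusion $m_{L(\lambda)}(\mu)=2-1=1$ follows. For $\lambda\in C_3$ the first correction comes from $s_{\alpha_1+2\alpha_2,p}$, and your final paragraph correctly disposes of it. So the case analysis is sound once this is spelled out.
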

%A proof of this Lemma is given in \cite[1.35]{Testerman_AMS}.\\

\begin{lemma}\label{table_B2}
	Let $\lambda=(a,b)\in X^+$. We have the following weight multiplicities:\\
	\\
	\begin{tabular}{|c|c|c|c|c|c|}
		\hline
		$\lambda=(a,b)$ & $a\geq 1,b\geq 2$ &$a\geq 1,b=1$  & $a=0,b\geq 2$  & $a\geq 1,b=0$ & $a=0,b=1$ \\
		\hline
		$m_{\Delta(\lambda)}(\lambda)$& $1$ & $1$ & $1$ & $1$ & $1$ \\
		\hline
		$m_{\Delta(\lambda)}(\lambda-\alpha_1)$& $1$ & $1$ & $0$ & $1$ & $0$ \\
		\hline
		$m_{\Delta(\lambda)}(\lambda-\alpha_2)$& $1$ & $1$ & $1$ & $0$ & $1$ \\
		\hline
		$m_{\Delta(\lambda)}(\lambda-\alpha_1-\alpha_2)$& $2$ & $2$ & $1$ & $1$ & $1$ \\
		\hline
		$m_{\Delta(\lambda)}(\lambda-2\alpha_2)$& $1$ & $0$ & $1$ & $0$ & $0$ \\
		\hline
		$m_{\Delta(\lambda)}(\lambda-\alpha_1-2\alpha_2)$& $3$ & $2$ & $2$ & $1$ & $1$ \\
		\hline
	\end{tabular}
\vspace{1em}
\\
	Moreover, for $\lambda=(a,0)\in X^+$ with $a\geq 2$, we have
	\begin{align*}
		&m_{\Delta(a,0)}(\lambda-\alpha_1-\alpha_2)=1, &&m_{\Delta(a,0)}(\lambda-\alpha_1-2\alpha_2)=1,
		&&m_{\Delta(a,0)}(\lambda-2\alpha_1-\alpha_2)=1,\\
		&m_{\Delta(a,0)}(\lambda-2\alpha_1-2\alpha_2)=2.
	\end{align*}
\end{lemma}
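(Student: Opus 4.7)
The plan is to verify each entry of the table by reducing, via Cavallin's formula (Proposition \ref{Cavallin}), to a weight multiplicity in a Weyl module of small highest weight, and then to evaluate the residual multiplicity either by inspecting a familiar low-dimensional representation of $\Sp_4\cong\Spin_5$ or by one application of Freudenthal's formula (Theorem \ref{Freudenthal}). The key point is that weight multiplicities in $\Delta(\lambda)$ coincide with those of the complex simple module of highest weight $\lambda$ (by the Chevalley-group construction of $\Delta(\lambda)$), so every entry of the table is characteristic-independent and may be verified once and for all.

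Concretely, for each row label $\mu=\lambda-c_1\alpha_1-c_2\alpha_2$, set $J=\{i\in\{1,2\}:c_i\leq a_i\}$; the column hypothesis on $(a,b)$ ensures $J\neq\emptyset$ except when $\lambda$ is already one of the small weights listed below. When $J\neq\emptyset$, the choice $\lambda'=\lambda-\sum_{j\in J}(a_j-c_j)\omega_j$ places $(a',b')$ in the finite list $\{(0,0),(0,1),(0,2),(1,0),(1,1),(1,2),(2,0)\}$, and Proposition \ref{Cavallin} yields $m_{\Delta(\lambda)}(\mu)=m_{\Delta(\lambda')}(\mu')$ with $\mu'=\lambda'-c_1\alpha_1-c_2\alpha_2$. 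For instance, in the column $a\geq 1,\, b\geq 2$, the five nontrivial row labels reduce to computations in $\Delta(1,0),\Delta(0,1),\Delta(1,1),\Delta(0,2),\Delta(1,2)$ respectively; in the column $a\geq 1,\,b=0$, the weight $\lambda-\alpha_2$ forces $J=\{1\}$ and reduces to $\Delta(0,0)$, giving multiplicity $0$; and the auxiliary block with $\lambda=(a,0)$, $a\geq 2$, reduces uniformly to $\Delta(1,0)$ or $\Delta(2,0)$ according to whether $c_1\leq 1$ or $c_1=2$.

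The residual multiplicities are then read off from explicit descriptions of the small modules: $\Delta(1,0)$ and $\Delta(0,1)$ are the $5$- and $4$-dimensional natural representations of $\Spin_5\cong\Sp_4$, with every weight of multiplicity one; $\Delta(0,2)$ is the $10$-dimensional adjoint representation, with the zero weight of multiplicity two and all non-zero weights of multiplicity one; and $\Delta(1,1),\Delta(1,2),\Delta(2,0)$, of dimensions $16,35,14$, have weight structures obtainable either by Freudenthal or by the decompositions $S^2\Delta(1,0)\cong\Delta(2,0)\oplus k$ and $\Delta(0,1)\otimes\Delta(1,0)\cong\Delta(1,1)\oplus\Delta(0,1)$ in characteristic zero. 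In the handful of cases where $J=\emptyset$ (occurring only when $\lambda=(0,1)$ or $\lambda=(1,0)$ itself), $\Delta(\lambda)$ is already one of these small modules and the multiplicity is extracted directly.

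The main obstacle is purely organisational: one must carry out roughly thirty separate reductions and small-module evaluations, paying attention to the fact that the admissible $J$ depends sensitively on which of the inequalities $c_i\leq a_i$ hold in the given column, and that one cannot always take $J=\{1,2\}$. No individual step is deep; each consists of a one-line invocation of Proposition \ref{Cavallin} followed by a direct inspection of one of seven explicit small Weyl modules.
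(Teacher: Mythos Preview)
Your approach is correct and coincides with the paper's: both invoke Proposition \ref{Cavallin} to reduce each entry to a multiplicity in a Weyl module of small highest weight, and then read off the residual value. The only difference is that the paper simply cites external tables of dominant weights for this last step, whereas you outline how to recover those small-module multiplicities by hand via Freudenthal or explicit descriptions of $\Delta(1,0),\Delta(0,1),\Delta(1,1),\Delta(0,2),\Delta(1,2),\Delta(2,0)$.
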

\begin{proof}
	This follows from Proposition \ref{Cavallin} and tables of dominant weights (\cite{tables_weight}).
\end{proof}

\begin{lemma}\label{B2_-12}
	Let $\lambda=(a,b)\in X^+$ with $1\leq a<p$ and $2\leq b<p$. Then
	$$m_{L(\lambda)}(\lambda-\alpha_1-2\alpha_2)=\begin{cases}
		2\quad &\text{if } \; 2a+b+2\equiv 0\bmod p,\\
		2\quad &\text{if } \; a+b=p-1,\\
		3\quad & \text{otherwise}.
	\end{cases}$$
\end{lemma}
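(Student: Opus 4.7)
My plan is to compute $m_{L(\lambda)}(\nu)$ where $\nu := \lambda - \alpha_1 - 2\alpha_2 = (a,b-2)$ in fundamental-weight coordinates. Under the hypothesis $a\geq 1,\,b\geq 2$, $\nu$ is dominant, and by Lemma~\ref{table_B2} we have $m_{\Delta(\lambda)}(\nu) = 3$. Using the identity
$$m_{\Delta(\lambda)}(\nu) \;=\; m_{L(\lambda)}(\nu) \;+\; \sum_{\mu\neq \lambda}[\Delta(\lambda):L(\mu)]\cdot m_{L(\mu)}(\nu),$$
it suffices to determine $m_{L(\mu)}(\nu)$ for the at-most-one further composition factor $L(\mu)$ of $\Delta(\lambda)$ identified by Lemma~\ref{Weyl_B2}. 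Since $\nu$ is dominant, $m_{L(\mu)}(\nu)>0$ if and only if $\nu\leq\mu$ in the root order.

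If $\lambda\in \widehat{C_1}\cup F_{1,2}\cup F_{2,3}\cup F_{3,4}\cup F_{3,5}\cup F_{4,6}$, then $\Delta(\lambda)\cong L(\lambda)$ and $m_{L(\lambda)}(\nu)=3$. A direct check of each of these six regions in fundamental-weight coordinates shows that neither $2a+b+2\equiv 0\pmod p$ nor $a+b=p-1$ is possible under $a\geq 1,\,b\geq 2$, so the \emph{otherwise} case applies and matches. For the reducible cases, the Jantzen computation in the proof of Lemma~\ref{Weyl_B2} gives the second composition factor explicitly: $\mu=(p-3-a-b,b)$ for $\lambda\in C_2$; $\mu=(a,2p-4-2a-b)$ for $\lambda\in C_3$; $\mu=(2p-3-a-b,b)$ for $\lambda\in C_4$; and $\mu=(p-2-a,p-1)$ for $\lambda\in F_{4,7}\setminus F_{4,6}$. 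In each case $\mu$ is dominant and $p$-restricted, and decomposing $\mu-\nu=c_1\alpha_1+c_2\alpha_2$ yields the constraint $c_1\geq 0$ equivalent to, respectively, $2a+b\leq p-2$, $a+b\leq p-1$, $2a+b\leq 2p-2$, and $2a\leq p-1$. Combined with the alcove condition on $\lambda$ (which provides the opposite weak inequality), this forces an equality, and these four equalities correspond exactly to the two conditions $2a+b+2\equiv 0\pmod p$ and $a+b=p-1$ stated in the lemma.

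In each equality case, one checks that $\nu=\mu$ (Case $C_3$) or $\nu=\mu-\alpha_2$ (the other three cases); the former yields $m_{L(\mu)}(\nu)=1$ trivially, and the latter yields $m_{L(\mu)}(\mu-\alpha_2)=1$ by Lemma~\ref{-1}, using that $\mu$ is $p$-restricted with $(\mu,\alpha_2^\vee)\geq 1$. Hence $m_{L(\lambda)}(\nu)=3-1=2$, matching the lemma. In all other subcases $c_1<0$, so $\nu\not\leq\mu$, $m_{L(\mu)}(\nu)=0$, and $m_{L(\lambda)}(\nu)=3$. The main obstacle is Case $\lambda\in C_4$: here $\mu$ itself lies in the open alcove $C_3$, so $\Delta(\mu)\neq L(\mu)$ and Lemma~\ref{-1} does not immediately give $m_{L(\mu)}(\mu-\alpha_2)$. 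This is resolved by observing that $m_{\Delta(\mu)}(\mu-\alpha_2)=1$ (since $(\mu,\alpha_2^\vee)=b\geq 1$), and that the further composition factor $L(\mu')$ of $\Delta(\mu)$, with $\mu'=(2p-3-a-b,0)$ when $2a+b=2p-2$, satisfies $\mu-\alpha_2\not\leq\mu'$ by an analogous $c_1<0$ computation; thus $L(\mu')$ contributes nothing and $m_{L(\mu)}(\mu-\alpha_2)=1$ as required.
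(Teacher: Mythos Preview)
Your proof is correct and follows essentially the same approach as the paper's: use $m_{\Delta(\lambda)}(\nu)=3$ from Lemma~\ref{table_B2}, identify the at-most-one further composition factor $L(\mu)$ of $\Delta(\lambda)$ via Lemma~\ref{Weyl_B2}, and check whether $\nu\leq\mu$. The paper is slightly more direct in that it recognizes immediately that in the three special cases $\mu=\lambda-\alpha_1-\alpha_2$ (for $2a+b+2\in\{p,2p\}$) or $\mu=\lambda-\alpha_1-2\alpha_2$ (for $a+b=p-1$), whereas you compute $\mu$ generically per alcove and then solve $\mu-\nu=c_1\alpha_1+c_2\alpha_2$; both routes arrive at the same conclusion.

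One remark: your worry in the $C_4$ case is unnecessary. Lemma~\ref{-1} requires only that $\mu$ be $p$-restricted, not that $\Delta(\mu)\cong L(\mu)$; since $\mu=(a-1,b)$ is $p$-restricted with $b\geq 1$, Lemma~\ref{-1} gives $m_{L(\mu)}(\mu-\alpha_2)=1$ directly (this is exactly what the paper invokes). Your detour through $\Delta(\mu)$ and its second factor $L(\mu')$ is correct but superfluous.
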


\begin{proof}\
	\begin{itemize}
		
	\item If $2a+b+2=p$, then $\lambda\in C_2$ and by Lemma \ref{Weyl_B2}, $\Delta(\lambda)$ admits the unique composition series $[L(\lambda),L(\lambda-\alpha_1-\alpha_2)]$. By Lemma \ref{table_B2}, we get
	$$m_{L(\lambda)}(\lambda-\alpha_1-2\alpha_2)=m_{\Delta(\lambda)}(\lambda-\alpha_1-2\alpha_2)-m_{\Delta(\lambda-\alpha_1-\alpha_2)}(\lambda-\alpha_1-2\alpha_2)=3-1=2.$$
	
	\item If $2a+b+2=2p$, then $\lambda\in C_4\cup (F_{4,7}\setminus F_{4,6})$ and by Lemma \ref{Weyl_B2}, $\Delta(\lambda)$ admits the unique composition series %\linebreak
	$[L(\lambda),L(\lambda-\alpha_1-\alpha_2)]$. By Lemmas \ref{table_B2} and \ref{-1}, we get
	$$m_{L(\lambda)}(\lambda-\alpha_1-2\alpha_2)=m_{\Delta(\lambda)}(\lambda-\alpha_1-2\alpha_2)-m_{L(\lambda-\alpha_1-\alpha_2)}(\lambda-\alpha_1-2\alpha_2)=3-1=2.$$
	
	\item If $a+b=p-1$, then $\lambda\in C_3$ and by Lemma \ref{Weyl_B2}, $\Delta(\lambda)$ admits the unique composition series $[L(\lambda),L(\lambda-\alpha_1-2\alpha_2)]$. By Lemma \ref{table_B2}, we get
	$$m_{L(\lambda)}(\lambda-\alpha_1-2\alpha_2)=m_{\Delta(\lambda)}(\lambda-\alpha_1-2\alpha_2)-m_{\Delta(\lambda-\alpha_1-2\alpha_2)}(\lambda-\alpha_1-2\alpha_2)=3-1=2.$$
	
	\item In all other cases, either $\Delta(\lambda)\cong L(\lambda)$ or all composition factors of $\Delta(\lambda)$ except $L(\lambda)$ have highest weight $\nu\ngeq\lambda-\alpha_1-2\alpha_2$, thus 
	$$m_{L(\lambda)}(\lambda-\alpha_1-2\alpha_2)=m_{\Delta(\lambda)}(\lambda-\alpha_1-2\alpha_2)=3$$
	by Lemma \ref{table_B2}.\qedhere
\end{itemize}
\end{proof}

\begin{lemma}
	Let $\lambda=(a,1)\in X^+$ with $1\leq a<p$. Then
	$$m_{L(\lambda)}(\lambda-\alpha_1-2\alpha_2)=\begin{cases}
		1\quad &\text{if } \; 2a+3=p,\\
		2\quad & \text{otherwise}.
	\end{cases}$$
\end{lemma}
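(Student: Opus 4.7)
The plan is to mirror the structure of the proof of Lemma \ref{B2_-12}, analyzing which alcove contains $\lambda=(a,1)$ and adjusting for $b=1$ rather than $b\geq 2$. By Lemma \ref{table_B2}, $m_{\Delta(\lambda)}(\lambda-\alpha_1-2\alpha_2)=2$ for every $\lambda=(a,1)$ with $a\geq 1$, so the whole question reduces to identifying the extra composition factors of $\Delta(\lambda)$ beyond $L(\lambda)$ and subtracting their contribution at the weight $\lambda-\alpha_1-2\alpha_2$.

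First I would determine the alcove of $\lambda=(a,1)$ by computing $(\lambda+\rho,\alpha^\vee)$ for $\alpha\in \Phi^+$. Since $(\lambda+\rho,\alpha_2^\vee)=2<p$ and $(\lambda+\rho,(\alpha_1+\alpha_2)^\vee)=2a+4$ is even while $p$ is odd, $\lambda$ never lies on $F_{1,2}$ nor in $\widehat{C_3}\cup\widehat{C_4}\setminus(F_{2,3}\cup F_{3,4}\cup F_{4,6})$. A short case check shows that $\lambda$ lies either in $C_1$ (when $a\leq (p-5)/2$), strictly inside $C_2$ (when $(p-3)/2\leq a\leq p-4$), or on one of the walls $F_{2,3}$, $F_{3,4}$, $F_{4,6}$ (when $a=p-3,p-2,p-1$ respectively). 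In all cases except $\lambda\in C_2$, Lemma \ref{Weyl_B2} gives $\Delta(\lambda)\cong L(\lambda)$, so $m_{L(\lambda)}(\lambda-\alpha_1-2\alpha_2)=2$. Note that $2a+3=p$ cannot occur in any of these cases (as either $a\leq (p-5)/2$ forces $2a+3\leq p-2$, or $a\geq p-3$ forces $2a+3\geq 2p-3>p$).

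It remains to handle $\lambda\in C_2$. By Lemma \ref{Weyl_B2}, $\Delta(\lambda)$ admits a composition series $[L(\lambda),L(\mu)]$ where $\mu=s_{\alpha_1+\alpha_2,p}\sbullet \lambda$. A direct computation gives $\mu=\lambda-m(\alpha_1+\alpha_2)$ with $m:=2a+4-p\geq 1$, and so
\[
\lambda-\alpha_1-2\alpha_2=\mu+(m-1)\alpha_1+(m-2)\alpha_2.
\]
If $m\geq 2$, then $\lambda-\alpha_1-2\alpha_2>\mu$ in the usual partial order, hence it is not a weight of $L(\mu)$ and $m_{L(\lambda)}(\lambda-\alpha_1-2\alpha_2)=2-0=2$. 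If $m=1$, i.e.\ $2a+3=p$, then $\lambda-\alpha_1-2\alpha_2=\mu-\alpha_2$ with $\mu=(p-4-a,1)\in X^+$, and since the coefficient of $\omega_2$ in $\mu$ is $1$, Lemma \ref{-1} yields $m_{L(\mu)}(\mu-\alpha_2)=1$, whence $m_{L(\lambda)}(\lambda-\alpha_1-2\alpha_2)=2-1=1$.

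There is no real obstacle here; the only delicate step is the alcove bookkeeping, which is simpler than for Lemma \ref{B2_-12} because the constraint $b=1$ together with $p$ odd forces $\lambda$ into a very small list of alcoves/walls, and the equality $2a+3=p$ pins down precisely the case $m=1$ within the $C_2$ stratum.
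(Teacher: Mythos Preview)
Your proof is correct and follows essentially the same approach as the paper: compute $m_{\Delta(\lambda)}(\lambda-\alpha_1-2\alpha_2)=2$ from Lemma \ref{table_B2}, then use Lemma \ref{Weyl_B2} to identify the extra composition factor of $\Delta(\lambda)$ (if any) and subtract its contribution at this weight. The only difference is organizational: the paper splits directly into the two cases $2a+3=p$ and $2a+3\neq p$, while you first carry out the alcove bookkeeping for $\lambda=(a,1)$ in full and then observe that $2a+3=p$ corresponds exactly to the $C_2$ case with $m=1$.
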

\pagebreak
\begin{proof}\
	\begin{itemize}
		\item 	If $2a+3=p$, then $\lambda\in C_2$ and by Lemma \ref{Weyl_B2}, $\Delta(\lambda)$ admits the unique composition series $[L(\lambda),L(\lambda-\alpha_1-\alpha_2)]$. By Lemma \ref{table_B2}, we get
		$$m_{L(\lambda)}(\lambda-\alpha_1-2\alpha_2)=m_{\Delta(\lambda)}(\lambda-\alpha_1-2\alpha_2)-m_{\Delta(\lambda-\alpha_1-\alpha_2)}(\lambda-\alpha_1-2\alpha_2)=2-1=1.$$
		\item In all other cases, either $\Delta(\lambda)\cong L(\lambda)$ or all composition factors of $\Delta(\lambda)$ except $L(\lambda)$ have highest weight $\nu\ngeq\lambda-\alpha_1-2\alpha_2$, thus 
		$$m_{L(\lambda)}(\lambda-\alpha_1-2\alpha_2)=m_{\Delta(\lambda)}(\lambda-\alpha_1-2\alpha_2)=2$$
		by Lemma \ref{table_B2}.\qedhere
	\end{itemize}
\end{proof}

\begin{lemma}\label{S_Weyl_a0}
	Let $2\leq a<p$ and set $W:=\Delta(1,0)$. Then $\chi(a,0)=\ch S^a(W)-\ch S^{a-2}(W)$ and $\Delta(0,a) \cong S^a(\Delta(0,1))$. In particular $\chi(0,a)=\ch S^a(\Delta(0,1))$.
\end{lemma}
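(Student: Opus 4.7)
The plan is to reduce both identities to consequences of the universal property of Weyl modules (Lemma \ref{Weyl_univ_prop}) combined with dimension and character computations carried out over $\C$ and transported to positive characteristic via base-change invariance of weight multiplicities.

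For the first identity, I would work entirely at the character level, using that $\ch S^a W$ depends only on the weight multiplicities of $W = \Delta(1,0)$, which coincide with those of $W_{\C} = \Delta_{\C}(\omega_1)$. Over $\C$, $W_{\C}$ is the natural $5$-dimensional orthogonal representation and carries a non-degenerate invariant quadratic form $q$. The classical spherical-harmonics decomposition yields the $G_{\C}$-equivariant short exact sequence
\[
0 \to S^{a-2} W_{\C} \xrightarrow{\,\cdot q\,} S^a W_{\C} \to L_{\C}(a\omega_1) \to 0,
\]
from which $\ch S^a W_{\C} - \ch S^{a-2} W_{\C} = \ch L_{\C}(a\omega_1) = \chi(a,0)$; pulling back to characteristic $p$ gives the claim, since $\ch S^a W$ is determined by $\ch W$.

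For the second identity, I would set $V := \Delta(0,1)$ with highest weight vector $v^+$ and first verify that $v^{+a} := v^+ \cdots v^+ \in S^a V$ is a maximal vector for $B$ of weight $a\omega_2 = (0,a)$: each $v^+$ is $U$-stable and a $T$-weight vector of weight $\omega_2$, and the diagonal $G$-action on $V^{\otimes a}$ descends to $S^a V$, so $u\cdot v^{+a} = (uv^+)^a = v^{+a}$ for $u \in U$ and $t \cdot v^{+a} = \omega_2(t)^a\, v^{+a}$ for $t \in T$. Moreover $v^{+a} \neq 0$ because the weight space $(S^a V)_{a\omega_2}$ is one-dimensional. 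Let $N := \langle v^{+a}\rangle_G \subseteq S^a V$; by Lemma \ref{Weyl_univ_prop} there is a surjection $\phi : \Delta(0,a) \twoheadrightarrow N$. The crucial observation is that $\Delta(0,a) \cong L(0,a)$ is irreducible for every $2 \leq a < p$: the weight $(0,a)$ lies in $\widehat{C_1}$ when $a \leq p-3$, on the wall $F_{2,3}$ when $a = p-2$ (since $0 + (p-2) = p-2$), and on the wall $F_{3,5}$ when $a = p-1$ (since $((0,p-1)+\rho,\alpha_2^\vee) = p$), so Lemma \ref{Weyl_B2} gives irreducibility in each case. Consequently $\phi$ has trivial kernel and is an isomorphism $\Delta(0,a) \cong N$.

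To conclude, I would compute $\dim \Delta(0,a) = \binom{a+3}{3}$ via Weyl's degree formula (Corollary \ref{Weyl_degree}), and observe that $\dim S^a V = \binom{a+3}{3}$ as well, since $V$ is $4$-dimensional. Equality of dimensions forces $N = S^a V$, yielding $\Delta(0,a) \cong S^a(\Delta(0,1))$; the final character identity $\chi(0,a) = \ch S^a(\Delta(0,1))$ is then immediate from $\ch \Delta(0,a) = \chi(0,a)$. The main delicate point to verify is the irreducibility of $\Delta(0,a)$ in the boundary cases $a \in \{p-2,p-1\}$, where the would-be linked weight either lies in $D \setminus X^+$ (making its Weyl character vanish by Lemma \ref{action_characters}) or sits on a wall where the Jantzen sum formula already collapses to zero.
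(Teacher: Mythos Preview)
Your proof is correct and follows essentially the same route as the paper. For the second identity the arguments are nearly identical: maximal vector in $S^aV$, universal property of $\Delta(0,a)$, irreducibility of $\Delta(0,a)$ via Lemma~\ref{Weyl_B2}, then the dimension count using Weyl's degree formula and $\dim S^aV=\binom{a+3}{3}$. For the first identity the paper simply cites \cite[\S19.5]{Fulton_Harris}, which is precisely the spherical-harmonics argument you spell out, so there is no genuine difference in approach.
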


\begin{proof}
	For $\chi(a,0)$, see \cite[§19.5]{Fulton_Harris}.
	For $\Delta(0,a)$, observe that $S^a(\Delta(0,1))$ admits a maximal vector of weight $(0,a)$. By Lemma \ref{Weyl_univ_prop}, there exists a non-zero morphism \linebreak$\theta:\Delta(0,a)\to S^a(\Delta(0,1))$. By Lemma \ref{Weyl_B2}, $\Delta(0,a)$ is irreducible, thus $\theta$ is injective. To show that $\theta$ is surjective and thus an isomorphism, it is enough to show that \linebreak$\dim \Delta(0,a)=\dim S^a(\Delta(0,1))$.
	Using usual multilinear algebra, we have \linebreak$\dim S^a(\Delta(0,1))=\binom{a+3}{3}$.
	On the other hand, using Weyl's degree formula (Corollary \ref{Weyl_degree}), with the choice $(\alpha_2,\alpha_2)=2$ (and hence $(\alpha_1,\alpha_1)=4$), we have
	%\begingroup
	%\addtolength{\jot}{1em}
	\begin{align*}
		\dim \Delta(0,a)&=\frac{(\lambda+\rho,\alpha_1)(\lambda+\rho,\alpha_2)(\lambda+\rho,\alpha_1+\alpha_2)(\lambda+\rho,\alpha_1+2\alpha_2)}{(\rho,\alpha_1)(\rho,\alpha_2)(\rho,\alpha_1+\alpha_2)(\rho,\alpha_1+2\alpha_2)}\\
		&=\frac{2(a+1)(a+3)(2a+4)}{2\cdot 1\cdot 3\cdot 4}=\binom{a+3}{3},
	\end{align*}
	%\endgroup
	hence we are done.
\end{proof}

%{\color{red} ajouter référence fulton harris}
%Liebeck 4.4 -> voir si la preuve existe quelque part

\begin{notation}
	Let $\lambda=x\epsilon_1+y\epsilon_2\in X$. We define its $1-$norm  
	$$\lVert \lambda\rVert:= \lvert x\rvert+ \lvert y\rvert$$
	and its $\infty$-norm 
	$$\lvert \lambda\rvert_\infty:=\max\{\lvert x\rvert ,\lvert y\rvert\}.$$
\end{notation}

\begin{lemma}\label{B2_weight_a0}%structure du module de plus haut poids (a,0)
	Let $a\in \N$ and $\lambda\in X$. We have 
	$$m_{\Delta(a,0)}(\lambda)=\begin{cases}
		\lfloor\frac{a-\lVert \lambda\rVert}{2}\rfloor+1 \quad &\text{ if }\lVert \lambda\rVert\in \N \text{ and }\lVert \lambda\rVert\leq a,\\
		0 \quad &\text{ else.}
	\end{cases}$$
\end{lemma}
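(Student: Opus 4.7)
The plan is to leverage Lemma~\ref{S_Weyl_a0}, which gives $\chi(a,0) = \ch S^a(W) - \ch S^{a-2}(W)$ for $W := \Delta(1,0)$ (with the convention $S^j = 0$ for $j<0$), so that
$$m_{\Delta(a,0)}(\lambda) = \dim S^a(W)_\lambda - \dim S^{a-2}(W)_\lambda.$$
The cases $a=0,1$ can be verified by inspection; for $a\geq 2$ the task reduces to a combinatorial count of monomials of weight $\lambda$ in a symmetric power. I would begin by pinning down the weights of $W$: Weyl's degree formula gives $\dim W = 5$, and since $W \cong L(\omega_1)$ has highest weight $\epsilon_1$ whose Weyl orbit is $\{\pm\epsilon_1,\pm\epsilon_2\}$, the remaining weight must be $0$, each occurring with multiplicity one.

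Fixing a weight basis $\{v_{\epsilon_1},v_{-\epsilon_1},v_{\epsilon_2},v_{-\epsilon_2},v_0\}$ of $W$, a basis of $S^a(W)$ is given by monomials $v_{\epsilon_1}^{p}v_{-\epsilon_1}^{q}v_{\epsilon_2}^{r}v_{-\epsilon_2}^{s}v_0^{t}$ with $p+q+r+s+t = a$, carrying weight $(p-q)\epsilon_1 + (r-s)\epsilon_2 \in \Z\epsilon_1\oplus\Z\epsilon_2$. In particular $\dim S^a(W)_\lambda = 0$ unless $\lambda \in \Z\epsilon_1\oplus\Z\epsilon_2$, so the formula has to be read as asserting multiplicity zero on all other weights of $X$ (e.g.\ the spin weights). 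For $\lambda = x\epsilon_1 + y\epsilon_2$ with $x,y\in\Z$, Weyl-invariance (Lemma~\ref{orbit}) reduces the count to $x,y\geq 0$. Substituting $p = q+x$, $r = s+y$ turns the conditions into $2q + 2s + t = a-(x+y) =: m$; the number of non-negative integer solutions is $0$ for $m<0$, and for $m\geq 0$ equals $\sum_{k=0}^{\lfloor m/2\rfloor}(k+1) = \binom{\lfloor m/2\rfloor+2}{2}$ (choose $q+s = k$, then $t = m-2k$). Hence
$$\dim S^a(W)_\lambda = \binom{\lfloor (a-\lVert\lambda\rVert)/2\rfloor + 2}{2} \quad\text{for } \lambda\in\Z\epsilon_1\oplus\Z\epsilon_2,\ \lVert\lambda\rVert\leq a,$$
and $0$ otherwise.

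Subtracting the analogous expression for $S^{a-2}(W)$ finishes the proof: for $\lVert\lambda\rVert\leq a-2$, the identity $\binom{k+2}{2} - \binom{k+1}{2} = k+1$ with $k = \lfloor (a-\lVert\lambda\rVert)/2\rfloor$ gives the desired $\lfloor (a-\lVert\lambda\rVert)/2\rfloor + 1$; for $\lVert\lambda\rVert \in \{a-1,a\}$ the second term vanishes and the first equals $1$, which again matches the formula. There is no genuine obstacle beyond careful bookkeeping---the only subtle point being the interpretation that the claimed multiplicity $0$ covers both the case $\lVert\lambda\rVert > a$ and the case $\lambda \notin \Z\epsilon_1\oplus\Z\epsilon_2$, both of which follow automatically from the monomial description of $S^a(W)$.
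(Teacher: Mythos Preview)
Your proof is correct and follows essentially the same route as the paper: both use Lemma~\ref{S_Weyl_a0} to reduce to counting monomials of given weight in $S^k(W)$ for $W=\Delta(1,0)$, obtain $\binom{\lfloor(k-\lVert\lambda\rVert)/2\rfloor+2}{2}$, and subtract. Your remark about the spin weights is a useful clarification---the condition ``$\lVert\lambda\rVert\in\N$'' is in fact automatic for all $\lambda\in X$, so the true vanishing condition is $\lambda\notin\Z\epsilon_1\oplus\Z\epsilon_2$, exactly as you note.
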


\begin{proof}
	In this proof, we use Euclidean coordinates.
	We set $W:=\Delta(1,0)$. Let $k\in \N$. We compute $m_{S^kW}(\lambda)$ and then use Lemma \ref{S_Weyl_a0}.
	
	Let $i:=\lfloor\frac{k-\lVert \lambda\rVert}{2}\rfloor$. We claim that 
	$$m_{S^kW}(\lambda)=\begin{cases}
		\binom{2+i}{2} \quad&\text{ if } \lVert \lambda\rVert\in \N \text{ and }\lVert \lambda\rVert\leq k,\\
		0 \quad & \text{ else.}
	\end{cases}$$

We know that $\dim W=5$ and $W$ admits the five weights $0,\pm\epsilon_1,\pm\epsilon_2$, all of them with multiplicity 1. We fix $(v_{-2},v_{-1},v_0,v_1,v_2)$ an ordered basis of $W$ with $v_0$ a weight vector associated to $0$ and $v_{\pm i}$ a weight vector associated to $\pm\epsilon_i$ for $i=1,2$. By multilinear algebra, $S^kW$ admits the basis $\{v_{i_1}\otimes \cdots\otimes v_{i_k}\}_{i_1\leq i_2\leq \ldots\leq i_k}$ and there exists a natural bijection between this basis and the set $\{(x,y,z,s,t)\in \N^5|\; x+y+z+s+t=k\}$ where $x$ is the number of $v_{i_j}$ with $i_j=-2$, $y$ is the number of $v_{i_j}$ with $i_j=-1$, and so on. Moreover, under this bijection, a basis vector associated to $(x,y,z,s,t)$ is a weight vector with weight $(t-x)\epsilon_2+(s-y)\epsilon_1$. Thus, to compute $m_{S^kW}(\lambda)$, we will count the number of $5$-tuples $(x,y,z,s,t)$ with associated weight $\lambda$. Let us write $\lambda=f\epsilon_1+g\epsilon_2$. Then $(x,y,z,s,t)$ is associated to $\lambda$ if $t-x=g$ and $s-y=f$. In particular, we need $\lVert \lambda\rVert\in \N$ and $\lVert \lambda\rVert\leq k$.
In this case, we get 
$$m_{S^kW}(\lambda)=|\{(x,y,z,s,t)\in \N^5|\; x+y+z+s+t=k,\; t-x=g,\; s-y=f\}|.$$
Up to symmetry we can assume that both $f$ and $g$ are non-negative.
We get 
\begin{align*}
	m_{S^kW}(\lambda)&=|\{(x,y,z,s,t)\in \N^5|\; x+y+z+s+t=k,\; t-x=g,\; s-y=f\}|\\
	&=|\{(x,y,z)\in \N^3|\; 2x+g+2y+f+z=k\}|\\
	&=|\{(x,y,z)\in \N^3|\; 2x+2y+z=k-\lVert \lambda\rVert\}|.
\end{align*}
If $k-\lVert \lambda\rVert$ is odd, then so is $z$, and we have 
$$|\{(x,y,z)\in \N^3|\; 2x+2y+z=k-\lVert \lambda\rVert\}|=|\{(x,y,z)\in \N^3|\; 2x+2y+(z-1)=k-\lVert \lambda\rVert-1\}|.$$
In this case, let $z':=(z-1)/2$. We get 
\begin{align*}
	m_{S^kW}(\lambda)&=|\{(x,y,z')\in \N^3|\; 2x+2y+2z'=k-\lVert \lambda\rVert-1\}|\\
	&=|\{(x,y,z')\in \N^3|\; x+y+z'=(k-\lVert \lambda\rVert-1)/2\}|\\
	&=\binom{2+(k-\lVert \lambda\rVert-1)/2}{2}\\
	&=\binom{2+\lfloor(k-\lVert \lambda\rVert)/2\rfloor}{2}
\end{align*}
where the third equality is a well-known combinatorial result, see for example \cite[3.3]{combinatorics}.
Now suppose that $k-\lVert \lambda\rVert$ and $z$ are even, and set $z':=z/2$. Using the same reasoning we get
\begin{align*}
	m_{S^kW}(\lambda)&=|\{(x,y,z')\in \N^3|\; 2x+2y+2z'=k-\lVert \lambda\rVert\}|\\
	&=|\{(x,y,z')\in \N^3|\; x+y+z'=(k-\lVert \lambda\rVert)/2\}|\\
	&=\binom{2+(k-\lVert \lambda\rVert)/2}{2}\\
	&=\binom{2+\lfloor(k-\lVert \lambda\rVert)/2\rfloor}{2}.
\end{align*}
Hence in both cases we are done with our claim.\\

	Now let $i:=\lfloor\frac{a-\lVert \lambda\rVert}{2}\rfloor$ and observe that $\lfloor\frac{(a-2)-\lVert \lambda\rVert}{2}\rfloor=i-1$. For $\lVert \lambda\rVert\in \N$ and $\lVert \lambda\rVert\leq a-2$, by Lemma \ref{S_Weyl_a0}, we have
	\begin{align*}
		m_{\Delta(a,0)}(\lambda)&=m_{S^aW}(\lambda)-m_{S^{a-2}W}(\lambda)=\binom{2+i}{2}-\binom{1+i}{2}=\frac{(2+i)!}{2!i!}-\frac{(1+i)!}{2!(i-1)!}\\
		&=\frac{(2+i)!-i(1+i)!}{2!i!}=\frac{2(i+1)!}{2!i!}=i+1=\lfloor\frac{a-\lVert \lambda\rVert}{2}\rfloor+1.
	\end{align*}
If $\lVert \lambda\rVert\in \{a,a-1\}$, then $i=0$ and $$m_{\Delta(a,0)}(\lambda)=m_{S^aW}(\lambda)=\binom{2}{2}=1.$$
If $\lVert \lambda\rVert\notin \N$ or if $\lVert \lambda\rVert> a$, we conclude that $m_{\Delta(a,0)}(\lambda)=0$.
\end{proof}

%rédiger preuve

\begin{cor}\label{Weyl_a0}
	Let $a\in \N$. We set 
	$$Y(i,j):=\lfloor\frac{a-|i|-|j|}{2}\rfloor+1.$$
	We have
	$$\chi(a,0)=\sum_{i=0}^a\sum_{j=i-a}^{a-i}Y(i,j)e^{j\epsilon_1+i\epsilon_2}+\sum_{i=1}^a\sum_{j=i-a}^{a-i}Y(i,j)e^{j\epsilon_1-i\epsilon_2}.$$
\end{cor}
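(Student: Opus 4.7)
The plan is to apply Lemma \ref{B2_weight_a0} directly and rewrite the character as a sum over explicit Euclidean coordinates, enumerating weights by the sign of the $\epsilon_2$-coefficient. By definition,
\[
\chi(a,0)=\ch \Delta(a,0)=\sum_{\lambda\in X}m_{\Delta(a,0)}(\lambda)\,e^\lambda.
\]
First I would observe that $(a,0)=a\epsilon_1\in \Z\epsilon_1\oplus\Z\epsilon_2$ and the root lattice $\Z\alpha_1+\Z\alpha_2=\Z(\epsilon_1-\epsilon_2)+\Z\epsilon_2$ is contained in $\Z\epsilon_1\oplus\Z\epsilon_2$, so every weight of $\Delta(a,0)$ can be written as $\lambda=j\epsilon_1+i\epsilon_2$ with $i,j\in\Z$. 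In particular, no half-integer weights arise and the condition $\lVert\lambda\rVert\in\N$ in Lemma \ref{B2_weight_a0} is automatic. Applied to $\lambda=j\epsilon_1+i\epsilon_2$, that lemma yields
\[
m_{\Delta(a,0)}(j\epsilon_1+i\epsilon_2)=\left\lfloor\tfrac{a-|i|-|j|}{2}\right\rfloor+1=Y(i,j)
\]
precisely when $|i|+|j|\leq a$, and $0$ otherwise.

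Next I would split the resulting sum $\sum_{|i|+|j|\leq a}Y(i,j)\,e^{j\epsilon_1+i\epsilon_2}$ according to the sign of $i$. The contributions with $i\geq 0$ obey $|j|\leq a-i$, so $j$ runs from $i-a$ to $a-i$, producing the first double sum of the claim. For the contributions with $i<0$ I would substitute $i=-i'$ with $i'\geq 1$, noting that $|{-i'}|=i'$ and hence $Y(-i',j)=Y(i',j)$ since $Y$ depends on $|i|$; after renaming $i'$ as $i$ this yields $\sum_{i=1}^{a}\sum_{j=i-a}^{a-i}Y(i,j)\,e^{j\epsilon_1-i\epsilon_2}$, which is the second double sum.

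There is no real obstacle here beyond bookkeeping — the statement is essentially Lemma \ref{B2_weight_a0} transcribed as a character. The only subtlety to verify is that the support condition $|i|+|j|\leq a$ translates correctly into the two index ranges $i\in\{0,\dots,a\}$ and $i\in\{1,\dots,a\}$ with $j\in\{i-a,\dots,a-i\}$, and that the $i=0$ slice is included exactly once (in the first double sum).
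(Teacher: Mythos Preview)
Your proof is correct and follows the same approach as the paper, which simply states that the corollary is a direct consequence of Lemma \ref{B2_weight_a0}. You have spelled out the bookkeeping (integrality of the weights, splitting on the sign of $i$, and the symmetry $Y(-i,j)=Y(i,j)$) that the paper leaves implicit.
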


\begin{proof}
	This is a direct consequence of Lemma \ref{B2_weight_a0}.
\end{proof}

\begin{cor}\label{irred_a0}
	Let $a=\frac{p-1}{2}$ and $\lambda\in X$. Then 
	$$m_{L(a,0)}(\lambda)=\begin{cases}
		1 \quad &\text{ if }\lVert \lambda\rVert\in \N \text{ and }\lVert \lambda\rVert\leq a,\\
		0 \quad &\text{ else.}
	\end{cases}$$
In particular, we have
$$\ch L(a,0)=\sum_{i=0}^a\sum_{j=i-a}^{a-i}e^{j\epsilon_1+i\epsilon_2}+\sum_{i=1}^a\sum_{j=i-a}^{a-i}e^{j\epsilon_1-i\epsilon_2}.$$
\end{cor}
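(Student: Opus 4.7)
First I would locate the weight $\lambda = (a,0)$ (with $a = (p-1)/2$) inside the alcove structure. Writing $\lambda$ in Euclidean coordinates we have $\lambda = a\epsilon_1$, so in the Euclidean description $a = (p-1)/2 > (p-3)/2$ and $a+0 = (p-1)/2 \leq p-2$ (since $p \geq 5$), with both inequalities strict. Hence $\lambda \in C_2$, and Lemma \ref{Weyl_B2} tells us that $\Delta(\lambda)$ has exactly two composition factors. More concretely, Remark \ref{B2_character_symmetry} gives
$$\ch L(a,0) = \chi(a,0) - \chi(p-3-a,0) = \chi(a,0) - \chi(a-2,0),$$
where the last equality uses $p-3-a = p-3-(p-1)/2 = (p-5)/2 = a-2$.

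The key computation is then to expand this difference using Lemma \ref{B2_weight_a0}. Writing a generic weight $\mu = j\epsilon_1 + i\epsilon_2$, that lemma gives
$$m_{\Delta(a,0)}(\mu) - m_{\Delta(a-2,0)}(\mu) = \left(\left\lfloor \tfrac{a - \lVert\mu\rVert}{2} \right\rfloor + 1\right) - \left(\left\lfloor \tfrac{a-2 - \lVert\mu\rVert}{2} \right\rfloor + 1\right)$$
whenever both multiplicities are given by the first branch, i.e.\ when $\lVert\mu\rVert \in \N$ and $\lVert\mu\rVert \leq a-2$; and this difference simplifies to $1$. I would then handle the boundary cases separately: when $\lVert\mu\rVert \in \{a-1,a\}$ and $\lVert\mu\rVert \in \N$, we have $m_{\Delta(a,0)}(\mu) = 1$ and $m_{\Delta(a-2,0)}(\mu) = 0$, giving difference $1$; while when $\lVert\mu\rVert > a$ or $\lVert\mu\rVert \notin \N$, both multiplicities vanish, giving $0$. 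Combining the cases yields exactly the formula claimed in the statement.

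The character formula then drops out by enumerating all weight lattice points $\mu = j\epsilon_1 + i\epsilon_2$ with $|i|+|j| \leq a$ and $|i|+|j| \in \N$. Since the weights of $\Delta(a,0)$ all have integer Euclidean coordinates (as the root lattice is spanned by $\epsilon_1 - \epsilon_2$ and $\epsilon_2$, both of which have integer Euclidean coordinates, and the same is true of $a\epsilon_1$), only integer values of $i,j$ contribute; the stated double sum then just organises these points by the absolute value of the $\epsilon_2$-coordinate, splitting off the $i=0$ row and symmetrising the $i > 0$ part into $+i$ and $-i$ contributions.

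The main (and essentially only) obstacle is bookkeeping: ensuring the alcove identification is correct (so that Remark \ref{B2_character_symmetry} applies directly), and correctly pairing the floor functions at the boundary values $\lVert\mu\rVert \in \{a-1,a\}$, since in this regime the first Weyl module contributes while the second does not. Once these are set up, the proof is a direct subtraction of the two explicit formulas from Corollary \ref{Weyl_a0}, and no further input is required.
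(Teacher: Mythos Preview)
Your proposal is correct and follows essentially the same route as the paper: identify $(a,0)\in C_2$, use the two-term composition series (via Remark \ref{B2_character_symmetry}, equivalently Lemma \ref{Weyl_B2}) to write $\ch L(a,0)=\chi(a,0)-\chi(a-2,0)$, and then subtract the explicit multiplicity formulas from Lemma \ref{B2_weight_a0}, treating the boundary values $\lVert\mu\rVert\in\{a-1,a\}$ separately. The paper's proof is organised identically, so there is nothing to add.
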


\begin{proof}
	Observe that $(a,0)\in C_2$. Moreover, $s_{\alpha_1+\alpha_2,p}\sbullet(a,0)=(a-2,0)$, so $(a-2,0)\in C_1$ is the only weight in the first alcove linked to $(a,0)$. By Lemmas \ref{Weyl_B2} and \ref{Weyl_a0}, we have
	\begin{align*}
		m_{L(a,0)}(\lambda)&=m_{\Delta(a,0)}(\lambda)-m_{\Delta(a-2,0)}(\lambda)\\
		&=\begin{cases}
			\lfloor\frac{a-\lVert\lambda\rVert}{2}\rfloor+1-\left(	\lfloor\frac{a-2-\lVert\lambda\rVert}{2}\rfloor+1\right)\quad &\text{ if }\lVert \lambda\rVert\in \N \text{ and }\lVert \lambda\rVert\leq a-2,\\
			\lfloor\frac{a-\lVert\lambda\rVert}{2}\rfloor+1\quad &\text{ if } \lVert \lambda\rVert\in\{a,a-1\},\\
			0 \quad &\text{ else.}
		\end{cases}
	\end{align*}
Observe that $\lfloor\frac{a-\lVert\lambda\rVert}{2}\rfloor+1=1$ for $\lVert \lambda\rVert\in\{a,a-1\}$ and 
$$\lfloor\frac{a-\lVert\lambda\rVert}{2}\rfloor+1-\left(	\lfloor\frac{a-2-\lVert\lambda\rVert}{2}\rfloor+1\right)=\lfloor1+\frac{a-2-\lVert\lambda\rVert}{2}\rfloor-\lfloor\frac{a-2-\lVert\lambda\rVert}{2}\rfloor=1,$$
which allows us to conclude.
\end{proof}

\begin{lemma}\label{B2_weight_0b}
	%structure du module de plus haut poids (0,b)
	Let $0\leq b<p$ and $\lambda\in X$. Then
	$$m_{L(0,b)}(\lambda)=m_{\Delta(0,b)}(\lambda)=\begin{cases}
		(\frac b2-\vert\lambda\rvert_\infty)+1 \quad &\text{ if }\frac b2-\lvert\lambda\rvert_\infty\in \N,\\% \text{ and }\lVert \lambda\rVert\leq a,\\
		0 \quad &\text{ else.}
	\end{cases}$$
\end{lemma}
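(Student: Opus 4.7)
The plan is to exploit the isomorphism $\Delta(0,b) \cong S^b\Delta(0,1)$ from Lemma \ref{S_Weyl_a0} and then count weight vectors in the symmetric power on a monomial basis.

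First I would verify $\Delta(0,b) \cong L(0,b)$ for every $0 \le b < p$: the weight $(0,b)$ lies in $\widehat{C_1}$ for $0 \le b \le p-3$, in $F_{1,2}$ for $b=p-2$, and a quick check using $(\rho+(0,p-1),\alpha_2^\vee)=p$ shows $(0,p-1) \in F_{3,5}$, so in all cases Lemma \ref{Weyl_B2} gives irreducibility. Next, Weyl's degree formula yields $\dim \Delta(0,1) = 4$, and the Weyl group orbit of $\omega_2$ consists, in Euclidean coordinates, of the four weights $(\pm 1/2, \pm 1/2)$, each of multiplicity $1$ by Lemma \ref{orbit}. Fixing a weight basis $(v_1,v_2,v_3,v_4)$ of $\Delta(0,1)$ associated to these weights in the order $(1/2,1/2),(1/2,-1/2),(-1/2,1/2),(-1/2,-1/2)$, multilinear algebra gives the monomial basis $\{v_1^{a_1}v_2^{a_2}v_3^{a_3}v_4^{a_4} : a_i \in \mathbb{N},\; \sum a_i = b\}$ for $S^b\Delta(0,1)$, and a direct computation shows such a monomial has Euclidean weight $\tfrac{1}{2}(a_1+a_2-a_3-a_4,\; a_1-a_2+a_3-a_4)$.

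The heart of the proof is then to count, for a given $\lambda = (x,y) \in X$ (in Euclidean coordinates), the number of such $4$-tuples whose monomial has weight $\lambda$. Solving the resulting linear system for $a_2, a_3, a_4$ in terms of $a_1$ gives $a_2 = b/2 + x - a_1$, $a_3 = b/2 + y - a_1$, and $a_4 = a_1 - x - y$. Since $\lambda \in X$ forces $x$ and $y$ to have the same half-integer parity, integer solutions exist iff $b/2 - \lvert\lambda\rvert_\infty \in \mathbb{Z}$, while non-negativity of all the $a_i$'s translates to the interval condition $\max(0,\, x+y) \le a_1 \le b/2 + \min(x,y)$.

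A short case analysis on the signs of $x$ and $y$ then shows that in each of the four sign patterns the length of this interval is exactly $b/2 - \lvert\lambda\rvert_\infty$, so the number of solutions is $b/2 - \lvert\lambda\rvert_\infty + 1$ when this quantity is a non-negative integer and $0$ otherwise, as claimed. The main (minor) obstacle is just keeping this case distinction clean and checking that the four sign cases collapse uniformly to $\lvert\lambda\rvert_\infty$; the remaining steps are routine multilinear algebra.
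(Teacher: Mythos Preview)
Your proposal is correct and follows essentially the same route as the paper: identify $L(0,b)\cong\Delta(0,b)\cong S^b\Delta(0,1)$ via Lemmas \ref{Weyl_B2} and \ref{S_Weyl_a0}, and count monomials in the symmetric power of the four-dimensional natural module. The only cosmetic difference is that the paper writes $\lambda=f\cdot\tfrac12(\epsilon_1+\epsilon_2)+g\cdot\tfrac12(\epsilon_1-\epsilon_2)$ and uses the Weyl symmetry to reduce to $f,g\geq 0$, which turns the count directly into $|\{(x,y)\in\N^2:x+y=\tfrac b2-\lvert\lambda\rvert_\infty\}|$ and thereby bypasses your four-case sign analysis on $(x,y)$.
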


\begin{proof} 
	Let $V:=\Delta(0,1)$. First, observe that $L(0,b)=\Delta(0,b)=S^bV$ by Lemmas \ref{Weyl_B2} and \ref{S_Weyl_a0}.  We know that $\dim V=4$ and $V$ admits the four weights $\frac12(\pm\epsilon_1\pm\epsilon_2)$, all of them with multiplicity 1. We fix a basis of weight vectors $(v_{-2},v_{-1},v_1,v_2)$ with $v_{\pm 2}$ associated to $\pm\frac12(\epsilon_1+\epsilon_2)$ and $v_{\pm 1}$ to $\pm\frac{1}{2}(\epsilon_1-\epsilon_2)$. By multilinear algebra, $S^bV$ admits the basis \linebreak$\{v_{i_1}\otimes \cdots\otimes v_{i_k}\}_{i_1\leq i_2\leq \ldots\leq i_k}$ and there exists a natural bijection between this basis and the set $\{(x,y,s,t)\in \N^4|\; x+y+s+t=b\}$ where $x$ is the number of $v_{i_j}$ with $i_j=-2$, $y$ is the number of $v_{i_j}$ with $i_j=-1$, and so on. Moreover, under this bijection, a basis vector associated to $(x,y,s,t)$ is a weight vector with weight $$(t-x)\frac12(\epsilon_1+\epsilon_2)+(s-y)\frac12(\epsilon_1-\epsilon_2)=\frac12(t+s-x-y)\epsilon_1+(t+y-x-s)\epsilon_2.$$ 
	Thus, to compute $m_{S^bV}(\lambda)$, we will count the number of $4$-tuples $(x,y,s,t)$ with associated weight $\lambda$. Let us write $\lambda=f\frac12(\epsilon_1+\epsilon_2)+g\frac12(\epsilon_1-\epsilon_2)$. Then $(x,y,s,t)$ is associated to $\lambda$ if $t-x=f$ and $s-y=g$. Therefore we get 
	$$m_{S^bV}(\lambda)=|\{(x,y,s,t)\in \N^4|\; x+y+s+t=b,\; t-x=f,\; s-y=g\}|.$$
	
	Up to symmetry we can assume that both $f$ and $g$ are non-negative. In particular, this implies that $\vert\lambda\rvert_\infty=\frac{1}{2}(f+g)$, so we get 
	\begin{align*}
		m_{S^bV}(\lambda)&=|\{(x,y,s,t)\in \N^4|\; x+y+s+t=b,\; t-x=f,\; s-y=g\}|\\
		&=|\{(x,y)\in \N^2|\; 2x+f+2y+g=b\}|\\
		&=|\{(x,y)\in \N^2|\; 2x+2y=b-2\vert\lambda\rvert_\infty\}|\\
		&=|\{(x,y)\in \N^2|\; x+y=\tfrac12b-\vert\lambda\rvert_\infty\}|.
	\end{align*}
Clearly, the equality $x+y=\frac12b-\vert\lambda\rvert_\infty$ cannot be satisfied for $x,y\in \N$ if $\frac12b-\vert\lambda\rvert_\infty\notin \N$. Thus we can restrict our attention to the case $\frac12b-\vert\lambda\rvert_\infty\in \N$, and using the combinatorial result (\cite[3.3]{combinatorics}) again, we get
\begin{align*}
	m_{S^bV}(\lambda)&=|\{(x,y)\in \N^2|\; x+y=\tfrac12b-\vert\lambda\rvert_\infty\}=\binom{\frac12b-\vert\lambda\rvert_\infty+1}{1}=\tfrac12b-\vert\lambda\rvert_\infty+1,
\end{align*}
thus we are done.
\end{proof}

%ajouter résultat structure des tiltings
\subsection{Classification results}\label{B2_classification}

In all statements of this section, unless stated otherwise, we use coordinates with respect to the fundamental weights.

We start by stating a theorem from Stembridge (\cite[Theorem 1.1.B]{Stembridge}) which classifies multiplicity-free tensor products of simple $\Sp_4(\C)$-modules.

\begin{thm}\label{B2_p0}
	%résultat stembridge multiplicity-free caractéristique 0
	Let $\lambda=(a,b),\mu=(c,d)\in X^+$ be dominant weights. Up to the reordering of $\lambda$ and $\mu$, $L_{\C}(\lambda) \otimes L_{\C}(\mu)$ is multiplicity-free if and only if one of the following holds:
	\begin{enumerate}[label = \emph{(\arabic*)}] 
		\item $a=0$ and $b=1$,
		\item $a=1$ and $b=0$,
		\item $a=d=0$,
		\item $a=c=0$,
		\item $b=0$ and $d=1$, or
		\item $b=d=0$.
	\end{enumerate}
\end{thm}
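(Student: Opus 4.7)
The plan is to prove both directions using character-theoretic arguments, leveraging the fact that in characteristic zero the category of $G_\C$-modules is semisimple, so the composition multiplicities $[L_\C(\lambda)\otimes L_\C(\mu):L_\C(\nu)]$ can be read off from the decomposition of the product $\chi(\lambda)\chi(\mu)$ into Weyl characters, using Proposition \ref{product_characters} together with Lemma \ref{action_characters} (which cancels contributions whose arguments are non-dominant or $W$-related).

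For the ``if'' direction, I would verify each of the six listed families by explicit character computation. For cases (4) and (6), the weights of $\Delta(0,b)$ and $\Delta(a,0)$ are given by Lemmas \ref{B2_weight_0b} and \ref{B2_weight_a0}, so Proposition \ref{product_characters} expresses the product $\chi(\lambda)\chi(\mu)$ as an explicit finite sum. By pairing up terms whose arguments differ by a simple reflection in $W$ and cancelling via Lemma \ref{action_characters}, one can reduce this sum to one with all coefficients equal to $1$, in direct analogy with the calculation carried out in Lemma \ref{A2_acd_char0}. Case (3) is handled the same way, mixing the two weight multiplicity lemmas. For cases (1) and (2), where one factor is a fundamental module, the expansion has only four or five terms and the verification is immediate. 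Case (5), with $\mu=(c,1)$, reduces to cases already established by expressing $L_\C(c,1)$ as a summand of $L_\C(c,0)\otimes L_\C(0,1)$ (which is multiplicity-free by case~(1)) and tracking characters.

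For the ``only if'' direction, I would show that every pair not in the listed families has multiplicity by exhibiting a specific composition factor of multiplicity at least $2$. Up to reordering, every excluded pair satisfies either (a) both $\lambda=(a,b)$ and $\mu=(c,d)$ have $a,b,c,d\geq 1$, or (b) exactly one of the weights has a zero coordinate but the pair falls outside cases (3)--(6). For situation~(a), Argument \ref{argument1} applied at the weight $\nu=\lambda+\mu-\alpha_1-\alpha_2$, using the characteristic-zero version of Lemma \ref{table_B2} (which gives $m_{L_\C(\eta)}(\eta-\alpha_1-\alpha_2)=2$ whenever both coordinates of $\eta$ are positive), yields $m_{L_\C(\lambda)\otimes L_\C(\mu)}(\nu)=6$, while the composition factors $L_\C(\lambda+\mu)$, $L_\C(\lambda+\mu-\alpha_1)$, $L_\C(\lambda+\mu-\alpha_2)$ (each forced to appear by dominance and multiplicities at weights above $\nu$) contribute only $2+1+1=4$, so $[L_\C(\lambda)\otimes L_\C(\mu):L_\C(\nu)]\geq 2$. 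For situation~(b), an entirely analogous computation at $\nu=\lambda+\mu-\alpha_1-2\alpha_2$, using Lemma \ref{B2_-12} in its characteristic-zero form, produces the required strict inequality.

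The main obstacle will be the bookkeeping in the ``only if'' direction: even after choosing the test weight $\nu$, one must enumerate every composition factor $L_\C(\eta)$ with $\lambda+\mu\geq \eta>\nu$ and carefully subtract its contribution to $m_{L_\C(\lambda)\otimes L_\C(\mu)}(\nu)$ before one can conclude that the remaining multiplicity is at least~$2$. For generic $\lambda,\mu$ this list has more than just the three obvious factors $L_\C(\lambda+\mu-\alpha_i)$, so the argument needs either an inductive description of the first few layers of the character of each intermediate factor (using Lemma \ref{table_B2} and Proposition \ref{saturated}) or a direct appeal to Stembridge's general combinatorial criterion; this is where the real work lies.
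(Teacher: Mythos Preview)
The paper does not prove this theorem: it is stated with an explicit attribution to Stembridge \cite[Theorem 1.1.B]{Stembridge} and used as a black box throughout Section~6. So there is no in-paper argument to compare your proposal against; you are attempting to supply a proof where the paper simply cites one.

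That said, your sketch has real gaps. For the ``if'' direction, your reduction in case~(5) does not work as stated: knowing that $L_\C(c,1)$ is a summand of $L_\C(c,0)\otimes L_\C(0,1)$ only tells you that $L_\C(a,0)\otimes L_\C(c,1)$ is a summand of $L_\C(a,0)\otimes L_\C(c,0)\otimes L_\C(0,1)$, and the latter triple product is certainly not multiplicity-free in general (the summands of $L_\C(a,0)\otimes L_\C(c,0)$ computed in Lemma~\ref{B2_ac_technique} have highest weights $(c+j,i)$ with both Euclidean coordinates typically nonzero, so tensoring each with $L_\C(0,1)$ produces overlapping constituents). You would need a direct character computation for this case too. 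For the ``only if'' direction, your own final paragraph concedes the issue: the case split into (a) and (b) is not exhaustive as written (e.g.\ $\lambda=(0,b)$, $\mu=(c,d)$ with $b\geq 2$, $c\geq 1$, $d\geq 2$ is excluded from the list but has $a=0$, so it is not in your situation~(a)), and for the pairs you do cover, the bookkeeping you flag as ``where the real work lies'' is exactly the content of Stembridge's argument. Ending with ``or a direct appeal to Stembridge's general combinatorial criterion'' is circular, since that criterion \emph{is} the theorem you are trying to prove.

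If you want a self-contained proof, the cleanest route is probably to mimic the paper's later computations (Lemmas~\ref{B2_ac_technique} and~\ref{B2_ad}) to handle cases (3)--(6) uniformly, treat (1)--(2) by hand, and for the converse run Argument~\ref{argument1} systematically at the weights $\lambda+\mu-\alpha_1-\alpha_2$ and $\lambda+\mu-\alpha_1-2\alpha_2$ after a careful enumeration of the excluded pairs up to reordering; but this is substantially more work than your outline suggests, and the paper is right to outsource it.
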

%This is a particular case of \cite[Theorem 1.1.B]{Stembridge}.

\begin{prop}
	Let $\lambda\in C_2$ and $\mu\in \widehat{C_1}$. If $\lambda+\mu\in C_3$, then $L(\lambda)\otimes L(\mu)$ has multiplicity.
\end{prop}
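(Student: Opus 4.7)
The plan is to argue by contradiction: suppose that $L(\lambda)\otimes L(\mu)$ is multiplicity-free, hence completely reducible by Remarks \ref{cont_tens_prod}, \ref{irred_cont_self} and Lemma \ref{coselfdual_reducible}. Since $\lambda\in C_2$, Lemma \ref{Weyl_B2} provides a short exact sequence $0 \to L(\lambda_1) \to \Delta(\lambda) \to L(\lambda) \to 0$ with $\lambda_1 = s_{\alpha_1+\alpha_2,p}\sbullet\lambda \in C_1$; because $\mu\in\widehat{C_1}$, Lemma \ref{fundamental_alcove} gives $L(\mu)\cong\Delta(\mu)$. Tensoring with $L(\mu)$ produces
\[
0 \to L(\lambda_1)\otimes L(\mu) \to \Delta(\lambda)\otimes\Delta(\mu) \to L(\lambda)\otimes L(\mu) \to 0.
\]

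Using the embedding $\Delta(\lambda+\mu)\hookrightarrow\Delta(\lambda)\otimes\Delta(\mu)$ from Lemma \ref{Weyl_tensor_Weyl}, I would consider the composite map $\phi:\Delta(\lambda+\mu) \to L(\lambda)\otimes L(\mu)$ obtained by following this embedding with the quotient map of the sequence above. Its image is nonzero because $L(\lambda_1)\otimes L(\mu)$ has highest weight $\lambda_1+\mu$, which is strictly below $\lambda+\mu$ since $\lambda_1<\lambda$. Since $\lambda+\mu\in C_3$, Lemma \ref{Weyl_B2} says $\Delta(\lambda+\mu)$ has composition series $[L(\lambda+\mu), L((\lambda+\mu)_2)]$ with unique simple quotient $L(\lambda+\mu)$; therefore the image of $\phi$ is either $L(\lambda+\mu)$ or $\Delta(\lambda+\mu)$. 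In the latter case $L(\lambda)\otimes L(\mu)$ contains the non-semisimple module $\Delta(\lambda+\mu)$ as a submodule, contradicting complete reducibility, and we are done.

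The remaining case---where the image of $\phi$ equals $L(\lambda+\mu)$---forces $L((\lambda+\mu)_2)\subseteq L(\lambda_1)\otimes L(\mu)$. Writing $(\lambda+\mu)_2 = \lambda+\mu - m(\alpha_1+2\alpha_2)$ with $m=(a+c)+(b+d)+2-p$ and $\lambda_1+\mu = \lambda+\mu - k(\alpha_1+\alpha_2)$ with $k=2a+b+3-p$, the necessary condition $(\lambda+\mu)_2\leq\lambda_1+\mu$ translates to $(m-k)\alpha_1+(2m-k)\alpha_2 \in \N\alpha_1+\N\alpha_2$, which forces $m\geq k$, i.e.\ $c+d\geq a+1$; the complementary range $c+d\leq a$ is already handled. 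In the hard range I would pass to characters: by Remark \ref{B2_character_symmetry}, $\ch L(\lambda) = \chi(\lambda)-\chi(\lambda_1)$, so $\ch(L(\lambda)\otimes L(\mu)) = \ch L(\lambda)\cdot\chi(\mu) = \sum_\nu m_{L(\lambda)}(\nu)\chi(\mu+\nu)$ by Corollary \ref{products_characters_bis}.

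Writing this as $\sum_\eta c^*_\eta\chi(\eta)$ and using $\chi(\lambda+\mu) = \ch L(\lambda+\mu)+\ch L((\lambda+\mu)_2)$ together with $\chi((\lambda+\mu)_2) = \ch L((\lambda+\mu)_2)+\ch L((\lambda+\mu)_1)$, one obtains $[L(\lambda)\otimes L(\mu):L((\lambda+\mu)_2)] = c^*_{\lambda+\mu}+c^*_{(\lambda+\mu)_2}=1+c^*_{(\lambda+\mu)_2}$, so strict positivity of $c^*_{(\lambda+\mu)_2}$ yields a composition factor of multiplicity at least $2$, contradicting multiplicity-freeness. The $w=1$ contribution to $c^*_{(\lambda+\mu)_2}$ is $m_{L(\lambda)}(\lambda-m(\alpha_1+2\alpha_2))$, which one can show is positive using Lemmas \ref{B2_-11}, \ref{B2_-12} and the weight tables of Lemma \ref{table_B2}; the alternating contributions from the remaining Weyl group elements are computed via Lemma \ref{orbit}, and they are supported on far-away weights of $L(\lambda)$. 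The main technical obstacle is to verify that the net coefficient $c^*_{(\lambda+\mu)_2}$ remains strictly positive uniformly in the range $c+d\geq a+1$, $\lambda\in C_2$, $\lambda+\mu\in C_3$; this combinatorial bookkeeping, which can also be reformulated via Argument \ref{argument1} applied at the weight $(\lambda+\mu)_2$, is the technical heart of the proof.
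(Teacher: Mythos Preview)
Your setup matches the paper's proof exactly through the point where, assuming $L(\lambda)\otimes L(\mu)$ is completely reducible, you deduce that $L(\eta)\hookrightarrow L(\lambda_1)\otimes L(\mu)$ with $\eta:=(\lambda+\mu)_2=s_{\alpha_1+2\alpha_2,p}\sbullet(\lambda+\mu)\in C_2$. From there, however, you abandon the module-theoretic line and switch to characters, and this is where the proof remains genuinely incomplete: you yourself flag the positivity of $c^*_{(\lambda+\mu)_2}$ as ``the technical heart'' and do not carry it out. So as written the argument has a gap.

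The paper closes the argument at exactly that point without any character computation, and the idea you are missing is short. Since $\lambda_1\in C_1$ and $\mu\in\widehat{C_1}$, one has $L(\lambda_1)\otimes L(\mu)=\Delta(\lambda_1)\otimes\Delta(\mu)$, which admits a Weyl filtration $0=V_0\subseteq\cdots\subseteq V_m$ with $V_i/V_{i-1}\cong\Delta(\nu_i)$ (Theorem \ref{Weyl_filtration}). Take $j$ minimal with $L(\eta)\subseteq V_j$; then $L(\eta)\cap V_{j-1}=0$, so $L(\eta)$ embeds into $\Delta(\nu_j)$. By the Strong Linkage Principle, $\eta\uparrow\nu_j$. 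But $\nu_j\leq\lambda_1+\mu<\lambda+\mu$, and the only weight linked to $\eta$ lying strictly below $\lambda+\mu$ is $\eta$ itself (the next one up is $\lambda+\mu\in C_3$). Hence $\nu_j=\eta$, so $L(\eta)\subseteq\soc\Delta(\eta)$. This is impossible: since $\eta\in C_2$, Lemma \ref{Weyl_B2} gives $\rad\Delta(\eta)\cong L(\eta_1)$ with $\eta_1\in C_1$, so $\soc\Delta(\eta)\cong L(\eta_1)\ncong L(\eta)$. That is the entire endgame, and it makes your weight-inequality case split ($c+d\leq a$ versus $c+d\geq a+1$) and the proposed character bookkeeping unnecessary.
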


\begin{proof}
	Let $\lambda':=s_{\alpha_1+\alpha_2,p}\sbullet \lambda\in C_1$, so that $\Delta(\lambda)$ admits the unique composition series $[L(\lambda),L(\lambda')]$ and let $\eta:= s_{\alpha_1+	2\alpha_2,p}\sbullet(\lambda+\mu)\in C_2$, so that $\Delta(\lambda+\mu)$ admits the unique composition series $[L(\lambda+\mu),L(\eta)]$ (Lemma \ref{Weyl_B2}). 
	We have a short exact sequence 
	$$\begin{tikzcd}
		0 & {L(\lambda')} & {\Delta(\lambda)} & {L(\lambda)} & 0
		\arrow[from=1-1, to=1-2]
		\arrow[from=1-2, to=1-3]
		\arrow[from=1-3, to=1-4]
		\arrow[from=1-4, to=1-5].
	\end{tikzcd}$$
	By Lemma \ref{Weyl_B2}, we have $L(\mu)\cong\Delta(\mu)$ and $L(\lambda')\cong \Delta(\lambda')$. Taking the tensor product with $L(\mu)$, we get the short exact sequence
	$$\begin{tikzcd}
		0 & {\Delta(\lambda')\otimes \Delta(\mu)} & {\Delta(\lambda)\otimes \Delta(\mu)} & {L(\lambda)\otimes L(\mu)} & 0
		\arrow[from=1-1, to=1-2]
		\arrow["\phi",from=1-2, to=1-3]
		\arrow["\psi",from=1-3, to=1-4]
		\arrow[from=1-4, to=1-5].
	\end{tikzcd}$$
	%We will abuse the notation and denote by $\Delta(\lambda+\mu)$ the submodule $\im(\theta)\subseteq \Delta(\lambda)\otimes\Delta(\mu)$.
	
	By Lemma \ref{Weyl_tensor_Weyl}, $\Delta(\lambda)\otimes \Delta(\mu)$ admits a submodule isomorphic to $\Delta(\lambda+\mu)$. We will abuse the notation and denote it by $\Delta(\lambda+\mu)$. Thus, we can restrict our exact sequence to
	$$\begin{tikzcd}
		0 & {\phi^{-1}(\Delta(\lambda+\mu))} & {\Delta(\lambda+\mu)} & {\psi(\Delta(\lambda+\mu))} & 0
		\arrow[from=1-1, to=1-2]
		\arrow["\phi", from=1-2, to=1-3]
		\arrow["\psi", from=1-3, to=1-4]
		\arrow[from=1-4, to=1-5].
	\end{tikzcd}$$
	
	Suppose for contradiction that $L(\lambda)\otimes L(\mu)$ is multiplicity-free. Then in particular it is completely reducible, and $\psi(\Delta(\lambda+\mu))$ is completely reducible (Proposition \ref{submodule_comp_red}). Therefore, $\rad \Delta(\lambda+\mu)\subseteq \ker(\psi)$ and $\psi(\Delta(\lambda+\mu))\cong L(\lambda+\mu)$ or $\psi(\Delta(\lambda+\mu))=0$. We claim that the second case is impossible. By exactness, it would imply $\phi^{-1}(\Delta(\lambda+\mu))\cong \Delta(\lambda+\mu)$, \linebreak but $\lambda +\mu \nleq \lambda'+\mu$, so it cannot appear as a submodule of $\Delta(\lambda')\otimes \Delta(\mu)$.
	Thus, \linebreak$\phi^{-1}(\Delta(\lambda+\mu))\cong L(\eta)$ and it is a submodule of $\Delta(\lambda')\otimes \Delta(\mu)$.
	
	Using Theorem \ref{Weyl_filtration}, we fix 
	$$0=V_0\subseteq V_1\subseteq\ldots\subseteq V_m=\Delta(\lambda')\otimes\Delta(\mu)$$
	a Weyl filtration. Thus there exist $\nu_1,\ldots,\nu_m\in X^+$ such that $V_i/V_{i-1}\cong \Delta(\nu_i)$ for \linebreak$i\in\{1,\ldots, m\}$. We set $W_i:=V_i\cap \phi^{-1}(\Delta(\lambda+\mu))$. Since $\phi^{-1}(\Delta(\lambda+\mu))$ is irreducible, then $W_i=0$ or $W_i\cong L(\eta)$. Let $j$ be minimal such that $W_j\cong L(\eta)$ (in particular $W_{j-1}=0$). We have the following situation
	$$\begin{tikzcd}
		0 & {V_{j-1}} & {V_j} & {\Delta(\nu_j)} & 0 \\
		&& {L(\eta)}
		\arrow[from=1-1, to=1-2]
		\arrow[hook, from=1-2, to=1-3]
		\arrow["\pi_j",two heads, from=1-3, to=1-4]
		\arrow[from=1-4, to=1-5]
		\arrow[hook, from=2-3, to=1-3]
		\arrow[dashed, hook, from=2-3, to=1-4]
	\end{tikzcd}$$
	Since $\ker \pi_j=V_{j-1}$ and $W_j\cap V_{j-1}=W_{j-1}=0$, we have an injective map $L(\eta)\to \Delta(\nu_j)$, so $L(\eta)$ is a submodule of $\Delta(\nu_j)$. In particular, $L(\eta)$ is a composition factor of $\Delta(\nu_j)$, so $\eta \uparrow \nu_j$ by the Strong Linkage Principle (Proposition \ref{strong_linkage}). Recall at this step that $\eta \uparrow \lambda+\mu$.
	Now, observe that $\nu_j\leq \lambda'+\mu < \lambda+\mu$, so $\nu_j\neq \lambda +\mu$. 
	By the geometry of alcoves, it follows that $\nu_j\in C_2$, so $\eta=\nu_j$. But $L(\eta)$ is not a submodule of $\Delta(\eta)$ (Lemma \ref{Weyl_B2}), so we get a contradiction. Therefore, $L(\lambda)\otimes L(\mu)$ has multiplicity.
\end{proof}

\subsubsection{$L(0,b)\otimes L(0,d)$}

\begin{prop}\label{B2_bd}
	Let $\lambda=(0,b),\mu=(0,d)\in X^+$ be $p$-restricted dominant weights with {$0<\nolinebreak b,d<p$}. Then $L(\lambda)\otimes L(\mu)$ is multiplicity-free if and only if one of the following holds:
	\begin{enumerate}[label = \emph{(\arabic*)}] 
		\item $b+d\leq p-3$ (i.e. $\lambda+\mu\in \widehat{C_1}$) or
		\item $(b,d)\in \{(1,p-2),(p-2,1)\}$.
	\end{enumerate}
\end{prop}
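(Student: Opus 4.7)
The plan is to partition by the value of $b+d$ relative to $p$. Every weight $(0,b)$ with $0<b<p$ lies in $\widehat{C_1}\cup F_{2,3}\cup F_{3,5}$, so Lemma~\ref{Weyl_B2} gives $\Delta(0,b)\cong L(0,b)$; hence $L(\lambda)$ and $L(\mu)$ are indecomposable tilting modules by Lemma~\ref{tilting_irreducible}, and in particular $M:=L(\lambda)\otimes L(\mu)$ is tilting by Theorem~\ref{tensor_tilting}, which puts Lemma~\ref{argument_tilting} at our disposal. The two extreme cases are immediate: if $b+d\leq p-3$ then $\lambda+\mu\in\widehat{C_1}$, so Corollary~\ref{sum_C1} combined with Theorem~\ref{B2_p0} (applied with $a=c=0$) gives multiplicity-freeness, while if $b+d\geq p$ Corollary~\ref{sum_p_res} forces multiplicity. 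It remains to treat $b+d\in\{p-2,p-1\}$.

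For the exceptional pair $(b,d)=(1,p-2)$ (the case $(p-2,1)$ is symmetric), I would compute $\chi(0,1)\chi(0,p-2)$ directly via Proposition~\ref{product_characters} using the four weights $(0,\pm1),(\pm1,\mp1)$ of $\Delta(0,1)$. The contribution coming from $(-1,1)$ is $\chi(-1,p-1)$, which vanishes by Lemma~\ref{action_characters} since the weight is fixed by $s_{\alpha_1}\sbullet$; the remaining summands $\chi(0,p-1)$, $\chi(1,p-3)$, $\chi(0,p-3)$ have highest weights in $F_{3,5}$, $F_{2,3}$, $F_{1,2}$ respectively, where $\Delta(\nu)\cong L(\nu)$ by Lemma~\ref{Weyl_B2}. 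Linear independence of irreducible characters (Lemma~\ref{basis_character}) then identifies $\ch M$ with $\ch L(0,p-1)+\ch L(1,p-3)+\ch L(0,p-3)$, so $M$ is multiplicity-free.

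It remains to show that $M$ has multiplicity when $b+d=p-2$ (any $b,d\geq1$) or when $b+d=p-1$ with $b,d\geq 2$. The strategy is to exhibit a composition factor $L(\eta)$ whose highest weight $\eta$ lies in the interior of $C_2$; then $T(\eta)$ is not irreducible by Lemma~\ref{Weyl_B2}, and Lemma~\ref{argument_tilting} yields the conclusion. The candidates are $\eta=(1,p-4)$ when $b+d=p-2$ and $\eta=(2,p-5)$ when $b+d=p-1$, both verified to satisfy the strict inequalities defining $C_2$. To see $L(\eta)$ is a composition factor I apply Argument~\ref{argument1}: in Euclidean coordinates, the first coordinates of any pair $(\nu_1,\nu_2)\in\mathrm{wts}(L(\lambda))\times\mathrm{wts}(L(\mu))$ summing to $\eta$ are forced to $(b/2,d/2)$, leaving only two or three possible splits of the second coordinate whose contributions are read off from Lemma~\ref{B2_weight_0b} and give $m_M(\eta)=2$ in the first case and $m_M(\eta)=3$ in the second.

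The main obstacle will be the case $b+d=p-1$ with $b,d\geq 2$: one step down from the highest weight, Argument~\ref{argument1} only detects the composition factor $L(1,p-3)$, whose weight lies on the ``safe'' wall $F_{2,3}$ and so gives no information. One must iterate to reach $\eta=(2,p-5)$, and this requires the auxiliary computation $m_{L(1,p-3)}(2,p-5)=1$, which I would obtain from Proposition~\ref{Cavallin} applied with $J=\{1\}$, reducing the problem to $m_{L(0,p-3)}(1,p-5)=1$ via Lemma~\ref{B2_weight_0b}. Subtracting this together with $m_{L(0,p-1)}(2,p-5)=1$ from $m_M(2,p-5)=3$ leaves a strictly positive remainder, certifying $L(2,p-5)$ as a composition factor of $M$ and completing the proof.
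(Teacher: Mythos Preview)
Your proof is correct and follows essentially the same route as the paper: the same case split on $b+d$, the same reduction to the tilting argument (Lemma~\ref{argument_tilting}) targeting $(1,p-4)\in C_2$ and $(2,p-5)\in C_2$, and the same direct character computation for $(b,d)=(1,p-2)$. The only difference is cosmetic: where you invoke Euclidean coordinates, Lemma~\ref{B2_weight_0b} and Proposition~\ref{Cavallin} to compute the relevant weight multiplicities, the paper simply applies Lemma~\ref{-1}, since each weight in question differs from the highest weight by a multiple of a single simple root (e.g.\ $m_{L(1,p-3)}(2,p-5)=m_{L(1,p-3)}((1,p-3)-\alpha_2)=1$ directly).
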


\begin{proof}
	We set $M:=L(\lambda)\otimes L(\mu)$. By Lemma \ref{Weyl_B2}, $L(\lambda)$ and $L(\mu)$ are tilting modules, so $M$ is a tilting module.
	
	If $b+d\geq p$, we conclude directly from Corollary \ref{sum_p_res} that $M$ has multiplicity.\\
	
	If $b+d\leq p-3$ (i.e. $\lambda+\mu\in \widehat{C_1}$), we apply Corollary \ref{sum_C1} and Theorem \ref{B2_p0} to conclude that $M$ is multiplicity-free.\\
	
	Suppose that $b+d=p-2$. By Lemma \ref{-1}, we have
	\begin{align*}
		&m_{L(\lambda)}(\lambda-\alpha_2)=1,
		&&m_{L(\mu)}(\mu-\alpha_2)=1,
		&&m_{L(\lambda+\mu)}(\lambda+\mu-\alpha_2)=1.
	\end{align*}
Using Argument \ref{argument1}, we have 
$$m_{M}(\lambda+\mu-\alpha_2)=1+1=2,$$
and we deduce that $L(\lambda+\mu-\alpha_2)$ is a composition factor of $M$. Observe that \linebreak$\lambda+\mu-\alpha_2=(1,p-4)\in C_2$, so $T(\lambda+\mu-\alpha_2)$ is not irreducible by Lemma \ref{Weyl_B2}. We can thus conclude by Lemma \ref{argument_tilting} that $M$ has multiplicity.\\

If $b+d=p-1$ and $b\neq1, d\neq 1$, we use Argument \ref{argument1}. By Lemma \ref{-1}, we have
\begin{align*}
	&m_{L(\lambda)}(\lambda-\alpha_2)=1,
	&&m_{L(\lambda)}(\lambda-2\alpha_2)=1,\\
	&m_{L(\mu)}(\mu-\alpha_2)=1,
	&&m_{L(\mu)}(\mu-2\alpha_2)=1,\\
	&m_{L(\lambda+\mu)}(\lambda+\mu-\alpha_2)=1,
	&&m_{L(\lambda+\mu)}(\lambda+\mu-2\alpha_2)=1,\\
	&m_{L(\lambda+\mu-\alpha_2)}(\lambda+\mu-\alpha_2)=1,
	&&m_{L(\lambda+\mu-\alpha_2)}(\lambda+\mu-2\alpha_2)=1.
\end{align*}
Therefore, we get 
$$m_M(\lambda+\mu-\alpha_2)=2,\qquad m_M(\lambda+\mu-2\alpha_2)=3.$$
We deduce that $L(\lambda+\mu-2\alpha_2)$ is a composition factor of $M$. Observe that \linebreak$\lambda+\mu-2\alpha_2=(2,p-5)\in C_2$, so $T(\lambda+\mu-\alpha_2)$ is not irreducible by Lemma \ref{Weyl_B2}. As in the previous case, we conclude by Lemma \ref{argument_tilting} that $M$ has multiplicity. \\

Finally, consider the case $b=1,d=p-2$ (the case $b=p-2,d=1$ is symmetric). By Proposition \ref{product_characters} and Lemmas \ref{Weyl_B2} and \ref{action_characters}, we have
\begin{align*}
	\ch M&=\chi(\lambda)\chi(\mu)=\chi(0,p-1)+\chi(1,p-3)+\chi(0,p-3)+\chi(-1,p-1)\\
	&=\ch L(0,p-1)+\ch L(1,p-3)+\ch L(0,p-3).
\end{align*}
Therefore, $M$ is multiplicity-free.
\end{proof}

\subsubsection{$L(a,0)\otimes L(c,0)$}

\begin{lemma}\label{B2_ac_technique}
	Let $a,c\in \N$. We use Euclidean coordinates. For $i,j\in \Z$, we set $$\delta(a,i,j):=\begin{cases}
		1&\quad \text{if } a- i-j \text{ is even,}\\
		0&\quad \text{if } a-i-j \text{ is odd.}
	\end{cases}$$ 
Then 
	$$\chi(a\omega_1)\chi(c\omega_1)=\sum_{i=0}^a\sum_{j=i-a}^{a-i}\delta(a,i,j)\chi(c+j,i).$$
	Moreover, if $a\leq c$, then all the weights on the right hand side of the equality are dominant.
\end{lemma}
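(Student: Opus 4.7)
The natural starting point is Proposition \ref{product_characters}, which gives
$$\chi(a\omega_1)\chi(c\omega_1)=\sum_{\nu\in X}m_{\Delta(a\omega_1)}(\nu)\,\chi(c\omega_1+\nu).$$
Since $c\omega_1=c\epsilon_1$ in Euclidean coordinates, and the weight multiplicities of $\Delta(a,0)$ are given explicitly by Corollary \ref{Weyl_a0}, this expands to
\begin{align*}
\chi(a\omega_1)\chi(c\omega_1)&=\sum_{i=0}^a\sum_{j=i-a}^{a-i}Y(i,j)\,\chi(c+j,i)\\
&\qquad+\sum_{i=1}^a\sum_{j=i-a}^{a-i}Y(i,j)\,\chi(c+j,-i),
\end{align*}
where $Y(i,j)=\lfloor(a-i-|j|)/2\rfloor+1$ and $(c+j,\pm i)$ is written in Euclidean coordinates.

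The key step is to collapse the second sum into the first using Lemma \ref{action_characters}. By Remark \ref{B2_symmetry}(2), for any $i\geq 1$ one has $s_{\alpha_2}\sbullet(c+j,i-1)=(c+j,-i)$, so $\chi(c+j,-i)=-\chi(c+j,i-1)$. Reindexing the second sum by $i\mapsto i+1$ converts it to $-\sum_{i=0}^{a-1}\sum_{j=i+1-a}^{a-i-1}Y(i+1,j)\chi(c+j,i)$. Combining the two sums, the coefficient of $\chi(c+j,i)$ for $i\in\{0,\dots,a-1\}$ and $|j|\leq a-i-1$ becomes $Y(i,j)-Y(i+1,j)$; on the boundary $|j|=a-i$, and for $i=a$ with $j=0$, only the first sum contributes and gives $Y(i,j)=1$. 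A short case analysis on the floor function shows
$$Y(i,j)-Y(i+1,j)=\lfloor\tfrac{a-i-|j|}{2}\rfloor-\lfloor\tfrac{a-i-|j|-1}{2}\rfloor=\delta(a,i,j),$$
using that $j$ and $|j|$ have the same parity, and that $\delta(a,i,i-a)=\delta(a,i,a-i)=\delta(a,a,0)=1$ matches the boundary values $Y=1$. This yields the claimed identity.

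For the dominance claim, converting $(c+j,i)$ back to fundamental-weight coordinates via $\epsilon_1=\omega_1$ and $\epsilon_2=2\omega_2-\omega_1$ gives the weight $(c+j-i,2i)$. Since $i\geq 0$, dominance amounts to $c+j-i\geq 0$. The summation range forces $j\geq i-a$, so $c+j-i\geq c-a$, and the hypothesis $a\leq c$ ensures this is non-negative. I expect the only delicate step to be the bookkeeping for the coefficient collapse $Y(i,j)-Y(i+1,j)=\delta(a,i,j)$ at the boundary indices, where one must verify that the degenerate terms arising from the reindexing shift line up correctly with the floor-function parity identity.
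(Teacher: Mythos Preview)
Your proposal is correct and follows essentially the same approach as the paper: expand via Proposition~\ref{product_characters} and Corollary~\ref{Weyl_a0}, fold the negative-$i$ terms into the positive-$i$ ones using $\chi(c+j,-i)=-\chi(c+j,i-1)$ from Lemma~\ref{action_characters}, and then verify $Y(i,j)-Y(i+1,j)=\delta(a,i,j)$ together with the boundary values. The only cosmetic difference is that the paper checks dominance directly in Euclidean coordinates (where it amounts to $0\leq i\leq c+j$) rather than converting back to fundamental-weight coordinates, but the inequality $c+j\geq i+(c-a)\geq i$ is the same either way.
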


\begin{proof}
	In this proof we use Euclidean coordinates. Like in Corollary \ref{Weyl_a0}, we set 
	$$Y(i,j):=\lfloor\frac{a-|i|-|j|}{2}\rfloor+1.$$
	%Note that $Y(i,j)=Y(-i,j)$ by definition.
	Using Proposition \ref{product_characters} and Corollary \ref{Weyl_a0} in the first equality below, Lemma \ref{action_characters} in the second one and Remark \ref{B2_symmetry} in the third one, we get
	\begin{align*}
		\chi(a\omega_1)\chi(c\omega_1)=&\sum_{i=0}^a\sum_{j=i-a}^{a-i}Y(i,j)\chi(c+j,i)+\sum_{i=1}^a\sum_{j=i-a}^{a-i}Y(i,j)\chi(c+j,-i)\\
		=&\sum_{i=0}^a\sum_{j=i-a}^{a-i}Y(i,j)\chi(c+j,i)-\sum_{i=1}^a\sum_{j=i-a}^{a-i}Y(i,j)\chi(s_{\alpha_2}\sbullet(c+j,-i))\\
		=&\sum_{i=0}^a\sum_{j=i-a}^{a-i}Y(i,j)\chi(c+j,i)-\sum_{i=1}^a\sum_{j=i-a}^{a-i}Y(i,j)\chi(c+j,i-1)\\
		=&\sum_{i=0}^a\sum_{j=i-a}^{a-i}Y(i,j)\chi(c+j,i)-\sum_{i=0}^{a-1}\sum_{j=i+1-a}^{a-i-1}Y(i+1,j)\chi(c+j,i)\\
		=&\sum_{i=0}^{a-1}\sum_{j=i+1-a}^{a-i-1}(Y(i,j)-Y(i+1,j))\chi(c+j,i)\\
		&\hspace{4em}+\sum_{i=0}^{a-1}Y(i,a-i)\chi(c+a-i,i)+Y(i,i-a)\chi(c+i-a,i)\\
		&\hspace{4em}+Y(a,0)\chi(c,a).
	\end{align*}
For $i\in\{0,\ldots, a-1\}$, we have $Y(i,a-i)=Y(i,i-a)=1=\delta(a,i,\pm(a-i))$, and $Y(a,0)=1=\delta(a,a,0)$. Moreover 
$$Y(i,j)-Y(i+1,j)=(\lfloor\frac{a-|i|-|j|}{2}\rfloor+1)-(\lfloor\frac{a-|i+1|-|j|}{2}\rfloor+1)=\delta(a,i,j).$$
Therefore, we get 
\begin{align}
	\chi(a\omega_1)\chi(c\omega_1)=&\sum_{i=0}^{a-1}\sum_{j=i+1-a}^{a-i-1}\delta(a,i,j)\chi(c+j,i)\nonumber\\
	&\hspace{4em}+\sum_{i=0}^{a-1}\delta(a,i,a-i)\chi(c+a-i,i)+\delta(a,i,i-a)\chi(c+i-a,i)\nonumber\\
	&\hspace{4em}+\delta(a,a,0)\chi(c,a)\nonumber\\
	=&\sum_{i=0}^a\sum_{j=i-a}^{a-i}\delta(a,i,j)\chi(c+j,i).\label{eq50}
\end{align}
Finally, if $0\leq i$, $a\leq c$ and $i-a\leq j$, then $0\leq i \leq i+(c-a)\leq c+j$, hence all the weights appearing in line \eqref{eq50} are dominant.
\end{proof}

\begin{prop}\label{B2_ac_bis}
	Let $\lambda=(a,0),\mu=(c,0)\in X^+$ be two $p$-restricted dominant weights with $0<a< c$, $c\geq \frac{p-1}{2}$ and $a+c<p-1$ (i.e. $\lambda\in \widehat{C_1}, \mu \in C_2$ and $\lambda+\mu \in \widehat{C_2}$). Then $L(\lambda)\otimes L(\mu)$ is multiplicity-free.
\end{prop}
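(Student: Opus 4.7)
The plan is to compute $\ch M = \ch(L(\lambda)\otimes L(\mu))$ explicitly as a $\Z$-linear combination of characters of irreducible modules, and verify that each coefficient is $0$ or $1$. Throughout the proof I would work in Euclidean coordinates. Since $\lambda = (a,0) \in \widehat{C_1}$, Lemma \ref{Weyl_B2} gives $L(\lambda) \cong \Delta(\lambda)$, so $\ch L(\lambda) = \chi(a\omega_1)$. Since $\mu = (c,0) \in C_2$ (because $c \geq \tfrac{p-1}{2}$ and $c \leq a+c < p-1$), Remark \ref{B2_character_symmetry} yields
$$\ch L(\mu) = \chi(c,0) - \chi(c',0), \qquad c' := p-3-c.$$
Note $0 \leq c' < \tfrac{p-1}{2} \leq c$, and $c'\omega_1 \in \widehat{C_1}$. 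Hence
$$\ch M = \chi(a\omega_1)\chi(c\omega_1) - \chi(a\omega_1)\chi(c'\omega_1).$$

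The next step is to expand both products via Lemma \ref{B2_ac_technique}. Since $a < c$, every weight in the first expansion is dominant, giving
$$\chi(a\omega_1)\chi(c\omega_1) = \sum_{i=0}^{a}\sum_{j=i-a}^{a-i}\delta(a,i,j)\,\chi(c+j,i).$$
For the second product the formula of Lemma \ref{B2_ac_technique} still holds as an identity in $\Z[X]^W$, but the weights $(c'+j, i)$ may fail to be dominant; I would apply Lemma \ref{action_characters} (via $s_{\alpha_1}$ and $s_{\alpha_2}$) to each offending term, producing either zero (when the weight lies in $D \setminus X^+$) or a signed Weyl character at a dominant weight, and then collect. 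Subtracting, one arrives at
$$\ch M \;=\; \sum_{i=0}^{a}\sum_{j=i-a}^{a-i}\delta(a,i,j)\Bigl(\chi(c+j,i) - \chi(c'+j,i)\Bigr),$$
with each non-dominant $\chi(c'+j,i)$ rewritten as above.

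To finish, I would locate each dominant weight $\nu = (c+j,i)$ in the alcove structure. Because $(c+j) + i \leq c+a \leq p-2$, such a $\nu$ always lies in $\widehat{C_1} \cup \widehat{C_2}$, with $\nu \in \widehat{C_1}$ exactly when $(c+j)+i \leq p-3$. By Lemma \ref{Weyl_B2} and Remark \ref{B2_character_symmetry}, this means
$$\chi(c+j,i) = \ch L(c+j,i) + [\,(c+j)+i > p-3\,]\cdot \ch L(p-3-c-j, i),$$
where the bracket is $1$ or $0$. A parallel analysis applies to $\chi(c'+j,i)$. The secondary composition factors $L(p-3-c-j,i)$ produced by the first sum are precisely of the form $L(c'-j,i)$, and I expect them to cancel term-by-term against the contributions coming from the $\chi(c'+j,i)$ sum (after reindexing $j \mapsto -j$ and using the symmetry $\delta(a,i,j)=\delta(a,i,-j)$).

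The main obstacle will be this cancellation bookkeeping: one must simultaneously track (i) which $(c+j,i)$ lie in $\widehat{C_2}$ versus $\widehat{C_1}$, (ii) which $(c'+j,i)$ are dominant, on the wall $b=0$, or in $D\setminus X^+$, and (iii) the parity factor $\delta(a,i,j)$. Once these are lined up, what remains is a sum $\sum \ch L(\eta)$ in which the indexing set consists of distinct dominant weights $\eta$, each appearing with coefficient $1$. By linear independence of the characters $\{\ch L(\nu) : \nu \in X^+\}$ (Lemma \ref{basis_character}), this proves that $M$ is multiplicity-free.
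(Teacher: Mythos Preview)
Your approach is the paper's approach: expand $\ch M = \chi(a\omega_1)\bigl(\chi(c,0)-\chi(p-3-c,0)\bigr)$ via Lemma~\ref{B2_ac_technique} and simplify using Remark~\ref{B2_character_symmetry}. The paper organises it more efficiently by reindexing $j\mapsto -j$ in the second product \emph{before} doing anything else. This pairs each $\chi(c+j,i)$ with $\chi(p-3-(c+j),i)$ (rather than with $\chi(c'+j,i)$), which is exactly the shape of Remark~\ref{B2_character_symmetry}; so whenever $(c+j,i)\in\widehat{C_2}$ the pair collapses directly to $\ch L(c+j,i)$, and one never has to decompose $\chi(c+j,i)$ into simple characters and chase secondary factors.

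Two concrete issues in your sketch. First, your alcove criterion is wrong in Euclidean coordinates: a dominant $(x,y)$ lies in $\widehat{C_1}$ iff $x\le\frac{p-3}{2}$, not iff $x+y\le p-3$. (Convert: $(x,y)_{\mathrm{Eucl}}=(x-y)\omega_1+2y\omega_2$, and the $\widehat{C_1}$ condition $2(x-y)+2y\le p-3$ reduces to $2x\le p-3$.) This matters, because not all $(c+j,i)$ with $i-a\le j\le a-i$ lie in $\widehat{C_2}$; e.g.\ for $p=7$, $c=3$, $a=2$, $i=0$, $j=-2$ one gets $(1,0)\in\widehat{C_1}$.

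Second, and more importantly, your ``I expect them to cancel'' is where the real content lies, and the mechanism is not visible in your outline. After the reindexing the paper splits the $j$-range at $\max\{i-a,\,a-i-m\}$ with $m:=2c-p+2$: above this threshold $(c+j,i)\in\widehat{C_2}$ and the pair yields $\ch L(c+j,i)$; below it, the leftover double sum is shown to vanish via a second reindexing $j\mapsto -m-1-j$, which works precisely because $m$ is \emph{odd} (so $-m-1$ is even and $\delta(a,i,-m-1-j)=\delta(a,i,j)$). This parity observation is the crux of the cancellation, and without it the bookkeeping you anticipate does not close.
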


\begin{proof}
	In this proof we use Euclidean coordinates. We set $M:=L(\lambda)\otimes L(\mu)$ and \linebreak$m:=2c-p+2$.
	Like in Lemma \ref{B2_ac_technique}, for $i,j\in \Z$, we set $$\delta(a,i,j):=\begin{cases}
		1&\quad \text{if } a- i-j \text{ is even,}\\
		0&\quad \text{if } a-i-j \text{ is odd.}
	\end{cases}$$
	Note that $\delta(a,i,j)=\delta(a,i,-j)$.\\
	
	By Remark \ref{B2_character_symmetry} and Proposition \ref{product_characters}, and using Lemma \ref{B2_ac_technique} in the second equality below, we have
	\begin{align}
		\ch M=& \chi(a,0)(\chi(c,0)-\chi(p-3-c,0))\nonumber\\
		=&\sum_{i=0}^a\sum_{j=i-a}^{a-i}\delta(a,i,j)\chi(c+j,i)-\sum_{i=0}^a\sum_{j=i-a}^{a-i}\delta(a,i,j)\chi(p-3-c+j,i)\nonumber\\
		=&\sum_{i=0}^a\sum_{j=i-a}^{a-i}\delta(a,i,j)\chi(c+j,i)-\sum_{i=0}^a\sum_{j=i-a}^{a-i}\delta(a,i,-j)\chi(p-3-c-j,i)\nonumber\\
		=&\sum_{i=0}^a\sum_{j=\max\{i-a,a-i-m\}}^{a-i}\delta(a,i,j)\chi(c+j,i)+\sum_{i=0}^a\sum_{j=i-a}^{a-i-m-1}\delta(a,i,j)\chi(c+j,i)\nonumber\\
		&\hspace{4em}-\sum_{i=0}^a\sum_{j=\max\{i-a,a-i-m\}}^{a-i}\delta(a,i,-j)\chi(p-3-c-j,i)\nonumber\\
		&\hspace{4em}-\sum_{i=0}^a\sum_{j=i-a}^{a-i-m-1}\delta(a,i,-j)\chi(p-3-c-j,i)\nonumber\\
		=&\sum_{i=0}^a\sum_{j=\max\{i-a,a-i-m\}}^{a-i}\delta(a,i,j)(\chi(c+j,i)-\chi(p-3-c-j,i))\label{eq60}\\
		&\hspace{2em}+\sum_{i=0}^a\left(\sum_{j=i-a}^{a-i-m-1}\delta(a,i,j)\chi(c+j,i)-\sum_{j=i-a}^{a-i-m-1}\delta(a,i,j)\chi(p-3-c-j,i)\right)\label{eq61}
	\end{align}
We show that line \eqref{eq61} is equal to zero. We have
\begin{align*}
	&\sum_{j=i-a}^{a-i-m-1}\delta(a,i,j)\chi(c+j,i)-\sum_{j=i-a}^{a-i-m-1}\delta(a,i,j)\chi(p-3-c-j,i)\\
	&\hspace{2em}=\sum_{j=i-a}^{a-i-m-1}\delta(a,i,j)\chi(c+j,i)\\
	&\hspace{7em}-\sum_{j=i-a}^{a-i-m-1}\delta(a,i,-m-1-j)\chi(p-3-c-(-m-1-j),i)\\
	&\hspace{2em}=\sum_{j=i-a}^{a-i-m-1}\delta(a,i,j)\chi(c+j,i)-\sum_{j=i-a}^{a-i-m-1}\delta(a,i,-m-1-j)\chi(c+j,i)\\
	&\hspace{2em}=\sum_{j=i-a}^{a-i-m-1}(\delta(a,i,j)-\delta(a,i,-m-1-j))\chi(c+j,i)=0,
\end{align*}
where in the last equality, we use that $m$ is odd, thus $-m-1$ is even and \linebreak$\delta(a,i,-m-1-j)=\delta(a,i,j)$. Therefore
$$\sum_{i=0}^a\left(\sum_{j=i-a}^{a-i-m-1}\delta(a,i,j)\chi(c+j,i)-\sum_{j=i-a}^{a-i-m-1}\delta(a,i,j)\chi(p-3-c-j,i)\right)=\sum_{i=0}^a0=0.$$

Now we work on line \eqref{eq60}. For $0\leq i\leq a$ and $\max{\{i-a,a-i-m\}}\leq j\leq a-i$, we claim that $(c+j,i)\in \widehat{C_2}$. Indeed, we have $i\geq 0$ and 
$$(c+j)+i\leq c+(a-i)+i\leq p-2$$ by assumption. Thus it remains to show that $c+j>\frac{p-3}{2}$. 
If $i-a\geq \frac{p-1}{2}-c$, then
$$c+j\geq c+i-a\geq \frac{p-1}{2}>\frac{p-3}{2},$$
hence we are done.
If $i-a< \frac{p-1}{2}-c$, then $a-i>c-\frac{p-1}{2}$ and
$$c+j\geq c+a-i-m>2c-\frac{p-1}{2}-m=2c-\frac{p-1}{2}-(2c-p+2)=\frac{p-3}{2}.$$
so we are done. Using Remark \ref{B2_character_symmetry}, we get
\begin{align*}
	\ch M=&\sum_{i=0}^a\sum_{j=\max\{i-a,a-i-m\}}^{a-i}\delta(a,i,j)(\chi(c+j,i)-\chi(p-3-c-j,i))\\
	=&\sum_{i=0}^a\sum_{j=\max\{i-a,a-i-m\}}^{a-i}\delta(a,i,j)\ch L(c+j,i).
\end{align*}
We conclude that $M$ is multiplicity-free.
\end{proof}

\begin{prop}\label{B2_ac}
	Let $\lambda=(a,0),\mu=(c,0)\in X^+$ be $p$-restricted dominant weights with $0<a\leq c$. Then $L(\lambda)\otimes L(\mu)$ is multiplicity-free if and only if one of the following holds:
	\begin{enumerate}[label = \emph{(\arabic*)}] 
		\item $a+c\leq \frac{p-3}{2}$ (i.e. $\lambda+\mu\in \widehat{C_1}$),
		\item $c\geq \frac{p-1}{2}$ and $a+c<p-1$ or
		\item $a=c=\frac{p-1}{2}$.
	\end{enumerate}
\end{prop}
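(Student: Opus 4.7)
The plan is to organise the argument according to the position of $\lambda$, $\mu$ and $\lambda+\mu$ in the alcove structure, and to dispose of the ``if'' direction case by case before showing multiplicity in all remaining configurations. Under $0<a\leq c<p$, the complement of conditions (1)--(3) splits into three mutually exclusive regions: (A) $a+c\geq p$; (B) $c\leq (p-3)/2$ with $(p-1)/2\leq a+c\leq p-3$; and (C) $a+c=p-1$ with $a<c$.

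For the multiplicity-free direction, case (1) follows from $\lambda+\mu\in\widehat{C_1}$ by combining Corollary~\ref{sum_C1} with Theorem~\ref{B2_p0}, and case (2) is exactly Proposition~\ref{B2_ac_bis} (once one observes that $a\leq c$ together with $a+c<p-1$ forces $a<(p-1)/2\leq c$). Case (3), $a=c=(p-1)/2$, requires a character computation. I would use Remark~\ref{B2_character_symmetry} to write $\ch L(a,0)=\chi(a,0)-\chi(a-2,0)$ (noting $p-3-a=a-2$), expand the square, and apply Lemma~\ref{B2_ac_technique} to each of the three resulting products. After eliminating terms on reflection hyperplanes via Lemma~\ref{action_characters} and decomposing the remaining Weyl characters into irreducible characters using Remark~\ref{B2_character_symmetry} and Lemma~\ref{Weyl_B2}, every $\ch L(\nu)$ will occur with coefficient $0$ or $1$.

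For the converse, case (A) is Corollary~\ref{sum_p_res}. In case (B) both $\lambda$ and $\mu$ lie in $\widehat{C_1}$, hence $L(\lambda)$ and $L(\mu)$ are tilting and so is $M:=L(\lambda)\otimes L(\mu)$; moreover $(p-1)/2\leq a+c\leq p-3<p-2$ places $\lambda+\mu=(a+c,0)$ strictly inside $C_2$, so $T(\lambda+\mu)$ is reducible by Lemma~\ref{Weyl_B2} and Lemma~\ref{argument_tilting} applied to the composition factor $L(\lambda+\mu)$ of $M$ produces multiplicity.

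The main obstacle is case (C). Here $L(\lambda)=\Delta(\lambda)$ by Lemma~\ref{Weyl_B2} and $\ch L(\mu)=\chi(c,0)-\chi(a-2,0)$ by Remark~\ref{B2_character_symmetry} (since $p-3-c=a-2$), giving $\ch M=\chi(a,0)\chi(c,0)-\chi(a,0)\chi(a-2,0)$. Applying Lemma~\ref{B2_ac_technique} in Euclidean coordinates, the Weyl characters indexed by $(c+j,i)=(p-3,0)$ and $(c+j,i)=(p-2,1)$ each occur with coefficient $1$ in $\chi(a,0)\chi(c,0)$, at $(i,j)=(0,a-2)$ and $(1,a-1)$; the strict inequality $a<(p-1)/2$ places the analogous indices $(i,j)=(0,p-3-a)$ and $(1,p-2-a)$ outside the summation range of $\chi(a,0)\chi(a-2,0)$, so both Weyl characters survive in $\ch M$ with coefficient $1$. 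By Lemma~\ref{Weyl_B2} both $\Delta(p-3,0)$ (linked to $(0,0)\in C_1$) and $\Delta(p-3,2)$ (the weight $(p-2,1)\in C_3$ in Euclidean, linked to $(p-3,0)\in C_2$) contain $L(p-3,0)$ with multiplicity one; and one checks via Proposition~\ref{strong_linkage} that the only other $p$-restricted representative of the orbit $W_p\sbullet(p-3,0)$, namely $(p-2,2)\in C_4$, falls outside the index range of both products and so does not appear in $\ch M$. Summing the two contributions gives $[M:L(p-3,0)]=2$, so $M$ has multiplicity.
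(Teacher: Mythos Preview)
Your organisation into cases (1)--(3) and (A)--(C) is correct and your handling of (1), (2), (A), (B) matches the paper exactly. The substantive differences are in case (C) and in case (3).

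For case (C), your character-theoretic argument is correct and more uniform than the paper's. The paper splits $a+c=p-1$, $a<c$ into two subcases: for $a=1$ it observes that $L(\lambda)$ and $L(\mu)=L(p-2,0)$ are both tilting and exhibits the composition factor $L(\lambda+\mu-\alpha_1)=L(p-3,2)\in C_3$, concluding via Lemma~\ref{argument_tilting}; for $2\leq a<(p-1)/2$ it runs Argument~\ref{argument1} with weight multiplicities from Lemmas~\ref{table_B2}, \ref{B2_-11}, \ref{B2_-12} to obtain $[M:L(\lambda+\mu-2\alpha_1-2\alpha_2)]=[M:L(p-3,0)]=2$. You arrive at the same multiplicity $[M:L(p-3,0)]=2$ directly from the decomposition in Lemma~\ref{B2_ac_technique}, and your argument covers $a=1$ uniformly (the second product vanishes since $\chi(-1,0)=0$). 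One remark: your check that the $C_4$ representative $(p-2,2)$ does not appear is not strictly needed, because Lemma~\ref{Weyl_B2} already gives $[\Delta(p-2,2):L(p-3,0)]=0$; but it does no harm.

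For case (3), your proposal to expand $(\chi(a,0)-\chi(a-2,0))^2$ via three applications of Lemma~\ref{B2_ac_technique} is a valid route in principle, but the final assertion that every $\ch L(\nu)$ occurs with coefficient $0$ or $1$ hides a substantial computation that you do not carry out. The paper takes a shorter path: it uses Corollary~\ref{irred_a0} (which says $m_{L(a,0)}(\nu)\in\{0,1\}$ when $a=(p-1)/2$) together with Corollary~\ref{products_characters_bis}, thereby avoiding the cross terms in your square, and even so the remaining simplification occupies about a page. Your method would work, but it is at least as laborious and is only sketched here.
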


\begin{proof}
	We set $M:=L(\lambda)\otimes L(\mu)$. 
	
	If $a+c\geq p$, we use Corollary \ref{sum_p_res} to conclude that $M$ has multiplicity.\\
	
	If $a+c\leq \frac{p-3}{2}$ (i.e. $\lambda+\mu\in \widehat{C_1}$), we apply Corollary \ref{sum_C1} and Theorem \ref{B2_p0} to conclude that $M$ is multiplicity-free.\\
	
	Suppose that $c\leq \frac{p-3}{2}$ and $a+c>\frac{p-3}{2}$ (i.e. $\mu\in \widehat{C_1}$ and $\lambda+\mu \in C_2$). In this case, $L(\lambda)$ and $L(\mu)$ are tilting modules by Lemma \ref{Weyl_B2}, hence $M$ is a tilting module. Since $T(\lambda+\mu)$ is not irreducible by Lemma \ref{Weyl_B2}, we conclude that $M$ has multiplicity by Lemma \ref{argument_tilting}.\\
	
	Consider the case $a=1, c=p-2$ (in particular $\mu\in F_{2,3}$, $\lambda\in \widehat{C_1}$ and $\lambda+\mu\in F_{4,6}$). By Lemma \ref{Weyl_B2}, we have $L(\lambda)$ and $L(\mu)$ are tilting modules, therefore $M$ is a tilting module. We use Argument \ref{argument1}. By Lemma \ref{-1}, we have
	\begin{align*}
		&m_{L(\lambda)}(\lambda-\alpha_1)=1,
		&&m_{L(\mu)}(\mu-\alpha_1)=1,
		&&m_{L(\lambda+\mu)}(\lambda+\mu-\alpha_1)=1.
	\end{align*}
Since $m_M(\lambda+\mu-\alpha_1)=2$, we deduce that $L(\lambda+\mu-\alpha_1)$ is a composition factor of $M$. Observe that $\lambda+\mu-\alpha_1=(p-3,2)\in C_3$, so $T(\lambda+\mu-\alpha_1)$ is not irreducible by Lemma \ref{Weyl_B2}. We conclude by Lemma \ref{argument_tilting} that $M$ has multiplicity. \\

If $a+c=p-1$ and $a\neq 1, \frac{p-1}{2}$, then $\frac{p-1}{2}< c<p-2$ (i.e. $\lambda\in \widehat{C_1}$ and $\mu\in C_2$) and by Lemma \ref{Weyl_B2}, we have $L(\lambda)\cong \Delta(\lambda)$. Moreover, $\Delta(\mu)$ admits the unique composition series $[L(\mu),L(\eta)]$ with $\eta=s_{\alpha_1+\alpha_2,p}\sbullet \mu\in C_1$. Since $c\geq \frac{p+1}{2}$, we have in particular \linebreak$\eta <\mu-2\alpha_1-2\alpha_2$, thus $m_{L(\mu)}(\nu)=m_{\Delta(\mu)}(\nu)$ for all weights $\nu\geq \lambda-2\alpha_1-2\alpha_2$.
We use Argument \ref{argument1} to show that $[M:L(\lambda+\mu-2\alpha_1-2\alpha_2)]=2$. Using Lemmas \ref{table_B2} and \ref{-1} (note that $a\geq 2$), we have
\begin{align*}
	&m_M(\lambda+\mu)=1,&&m_M(\lambda+\mu-\alpha_1)=2,\\
	&m_M(\lambda+\mu-2\alpha_1)=3, &&m_M(\lambda+\mu-\alpha_2)=m_M(\lambda+\mu-2\alpha_2)=0,\\
	&m_M(\lambda+\mu-\alpha_1-\alpha_2)=2,&&m_M(\lambda+\mu-\alpha_1-2\alpha_2)=2,\\
	&m_M(\lambda+\mu-2\alpha_1-\alpha_2)=4,&&m_M(\lambda+\mu-2\alpha_1-2\alpha_2)=7.
\end{align*}
We deduce that 
\begin{align*}
	&[M:L(\lambda+\mu)]=[M:L(\lambda+\mu-\alpha_1)]=[M:L(\lambda+\mu-2\alpha_1)]=1,\\
	&[M:L(\lambda+\mu-\alpha_2)]=[M:L(\lambda+\mu-2\alpha_2)]=0.
\end{align*}
By Lemma \ref{Weyl_B2}, $L(\lambda+\mu)\cong \Delta(\lambda+\mu)$. Moreover, $\lambda+\mu-\alpha_1=(p-3,2)$. Thus, using \linebreak Lemmas \ref{-1} and \ref{table_B2}, we get
\begin{align*}
	&[M:L(\lambda+\mu-\alpha_1-\alpha_2)]=0, \qquad\text{and} \qquad[M:L(\lambda+\mu-\alpha_1-2\alpha_2)]=0.
\end{align*} 
Since $2(p-3)+2+2\not\equiv 0\bmod p$, we have $m_{L(\lambda+\mu-\alpha_1)}(\lambda+\mu-2\alpha_1-\alpha_2)=2$ by Lemma \ref{B2_-11}.\linebreak By Lemma \ref{-1}, we have $m_{L(\lambda+\mu-2\alpha_1)}(\lambda+\mu-2\alpha_1-\alpha_2)=1$. Moreover, by Lemma \ref{table_B2}, we have $m_{L(\lambda+\mu)}(\lambda+\mu-2\alpha_1-\alpha_2)=1$. Therefore, 
$$[M:L(\lambda+\mu-2\alpha_1-\alpha_2)]=0.$$
Finally observe that $(p-3)+2=p-1$. Thus, using Lemmas \ref{table_B2}, \ref{B2_-12} and \ref{-1}, we have
\begin{align*}
	&m_{L(\lambda+\mu)}(\lambda+\mu-2\alpha_1-2\alpha_2)=2,&&m_{L(\lambda+\mu-\alpha_1)}(\lambda+\mu-2\alpha_1-2\alpha_2)=2,\\
	&m_{L(\lambda+\mu-2\alpha_1)}(\lambda+\mu-2\alpha_1-2\alpha_2)=1.
\end{align*}
We conclude that 
$$[M:L(\lambda+\mu-2\alpha_1-2\alpha_2)]=7-2-2-1=2.$$
In particular, $M$ has multiplicity.\\

If $c\geq \frac{p-1}{2}$ and $a+c<p-1$ then $M$ is multiplicity-free by Proposition \ref{B2_ac_bis}.\\

Finally, suppose that $a=c=\frac{p-1}{2}$ (i.e. $\lambda,\mu\in C_2$ and $\lambda+\mu\in F_{4,6}$). We show that $M$ is multiplicity-free. We have $$s_{\alpha_1+\alpha_2,p}\sbullet \mu=\mu-2(\alpha_1+\alpha_2).$$ 
For the rest of this proof we use Euclidean coordinates. Using Corollary \ref{products_characters_bis} and Lemma \ref{Weyl_B2}\linebreak in the first equality below, Corollary \ref{irred_a0} in the second one and Lemma \ref{action_characters} in the third one, we get
\begin{align*}
	\ch M=&\sum_{\nu\in X}m_{L(\lambda)}(\nu)(\chi(\mu+\nu)-\chi(\mu-2(\alpha_1+\alpha_2)+\nu))\\
	=&\sum_{i=0}^a\sum_{j=i-a}^{a-i}\chi(a+j,i)-\chi(a-2+j,i)+\sum_{i=1}^a\sum_{j=i-a}^{a-i}\chi(a+j,-i)-\chi(a-2+j,-i)\\
	=&\sum_{i=0}^a\sum_{j=i-a}^{a-i}\chi(a+j,i)-\sum_{i=1}^a\sum_{j=i-a}^{a-i}\chi(s_{\alpha_2}\sbullet(a+j,-i))\\
	&-\sum_{i=0}^a\sum_{j=i-a}^{a-i}\chi(a-2+j,i)+\sum_{i=1}^a\sum_{j=i-a}^{a-i}\chi(s_{\alpha_2}\sbullet(a-2+j,-i))\\
	=&\sum_{i=0}^a\sum_{j=i-a}^{a-i}\chi(a+j,i)-\sum_{i=1}^a\sum_{j=i-a}^{a-i}\chi(a+j,i-1)\\
	&-\sum_{i=0}^a\sum_{j=i-a}^{a-i}\chi(a-2+j,i)+\sum_{i=1}^a\sum_{j=i-a}^{a-i}\chi(a-2+j,i-1)\\
	=&\sum_{i=0}^a\sum_{j=i-a}^{a-i}\chi(a+j,i)-\sum_{i=0}^{a-1}\sum_{j=i+1-a}^{a-i-1}\chi(a+j,i)\\
	&-\sum_{i=0}^a\sum_{j=i-a}^{a-i}\chi(a-2+j,i)+\sum_{i=0}^{a-1}\sum_{j=i+1-a}^{a-i-1}\chi(a-2+j,i)\\
	=&\left(\sum_{i=0}^{a-1}\chi(i,i)+\chi(2a-i,i)\right)+\chi(a,a)\\
	&-\left(\sum_{i=0}^{a-1}\chi(i-2,i)+\chi(2a-2-i,i)\right)-\chi(a-2,a)\\
	=&\sum_{i=0}^a\chi(i,i)+\chi(2a,0)+\sum_{i=1}^{a-1}\chi(2a-i,i)\\
	&-\sum_{i=0}^{a-2}\chi(2a-2-i,i)-\chi(a-1,a-1)-\chi(-2,0)-\sum_{i=1}^{a-1}\chi(i-2,i)-\chi(a-2,a).
\end{align*}
At this step, recall that we use Euclidean coordinates and observe that $(i,i)\in \widehat{C_1}\cup F_{2,3}\cup F_{3,5}$\linebreak for $0\leq i\leq a=\frac{p-1}{2}$. Thus, by Lemma \ref{Weyl_B2}, we have $\chi(i,i)=\ch L(i,i)$. Similarly, \linebreak$(2a,0)=(p-1,0)\in F_{4,6}$, hence $\chi(2a,0)=\ch L(2a,0)$ by Lemma \ref{Weyl_B2}. Using those facts and Lemma \ref{action_characters}, we get
\begin{align*}
	\ch M=&\sum_{i=0}^a\ch L(i,i)+\ch L(2a,0)+\sum_{i=1}^{a-1}\chi(2a-i,i)-\sum_{i=1}^{a-1}\chi(2a-1-i,i-1)\\
	&+\sum_{i=1}^{a-1}\chi(s_{\alpha_1}\sbullet(i-2,i))+\chi(s_{\alpha_1+\alpha_2}\sbullet(-2,0))-\chi(a-1,a-1)+\chi(s_{\alpha_1}\sbullet(a-2,a))\displaybreak\\
	=&\sum_{i=0}^a\ch L(i,i)+\ch L(2a,0)+\sum_{i=1}^{a-1}\chi(2a-i,i)-\sum_{i=1}^{a-1}\chi(2a-1-i,i-1)\\
	&+\sum_{i=1}^{a-1}\chi(i-1,i-1)+\chi(-1,0)-\chi(a-1,a-1)+\chi(a-1,a-1)\\
	=&\sum_{i=0}^a\ch L(i,i)+\ch L(2a,0)+\sum_{i=1}^{a-1}\chi(2a-i,i)\\
	&-\left(\sum_{i=1}^{a-1}\chi(2a-1-i,i-1)-\chi(i-1,i-1)\right),
\end{align*}
where in the last equality we use that $(-1,0)\in D\setminus X^+$ so $\chi(-1,0)=0$ by Lemma \ref{action_characters}. If $i\in \{1,\ldots,a-1\}$, then $(2a-1-i,i-1)\in C_2$. Thus, by Remark \ref{B2_symmetry}, we have 
$$s_{\alpha_1+\alpha_2,p}\sbullet (2a-1-i,i-1)=(p-3-(2a-1-i),i-1)=(i-1,i-1),$$
and by Remark \ref{B2_character_symmetry} we get
$$\chi(2a-1-i,i-1)-\chi(i-1,i-1)=\ch L(2a-1-i,i-1).$$
Moreover, $(2a-i,i)\in C_3$ and $2a-i+i-(p-2)=1$, so by Remark \ref{B2_character_symmetry}, we have
$$\chi(2a-i,i)=\ch L(2a-i,i)+\ch L(2a-i-1,i-1).$$
We can thus conclude that
\begin{align*}
	\ch M=&\sum_{i=0}^a\ch L(i,i)+\ch L(2a,0)+\sum_{i=1}^{a-1}\chi(2a-i,i)-\sum_{i=1}^{a-1}\ch L(2a-1-i,i-1)\\
	=&\sum_{i=0}^a\ch L(i,i)+\ch L(2a,0)+\sum_{i=1}^{a-1}\ch L(2a-i,i)\\
	=&\sum_{i=0}^a\ch L(i,i)+\sum_{i=0}^{a-1}\ch L(2a-i,i).
\end{align*}
In particular, $M$ is multiplicity-free.
\end{proof}

\subsubsection{$L(a,0)\otimes L(0,d)$}

\begin{lemma}\label{B2_ad}
	Let $a,b\in \N$. In Euclidean coordinates, we have
	$$\chi(a\omega_1)\chi(b\omega_2)= \sum_{i=0}^b\sum_{j=0}^{\min\{i,b-i\}}\chi(a+\frac b2-i,\frac b2-j).$$
	Moreover, if $b<p$ and $a<p$, then all weights appearing on the right hand side of the equality belong to $D\cup W\sbullet\overline{C_1}$ and the dominant ones are $p$-restricted.
\end{lemma}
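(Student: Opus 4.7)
The plan is to derive the identity by combining Proposition \ref{product_characters} with the explicit weight multiplicities for $\Delta(0,b)$ obtained in Lemma \ref{B2_weight_0b}, then to collapse the resulting sum using the $s_{\alpha_2}$-antisymmetry of Weyl characters from Lemma \ref{action_characters}, and finally to recognize what remains as the stated right-hand side.

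Concretely, I would first apply Proposition \ref{product_characters} with the roles $\lambda=b\omega_2$ and $\mu=a\omega_1$ to obtain $\chi(a\omega_1)\chi(b\omega_2)=\sum_{\nu}m_{\Delta(0,b)}(\nu)\,\chi(a\omega_1+\nu)$, and insert the explicit formula from Lemma \ref{B2_weight_0b}. Writing $\nu=s\epsilon_1+t\epsilon_2$, I would then split the sum by the sign of $t$. For the $t<0$ contribution, Remark \ref{B2_symmetry} gives $s_{\alpha_2}\sbullet(x,y)=(x,-y-1)$, and Lemma \ref{action_characters} then yields $\chi(a+s,t)=-\chi(a+s,-t-1)$; when $b$ is odd the value $t=-1/2$ is fixed by $s_{\alpha_2}\sbullet$, forcing the vanishing $\chi(a+s,-1/2)=0$. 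After reindexing $t\mapsto -t-1$ and using the symmetry $m_{\Delta(0,b)}(s,-t)=m_{\Delta(0,b)}(s,t)$, the two halves combine into a telescoping sum governed by the identity $m_{\Delta(0,b)}(s,t)-m_{\Delta(0,b)}(s,t+1)=\mathbf{1}_{|s|\le t}$. This collapses the expression to $\sum_{0\le t\le b/2}\sum_{|s|\le\min(t,b-t)}\chi(a+s,t)$, which matches the stated RHS after substituting $x=a+b/2-i$, $y=b/2-j$ and using $b/2-\min(i,b-i)=|x-a|$.

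For the ``moreover'' assertion, I would work in Euclidean coordinates. Every weight appearing on the RHS has the form $\mu=(a+s,t)_{\text{Euc}}$ with $|s|\le t\le b/2$, which in fundamental coordinates reads $(a+s-t,2t)$. Since $s\le t$ gives $a+s-t\le a\le p-1$ and $2t\le b\le p-1$, every dominant $\mu$ is automatically $p$-restricted. For a non-dominant $\mu$, I would argue by cases on the magnitude of $a+s-t$: if $a+s-t\ge -1$ then $\mu\in D$; if $-3/2\le a+s<t-1$ then $s_{\alpha_1}\sbullet\mu=(t-1,a+s+1)_{\text{Euc}}$ lies in $\overline{C_1}$ (the only nontrivial defining inequality $2t+1\le p$ follows from $t\le b/2\le(p-1)/2$); and if $a+s<-3/2$ then $s_{\alpha_2}s_{\alpha_1}\sbullet\mu=(t-1,-a-s-2)_{\text{Euc}}$ lies in $\overline{C_1}$, using $s\ge -t$ to verify $t+a+s\ge 0$.

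The main obstacle is the parity bookkeeping: one must track whether $s,t$ are integers or half-integers (according to the parity of $b$), handle the boundary vanishing $\chi(a+s,-1/2)=0$, and check the telescoping identity with the correct index ranges. Once these indices are properly aligned the computation is essentially mechanical, and the geometric case analysis for the ``moreover'' part is short.
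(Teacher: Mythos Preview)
Your proposal is correct and follows essentially the same route as the paper's proof. Both arguments expand $\chi(a\omega_1)\chi(b\omega_2)$ via Proposition~\ref{product_characters} using the multiplicities of Lemma~\ref{B2_weight_0b}, fold the sum with the $s_{\alpha_2}$-antisymmetry of Lemma~\ref{action_characters} (handling the parity of $b$ and the vanishing at $t=-\tfrac12$), and then reduce via the difference $m_{\Delta(0,b)}(s,t)-m_{\Delta(0,b)}(s,t+1)$; the paper writes this difference out as $Y(i,j)$ and evaluates it case by case, whereas you state it directly as $\mathbf{1}_{|s|\le t}$, but this is the same computation. For the ``moreover'' part, both proofs run the identical case analysis (first test membership in $D$, then apply $s_{\alpha_1}\sbullet$, then $s_{\alpha_2}s_{\alpha_1}\sbullet$), the only cosmetic difference being that the paper switches to fundamental-weight coordinates while you stay in Euclidean coordinates. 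One small remark: in your collapsed sum the bound $|s|\le\min(t,b-t)$ is harmless since $t\le b/2$ forces $\min(t,b-t)=t$, but you may as well write $|s|\le t$ directly.
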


\begin{proof}
	In this proof, we use Euclidean coordinates. Let $\lambda=a\omega_1$ and $\mu=b\omega_2$. We set $X(i,j):=m_{\Delta(\mu)}(\frac b2-i,\frac b2-j)=\frac b2 -\max\{|\frac b2-i|,|\frac b2-j|\}+1$ (Lemma \ref{B2_weight_0b}). We get
	\begin{align*}
		\chi(\lambda)\chi(\mu)=&\sum_{\nu\in X}m_{\Delta(\mu)}(\nu)\chi(\lambda+\nu)
		=\sum_{i=0}^b\sum_{j=0}^bX(i,j)\chi(a+\frac b2-i,\frac b2-j).
	\end{align*}
	We dissociate the cases $b$ even and $b$ odd. First suppose that $b$ is even. Using Lemma \ref{action_characters} in the second equality below and Remark \ref{B2_symmetry} in the third one, we get 
	\begin{align}
		\chi(\lambda)\chi(\mu)=&\sum_{i=0}^b\sum_{j=0}^{\frac b2}X(i,j)\chi(a+\frac b2-i,\frac b2-j)+\sum_{i=0}^b\sum_{j=\frac b2 +1}^bX(i,j)\chi(a+\frac b2-i,\frac b2-j)\nonumber\\
		=&\sum_{i=0}^b\sum_{j=0}^{\frac b2}X(i,j)\chi(a+\frac b2-i,\frac b2-j)-\sum_{i=0}^b\sum_{j=\frac b2 +1}^bX(i,j)\chi(s_{\alpha_2}\sbullet(a+\frac b2-i,\frac b2-j))\nonumber\\
		=&\sum_{i=0}^b\sum_{j=0}^{\frac b2}X(i,j)\chi(a+\frac b2-i,\frac b2-j)-\sum_{i=0}^b\sum_{j=\frac b2 +1}^bX(i,j)\chi(a+\frac b2-i,-\frac b2+j-1)\nonumber\\
		=&\sum_{i=0}^b\sum_{j=0}^{\frac b2}X(i,j)\chi(a+\frac b2-i,\frac b2-j)\nonumber\\
		&\hspace{6em}-\sum_{i=0}^b\sum_{j=1}^{\frac b2}X(i,b-j+1)\chi(a+\frac b2-i,-\frac b2+(b-j+1)-1)\nonumber\\
		=&\sum_{i=0}^b\sum_{j=0}^{\frac b2}X(i,j)\chi(a+\frac b2-i,\frac b2-j)-\sum_{i=0}^b\sum_{j=1}^{\frac b2}X(i,b-j+1)\chi(a+\frac b2-i,\frac b2-j)\nonumber\\
		=&\sum_{i=0}^bX(i,0)\chi(a+\frac b2 -i,\frac b2)\nonumber\\
		&\hspace{6em}+\sum_{i=0}^b\sum_{j=1}^{\frac b2}(X(i,j)-X(i,b-j+1))\chi(a+\frac b2-i,\frac b2-j).\label{eqeven}
	\end{align}
	
	If $b$ is odd, then for any $i$, we have 
	$$s_{\alpha_2}\sbullet(a+\frac b2-i,\frac b2- \frac{b+1}{2})=s_{\alpha_2}\sbullet(a+\frac b2-i,-\frac 12)=(a+\frac b2-i,-\frac 12),$$
	so $\chi(a+\frac b2-i,\frac b2- \frac{b+1}{2})=0$ by Lemma \ref{action_characters}. Similarly to the case where $b$ is even, we get
	\begin{align}
		\chi(\lambda)\chi(\mu)=&\sum_{i=0}^b\sum_{j=0}^{\frac{b-1}2}X(i,j)\chi(a+\frac b2-i,\frac b2-j)+\sum_{i=0}^b\sum_{j=\frac{b+3}2 }^bX(i,j)\chi(a+\frac b2-i,\frac b2-j)\nonumber\\
		=&\sum_{i=0}^b\sum_{j=0}^{\frac{b-1}2}X(i,j)\chi(a+\frac b2-i,\frac b2-j)-\sum_{i=0}^b\sum_{j=\frac{b+3}2}^bX(i,j)\chi(s_{\alpha_2}\sbullet(a+\frac b2-i,\frac b2-j))\nonumber\\
		=&\sum_{i=0}^b\sum_{j=0}^{\frac{b-1}2}X(i,j)\chi(a+\frac b2-i,\frac b2-j)-\sum_{i=0}^b\sum_{j=\frac{b+3}2}^bX(i,j)\chi(a+\frac b2-i,-\frac b2+j-1)\nonumber\displaybreak\\
		=&\sum_{i=0}^b\sum_{j=0}^{\frac{b-1}2}X(i,j)\chi(a+\frac b2-i,\frac b2-j)\nonumber\\
		&\hspace{6em}-\sum_{i=0}^b\sum_{j=1}^{\frac{b-1}2}X(i,b-j+1)\chi(a+\frac b2-i,-\frac b2+(b-j+1)-1)\nonumber\\
		=&\sum_{i=0}^b\sum_{j=0}^{\frac{b-1}2}X(i,j)\chi(a+\frac b2-i,\frac b2-j)-\sum_{i=0}^b\sum_{j=1}^{\frac{b-1}2}X(i,b-j+1)\chi(a+\frac b2-i,\frac b2-j)\nonumber\\
		=&\sum_{i=0}^bX(i,0)\chi(a+\frac b2 -i,\frac b2)\nonumber\\
		&\hspace{6em}+\sum_{i=0}^b\sum_{j=1}^{\frac{b-1}2}(X(i,j)-X(i,b-j+1))\chi(a+\frac b2-i,\frac b2-j).\label{eqodd}
	\end{align}
	At this step, observe that $X(i,0)=1$, and set $Y(i,j):=X(i,j)-X(i,b-j+1)$. Using \linebreak lines \eqref{eqeven} and \eqref{eqodd}, for all $b$, we have
	\begin{align}
		\chi(\lambda)\chi(\mu)=&\sum_{i=0}^b\chi(a+\frac b2 -i,\frac b2)+\sum_{i=0}^b\sum_{j=1}^{\lfloor\frac b2\rfloor}Y(i,j)\chi(a+\frac b2-i,\frac b2-j)\nonumber\\
		=&\sum_{i=0}^b\chi(a+\frac b2 -i,\frac b2)\nonumber\\
		&\hspace{5em}+\sum_{i=0}^{\lfloor\frac b2\rfloor}\sum_{j=1}^{\lfloor\frac b2\rfloor}Y(i,j)\chi(a+\frac b2-i,\frac b2-j)\label{eq42}\\
		&\hspace{5em}+\sum_{i=\lfloor\frac b2\rfloor+1}^{b}\sum_{j=1}^{\lfloor\frac b2\rfloor}Y(i,j)\chi(a+\frac b2-i,\frac b2-j).\label{eq43}
	\end{align}
	We compute the value of $Y(i,j)$ in lines \eqref{eq42} and \eqref{eq43}. Recall that \linebreak$Y(i,j)=X(i,j)-X(i,b-j+1)$ and $X(i,j)=\frac b2 -\max\{|\frac b2-i|,|\frac b2-j|\}+1$. Therefore, for $j\leq\frac b2$, we have 
	\begin{align*}
		Y(i,j)&=(\frac b2 -\max\{|\frac b2-i|,|\frac b2-j|\}+1)-(\frac b2 -\max\{|\frac b2-i|,|\frac b2-(b-j+1)|\}+1)\\
		&=\max\{|\frac b2-i|,|j-\frac b2-1|\}-\max\{|\frac b2-i|,|\frac b2-j|\}\\
		&=\max\{|\frac b2-i|,|\frac b2-j+1|\}-\max\{|\frac b2-i|,|\frac b2-j|\}\\
		&=\max\{|\frac b2-i|,\frac b2-j+1\}-\max\{|\frac b2-i|,\frac b2-j\}.
	\end{align*}
	
	If $|\frac b2-i|\geq\frac b2-j+1$, then 
	$$\max\{|\frac b2-i|,\frac b2-j+1\}=\max\{|\frac b2-i|,\frac b2-j\}=|\frac b2-i|$$
	so $Y(i,j)=0$.
	
	If $|\frac b2-i|<\frac b2-j+1$, then $|\frac b2-i|\leq\frac b2-j$ and we have
	$$\max\{|\frac b2-i|,\frac b2-j+1\}=\frac b2-j+1\qquad \text{ and }\qquad\max\{|\frac b2-i|,\frac b2-j\}=\frac b2-j,$$
	so $Y(i,j)=1$. Therefore, we have 
	\begin{ceqn}
		\begin{align}
			Y(i,j)=\begin{cases}
				0 \quad &\text{if  }\;|\frac b2-i|\geq\frac b2-j+1\\
				1\quad &\text{if  }\;|\frac b2-i|<\frac b2-j+1.
			\end{cases} \label{eq44}
		\end{align}
		If $i\leq\frac b2$, as in line \eqref{eq42}, then
		\begin{align}
			|\frac b2-i|=\frac b2-i\geq\frac b2-j+1 \iff j\geq i+1 \label{eq45}.
		\end{align}
		If $i>\frac b2$, as in line \eqref{eq43}, then
		\begin{align}
			|\frac b2-i|=i-\frac b2\geq\frac b2-j+1 \iff j\geq b-i+1 \label{eq46}.
		\end{align}
	\end{ceqn}
	Combining lines \eqref{eq44}, \eqref{eq45} and \eqref{eq46} with lines \eqref{eq42} and \eqref{eq43}, we get
	
	\begin{align}
		\chi(\lambda)\chi(\mu)=&\sum_{i=0}^b\chi(a+\frac b2 -i,\frac b2)+\sum_{i=0}^{\lfloor\frac b2\rfloor}\sum_{j=1}^{i}\chi(a+\frac b2-i,\frac b2-j)\nonumber\\
		&\hspace{4em}+\sum_{i=\lfloor\frac b2\rfloor+1}^{b}\;\sum_{j=1}^{b-i}\chi(a+\frac b2-i,\frac b2-j)\nonumber\\
		=&\sum_{i=0}^b\chi(a+\frac b2 -i,\frac b2)+\sum_{i=0}^{b}\sum_{j=1}^{\min\{i,b-i\}}\chi(a+\frac b2-i,\frac b2-j)\nonumber\\
		=&\sum_{i=0}^{b}\sum_{j=0}^{\min\{i,b-i\}}\chi(a+\frac b2-i,\frac b2-j),\label{eq48}
	\end{align}
	establishing the claim of the first statement.

	Finally, assume that $a<p$ and $b<p$. We do a change of basis to express our weights in coordinates with respect to the fundamental weights. We have
	$$(a+\frac b2-i)\epsilon_1+(\frac b2-j)\epsilon_2=(a-i+j)\omega_1+(b-2j)\omega_2.$$
	For the rest of the proof, we use coordinates with respect to the fundamental weights.
	Let $0\leq i\leq b<p$ and $0\leq j\leq \min\{i,b-i\}$. We set $\nu:=(a-i+j,b-2j)$. We have
	$$a-i+j\leq a<p \qquad\text{and} \qquad b-2j\leq b<p,$$ 
	so dominant weights appearing in line \eqref{eq48} are $p$-restricted. Moreover, $j\leq \frac b2$, thus $b-2j\geq 0$.
	If $(a-i+j)\geq -1$, then $\nu\in D$ and we are done. Assume $(a-i+j)< -1$. We have
	$$s_{\alpha_1}\sbullet (a-i+j,b-2j)=(i-a-j-2,b+2a-2i+2)=:\nu'.$$
	By assumption, we have 
	$$i-a-j-2=-(a-i+j)-2>1-2=-1.$$
	If $b+2a-2i+2\geq -1$, then $\nu'\in D$ and 
	$$2(i-a-j-2)+(b+2a-2i+2)=b-2j-2\leq b-2\leq p-3,$$
	so $\nu'\in \overline{C_1}$ and we are done.
	
	Finally, suppose $b+2a-2i+2<-1$. Then we have 
	$$s_{\alpha_2}\sbullet(i-a-j-2,b+2a-2i+2)=(b+a-i-j+1,2i-b-2a-4)=:\eta.$$
	By assumption, we have 
	$$2i-b-2a-4=-(b+2a-2i+2)-2>1-2=-1$$
	and 
	$$b+a-i-j+1=(b-i)-j+a+1\geq a+1>0,$$
	hence $\eta \in D$. Moreover, we have
	$$2(b+a-i-j+1)+(2i-b-2a-4)=b-2j-2\leq p-3$$
	so $\eta\in \overline{C_1}$. Therefore in all cases we are done.
	% we show that $(a+\frac b2-i,\frac b2-j)\in D\cup W\sbullet\overline{C_1}$ for all $0\leq i\leq b<p$ and all $0\leq j\leq \min\{i,b-i\}$.
	%Suppose that $(a+\frac b2-i,\frac b2-j)$ not in $D$. Then $\frac b2-j> a+\frac b2-i+1$. We have $s_{\alpha_1}\sbullet (a+\frac b2-i,\frac b2-j)=(\frac b2-j-1,a+\frac b2-i+1)$ and now $a+\frac b2-i+1\leq (\frac b2-j-1)+1$.
\end{proof}

\begin{prop}
	Let $\lambda=(p-2,0),\mu=(0,d)\in X^+$ be $p$-restricted with $d>0$. Then $L(\lambda)\otimes L(\mu)$ is multiplicity-free if and only if $d=1$.
\end{prop}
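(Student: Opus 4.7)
The plan is to exploit the fact that both $L(\lambda)$ and $L(\mu)$ are tilting modules, so that $M := L(\lambda)\otimes L(\mu)$ is also tilting. For $d \geq 2$ I will apply Lemma \ref{argument_tilting} to $\eta = \lambda+\mu$, while for $d = 1$ I will compute $\ch M$ directly and observe that the two resulting summands are already irreducible characters.

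First I would check that $\Delta(\lambda)\cong L(\lambda)$ and $\Delta(\mu)\cong L(\mu)$, so that both are tilting. For $\lambda=(p-2,0)$ we have $a+b=p-2$, placing $\lambda$ in the wall $F_{2,3}$, and Lemma \ref{Weyl_B2} applies. For $\mu=(0,d)$ with $1\leq d<p$, depending on $d$ we have $\mu\in\widehat{C_1}$ (for $d\leq p-3$), $\mu\in F_{2,3}$ (for $d=p-2$), or $\mu\in F_{3,5}$ (for $d=p-1$); in all of these cases Lemma \ref{Weyl_B2} again yields $\Delta(\mu)\cong L(\mu)$. Hence by Theorem \ref{tensor_tilting}, $M$ is a tilting module.

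For $d\geq 2$, I would determine the alcove of $\lambda+\mu=(p-2,d)$. Since
\[
a+b = p-2+d \geq p, \qquad 2a+b = 2p-4+d \geq 2p-2 > 2p-3,
\]
together with $a=p-2\leq p-1$ and $b=d\leq p-1$, the weight $\lambda+\mu$ lies in $\widehat{C_4}$; furthermore $a+1=p-1\neq p$ shows $\lambda+\mu\notin F_{4,6}$. By Lemma \ref{Weyl_B2}, $T(\lambda+\mu)$ is therefore not irreducible. Since $L(\lambda+\mu)$ is always a composition factor of $M$, Lemma \ref{argument_tilting} concludes that $M$ has multiplicity.

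For $d=1$, I would compute $\chi(\lambda)\chi(\mu)$ using Lemma \ref{B2_ad} with $a=p-2$ and $b=1$; only the indices $(i,j)=(0,0)$ and $(1,0)$ appear, yielding (after converting from Euclidean to fundamental-weight coordinates)
\[
\chi(\lambda)\chi(\mu) = \chi(p-2,1) + \chi(p-3,1).
\]
The weight $(p-2,1)$ satisfies $2a+b=2p-3$, so lies on $F_{3,4}$, and $(p-3,1)$ satisfies $a+b=p-2$, so lies on $F_{2,3}$; by Lemma \ref{Weyl_B2}, both weights satisfy $\Delta\cong L$. Hence
\[
\ch M = \chi(\lambda)\chi(\mu) = \ch L(p-2,1) + \ch L(p-3,1),
\]
which exhibits two distinct composition factors each of multiplicity one, so $M$ is multiplicity-free. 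The only non-routine step is verifying the alcove membership of $\lambda+\mu$ when $d\geq 2$; everything else reduces to inspection of the tables from Lemma \ref{Weyl_B2} and Lemma \ref{B2_ad}.
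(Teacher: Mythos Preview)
Your proof is correct and follows essentially the same strategy as the paper: for $d\geq 2$ you use the tilting argument via Lemma \ref{argument_tilting} applied to $\eta=\lambda+\mu$, and for $d=1$ you compute the character explicitly. The only minor difference is that for $d=1$ the paper expands $\chi(\mu)\chi(\lambda)$ directly from the four weights of $L(0,1)$ (obtaining four terms, two of which lie in $D\setminus X^+$ and vanish), whereas you invoke Lemma \ref{B2_ad}, which has already performed that cancellation and yields the two surviving terms immediately; both routes arrive at $\ch M=\ch L(p-2,1)+\ch L(p-3,1)$.
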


\begin{proof}
	We set $M:=L(\lambda)\otimes L(\mu)$. Observe that $\lambda\in F_{2,3}$, so $\Delta(\lambda)\cong L(\lambda)$ by Lemma \ref{Weyl_B2}.\\
	
	Suppose that $d=1$. Using Corollary \ref{products_characters_bis} and the structure of $L(0,1)$, we have
	\begin{align*}
		\ch M=&\sum_{\nu\in X}m_{L(\mu)}(\nu)\chi(\lambda+\nu)\\
		=&\chi(p-2,1)+\chi(p-3,1)+\chi(p-2,-1)+\chi(p-1,-1).
	\end{align*}
Observe that $(p-2,1)\in F_{3,4}$ and $(p-3,1)\in F_{2,3}$ so $\chi(p-2,1)=\ch L(p-2,1)$ and $\chi(p-3,1)=\ch L(p-3,1)$ by Lemma \ref{Weyl_B2}. Moreover, $(p-2,-1),(p-1,-1)\in D\setminus X^+$ so $\chi(p-2,-1)=\chi(p-1,-1)=0$ by Lemma \ref{action_characters}. Thus we get
$$\ch M=\ch L(p-2,1)+\ch L(p-3,1).$$
In particular, $M$ is multiplicity-free.\\

Now suppose that $d>1$. In particular, $\lambda+\mu \in C_4\cup( F_{4,7}\setminus F_{4,6})$. By Lemma \ref{Weyl_B2}, both $L(\lambda)$ and $L(\mu)$ are tilting modules, so $M$ is a tilting module. Since $L(\lambda+\mu)$ is a composition factor of $M$ but $T(\lambda+\mu)$ is not irreducible, we conclude by Lemma \ref{action_characters} that $M$ has multiplicity. 
\end{proof}

\begin{prop}\label{B2_ad_p-1}
	Let $\lambda=(p-1,0),\mu=(0,d)\in X^+$ be $p$-restricted with $d>0$. Then $L(\lambda)\otimes L(\mu)$ is multiplicity-free if and only if $d=1$.
\end{prop}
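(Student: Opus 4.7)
The two weights sit in well-understood alcoves: $\lambda=(p-1,0)\in F_{4,6}$, and for $0<d<p$, $\mu=(0,d)$ lies in $\widehat{C_1}$ (if $d\leq p-3$), in $F_{2,3}$ (if $d=p-2$), or in $F_{3,5}$ (if $d=p-1$). In every case Lemma \ref{Weyl_B2} gives $\Delta(\lambda)\cong L(\lambda)$ and $\Delta(\mu)\cong L(\mu)$, so both are tilting, and hence $M:=L(\lambda)\otimes L(\mu)$ is tilting by Theorem \ref{tensor_tilting}. This sets the stage for a tilting-based multiplicity argument when $d\geq 2$.

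For $d=1$ I would simply compute $\ch M$. The module $L(0,1)$ has weights $(0,1),(1,-1),(-1,1),(0,-1)$, each of multiplicity one, so Corollary \ref{products_characters_bis} yields
$$\ch M=\chi(p-1,1)+\chi(p,-1)+\chi(p-2,1)+\chi(p-1,-1).$$
Both $(p,-1)$ and $(p-1,-1)$ lie in $D\setminus X^+$, so the middle two characters vanish by Lemma \ref{action_characters}. Since $(p-1,1)\in F_{4,6}$ and $(p-2,1)\in F_{3,4}$, Lemma \ref{Weyl_B2} identifies $\chi(p-1,1)=\ch L(p-1,1)$ and $\chi(p-2,1)=\ch L(p-2,1)$. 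Thus $\ch M=\ch L(p-1,1)+\ch L(p-2,1)$ and $M$ is multiplicity-free.

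For $d\geq 2$ I would produce a composition factor $L(\eta)$ with $T(\eta)$ not irreducible and invoke Lemma \ref{argument_tilting}. Take $\eta:=\lambda+\mu-\alpha_1-\alpha_2=(p-2,d)$. Using Lemma \ref{-1} and the vanishings $m_{L(\lambda)}(\lambda-\alpha_2)=0$ (since $b=0$ in $\lambda$) and $m_{L(\mu)}(\mu-\alpha_1)=0$ (since $a=0$ in $\mu$) from Lemma \ref{table_B2}, Argument \ref{argument1} reveals that the only pairs $(\nu_1,\nu_2)$ of weights of $L(\lambda),L(\mu)$ summing to $\eta$ with both multiplicities nonzero are $(\lambda,\mu-\alpha_1-\alpha_2)$, $(\lambda-\alpha_1,\mu-\alpha_2)$ and $(\lambda-\alpha_1-\alpha_2,\mu)$, each contributing $1$, so $m_M(\eta)=3$. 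Since $2(p-1)+d+2=2p+d\not\equiv 0\pmod p$ for $0<d<p$, Lemma \ref{B2_-11} gives $m_{L(\lambda+\mu)}(\eta)=2$. Together with the easy checks $m_M(\lambda+\mu-\alpha_i)=1=m_{L(\lambda+\mu)}(\lambda+\mu-\alpha_i)$ for $i=1,2$, Argument \ref{argument1} forces $L(p-2,d)$ to be a composition factor of $M$. Finally, $(p-2,d)$ lies in $C_4$ when $d<p-1$ (because $2(p-2)+d\geq 2p-2>2p-3$) and in $F_{4,7}\setminus F_{4,6}$ when $d=p-1$; in both cases Lemma \ref{Weyl_B2} says $T(p-2,d)$ is not irreducible, and Lemma \ref{argument_tilting} then concludes that $M$ has multiplicity. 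The only delicate step is confirming that the three pairs above exhaust the nonzero contributions to $m_M(\eta)$; any further decomposition would push one of $\nu_1,\nu_2$ beyond the $1$-norm or $\infty$-norm bound provided by Lemmas \ref{B2_weight_a0} and \ref{B2_weight_0b}, so the enumeration is complete.
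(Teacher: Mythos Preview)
Your proof is correct and follows essentially the same approach as the paper: a direct character computation for $d=1$, and for $d\geq 2$ the tilting argument via Lemma \ref{argument_tilting} after exhibiting $L(p-2,d)$ as a composition factor through Argument \ref{argument1}. Your alcove analysis is in fact slightly more careful than the paper's, since you distinguish the case $d=p-1$ (where $(p-2,d)\in F_{4,7}\setminus F_{4,6}$) from $d<p-1$ (where $(p-2,d)\in C_4$), whereas the paper simply writes $(p-2,d)\in C_4$.
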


\begin{proof}
	We set $M:=L(\lambda)\otimes L(\mu)$. Observe that $\lambda\in F_{4,6}$, so $\Delta(\lambda)\cong L(\lambda)$ by Lemma \ref{Weyl_B2}. Moreover, $\lambda+\mu \in F_{4,6}$.\\
	
	Suppose that $d=1$. Using Corollary \ref{products_characters_bis} and the structure of $L(0,1)$, we get
	\begin{align*}
		\ch M=&\sum_{\nu\in X}m_{L(\mu)}(\nu)\chi(\lambda+\nu)\\
		=&\chi(p-1,1)+\chi(p-2,1)+\chi(p-1,-1)+\chi(p,-1).
	\end{align*}
Observe that $(p-1,1)\in F_{4,6}$ and $(p-2,1)\in F_{3,4}$ so $\chi(p-1,1)=\ch L(p-1,1)$ and $\chi(p-2,1)=\ch L(p-2,1)$ by Lemma \ref{Weyl_B2}. Moreover, $(p-1,-1),(p,-1)\in D\setminus X^+$ so $\chi(p-1,-1)=\chi(p,-1)=0$ by Lemma \ref{action_characters}. Thus we get
$$\ch M=\ch L(p-1,1)+\ch L(p-2,1).$$
In particular, $M$ is multiplicity-free.\\

	Now suppose that $d>1$. By Lemma \ref{Weyl_B2}, both $L(\lambda)$ and $L(\mu)$ are tilting modules, so $M$ is a tilting module. We use Argument \ref{argument1} to show that $L(\lambda+\mu-\alpha_1-\alpha_2)$ is a composition factor of $M$. Using Lemmas \ref{B2_weight_a0}, \ref{B2_weight_0b}, \ref{-1} and \ref{B2_-11}, we have
	\begin{align*}
		&m_{L(\lambda)}(\lambda-\alpha_1)=1,
		&&m_{L(\lambda)}(\lambda-\alpha_1-\alpha_2)=1,\\
		&m_{L(\mu)}(\mu-\alpha_2)=1,
		&&m_{L(\mu)}(\mu-\alpha_1-\alpha_2)=1,\\
		&m_{L(\lambda+\mu)}(\lambda+\mu-\alpha_1)=1,
		&&m_{L(\lambda+\mu)}(\lambda+\mu-\alpha_2)=1,\\
		&m_{L(\lambda+\mu)}(\lambda+\mu-\alpha_1-\alpha_2)=2.
	\end{align*}
	Therefore $m_M(\lambda+\mu-\alpha_1)=m_M(\lambda+\mu-\alpha_2)=1$ and $L(\lambda+\mu-\alpha_1),\; L(\lambda+\mu-\alpha_2)$ are not composition factors of $M$. Moreover, $m_M(\lambda+\mu-\alpha_1-\alpha_2)=3$, thus $L(\lambda+\mu-\alpha_1-\alpha_2)$ is a composition factor of $M$.
	Observe that $\lambda+\mu-\alpha_1-\alpha_2=(p-2,d)\in C_4$. By Lemma \ref{Weyl_B2}, $T(\lambda+\mu-\alpha_1-\alpha_2)$ is not irreducible and by Lemma \ref{argument_tilting}, $M$ has multiplicity.
\end{proof}

\begin{prop}
	Let $\lambda=(a,0),\mu=(0,d)\in X^+$ with $2a+d\leq p-3$ (i.e. $\lambda+\mu \in \widehat{C_1}$). Then $L(\lambda)\otimes L(\mu)$ is multiplicity-free.
\end{prop}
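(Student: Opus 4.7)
The proof will be a direct application of two earlier results. The hypothesis $2a+b\le p-3$ with $b=0$ means exactly that $\lambda+\mu=(a,d)$ satisfies $2a+d\le p-3$, which is the defining condition for a dominant weight to lie in the closure $\widehat{C_1}$ of the fundamental alcove (using the explicit description of $\widehat{C_1}\cap X^+$ given at the start of the alcove subsection). The plan is therefore to reduce to the characteristic $0$ situation using Corollary \ref{sum_C1}, and then to verify the hypothesis of that corollary via Stembridge's classification (Theorem \ref{B2_p0}).

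First, since $\lambda+\mu\in \widehat{C_1}$, Corollary \ref{sum_C1} gives the equivalence
\[
L(\lambda)\otimes L(\mu) \text{ is multiplicity-free} \iff L_{\C}(\lambda)\otimes L_{\C}(\mu) \text{ is multiplicity-free}.
\]
So it suffices to verify that the tensor product over $\C$ is multiplicity-free.

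Next, we read off the characteristic $0$ statement from Theorem \ref{B2_p0}. After the (allowed) reordering of $\lambda$ and $\mu$, we may write $\lambda'=(0,d)$ and $\mu'=(a,0)$, so that in the notation of that theorem with $\lambda'=(a',b')$ and $\mu'=(c',d')$ we have $a'=0$ and $d'=0$, which is exactly case (3). Hence $L_{\C}(0,d)\otimes L_{\C}(a,0)\cong L_{\C}(\lambda)\otimes L_{\C}(\mu)$ is multiplicity-free, and combining with the equivalence above yields the claim.

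There is no genuine obstacle: the only thing worth checking carefully is that the condition $2a+d\le p-3$ really does force $\lambda+\mu=(a,d)\in\widehat{C_1}$ (so that Corollary \ref{sum_C1} applies), which is immediate from the explicit description
\[
\widehat{C_1}\cap X^+=\{(x,y)\in\N^2\mid 2x+y\le p-3\}
\]
recalled in the alcove subsection. The whole argument therefore collapses to a one-line citation of Corollary \ref{sum_C1} and Theorem \ref{B2_p0}.
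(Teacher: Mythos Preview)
Your proof is correct and follows exactly the same route as the paper: apply Corollary~\ref{sum_C1} (valid since $\lambda+\mu\in\widehat{C_1}$) to reduce to characteristic~$0$, and then invoke Stembridge's classification (Theorem~\ref{B2_p0}). Your additional verification that case~(3) of Theorem~\ref{B2_p0} applies after reordering is a nice clarification, but the paper's proof is literally the one-line citation you anticipated.
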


\begin{proof}
	We apply Corollary \ref{sum_C1} and Theorem \ref{B2_p0} to conclude that $L(\lambda)\otimes L(\mu)$ is multiplicity-free.
\end{proof}

\begin{prop}
	Let $\lambda=(a,0),\mu=(0,d)\in X^+$ with $0<a\leq \frac{p-3}2$, $ 0<d\leq p-3$ and $2a+d> p-3$ (i.e. $\lambda,\mu\in \widehat{C_1}$ and $\lambda+\mu\notin \widehat{C_1}$). Then $L(\lambda)\otimes L(\mu)$ is multiplicity-free if and only if $(a,d)=(1,p-3)$.
\end{prop}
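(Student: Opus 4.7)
The plan is to split into the two implication directions and handle three distinct geometric cases for the alcove position of $\lambda+\mu$. Since $\lambda,\mu\in\widehat{C_1}$, Lemma~\ref{Weyl_B2} gives $L(\lambda)\cong\Delta(\lambda)$ and $L(\mu)\cong\Delta(\mu)$, so both are tilting and $M:=L(\lambda)\otimes L(\mu)$ is a tilting module. The assumption $2a+d>p-3$ forces $\lambda+\mu=(a,d)\notin\widehat{C_1}$, while the bounds $a\le(p-3)/2$ and $d\le p-3$ exclude $\widehat{C_4}$ (indeed $2a+d\le 2p-6<2p-3$) as well as the walls $F_{3,4}$, $F_{3,5}$, $F_{4,6}$, $F_{4,7}$. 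Hence $\lambda+\mu\in C_2\cup F_{2,3}\cup C_3$.

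For the "only if" direction, first suppose $\lambda+\mu\in C_2$ or $\lambda+\mu\in C_3$ strictly (i.e.\ not on $F_{2,3}$; equivalently $a+d\ne p-2$). Then $T(\lambda+\mu)$ is not irreducible by Lemma~\ref{Weyl_B2}, and since $L(\lambda+\mu)$ is obviously a composition factor of $M$, Lemma~\ref{argument_tilting} gives that $M$ has multiplicity. Now suppose $\lambda+\mu\in F_{2,3}$, i.e.\ $a+d=p-2$, and $a\ge 2$ (so $(a,d)\ne(1,p-3)$). I will use Argument~\ref{argument1} on $\eta:=\lambda+\mu-\alpha_1-\alpha_2=(a-1,p-2-a)$. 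Using Lemmas~\ref{-1} and~\ref{B2_weight_0b} (and the fact that $\lambda-\alpha_2$, $\mu-\alpha_1$ have weight multiplicity zero), a short count gives $m_M(\lambda+\mu-\alpha_1)=m_M(\lambda+\mu-\alpha_2)=1$, so neither of $L(\lambda+\mu-\alpha_1)$, $L(\lambda+\mu-\alpha_2)$ is a composition factor, and $m_M(\eta)=3$. Lemma~\ref{B2_-11} applied to $\lambda+\mu=(a,p-2-a)$ with $2a+(p-2-a)+2=a+p\not\equiv 0\bmod p$ (since $1\le a\le(p-3)/2$) yields $m_{L(\lambda+\mu)}(\eta)=2$, so $[M:L(\eta)]=3-2=1$. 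Finally $\eta=(a-1,p-2-a)$ satisfies $2(a-1)+(p-2-a)=a+p-4>p-3$ and $(a-1)+(p-2-a)=p-3<p-2$, i.e.\ $\eta\in C_2$, so $T(\eta)$ is not irreducible by Lemma~\ref{Weyl_B2}, and Lemma~\ref{argument_tilting} concludes.

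For the "if" direction with $(a,d)=(1,p-3)$, I will compute $\ch M$ directly. The weights of $L(1,0)=\Delta(1,0)$ are $(1,0),(-1,0),(-1,2),(1,-2),(0,0)$ (each with multiplicity one), so Corollary~\ref{products_characters_bis} gives
\[
\ch M=\chi(1,p-3)+\chi(-1,p-3)+\chi(-1,p-1)+\chi(1,p-5)+\chi(0,p-3).
\]
Both $(-1,p-3)$ and $(-1,p-1)$ lie in $D\setminus X^+$ (their first coordinate is $-1$), so by Lemma~\ref{action_characters} their Weyl characters vanish. For each surviving term, I check the alcove: $(1,p-3)\in F_{2,3}$ (since $1+(p-3)=p-2$), $(1,p-5)\in F_{1,2}$ (since $2+(p-5)=p-3$), and $(0,p-3)\in F_{1,2}$. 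By Lemma~\ref{Weyl_B2} each of the corresponding Weyl modules is irreducible, so
\[
\ch M=\ch L(1,p-3)+\ch L(1,p-5)+\ch L(0,p-3),
\]
and $M$ is multiplicity-free.

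The main potential obstacle is the bookkeeping in Argument~\ref{argument1} for the $a\ge 2$, $a+d=p-2$ case — in particular, verifying that $L(\lambda+\mu-\alpha_1)$ and $L(\lambda+\mu-\alpha_2)$ are \emph{not} composition factors (so that the multiplicity surplus at $\eta$ must be attributed to $L(\eta)$ itself) and correctly applying Lemma~\ref{B2_-11} to identify the multiplicity contributed by $L(\lambda+\mu)$. Everything else reduces to the alcove checks and the direct character computation in the last case.
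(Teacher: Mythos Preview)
Your proof is correct and follows essentially the same approach as the paper's own proof: both treat the cases $\lambda+\mu\in C_2\cup C_3$ via Lemma~\ref{argument_tilting}, the case $a+d=p-2$ with $a\ge 2$ by exhibiting $L(\lambda+\mu-\alpha_1-\alpha_2)$ as a composition factor lying in $C_2$, and the case $(a,d)=(1,p-3)$ by a direct character computation. Your version is slightly more explicit in carrying out Argument~\ref{argument1} where the paper just refers back to the proof of Proposition~\ref{B2_ad_p-1}.
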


\begin{proof}
	We set $M:=L(\lambda)\otimes L(\mu)$. Observe that $\lambda+\mu\in C_2\cup C_3\cup F_{2,3}$. Moreover, $L(\lambda)$ and $L(\mu)$ are tilting modules, so $M$ is a tilting module.\\
	
	Suppose that $a+d\neq p-2$ (i.e. $\lambda+\mu\notin F_{2,3}$, so $\lambda+\mu \in C_2\cup C_3$). The tilting module $T(\lambda+\mu)$ is thus not irreducible, and since $L(\lambda+\mu)$ is a composition factor of $M$, we conclude by Lemma \ref{argument_tilting} that $M$ has multiplicity.\\
	
	Suppose that $a+d=p-2$ and $a\neq 1$. Using the same argument as in the proof of Proposition \ref{B2_ad_p-1}, we get that $L(\lambda+\mu-\alpha_1-\alpha_2)$ is a composition factor of $M$.
	Observe that $\lambda+\mu-\alpha_1-\alpha_2=(a-1,d)\in C_2$ so $T(\lambda+\mu-\alpha_1-\alpha_2)$ is not irreducible. We conclude by Lemma \ref{argument_tilting} that $M$ has multiplicity.\\
	
	Finally, suppose that $(a,d)=(1,p-3)$. Using Proposition \ref{product_characters} and the structure of $L(1,0)$, we get
	\begin{align*}
		\ch M=&\sum_{\nu \in X}m_{L(1,0)}(\nu)\chi(\mu+\nu)\\
		=&\chi(1,p-3)+\chi(-1,p-3)+\chi(-1,p-1)+\chi(0,p-3)+\chi(1,p-5)
	\end{align*}
Observe that $(1,p-3)\in F_{2,3}$ and $(0,p-3),(1,p-5)\in F_{1,2}$, so $\chi(1,p-3)=\ch L(1,p-3)$, $\chi(0,p-3)=\ch L(0,p-3)$ and $\chi(1,p-5)=\ch L(1,p-5)$ by Lemma \ref{Weyl_B2}. Moreover, $(-1,p-3),(-1,p-1)\in D\setminus X^+$ so $\chi(-1,p-3)=\chi(-1,p-1)=0$ by Lemma \ref{action_characters}. Thus we get
$$\ch M=\ch L(1,p-3)+\ch L(0,p-3)+\ch L(1,p-5).$$
In particular, $M$ is multiplicity-free.
\end{proof}

\begin{prop}
	Let $\lambda=(a,0),\mu=(0,p-2)\in X^+$ with $0<a\leq \frac{p-3}2$ (i.e. $\lambda\in \widehat{C_1}$ and $\mu\in F_{2,3}$). Then $L(\lambda)\otimes L(\mu)$ has multiplicity.
\end{prop}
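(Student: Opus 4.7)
The plan is to mirror the tilting-module argument used repeatedly above. Since $\lambda=(a,0)$ with $2a\le p-3$ lies in $\widehat{C_1}$ and $\mu=(0,p-2)$ satisfies $0+(p-2)=p-2$, i.e.\ $\mu\in F_{2,3}$, Lemma \ref{Weyl_B2} gives $\Delta(\lambda)\cong L(\lambda)$ and $\Delta(\mu)\cong L(\mu)$; in particular both $L(\lambda)$ and $L(\mu)$ are tilting modules. By Theorem \ref{tensor_tilting}, the module $M:=L(\lambda)\otimes L(\mu)$ is therefore tilting, and $L(\lambda+\mu)$ occurs as a composition factor of $M$ (as the top factor).

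The key step is to locate $\lambda+\mu=(a,p-2)$ in the alcove geometry. Using the coordinates with respect to the fundamental weights recorded for $\widehat{C_3}\cap X^+$, I would check:
\begin{itemize}
\item $a+(p-2)>p-2$, since $a\ge 1$;
\item $2a+(p-2)\le 2\cdot\frac{p-3}{2}+(p-2)=2p-5<2p-3$;
\item $p-2<p-1$.
\end{itemize}
All three inequalities are strict, so $\lambda+\mu$ lies in the interior of $C_3$ and not on any of the walls $F_{2,3}$, $F_{3,4}$, $F_{3,5}$, $F_{4,6}$ (nor in $\widehat{C_1}$): $\lambda+\mu\notin F_{2,3}$ because $a\ge 1$, $\lambda+\mu\notin F_{3,4}$ because $a\le\frac{p-3}{2}<\frac{p-1}{2}$, and $\lambda+\mu\notin F_{3,5}\cup F_{4,6}$ because $b=p-2<p-1$.

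By Lemma \ref{Weyl_B2}, the indecomposable tilting module $T(\lambda+\mu)$ is therefore not irreducible. I then invoke Lemma \ref{argument_tilting} with $\eta=\lambda+\mu$: $M$ is tilting, $L(\lambda+\mu)$ is a composition factor of $M$, and $T(\lambda+\mu)$ is not irreducible, so $M$ has multiplicity. There is no genuine obstacle here; the only thing to be careful about is the strictness of all three defining inequalities for $C_3$, which is exactly what the hypothesis $a\le\frac{p-3}{2}$ (together with $a\ge 1$) guarantees.
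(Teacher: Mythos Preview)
Your proof is correct and follows essentially the same approach as the paper's own proof: both argue that $L(\lambda)$ and $L(\mu)$ are tilting (via Lemma~\ref{Weyl_B2}), that $\lambda+\mu\in C_3$, hence $T(\lambda+\mu)$ is not irreducible, and then conclude by Lemma~\ref{argument_tilting}. Your version simply spells out the verification that $\lambda+\mu$ lies in the interior of $C_3$, which the paper leaves implicit.
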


\begin{proof}
	By Lemma \ref{Weyl_B2}, both $L(\lambda)$ and $L(\mu)$ are tilting modules, thus $L(\lambda)\otimes L(\mu)$ is a tilting module. Moreover, $\lambda+\mu\in C_3$, hence $T(\lambda+\mu)$ is not irreducible. Since $L(\lambda+\mu)$ is a composition factor of $L(\lambda)\otimes L(\mu)$, we conclude by Lemma \ref{argument_tilting}.
\end{proof}

\begin{prop}
	Let $\lambda=(a,0),\mu=(0,p-1)\in X^+$ with $0<a\leq \frac{p-3}2$ (i.e. $\lambda\in \widehat{C_1}$ and $\mu\in F_{3,5}$). Then $L(\lambda)\otimes L(\mu)$ is multiplicity-free if and only if $a=1$.
\end{prop}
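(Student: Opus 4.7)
The plan is to apply Lemma \ref{Weyl_B2} first to observe that both $L(\lambda)\cong\Delta(\lambda)$ and $L(\mu)\cong\Delta(\mu)$ are tilting modules, so $M:=L(\lambda)\otimes L(\mu)$ is itself tilting, and its highest weight is $\lambda+\mu=(a,p-1)\in F_{3,5}$, with $\Delta(\lambda+\mu)\cong L(\lambda+\mu)$.

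For the easy direction $a=1$, I would expand $\ch M=\chi(1,0)\chi(0,p-1)$ via Corollary \ref{products_characters_bis}. By Corollary \ref{Weyl_a0} the weights of $L(1,0)$ are $(1,0),(-1,0),(-1,2),(1,-2),(0,0)$, each with multiplicity one, yielding
\[
\ch M = \chi(1,p-1)+\chi(-1,p-1)+\chi(-1,p+1)+\chi(1,p-3)+\chi(0,p-1).
\]
The two characters with first coordinate $-1$ vanish by Lemma \ref{action_characters}, since both corresponding weights lie in $D\setminus X^+$ (each is fixed by $s_{\alpha_1}\sbullet$ because $(\lambda+\rho,\alpha_1^\vee)=0$). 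The remaining weights $(1,p-1)$, $(1,p-3)$, $(0,p-1)$ lie respectively in the walls $F_{3,5}$, $F_{2,3}$, $F_{3,5}$, where $\Delta\cong L$ by Lemma \ref{Weyl_B2}. Hence $\ch M=\ch L(1,p-1)+\ch L(1,p-3)+\ch L(0,p-1)$, and $M$ is multiplicity-free.

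For $a\geq 2$, my plan is to exhibit $L(a,p-3)$ as a composition factor of $M$ and then invoke Lemma \ref{argument_tilting}. This is enough because $a\geq 2$ forces $(a,p-3)\in C_3$ (one checks $a+(p-3)>p-2$, $2a+(p-3)\leq 2p-3$ and $p-3\leq p-1$), so $T(a,p-3)$ is not irreducible by Lemma \ref{Weyl_B2}. I will use Argument \ref{argument1} at $\eta:=\lambda+\mu-\alpha_1-2\alpha_2=(a,p-3)$, in three steps. First, a short computation with Lemmas \ref{-1}, \ref{B2_weight_a0} and \ref{B2_weight_0b} gives $m_M(\lambda+\mu-\alpha_1)=m_M(\lambda+\mu-\alpha_2)=1$ (using in particular that $m_{L(\mu)}(\mu-\alpha_1)=0$ and $m_{L(\lambda)}(\lambda-\alpha_2)=0$), matching $m_{L(\lambda+\mu)}$ at these weights by Lemma \ref{-1}, so neither $L(a-2,p+1)$ nor $L(a+1,p-3)$ is a composition factor. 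Next, Lemma \ref{B2_-11} gives $m_{L(\lambda+\mu)}(\lambda+\mu-\alpha_1-\alpha_2)=2$ (since $a\leq(p-3)/2<(p-1)/2$ implies $2a+p+1\not\equiv 0\pmod{p}$), while direct enumeration yields $m_M(\lambda+\mu-\alpha_1-\alpha_2)=3$; the surplus forces $L(a-1,p-1)$ into the composition series of $M$. Finally, Lemma \ref{B2_-12} gives $m_{L(\lambda+\mu)}(\eta)=3$ (neither exceptional congruence holds) and Lemma \ref{-1} gives $m_{L(a-1,p-1)}(\eta)=1$; parameterizing $\nu_1=\lambda-s\alpha_1-t\alpha_2$ with $(s,t)\in\{0,1\}\times\{0,1,2\}$ (the only range compatible with $\nu_1\leq\lambda$ and $\eta-\nu_1\leq\mu$) and evaluating $m_{L(\lambda)}(\nu_1)\,m_{L(\mu)}(\eta-\nu_1)$ using Lemmas \ref{B2_weight_a0} and \ref{B2_weight_0b} in Euclidean coordinates, I expect $m_M(\eta)=5>3+1$, which forces $[M:L(a,p-3)]\geq 1$ and completes the proof.

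The main obstacle is the six-term enumeration in the last step, which requires carefully converting between the $\omega$-basis and Euclidean coordinates to apply the explicit multiplicity formulas of Lemmas \ref{B2_weight_a0} and \ref{B2_weight_0b}. The arithmetic is routine but delicate, following the pattern used in earlier propositions such as Proposition \ref{B2_ad_p-1}, and the fact that the answer $m_M(\eta)=5$ is independent of $a$ (within the admissible range) makes a single computation cover every case $a\geq 2$.
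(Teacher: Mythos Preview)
Your approach is essentially identical to the paper's: both use the tilting structure of $M$, treat $a=1$ by a direct character expansion, and for $a\ge 2$ use Argument~\ref{argument1} to exhibit $L(a,p-3)$ as a composition factor and then invoke Lemma~\ref{argument_tilting} since $(a,p-3)\in C_3$.

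There is, however, one small gap in your step~3. Among the dominant weights $\nu$ with $\eta<\nu\le\lambda+\mu$ you must account for $\lambda+\mu-2\alpha_2=(a+2,p-5)$ as well, since $(a+2,p-5)-\eta=\alpha_1$. You rule out $L(\lambda+\mu-\alpha_1)$ and $L(\lambda+\mu-\alpha_2)$ but never check $[M:L(a+2,p-5)]$; without this, your inequality $5>3+1$ is not yet conclusive (a hypothetical copy of $L(a+2,p-5)$ would contribute $m_{L(a+2,p-5)}(\eta)=1$ to the right-hand side). The paper closes this by noting $m_M(\lambda+\mu-2\alpha_2)=1$: indeed $m_{L(\lambda)}(\lambda-\alpha_2)=m_{L(\lambda)}(\lambda-2\alpha_2)=0$ (Lemma~\ref{table_B2}), so only $m_{L(\lambda)}(\lambda)\cdot m_{L(\mu)}(\mu-2\alpha_2)=1$ contributes; since $m_{L(\lambda+\mu)}(\lambda+\mu-2\alpha_2)=1$ by Lemma~\ref{-1}, one gets $[M:L(a+2,p-5)]=0$. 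With this one-line addition your argument is complete and matches the paper's proof.
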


\begin{proof}
	We set $M:=L(\lambda)\otimes L(\mu)$. Observe that $L(\lambda)$ and $L(\mu)$ are tilting modules, thus $M$ is a tilting module.
	
	Suppose that $a=1$. Using Corollary \ref{products_characters_bis} and the structure of $L(1,0)$, we have
	\begin{align*}
		\ch M=&\sum_{\nu \in X}m_{L(1,0)}(\nu)\chi(\mu+\nu)\\
		=&\chi(1,p-1)+\chi(-1,p-1)+\chi(-1,p+1)+\chi(0,p-1)+\chi(1,p-3).
	\end{align*}
Observe that $(1,p-1),(0,p-1)\in F_{3,5}$ and $(1,p-3)\in F_{2,3}$, so $\chi(1,p-1)=\ch L(1,p-1)$, $\chi(0,p-1)=\ch L(0,p-1)$ and $\chi(1,p-3)=\ch L(1,p-3)$ by Lemma \ref{Weyl_B2}. Moreover, $(-1,p-1),(-1,p+1)\in D\setminus X^+$ so $\chi(-1,p-1)=\chi(-1,p+1)=0$ by Lemma \ref{action_characters}. Thus we get
$$\ch M=\ch L(1,p-1)+\ch L(0,p-1)+\ch L(1,p-3).$$
In particular, $M$ is multiplicity-free.\\

	Now suppose $a>1$. By Lemma \ref{Weyl_B2}, we have $L(\lambda)\cong \Delta(\lambda)$, $L(\mu)\cong \Delta(\mu)$ and \linebreak $L(\lambda+\mu)\cong \Delta(\lambda+\mu)$. We use Argument \ref{argument1} to show that $L(\lambda+\mu-\alpha_1-2\alpha_2)$ is a composition factor of $M$. By Lemma \ref{table_B2}, we have
	\begin{align*}
		&m_{L(\lambda)}(\lambda-\alpha_1)=1,
		&&m_{L(\lambda)}(\lambda-\alpha_1-\alpha_2)=1,\\
		&m_{L(\lambda)}(\lambda-\alpha_1-2\alpha_2)=1,
		&&m_{L(\mu)}(\mu-\alpha_2)=1,\\
		&m_{L(\mu)}(\mu-\alpha_1-\alpha_2)=1,
		&&m_{L(\mu)}(\mu-\alpha_1-2\alpha_2)=2,\\
		&m_{L(\mu)}(\mu-2\alpha_2)=1,
		&&m_{L(\lambda+\mu)}(\lambda+\mu-\alpha_1)=1,\\
		&m_{L(\lambda+\mu)}(\lambda+\mu-\alpha_2)=1,
		&&m_{L(\lambda+\mu)}(\lambda+\mu-\alpha_1-\alpha_2)=2,\\
		&m_{L(\lambda+\mu)}(\lambda+\mu-2\alpha_2)=1,
		&&m_{L(\lambda+\mu)}(\lambda+\mu-\alpha_1-2\alpha_2)=3,\\
		&m_{L(\lambda+\mu-\alpha_1-\alpha_2)}(\lambda+\mu-\alpha_1-2\alpha_2)=1.
	\end{align*}
	Therefore, $m_M(\lambda+\mu-\alpha_1)=m_M(\lambda+\mu-\alpha_2)=m_M(\lambda+\mu-2\alpha_2)=1,\; m_M(\lambda+\mu-\alpha_1-\alpha_2)=3$ and $m_M(\lambda+\mu-\alpha_1-2\alpha_2)=5$. We deduce that $L(\lambda+\mu-\alpha_1-2\alpha_2)$ is a composition factor of $M$. Observe that $\lambda+\mu-\alpha_1-2\alpha_2=(a,p-3)\in C_3$, thus $T(\lambda+\mu-\alpha_1-2\alpha_2)$ is not irreducible. We conclude by Lemma \ref{argument_tilting} that $M$ has multiplicity.
\end{proof}

\begin{prop}
	Let $\lambda=(a,0),\mu=(0,b)\in X^+$ be $p$-restricted with $\frac{p-1}{2}\leq a\leq p-3$ and $a+b\leq p-2$ (i.e. $\lambda\in C_2$ and $\lambda+\mu\in \widehat{C_2}$). Then $L(\lambda)\otimes L(\mu)$ is multiplicity-free.
\end{prop}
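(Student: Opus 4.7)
My plan is a direct character computation in the spirit of Propositions \ref{acd_p} and \ref{B2_ac_bis}. Since $\lambda \in C_2$, setting $\lambda' := s_{\alpha_1+\alpha_2,p}\sbullet\lambda \in \widehat{C_1}$ (which equals $(p-3-a, 0)$ in Euclidean coordinates by Remark \ref{B2_symmetry}), Remark \ref{B2_character_symmetry} gives $\ch L(\lambda) = \chi(\lambda) - \chi(\lambda')$. Since $\mu \in \widehat{C_1}$, Lemma \ref{Weyl_B2} gives $\ch L(\mu) = \chi(\mu)$, hence
\[
\ch M = \chi(\lambda)\chi(\mu) - \chi(\lambda')\chi(\mu).
\]

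Next, I would apply Lemma \ref{B2_ad} to expand each product in Euclidean coordinates. Writing
\[
\nu_{i,j} := (a+b/2-i,\, b/2-j), \qquad \nu'_{i,j} := (p-3-a+b/2-i,\, b/2-j)
\]
for $0 \leq i \leq b$ and $0 \leq j \leq \min(i, b-i)$, Lemma \ref{B2_ad} yields $\ch M = \sum_{i,j}(\chi(\nu_{i,j}) - \chi(\nu'_{i,j}))$. A direct check via Remark \ref{B2_symmetry} gives the key identity $s_{\alpha_1+\alpha_2,p}\sbullet\nu_{i,j} = \nu'_{b-i,\,j}$, providing a bijection $(i,j) \leftrightarrow (b-i, j)$ between the two index sets.

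Using the hypotheses $(p-1)/2 \leq a$ and $a+b \leq p-2$, I would verify that every $\nu_{i,j}$ is dominant, $p$-restricted, and lies in $\widehat{C_1} \cup \widehat{C_2}$: the first coordinate satisfies $a+b/2-i \geq a-b/2 > 0$ and $(a+b/2-i)+(b/2-j) \leq a+b \leq p-2$. For $\nu_{i,j} \in \widehat{C_1}$, $\chi(\nu_{i,j}) = \ch L(\nu_{i,j})$; for $\nu_{i,j} \in C_2$, Remark \ref{B2_character_symmetry} gives $\chi(\nu_{i,j}) = \ch L(\nu_{i,j}) + \ch L(\nu'_{b-i,j})$, and the extra term should cancel precisely against the corresponding $\chi(\nu'_{b-i,j})$ appearing in the second sum after the reindexing $i \mapsto b-i$. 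After all cancellations,
\[
\ch M = \sum_{(i,j)} \ch L(\nu_{i,j})
\]
with the sum over the surviving index set; the injectivity of $(i,j) \mapsto \nu_{i,j}$ (different Euclidean coordinates) ensures distinctness, proving that $M$ is multiplicity-free.

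The main obstacle will be handling the $\chi(\nu'_{i,j})$ terms carefully. When $a$ is close to $p-3$, the weights $\nu'_{i,j}$ may be non-dominant or lie on walls; for these one must apply Lemma \ref{action_characters} together with $W$-reflections to rewrite $\chi(\nu'_{i,j})$ as $\pm \chi$ of a dominant weight (or as zero), and then verify that the resulting terms exactly cancel the $\ch L(\nu'_{b-i,j})$ contributions arising from the $C_2$-part of the first sum. Parity of $b$ and the boundary cases where $\nu_{i,j}$ or $\nu'_{i,j}$ lies on the walls $F_{1,2}$ or $F_{2,3}$ will require separate attention.
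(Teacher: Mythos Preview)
Your approach is essentially the paper's: both compute $\ch M=\chi(\mu)\bigl(\chi(\lambda)-\chi(s_{\alpha_1+\alpha_2,p}\sbullet\lambda)\bigr)$, expand via Lemma~\ref{B2_ad}, and exploit the identity $s_{\alpha_1+\alpha_2,p}\sbullet\nu_{i,j}=\nu'_{b-i,j}$. The organization differs, and your version has a gap you have flagged but not resolved.

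After your reindexing one gets $\ch M=\sum_{(i,j)}\bigl(\chi(\nu_{i,j})-\chi(s\sbullet\nu_{i,j})\bigr)$ with $s=s_{\alpha_1+\alpha_2,p}$. For $\nu_{i,j}\in\widehat{C_2}$ this equals $\ch L(\nu_{i,j})$, as you say. But not every $\nu_{i,j}$ lies in $\widehat{C_2}$: when the first Euclidean coordinate $a+b/2-i$ drops to $\leq(p-3)/2$ the weight lies in $\widehat{C_1}$, and then the difference is $0$ (on $F_{1,2}$) or $-\ch L(s\sbullet\nu_{i,j})$ (in the interior of $C_1$). These negative contributions do cancel---one checks $s\sbullet\nu_{i,j}=\nu_{i',j}$ with $i'=2a+b-p+3-i$, and the hypothesis $a\geq(p-1)/2$ forces $(i',j)$ to lie in the index set with $\nu_{i',j}\in\widehat{C_2}$---but this is an additional verification your outline does not supply. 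The paper avoids the issue by first splitting each double sum at $j=p-2-2a+i$ (resp.\ $j=b-i+p-2-2a$): the ``small $j$'' parts cancel via the pure index shift $\nu_{i,j}=\nu'_{\,i-(2a+3-p),\,j}$ (identical weights, no reflection), and the surviving terms all have $\nu_{i,j}\in\widehat{C_2}$, so Remark~\ref{B2_character_symmetry} applies directly. Finally, your worry about non-dominant $\nu'_{i,j}$ is moot once you reindex: every $s\sbullet\nu_{i,j}$ is either dominant in $\overline{C_1}\cup\overline{C_2}$ or lies in $D\setminus X^+$, where $\chi$ vanishes.
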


\begin{proof}
	In this proof, we use Euclidean coordinates. We set $M:=L(\lambda)\otimes L(\mu)$. 
	
	For $i\in \{0,\ldots, b\}$, we have $p-2-2a+i\leq i$.
	By Proposition \ref{product_characters} and Lemma \ref{Weyl_B2}, and using Lemma \ref{B2_ad} in the second equality below, we get
	\begin{align}
		\ch M=&\chi (b\omega_2)(\chi(a,0)-\chi(p-3-a,0))\nonumber\\
		=&\sum_{i=0}^b\sum_{j=0}^{\min\{i,b-i\}}\chi(a+\frac b2-i,\frac b2-j)-\sum_{i=0}^b\sum_{j=0}^{\min\{i,b-i\}}\chi(p-3-a+\frac b2-i,\frac b2-j)\nonumber\\
		=&\sum_{i=0}^b \;\sum_{j=\max\{0,p-2-2a+i\}}^{\min\{i,b-i\}}\chi(a+\frac b2-i,\frac b2-j)\nonumber\\
		&\qquad+\sum_{i=0}^b \;\sum_{j=0}^{\min\{b-i,p-3-2a+i\}}\chi(a+\frac b2-i,\frac b2-j)\nonumber\\
		&\qquad-\sum_{i=0}^b \;\sum_{j=\max\{0,b-i+p-2-2a\}}^{\min\{i,b-i\}}\chi(p-3-a+\frac b2-i,\frac b2-j)\nonumber\\
		&\qquad-\sum_{i=0}^b \;\sum_{j=0}^{\min\{i,b-i+p-3-2a\}}\chi(p-3-a+\frac b2-i,\frac b2-j)\nonumber\\
		=&\sum_{i=0}^b \;\sum_{j=\max\{0,p-2-2a+i\}}^{\min\{i,b-i\}}\chi(a+\frac b2-i,\frac b2-j)\label{eq334}\\
		&\qquad-\sum_{i=0}^b \;\sum_{j=\max\{0,b-i+p-2-2a\}}^{\min\{i,b-i\}}\chi(p-3-a+\frac b2-i,\frac b2-j)\label{eq335}\\
		&\qquad+\sum_{i=0}^b \;\sum_{j=0}^{\min\{b-i,p-3-2a+i\}}\chi(a+\frac b2-i,\frac b2-j)\label{eq336}\\
		&\qquad-\sum_{i=0}^b \;\sum_{j=0}^{\min\{i,b-i+p-3-2a\}}\chi(p-3-a+\frac b2-i,\frac b2-j).\label{eq337}
		%=&\sum_{i=0}^b\sum_{j=\max\{0,p-2-2a+i\}}^{\min\{i,b-i\}}\chi(a+\frac b2-i,\frac b2-j)-\sum_{i=0}^b\sum_{j=\max\{0,i+p-2-2a\}}^{\min\{i,b-i\}}\chi(p-3-a-\frac b2+i,\frac b2-j)\\
		%&+\sum_{i=0}^b\sum_{j=0}^{\min\{b-i,p-3-2a+i\}}\chi(a+\frac b2-i,\frac b2-j)-\sum_{i=-p+3+2a}^{b-p+3+2a}\sum_{j=0}^{\min\{i+p-3-2a,b-i\}}\chi(a+\frac b2-i,\frac b2-j)\\
		%=&\sum_{i=0}^b\sum_{j=\max\{0,p-2-2a+i\}}^{\min\{i,b-i\}}(\chi(a+\frac b2-i,\frac b2-j)-\chi(p-3-a-\frac b2+i,\frac b2-j))\\
		%&+\sum_{i=0}^b\sum_{j=0}^{\min\{b-i,p-3-2a+i\}}\chi(a+\frac b2-i,\frac b2-j)-\sum_{i=-p+3+2a}^{b-p+3+2a}\sum_{j=0}^{\min\{i+p-3-2a,b-i\}}\chi(a+\frac b2-i,\frac b2-j)
	\end{align}
	First, we work on the terms of lines \eqref{eq336} and \eqref{eq337}. We claim that they sum to zero. To that end, observe that in line \eqref{eq336}, the second sum is empty whenever $i<2a+3-p$ and in line \eqref{eq337},\linebreak the second sum is empty whenever $i>b+p-3-2a$. Thus we get
	\begin{align}
		&\sum_{i=0}^b \;\sum_{j=0}^{\min\{b-i,p-3-2a+i\}}\chi(a+\frac b2-i,\frac b2-j)-\sum_{i=0}^b \;\sum_{j=0}^{\min\{i,b-i+p-3-2a\}}\chi(p-3-a+\frac b2-i,\frac b2-j)\nonumber\\
		&=\sum_{i=2a+3-p}^b\sum_{j=0}^{\min\{b-i,p-3-2a+i\}}\chi(a+\frac b2-i,\frac b2-j)\nonumber\\
		&\hspace{4em}-\sum_{i=0}^{b+p-3-2a}\;\sum_{j=0}^{\min\{i,b-i+p-3-2a\}}\chi(p-3-a+\frac b2-i,\frac b2-j)\nonumber\displaybreak\\
		&=\sum_{i=2a+3-p}^b\sum_{j=0}^{\min\{b-i,p-3-2a+i\}}\chi(a+\frac b2-i,\frac b2-j)\nonumber\\
		&\hspace{4em}-\sum_{i=2a+3-p}^{b}\sum_{j=0}^{\min\{i-2a-3+p,b-i\}}\chi(a+\frac b2-i,\frac b2-j)\nonumber\\
		&=0.\nonumber
	\end{align}
	Therefore, only lines \eqref{eq334} and \eqref{eq335} remain and we get
	\begin{align}
		\ch M=&\sum_{i=0}^b \;\sum_{j=\max\{0,p-2-2a+i\}}^{\min\{i,b-i\}}\chi(a+\frac b2-i,\frac b2-j)\nonumber\\
		&\qquad-\sum_{i=0}^b \;\sum_{j=\max\{0,b-i+p-2-2a\}}^{\min\{i,b-i\}}\chi(p-3-a+\frac b2-i,\frac b2-j)\nonumber\\
		=&\sum_{i=0}^b \;\sum_{j=\max\{0,p-2-2a+i\}}^{\min\{i,b-i\}}\chi(a+\frac b2-i,\frac b2-j)\nonumber\\
		&\qquad-\sum_{i=0}^b \;\sum_{j=\max\{0,i+p-2-2a\}}^{\min\{i,b-i\}}\chi(p-3-a-\frac b2+i,\frac b2-j)\nonumber\\
		=&\sum_{i=0}^b \;\sum_{j=\max\{0,p-2-2a+i\}}^{\min\{i,b-i\}}\chi(a+\frac b2-i,\frac b2-j)-\chi(p-3-a-\frac b2+i,\frac b2-j).\label{eq338}
	\end{align}
	At this step, observe that if $i>\frac{b+2a+2-p}{2}$, then $p-2-2a+i>b-i$ and the second sum in line \eqref{eq338} is empty. Thus we get
	$$\ch M=\sum_{i=0}^{\min\{b,\lfloor\frac{b+2a+2-p}{2}\rfloor\}}\sum_{j=\max\{0,p-2-2a+i\}}^{\min\{i,b-i\}}\chi(a+\frac b2-i,\frac b2-j)-\chi(p-3-(a+\frac b2-i),\frac b2-j).$$
	For $0\leq i\leq \min\{b,\lfloor\frac{b+2a+2-p}{2}\rfloor\}$ and $\max\{0,p-2-2a+i\}\leq j\leq \min\{i,b-i\}$, we have
	\begin{ceqn}
	\begin{align*}
		&a+\frac b2-i\geq a+\frac b2-(\frac{b+2a+2-p}{2})=\frac{p-2}{2}>\frac{p-3}{2},\\
		&\frac b2 -j\geq 0\qquad \text{and}\\
		&(a+\frac b2-i)+(\frac b2 -j)=a+b-i-j\leq a+b \leq p-2.
	\end{align*}
\end{ceqn}
	Therefore, $(a+\frac b2-i,\frac b2-j)\in \widehat{C_2}$ for all $0\leq i\leq \min\{b,\lfloor\frac{b+2a+2-p}{2}\rfloor\}$ and \linebreak$\max\{0,p-2-2a+i\}\leq j\leq \min\{i,b-i\}$ and by Remark \ref{B2_character_symmetry}, we get 
	$$\ch M =\sum_{i=0}^{\min\{b,\lfloor\frac{b+2a+2-p}{2}\rfloor\}}\sum_{j=\max\{0,p-2-2a+i\}}^{\min\{i,b-i\}}\ch L(a+\frac b2-i,\frac b2-j).$$
	In particular, $M$ is multiplicity-free.
\end{proof}

\begin{prop}
	Let $\lambda=(a,0),\mu=(0,b)\in X^+$ be $p$-restricted with $\frac{p-1}{2}\leq a\leq p-3$ and $a+b> p-2$ (i.e. $\lambda\in C_2$ and $\lambda+\mu\in \widehat{C_3}\cup \widehat{C_4}$). Then $L(\lambda)\otimes L(\mu)$ has multiplicity.
\end{prop}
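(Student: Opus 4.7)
The plan is to mirror the character computation in the proof of the preceding proposition and to identify a point where the resulting sum of Weyl characters fails to decompose into distinct irreducible characters.

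First, by Lemma \ref{Weyl_B2}, $L(\mu)\cong\Delta(\mu)$, so $\ch L(\mu)=\chi(0,b)$; and since $\lambda\in C_2$, Remark \ref{B2_character_symmetry} gives $\ch L(\lambda)=\chi(a,0)-\chi(p-3-a,0)$. I would expand the two products $\chi(a,0)\chi(0,b)$ and $\chi(p-3-a,0)\chi(0,b)$ via Lemma \ref{B2_ad} and repeat the cancellation manipulations used in the preceding proof. This should yield (in Euclidean coordinates) an identity of the form
\[
\ch M=\sum_{(i,j)\in J}\bigl(\chi(a+\tfrac b2-i,\tfrac b2-j)-\chi(p-3-a-\tfrac b2+i,\tfrac b2-j)\bigr)
\]
for an explicit finite index set $J$, with the auxiliary terms cancelling exactly as before.

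Next, since $\lambda+\mu$ now lies in $\widehat{C_3}\cup\widehat{C_4}$ rather than in $\widehat{C_2}$, the pair $(i,j)=(0,0)$ produces a weight $\eta:=(a+\tfrac b2,\tfrac b2)$ (namely $\lambda+\mu$) lying outside $\widehat{C_2}$. For such a weight, iterated application of Remark \ref{B2_character_symmetry} writes $\chi(\eta)=\ch L(\eta)+\ch L(\eta')+\cdots$, where $\eta'$ is the dot-image of $\eta$ under $s_{\alpha_1+2\alpha_2,p}$ (and, in the $\widehat{C_4}$ case, a further term arises by reflecting across $F_{1,2}$). I would then verify by inspecting the index set $J$ that the extra term $\ch L(\eta')$ either coincides with a contribution already appearing in the sum for some other pair $(i'',j'')\in J$, or is not cancelled by the ``mirror'' term $\chi(p-3-a-\tfrac b2+i,\tfrac b2-j)$. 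In either case some $L(\nu)$ occurs in $\ch M$ with coefficient at least $2$, so $M$ has multiplicity.

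The main obstacle will be the alcove-geometry bookkeeping, particularly when $\lambda+\mu\in\widehat{C_4}$, since Remark \ref{B2_character_symmetry} then has to be iterated twice (through $C_3$ and $C_2$) and each intermediate contribution matched against the remaining sum. As a more elementary alternative, one can bypass the full identity and instead apply Argument \ref{argument1} to a concrete test weight such as $\eta:=\lambda+\mu-\alpha_1-2\alpha_2$: compute $m_M(\eta)$ using Lemmas \ref{B2_weight_a0} and \ref{B2_weight_0b} together with the short exact sequence $0\to L(\lambda')\to\Delta(\lambda)\to L(\lambda)\to 0$ with $\lambda'=(p-3-a,0)\in C_1$, then subtract the contributions $m_{L(\nu)}(\eta)$ of the already-identified higher composition factors (obtained via Lemmas \ref{-1}, \ref{B2_-11} and \ref{B2_-12}) to expose the required excess and conclude $[M:L(\eta)]\geq 2$.
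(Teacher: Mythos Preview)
Your setup is correct and matches the paper's: write $\ch M=\chi(a\omega_1)\chi(b\omega_2)-\chi((p-3-a)\omega_1)\chi(b\omega_2)$ and expand both products via Lemma~\ref{B2_ad}. However, the paper does \emph{not} attempt to carry through the full cancellation of the preceding proof and then decompose everything into irreducible characters; that route would indeed bog down in the alcove bookkeeping you anticipate. Instead, it isolates one explicit weight $\nu_2\in C_2$ and computes $[M:L(\nu_2)]$ directly.

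The key points you are missing are these. First, rather than tracking $\lambda+\mu$ (whose alcove varies with the parameters), the paper chooses a weight on the ``diagonal'': with $t=\lfloor\tfrac{a+b-p+1}{2}\rfloor$ one sets $\nu_3\in C_3$ equal to $(a+\tfrac b2-t,\tfrac b2-t)$ or $(a+\tfrac b2-t-1,\tfrac b2-t)$ according to the parity of $a+b$, and $\nu_2=s_{\alpha_1+2\alpha_2,p}\sbullet\nu_3\in C_2$. One checks that both $\nu_2,\nu_3$ occur among the weights $(a+\tfrac b2-i,\tfrac b2-j)$ of the first sum and \emph{neither} among the weights $(p-3-a+\tfrac b2-i,\tfrac b2-j)$ of the second. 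Second---and this is the decisive technical input you did not invoke---Lemma~\ref{B2_ad} says every weight appearing in either sum lies in $D\cup W\sbullet\overline{C_1}$. Hence any non-dominant term, once straightened by Lemma~\ref{action_characters}, lands in $\overline{C_1}$ and contributes nothing to $[M:L(\nu_2)]$ by the Strong Linkage Principle. So the only possible contributions come from the dominant weights $\nu_2$ and $\nu_3$ themselves, giving $[M:L(\nu_2)]=[\Delta(\nu_2):L(\nu_2)]+[\Delta(\nu_3):L(\nu_2)]=2$.

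Your alternative via Argument~\ref{argument1} with the test weight $\lambda+\mu-\alpha_1-2\alpha_2$ is not obviously the right target: there is no reason this particular weight should carry the multiplicity, and the paper's choice of $\nu_2$ shows that a more delicate selection (depending on $a+b\bmod 2$) is needed.
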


\begin{proof}
	 In this proof, we use Euclidean coordinates. We set $M:=L(\lambda)\otimes L(\mu)$. By \linebreak Proposition \ref{product_characters}, Lemma \ref{B2_ad} and Remark \ref{B2_character_symmetry}, we have
	\begin{align}
		\ch M&=\chi(\lambda)\chi(\mu)-\chi(s_{\alpha_1+\alpha_2,p}\sbullet \lambda)\chi(\mu)\nonumber\\
		&=\sum_{i=0}^b\sum_{j=0}^{\min\{i,b-i\}}\chi(a+\frac b2-i,\frac b2-j)-\sum_{i=0}^b\sum_{j=0}^{\min\{i,b-i\}}\chi(p-3-a+\frac b2-i,\frac b2-j).\label{eq70}
	\end{align}

	We set $$A:=\{(a+\frac b2-i,\frac b2-j)|\; 0\leq i\leq b,\; 0\leq j\leq \min\{i,b-i\}\}$$ 
	and 
	$$B:=\{(p-3-a+\frac b2-i,\frac b2-j)|\; 0\leq i\leq b,\; 0\leq j\leq \min\{i,b-i\}\}.$$
	
	By Lemma \ref{B2_ad}, we have $A,B\subseteq D\cup W\sbullet \overline{C_1}$.\\
	
	If $a+b$ is even, we set $t:=\frac{a+b-p+1}2$, $\nu_3:=(a+\frac b2-t,\frac{b}{2}-t)\in C_3$ and \linebreak$\nu_2:=s_{\alpha_1+2\alpha_2,p}\sbullet \nu_3= (a+\frac b2-(t+1),\frac{b}{2}-(t+1))\in C_2$.\\
	
	If $a+b$ is odd, we set $t:=\frac{a+b-p}2$, $\nu_3:=(a+\frac b2-(t+1),\frac{b}{2}-t)\in C_3$ and\linebreak $\nu_2:=s_{\alpha_1+2\alpha_2,p}\sbullet \nu_3= (a+\frac b2-(t+2),\frac{b}{2}-(t+1))\in C_2$.\\
	
	In both cases, we show that $\nu_2$ has multiplicity $2$ in $M$. Observe that $\nu_2,\nu_3\in A$ (this follows from $t+1\leq \frac b2$).
	Moreover, we claim that $\nu_2, \nu_3\notin B$. We check $\nu_3$ in case $a+b$ even, the other cases are similar. Suppose that $(a+\frac b2-t,\frac{b}{2}-t)=(p-3-a+\frac b2-k,\frac b2-r)$ for some $k,r$. Then
	 $r=t$ and $k=\frac{p+b-5-3a}2<r$, thus $\nu_3\notin B$.
	
	%We will dissociate the cases $a+b$ odd and even.
	%First, suppose that $a+b$ is even.
	%Let $i:=\frac{a+b-p+1}2$. Then $0\leq i<\frac b2$. Let $\nu_3:=(a+\frac b2-i,\frac{b}{2}-i)$. Observe that $\nu_3\in C_3$, and $\nu_2:=s_{\alpha_1+2\alpha_2,p}\sbullet \nu_3= (a+\frac b2-(i+1),\frac{b}{2}-(i+1))\in C_2$. 
	
	%If $a+b$ is odd, we set $i:=\frac{a+b-p}2$, $\nu_3:=(a+\frac b2-{i+1},\frac{b}{2}-i)\in C_3$ and $\nu_2:=s_{\alpha_1+2\alpha_2,p}\sbullet \nu_3= (a+\frac b2-(i+2),\frac{b}{2}-(i+1))\in C_2$.
	%In both cases, we will show that $\nu_2$ has multiplicity $2$ in $M$.
	
	%Using line \eqref{eq70} and the definitions of $A$ and $B$, we have 
	%$$[M:L(\nu_2)]=\sum
	For every weight $\nu\in X$, we fix $w_\nu\in W$ such that $w_\nu \sbullet \nu \in D$ (recall that $D$ is a fundamental domain for the dot action of $W$ on $X$). Moreover, we take the convention that $\Delta(\nu)=0$ for every $\nu\in D\setminus X^+$. Using line \eqref{eq70} and Lemma \ref{action_characters}, we have
	$$[M:L(\nu_2)]=\sum_{\eta\in A}\det(w_\eta)[\Delta(w_\eta \sbullet \eta):L(\nu_2)]-\sum_{\eta\in B}\det(w_\eta)[\Delta(w_\eta \sbullet \eta):L(\nu_2)].$$
By the Strong Linkage Principle (Proposition \ref{strong_linkage}), $[\Delta(\eta):L(\nu_2)]=0$ unless $\nu_2\uparrow\eta$. By Lemma \ref{Weyl_B2}, we have $[\Delta(\eta):L(\nu_2)]=0$ for all $\eta\in C_4$. Thus, if $\eta$ is $p$-restricted and such that $[\Delta(\eta):L(\nu_2)]\neq0$, we have either $\eta=\nu_2$ or $\eta=\nu_3$. At this step, recall that every $\eta\in (A\cup B)\cap X^+$ is $p$-restricted.
By Lemma \ref{B2_ad}, if $\eta\in A\cup B$ and $w_\eta\neq \id$, then $w_\eta \sbullet \eta\in \overline{C_1}$ and $[\Delta(w_\eta \sbullet \eta):\nu_2]=0$. Therefore, we have
$$[M:L(\nu_2)]=\sum_{\eta\in A\cap \{\nu_2,\nu_3\}}[\Delta( \eta):L(\nu_2)]-\sum_{\eta\in B\cap \{\nu_2,\nu_3\}}[\Delta(\eta):L(\nu_2)].$$
By the previous observations, $B\cap \{\nu_2,\nu_3\}=\emptyset$ and $A\cap \{\nu_2,\nu_3\}=\{\nu_2,\nu_3\}$. Therefore, 
$$[M:L(\nu_2)]=[\Delta(\nu_2):L(\nu_2)]+[\Delta(\nu_3):L(\nu_2)]=2$$
and $M$ has multiplicity.
\iffalse
	By the Strong Linkage Principle (\ref{strong_linkage}), the terms in line \eqref{eq70} which can contribute to $[M:L(\nu_2)]$ are of the form $\chi(\nu)$ with $\nu_2\uparrow \nu$. By Lemmas \ref{B2_ad} and \ref{Weyl_B2}, the only possibilities are $\chi(\nu_2)$ and $\chi(\nu_3)$. Since $\nu_2,\nu_3\in A$ and $\nu_2,\nu_3\notin B$, two terms contribute to $[M:L(\nu_2)]$ so $[M:L(\nu_2)]=2$ and $M$ has multiplicity.
\fi
\end{proof}

\subsubsection{$L(a,b)\otimes L(0,d)$}

\iffalse

\begin{prop}
	Let $\lambda=(a,b),\mu=(0,d)\in X^+$ with $1\leq a<p$ and $2\leq b,d<p$. If $2a+b+2=p$ and $a+b+d\leq p-2$ (i.e. $\lambda+\mu\in \widehat{C_2}$), then $L(\lambda)\otimes L(\mu)$ is multiplicity-free.
\end{prop}

{\color{red} rédiger preuve de cette proposition}

\fi

\begin{prop}
	Let $\lambda=(a,b),\mu=(0,d)\in X^+$ with $1\leq a<p$ and $2\leq b,d<p$. If $2a+b+2\not\equiv 0\bmod p$ and $a+b\neq p-1$, then $L(\lambda)\otimes L(\mu)$ has multiplicity.
\end{prop}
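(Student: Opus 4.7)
Plan: The plan is to apply Argument~\ref{argument1} directly, using the hypotheses to force the maximal weight multiplicities in $L(\lambda)$ given by Lemmas~\ref{B2_-11} and~\ref{B2_-12}. First note that $L(\mu)\cong\Delta(\mu)$ by Lemma~\ref{Weyl_B2}, since $(0,d)$ lies either in $\widehat{C_1}$, on $F_{2,3}$, or on $F_{3,5}$ according as $d\leq p-3$, $d=p-2$, or $d=p-1$; so all weight multiplicities of $L(\mu)$ are given by Lemma~\ref{table_B2}. I may further assume $a\leq p-2$ and $b+d<p$: otherwise $\lambda+\mu=(a,b+d)$ or $\lambda+\mu-\alpha_2=(a+1,b+d-2)$ is a dominant but non-$p$-restricted composition factor of $M$ (in the second case because a preliminary count gives $m_M(\lambda+\mu-\alpha_2)=2$ while $m_{L(\lambda+\mu)}(\lambda+\mu-\alpha_2)=1$ by Lemma~\ref{-1}), and $M$ has multiplicity by Corollary~\ref{composition_p}.

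The decisive input from the hypotheses $2a+b+2\not\equiv 0\bmod p$ and $a+b\neq p-1$ is that Lemmas~\ref{B2_-11} and~\ref{B2_-12} yield the \emph{maximal} values $m_{L(\lambda)}(\lambda-\alpha_1-\alpha_2)=2$ and $m_{L(\lambda)}(\lambda-\alpha_1-2\alpha_2)=3$. Combining these with Lemma~\ref{-1} for $L(\lambda)$ and Lemma~\ref{table_B2} for $L(\mu)$, a routine enumeration of pairs in Argument~\ref{argument1} yields
\begin{align*}
&m_M(\lambda+\mu)=1,\quad m_M(\lambda+\mu-\alpha_2)=2,\quad m_M(\lambda+\mu-2\alpha_2)=3,\\
&m_M(\lambda+\mu-\alpha_1-\alpha_2)=4,\quad m_M(\lambda+\mu-\alpha_1-2\alpha_2)=9.
\end{align*}
In particular $[M:L(\lambda+\mu)]=[M:L(\lambda+\mu-\alpha_2)]=1$.

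I then split into two cases according to the value of $2a+b+d+2\bmod p$. If $2a+b+d+2\equiv 0\bmod p$, then Lemma~\ref{B2_-11} applied to $\lambda+\mu=(a,b+d)$ gives $m_{L(\lambda+\mu)}(\lambda+\mu-\alpha_1-\alpha_2)=1$, while $m_{L(\lambda+\mu-\alpha_2)}(\lambda+\mu-\alpha_1-\alpha_2)=1$ by Lemma~\ref{-1} applied to $(a+1,b+d-2)$. Argument~\ref{argument1} then yields $[M:L(\lambda+\mu-\alpha_1-\alpha_2)]=4-1-1=2$, and $M$ has multiplicity. If instead $2a+b+d+2\not\equiv 0\bmod p$, the same lemmas give $m_{L(\lambda+\mu)}(\lambda+\mu-\alpha_1-\alpha_2)=2$, whence $[M:L(\lambda+\mu-\alpha_1-\alpha_2)]=4-2-1=1$, and an analogous computation at $\lambda+\mu-2\alpha_2$ produces $[M:L(\lambda+\mu-2\alpha_2)]=1$ (assuming $a+2\leq p-1$; otherwise the resulting non-$p$-restricted composition factor concludes via Corollary~\ref{composition_p}). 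The contributions of $L(\lambda+\mu)$, $L(\lambda+\mu-\alpha_2)$, $L(\lambda+\mu-\alpha_1-\alpha_2)$ and $L(\lambda+\mu-2\alpha_2)$ at the weight $\lambda+\mu-\alpha_1-2\alpha_2$, evaluated via Lemmas~\ref{-1},~\ref{B2_-11} and~\ref{B2_-12}, are at most $3$, $2$, $1$ and $1$ respectively, totalling at most $7$ (the first contribution drops from $3$ to $2$ when $a+b+d=p-1$, which only strengthens the bound). Since $m_M(\lambda+\mu-\alpha_1-2\alpha_2)=9$, I deduce $[M:L(\lambda+\mu-\alpha_1-2\alpha_2)]\geq 2$, so $M$ has multiplicity.

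The main obstacle is the bookkeeping of the multiplicity counts, in particular the careful case analysis for the contribution of $L(\lambda+\mu)$ at $\lambda+\mu-\alpha_1-2\alpha_2$, which depends on whether $2a+b+d+2\equiv 0\bmod p$ and whether $a+b+d=p-1$. A secondary technical point is to verify that all intermediate composition factors have dominant $p$-restricted highest weights; the boundary cases ($a\in\{p-1,p-2\}$ or $b+d=p$) are handled uniformly by Corollary~\ref{composition_p}.
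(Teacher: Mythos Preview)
Your approach is essentially the same as the paper's: both use Argument~\ref{argument1} to compute the weight multiplicities $m_M(\lambda+\mu)=1$, $m_M(\lambda+\mu-\alpha_2)=2$, $m_M(\lambda+\mu-2\alpha_2)=3$, $m_M(\lambda+\mu-\alpha_1-\alpha_2)=4$, $m_M(\lambda+\mu-\alpha_1-2\alpha_2)=9$, and then split on the value of $m_{L(\lambda+\mu)}(\lambda+\mu-\alpha_1-\alpha_2)$ (your split on $2a+b+d+2\bmod p$ is equivalent via Lemma~\ref{B2_-11}) to force either $[M:L(\lambda+\mu-\alpha_1-\alpha_2)]\geq 2$ or $[M:L(\lambda+\mu-\alpha_1-2\alpha_2)]\geq 2$.

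One small omission: you do not record $m_M(\lambda+\mu-\alpha_1)=1$ and hence $[M:L(\lambda+\mu-\alpha_1)]=0$, which the paper checks explicitly. Without this, the count $4-1-1$ (or $4-2-1$) at $\lambda+\mu-\alpha_1-\alpha_2$ is not fully justified, since $\lambda+\mu-\alpha_1$ is a dominant weight above $\lambda+\mu-\alpha_1-\alpha_2$ whenever $a\geq 2$. Conversely, the paper silently passes over the non-$p$-restricted boundary cases ($b+d\geq p$, $a=p-1$, $a=p-2$) that you handle explicitly via Corollary~\ref{composition_p}; your treatment is more careful on that front.
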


\begin{proof}
	We set $M:=L(\lambda)\otimes L(\mu)$. Using Argument \ref{argument1}, we show that either \linebreak$[M:L(\lambda+\mu-\alpha_1-\alpha_2)]\geq 2$ or $[M:L(\lambda+\mu-\alpha_1-2\alpha_2)]\geq 2$. 
	
	Using Lemmas \ref{-1}, \ref{B2_-11}, \ref{table_B2} and \ref{B2_-12}, we have
	\begin{align*}
		&m_M(\lambda+\mu)=1,&&m_M(\lambda+\mu-\alpha_1)=1,&&m_M(\lambda+\mu-\alpha_2)=2,\\
		&m_M(\lambda+\mu-\alpha_1-\alpha_2)=4,&&m_M(\lambda+\mu-2\alpha_2)=3,&&m_M(\lambda+\mu-\alpha_1-2\alpha_2)=9.
	\end{align*}
Therefore $$[M:L(\lambda+\mu)]=[M:L(\lambda+\mu-\alpha_2)]=[M:L(\lambda+\mu-2\alpha_2)]=1$$ and $$[M:L(\lambda+\mu-\alpha_1)]=0.$$

If $m_{L(\lambda+\mu)}(\lambda+\mu-\alpha_1-\alpha_2)=1$, then $[M:L(\lambda+\mu-\alpha_1-\alpha_2)]=2$ and $M$ has multiplicity.

If $m_{L(\lambda+\mu)}(\lambda+\mu-\alpha_1-\alpha_2)=2$, then $[M:L(\lambda+\mu-\alpha_1-\alpha_2)]=1$. In this case, we have $$m_{L(\lambda+\mu)}(\lambda+\mu-\alpha_1-2\alpha_2)\leq 3 \quad\text{and}\quad m_{L(\lambda+\mu-\alpha_2)}(\lambda+\mu-\alpha_1-2\alpha_2)\leq 2.$$ Therefore, $[M:L(\lambda+\mu-\alpha_1-2\alpha_2)]\geq 9-3-2-1-1=2$ and $M$ has multiplicity.
\end{proof}

\begin{prop}
	Let $\lambda=(a,b),\mu=(0,1)\in X^+$ with $1\leq a,b<p$. Then $L(\lambda)\otimes L(\mu)$ is multiplicity-free if and only if $\lambda\in C_1\cup C_2\cup C_3\cup C_4$.
\end{prop}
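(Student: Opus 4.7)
The strategy is to compute $\ch M$ explicitly for $M := L(\lambda) \otimes L(0,1)$. Since $L(0,1) = \Delta(0,1)$ by Lemma \ref{Weyl_B2}, its four weights $(0,1), (1,-1), (-1,1), (0,-1)$ each occur with multiplicity one, and Proposition \ref{product_characters} yields
\[
\chi(\eta) \cdot \chi(0,1) = \chi(\eta + (0,1)) + \chi(\eta + (1,-1)) + \chi(\eta + (-1,1)) + \chi(\eta + (0,-1))
\]
for every $\eta \in X$. I would first write $\ch L(\lambda)$ as an alternating sum of Weyl characters over the weights of $W_p \sbullet \lambda$ that lie in $\widehat{C_1} \cup \widehat{C_2} \cup \widehat{C_3} \cup \widehat{C_4}$ (as in Remark \ref{B2_character_symmetry} and the proof of Lemma \ref{Weyl_B2}, extended to $C_3$ and $C_4$ as $\ch L(\lambda) = \chi(\lambda_3) - \chi(\lambda_2) + \chi(\lambda_1)$ and $\ch L(\lambda) = \chi(\lambda_4) - \chi(\lambda_3) + \chi(\lambda_2) - \chi(\lambda_1)$), then multiply by $\chi(0,1)$ to express $\ch M$ as an explicit sum of Weyl characters, using Lemma \ref{action_characters} to discard terms with $\chi(\nu) = 0$ for $\nu \in D \setminus X^+$ and to fold $\nu \notin D$ back into $D$ via $\chi(w \sbullet \mu) = \det(w) \chi(\mu)$.

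For the \emph{only if} direction, I assume $\lambda$ lies on one of the walls $F_{1,2}, F_{2,3}, F_{3,4}, F_{3,5}, F_{4,6}, F_{4,7}$. In all but the subcase $\lambda \in F_{4,7} \setminus F_{4,6}$, Lemma \ref{Weyl_B2} shows that $L(\lambda)$ is a tilting module, so $M$ is tilting by Theorem \ref{tensor_tilting}. When $\lambda \in F_{1,2} \cup F_{2,3} \cup F_{3,4}$, a direct alcove calculation shows that $\lambda + (0,1)$ lies in the interior of $C_2$, $C_3$, $C_4$ respectively, so $T(\lambda + (0,1))$ is not irreducible and Lemma \ref{argument_tilting} yields multiplicity. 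When $a = p-1$ or $b = p-1$, the weight $\lambda + (0,1)$ either fails to be $p$-restricted or lands on $F_{4,7} \setminus F_{4,6}$; in the former case Corollary \ref{composition_p} gives multiplicity, and in the latter case $T(\lambda + (0,1))$ is again non-irreducible by Lemma \ref{Weyl_B2}. The remaining subcase $\lambda \in F_{4,7} \setminus F_{4,6}$ is treated directly using Argument \ref{argument1} together with the weight-multiplicity lemmas (Lemmas \ref{-1}, \ref{B2_-11}, \ref{B2_-12}, \ref{table_B2}) to produce a composition factor with multiplicity at least two.

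For the \emph{if} direction, I treat each open alcove separately, computing $\ch M$ via the strategy of the first paragraph. In $C_1$, $\ch L(\lambda) = \chi(\lambda)$ and each of the four translates $\lambda + \nu$ lies in $\widehat{C_1}$, so $\chi(\lambda + \nu) = \ch L(\lambda + \nu)$ by Lemma \ref{Weyl_B2} and $\ch M = \sum_\nu \ch L(\lambda + \nu)$ is manifestly multiplicity-free. The cases $C_2, C_3, C_4$ require expanding a product of two, three, or four $\chi$'s against the four translating weights, producing respectively eight, twelve, or sixteen Weyl characters which, after telescoping cancellations, reduce to a sum of four distinct irreducible characters. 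The main obstacle is this combinatorial bookkeeping in the $C_3$ and $C_4$ cases: for each linked weight $\lambda_j$ and each $\nu_k$ one must track whether $\lambda_j + \nu_k$ lies in $X^+$, in $D \setminus X^+$, or outside $D$, then apply Remark \ref{B2_symmetry} and Lemma \ref{action_characters} to ensure the resulting Weyl characters assemble into $\sum_k \ch L(\lambda + \nu_k)$ exactly as in the characteristic-zero Pieri-type formula.
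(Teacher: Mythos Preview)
Your overall strategy matches the paper's, but there is a genuine gap in your ``only if'' direction. You claim that when $a=p-1$ the weight $\lambda+(0,1)$ lands on $F_{4,7}\setminus F_{4,6}$, so that $T(\lambda+\mu)$ is not irreducible. This is false: if $\lambda=(p-1,b)\in F_{4,6}$ with $b<p-1$, then $\lambda+\mu=(p-1,b+1)$ is again in $F_{4,6}$, where Lemma~\ref{Weyl_B2} says $T(\lambda+\mu)$ \emph{is} irreducible, so Lemma~\ref{argument_tilting} does not apply. The paper handles $F_{4,6}$ differently: one checks via Argument~\ref{argument1} that $L(\lambda+\mu-\alpha_2)=L(p,b-1)$ is a composition factor of $M$, and since this highest weight is not $p$-restricted, Corollary~\ref{composition_p} gives multiplicity. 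Your ``remaining subcase $\lambda\in F_{4,7}\setminus F_{4,6}$'' is, by contrast, already covered by $b=p-1$ and Corollary~\ref{sum_p_res}; it is $F_{4,6}$ that requires the extra argument.

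For the ``if'' direction your approach is valid, but two points are worth noting. First, the answer is not always exactly four irreducible characters: the paper introduces indicator functions $\delta_2,\delta_3,\delta_4$ which vanish at certain interior positions of $C_2,C_3,C_4$ (for instance $\delta_2(\lambda)=0$ when $2a+b+2=p$, which does occur inside $C_2$), and in those cases two of the four summands cancel. Your brute-force expansion would reveal this, but your sketch does not anticipate it. Second, for $C_3$ and $C_4$ the paper avoids the $12$- and $16$-term bookkeeping you flag as the main obstacle by proceeding inductively: writing $\ch L(\lambda)=\chi(\lambda)-\ch L(\lambda_{i-1})$ with $\lambda_{i-1}$ in the previous alcove and invoking the already-established decomposition of $L(\lambda_{i-1})\otimes L(\mu)$. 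This reduces each case to pairing four new Weyl characters against four known irreducible characters, which is substantially cleaner than tracking all terms simultaneously.
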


\begin{proof}
	In this proof, we take the convention that $\ch L(\nu)=0$ for all $\nu\notin X^+$. Moreover, for $\nu=(c,d)\in X$, we set 
	\begin{align*}
	&\delta_2(\nu):=\begin{cases}
		0 &\text{if } 2c+d+2=p,\\
		1&\text{else.}
	\end{cases}\qquad \delta_3(\nu):=\begin{cases}
	0 &\text{if } c+d=p-1,\\
	1&\text{else.} 
\end{cases}\\
&\delta_4(\nu):=\begin{cases}
0 &\text{if } 2c+d+2=2p,\\
1&\text{else.} 
\end{cases}\end{align*}
We set $M:=L(\lambda)\otimes L(\mu)$. By Lemma \ref{Weyl_B2}, $L(\mu)$ is a tilting module. 
	
	\begin{itemize}
		\item If $\lambda\in F_{1,2}\cup F_{2,3}\cup F_{3,4}$ then $L(\lambda)$ is a tilting module so $M$ is a tilting module. In this case $\lambda +\mu\in C_2\cup C_3\cup C_4\cup (F_{4,7}\setminus F_{4,6})$, hence $T(\lambda+\mu)$ is not irreducible by Lemma \ref{Weyl_B2}.\linebreak We conclude by Lemma \ref{argument_tilting} that $M$ has multiplicity.
		\item If $\lambda \in F_{3,5}\cup F_{4,7}$, then $b=p-1$ so $\lambda+\mu$ is not $p$-restricted. We conclude by Corollary \ref{sum_p_res}\linebreak that $M$ has multiplicity.
		\item If $\lambda\in F_{4,6}$ (i.e. $a=p-1$), observe that $L(\lambda+\mu-\alpha_2)$ is a composition factor of $M$ (by Argument \ref{argument1}). But $\lambda+\mu-\alpha_2$ is not $p$-restricted, hence we conclude by Corollary \ref{composition_p}\linebreak that $M$ has multiplicity.
		\item If $\lambda\in C_1$, then $\lambda+\mu\in \widehat{C_1}$ and we apply Corollary \ref{sum_C1} and Theorem \ref{B2_p0} to conclude that $M$ is multiplicity-free.
		\item If $\lambda\in C_2$, let $\lambda_1:=s_{\alpha_1+\alpha_2,p}\sbullet \lambda\in C_1$ and $\delta_2:=\delta_2(\lambda)$. By Proposition \ref{product_characters} and \linebreak Lemma \ref{Weyl_B2} we have
		\begin{align*}
			\ch M&= \chi(\mu)(\chi(\lambda)-\chi(\lambda_1))\\
			&=\chi(\lambda+\mu)+\chi(\lambda+\mu-\alpha_2)+\chi(\lambda+\mu-\alpha_1-\alpha_2)+\chi(\lambda+\mu-\alpha_1-2\alpha_2)\\
			&\hspace{3em}-\chi(\lambda_1+\mu)-\chi(\lambda_1+\mu-\alpha_2)-\chi(\lambda_1+\mu-\alpha_1-\alpha_2)\\
			&\hspace{3em}-\chi(\lambda_1+\mu-\alpha_1-2\alpha_2)\\
			&=(\chi(\lambda+\mu)-\chi(\lambda_1+\mu-\alpha_1-\alpha_2))+(\chi(\lambda+\mu-\alpha_2)-\chi(\lambda_1+\mu-\alpha_1-2\alpha_2))\\
			&\hspace{3em}+(\chi(\lambda+\mu-\alpha_1-\alpha_2)-\chi(\lambda_1+\mu))\\
			&\hspace{3em}+(\chi(\lambda+\mu-\alpha_1-2\alpha_2)-\chi(\lambda_1+\mu-\alpha_2))\\
			&=\ch L(\lambda+\mu)+\ch L(\lambda+\mu-\alpha_2)+\delta_2\cdot\ch L(\lambda+\mu-\alpha_1-\alpha_2)\\
			&\hspace{3em}+\delta_2\cdot\ch L(\lambda+\mu-\alpha_1-2\alpha_2).
		\end{align*}
		Thus $M$ is multiplicity-free.
		\item If $\lambda\in C_3$, let $\lambda_2:=s_{\alpha_1+2\alpha_2,p}\sbullet \lambda\in C_2$, $\delta_2:=\delta_2(\lambda_2)$ and $\delta_3:=\delta_3(\lambda)$. By Proposition \ref{product_characters}, Lemma \ref{Weyl_B2} and the previous case, we have
		\begin{align*}
			\ch M&= \chi(\mu)(\chi(\lambda)-\ch L(\lambda_2))\\
			&=\chi(\lambda+\mu)+\chi(\lambda+\mu-\alpha_2)+\chi(\lambda+\mu-\alpha_1-\alpha_2)+\chi(\lambda+\mu-\alpha_1-2\alpha_2)\\
			&\hspace{3em}-\ch L(\lambda_2+\mu)-\ch L(\lambda_2+\mu-\alpha_2)-\delta_2\cdot\ch L(\lambda_2+\mu-\alpha_1-\alpha_2)\\
			&\hspace{3em}-\delta_2\cdot \ch L(\lambda_2+\mu-\alpha_1-2\alpha_2)\\
			&=(\chi(\lambda+\mu)-\delta_2\cdot \ch L(\lambda_2+\mu-\alpha_1-2\alpha_2))\\
			&\hspace{3em}+(\chi(\lambda+\mu-\alpha_2)-\ch L(\lambda_2+\mu-\alpha_2))\\
			&\hspace{3em}+(\chi(\lambda+\mu-\alpha_1-\alpha_2)-\delta_2\cdot \ch L(\lambda_2+\mu-\alpha_1-\alpha_2))\\
			&\hspace{3em}+(\chi(\lambda+\mu-\alpha_1-2\alpha_2)-\ch L(\lambda_2+\mu))\\
			&=\ch L(\lambda+\mu)+\ch L(\lambda+\mu-\alpha_2)+\ch L(\lambda+\mu-\alpha_1-\alpha_2)\\
			&\hspace{3em}+\delta_3\cdot\ch L(\lambda+\mu-\alpha_1-2\alpha_2).
		\end{align*}
		Thus $M$ is multiplicity-free.
		\item If $\lambda\in C_4$, let $\lambda_3:=s_{\alpha_1+\alpha_2,2p}\sbullet \lambda\in C_3$, $\delta_3:=\delta_3(\lambda_3)$ and $\delta_4:=\delta_4(\lambda)$. By Proposition \ref{product_characters}, Lemma \ref{Weyl_B2} and the previous case, we have
		\begin{align*}
			\ch M&= \chi(\mu)(\chi(\lambda)-\ch L(\lambda_3))\\
			&=\chi(\lambda+\mu)+\chi(\lambda+\mu-\alpha_2)+\chi(\lambda+\mu-\alpha_1-\alpha_2)+\chi(\lambda+\mu-\alpha_1-2\alpha_2)\\
			&\hspace{3em}-\ch L(\lambda_3+\mu)-\ch L(\lambda_3+\mu-\alpha_2)-\ch L(\lambda_3+\mu-\alpha_1-\alpha_2)\\
			&\hspace{3em}-\delta_3\cdot \ch L(\lambda_3+\mu-\alpha_1-2\alpha_2)\displaybreak\\
			&=(\chi(\lambda+\mu)-\ch L(\lambda_3+\mu-\alpha_1-\alpha_2))\\
			&\hspace{3em}+(\chi(\lambda+\mu-\alpha_2)-\delta_3\cdot \ch L(\lambda_3+\mu-\alpha_1-2\alpha_2))\\
			&\hspace{3em}+(\chi(\lambda+\mu-\alpha_1-\alpha_2)-\ch L(\lambda_3+\mu))\\
			&\hspace{3em}+(\chi(\lambda+\mu-\alpha_1-2\alpha_2)-\ch L(\lambda_3+\mu-\alpha_2))\\
			&=\ch L(\lambda+\mu)+\ch L(\lambda+\mu-\alpha_2)+\delta_4\cdot \ch L(\lambda+\mu-\alpha_1-\alpha_2)\\
			&\hspace{3em}+\delta_4\cdot \ch L(\lambda+\mu-\alpha_1-2\alpha_2).
		\end{align*}
		Thus $M$ is multiplicity-free. \qedhere
	\end{itemize}

The classification of multiplicity-free tensor products of simple modules with $p$-restricted highest weight for an algebraic group of type $B_2$ is not completed in this thesis. In the previous sequence of propositions, we fully treated the following cases:
\begin{itemize}
	\item $\lambda=(a,b),\;\mu=(c,d)$ with $a\cdot b=0$ and $c\cdot d=0$,
	\item $\lambda=(a,b),\; \mu=(0,1)$.
\end{itemize}
It remains to consider the following cases:
\begin{itemize}
	\item $\lambda=(a,b),\;\mu=(0,d)$ with $a\neq 0,\; b\neq 0$ and $d\geq 2$,
	\item $\lambda=(a,b),\;\mu=(c,0)$ with $a\neq 0,\; b\neq 0$ and $c\neq 0$,
	\item $\lambda=(a,b),\;\mu=(c,d)$ with $a\cdot b\cdot c \cdot d\neq 0$.
\end{itemize}

\end{proof}

\iffalse
\begin{prop}
	Let $\lambda,\mu \in X^+$ with $\lambda\in C_2$ and $\mu\in \widehat{C_1}$. If $\lambda+\mu \notin\widehat{C_2}$, then $L(\lambda)\otimes L(\mu)$ has multiplicity.
\end{prop}

\begin{proof}
	Let $\lambda_1:=s_{\alpha_1+\alpha_2,p}\sbullet \lambda\in C_1$. By Lemma \ref{Weyl_B2}, we have $L(\mu)=\Delta(\mu)$, $L(\lambda_1)=\Delta(\lambda_1)$ and $\Delta(\lambda)$ admits the unique composition series $[L(\lambda),L(\lambda_1)]$. Thus we have an exact sequence
	$$\begin{tikzcd}
		0 & {L(\lambda_1)} & {\Delta(\lambda)} & {L(\lambda)} & {0.}
		\arrow[from=1-1, to=1-2]
		\arrow[from=1-2, to=1-3]
		\arrow[from=1-3, to=1-4]
		\arrow[from=1-4, to=1-5]
	\end{tikzcd}$$
Taking the tensor product with $L(\mu)$, we get
$$\begin{tikzcd}
	0 & {\Delta(\lambda_1)\otimes \Delta(\mu)} & {\Delta(\lambda)\otimes \Delta(\mu)} & {L(\lambda)\otimes L(\mu)} & {0.}
	\arrow[from=1-1, to=1-2]
	\arrow["\phi",from=1-2, to=1-3]
	\arrow["\psi",from=1-3, to=1-4]
	\arrow[from=1-4, to=1-5]
\end{tikzcd}$$
Using Theorem \ref{Weyl_filtration}, we fix a Weyl filtration
$$0=V_0\subseteq V_1\subseteq \ldots \subseteq V_n=\Delta(\lambda)\otimes \Delta(\mu).$$
Assume for contradiction that $L(\lambda)\otimes L(\mu)$ is multiplicity-free. In particular, it is completely reducible.
{\color{red} à compléter}
\end{proof}
\fi

\cleardoublepage

\section{$\SL_n$ for $p=2$}\label{An_p2}

In this section, we classify multiplicity-free tensor products of simple $\SL_n$-modules with \linebreak$p$-restricted highest weight when $p=2$. To that end, we will use the classification of completely reducible tensor products of simple $\SL_n$-modules with $p$-restricted highest weight for $p=2$ (\cite[Theorem 7.12]{article_jonathan}).
%, proved in \cite{article_jonathan}.

\begin{thm}\label{A_n_completely_reducible}
	Let $G$ be of type $A_n$ and $p=2$. Let $\lambda,\mu\in X^+$ be nonzero $2$-restricted dominant weights. Up to the reordering of $\lambda$ and $\mu$, $L(\lambda)\otimes L(\mu)$ is completely reducible if and only if one of the following holds:
	\begin{enumerate}[label = \emph{(\arabic*)}] 
		\item $\lambda=\omega_1$ and $\mu=\omega_{i_1}+\ldots+\omega_{i_r}$ for even numbers $1<i_1<\ldots<i_r\leq n$,
		\item $\lambda=\omega_n$ and $\mu=\omega_{i_1}+\ldots+\omega_{i_r}$ for certain $i_1<\ldots<i_r< n$ such that $n+1-i_j$ is even for all $j\in \{1,\ldots,r\}$,
		\item $\lambda=\omega_2$ and $\mu=\omega_j$ for some $2<j\leq n$ with $j-2\equiv 3\bmod 4$,
		\item $\lambda=\omega_{n-1}$ and $\mu=\omega_j$ for some $1\leq j< n-1$ with $n-1-j\equiv 3\bmod 4$.
	\end{enumerate}
\end{thm}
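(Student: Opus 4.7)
The plan is to combine explicit tensor product decompositions for the ``positive'' cases with extension computations for the ``negative'' cases. Since $p = 2$, every nonzero $2$-restricted weight for $G$ of type $A_n$ has the form $\sum_{i \in S} \omega_i$ for some nonempty $S \subseteq \{1,\ldots,n\}$, and $L(\omega_i) \cong \Lambda^i V \cong \Delta(\omega_i)$ where $V = L(\omega_1)$ is the natural module. This irreducibility of the fundamental Weyl modules is the starting point: it means $L(\omega_i) = \nabla(\omega_i) = T(\omega_i)$, so tensor products of fundamental modules are tilting, and by Lemma \ref{coselfdual_reducible} complete reducibility is equivalent to multiplicity-freeness here.

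For the \emph{sufficiency} of the listed conditions I would verify each family by direct computation. In case (1), with $\mu = \omega_{i_1} + \cdots + \omega_{i_r}$ and all $i_j$ even, I would compute $V \otimes L(\mu)$ by analyzing the natural surjection $V \otimes L(\mu) \twoheadrightarrow L(\omega_1 + \mu)$ coming from cup product and then identifying its kernel as a direct sum of simples $L(\omega_1 + \mu - \alpha_{i_j} - \alpha_{i_j-1} - \cdots)$ indexed by the positions where one can ``absorb'' $\omega_1$ into an $\omega_{i_j}$. The parity hypothesis on the $i_j$ ensures that the highest weights of all these summands lie in alcoves where the relevant $\mathrm{Ext}^1$'s vanish, so the short exact sequence splits. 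Case (2) follows from (1) by applying the Dynkin diagram automorphism (duality), which sends $\omega_i \mapsto \omega_{n+1-i}$. Cases (3) and (4) are analogous: one computes $\Lambda^2 V \otimes L(\omega_j)$ by a Pieri-type decomposition into at most three summands with highest weights $\omega_2 + \omega_j$, $\omega_1 + \omega_{j+1}$, and $\omega_{j+2}$, and the mod-$4$ congruence on $j$ is precisely what is needed to ensure each summand is irreducible and that the sequence splits.

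For the \emph{necessity}, I would suppose $(\lambda,\mu)$ lies outside the listed families and exhibit a non-split extension in $L(\lambda) \otimes L(\mu)$, hence a non-semisimple indecomposable summand. Because $L(\lambda) \otimes L(\mu)$ is a tilting module whenever $\lambda, \mu$ are fundamental weights, Lemma \ref{argument_tilting} lets me conclude failure of complete reducibility as soon as I locate a composition factor $L(\eta)$ with $T(\eta)$ not irreducible. For $\mu$ a sum of two or more fundamental weights I would strip off one summand via a short exact sequence $0 \to K \to L(\omega_{i_1}) \otimes L(\mu') \to L(\mu) \to 0$ and argue inductively: the excluded parity or congruence condition forces some composition factor into a linkage class where $\Delta(\eta)$ is known to be reducible, via the Strong Linkage Principle (Proposition \ref{strong_linkage}) and the Jantzen sum formula.

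The main obstacle will be the necessity direction. Checking that the extensions one constructs are genuinely non-split requires controlling $\mathrm{Ext}^1_G(L(\nu_1), L(\nu_2))$ for many pairs of linked weights in characteristic $2$, where sporadic coincidences (especially in small rank and along the faces of the fundamental alcove) are frequent. I would probably handle this by restricting to a Levi subgroup of type $A_1 \times A_{n-2}$ and using Frobenius reciprocity together with the known structure of $\SL_2$-representations (Proposition \ref{sl2}) to reduce to a finite list of low-rank base cases, then lift the obstruction to $\SL_n$.
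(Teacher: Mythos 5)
The first thing to note is that the paper contains no proof of this statement to compare against: Theorem \ref{A_n_completely_reducible} is imported wholesale from Gruber's work (\cite[Theorem 7.12]{article_jonathan}) and used as a black box, the thesis only combining it with Theorem \ref{comp_red_0} and Stembridge's Theorem \ref{A_n_char0} to deduce the multiplicity-freeness classification. Your proposal is therefore an attempt to reprove a research-level classification from scratch, and as written it is an outline whose two crucial steps are asserted rather than established. Your starting observation is fine ($\omega_i$ is minuscule, so $L(\omega_i)\cong\Lambda^iV\cong\Delta(\omega_i)$ is tilting), but in the sufficiency direction everything hinges on the claim that the parity and mod-$4$ hypotheses place the candidate summands' highest weights ``in alcoves where the relevant $\mathrm{Ext}^1$'s vanish, so the short exact sequence splits.'' For $p=2$ and $n\geq 2$ one has $p<h=n+1$, so the closure of the fundamental alcove contains no nonzero dominant weight and every $2$-restricted weight sits on alcove walls: the alcove-geometric $\mathrm{Ext}$-vanishing you appeal to simply does not exist in this regime, and the splitting must be proved by other means (this is precisely where Gruber's argument is delicate). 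Your Pieri-type decompositions are likewise characteristic-zero facts; at $p=2$ the modules $\Delta(\omega_2+\omega_j)$, $\Delta(\omega_1+\omega_{j+1})$, etc.\ need not be simple, so even the list of candidate summands requires justification. In the necessity direction you concede the problem yourself: Levi restriction plus ``a finite list of low-rank base cases'' is a strategy, not an argument, and controlling $\mathrm{Ext}^1_G(L(\nu_1),L(\nu_2))$ at $p=2$ is exactly the hard content of the theorem.

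There are also three concrete errors in the setup. First, Lemma \ref{coselfdual_reducible} gives only the implication multiplicity-free $\Rightarrow$ completely reducible, so your claim that the two notions are equivalent for these tensor products does not follow from it; that equivalence is a consequence of the completed classifications, not an available input. Second, Lemma \ref{argument_tilting} as stated concludes that $M$ \emph{has multiplicity}, not that $M$ fails to be completely reducible; for tilting modules the same decomposition argument does yield non-complete-reducibility (any indecomposable summand $T(\nu_i)\supseteq L(\eta)$ is forced to be non-simple), but you would need to state and prove that variant. Third, the ``natural surjection $V\otimes L(\mu)\twoheadrightarrow L(\omega_1+\mu)$ coming from cup product'' does not exist in general: the exterior-algebra multiplication maps $V\otimes\Lambda^iV$ onto $L(\omega_{i+1})$, not $L(\omega_1+\omega_i)$, and a tensor product of simples need not have $L(\lambda+\mu)$ in its head --- already for $\SL_2$ with $p=2$, $V\otimes V\cong T(2)$ is uniserial with head $L(0)$, so no surjection onto $L(2)$ exists. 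The maximal vector only gives a highest-weight \emph{submodule}, i.e.\ a quotient of $\Delta(\lambda+\mu)$ inside the tensor product. If you need this classification, the correct move is the one the thesis makes: cite \cite[Theorem 7.12]{article_jonathan}.
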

%A proof of this Theorem is given in \cite[7.12]{article_jonathan}.

\begin{thm}[{\cite[Theorem 1.1.A]{Stembridge}}]\label{A_n_char0}
	Let $G$ be of type $A_n$, $\lambda\in X^+$ and $i\in \{1,\ldots,n\}$. Then $L_{\C}(\omega_i)\otimes L_{\C}(\lambda)$ is multiplicity-free.
\end{thm}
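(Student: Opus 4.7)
The plan is to use the classical identification $L_{\C}(\omega_i) \cong \Lambda^i V$, where $V$ is the natural $(n+1)$-dimensional representation of $\SL_{n+1}(\C)$. The crucial feature is that $L_{\C}(\omega_i)$ is \emph{minuscule}: all its weights lie in a single Weyl orbit and each has multiplicity one. This should reduce the multiplicity-freeness claim to a combinatorial statement about strictly decreasing integer sequences via Lemma~\ref{action_characters}.

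Concretely, let $e_1,\ldots,e_{n+1}$ be the standard basis of the ambient space (modulo $\sum_j e_j = 0$). The weights of $L_{\C}(\omega_i)$ are the vectors $e_S := \sum_{j\in S}e_j$ for subsets $S\subseteq\{1,\ldots,n+1\}$ of size $i$, each with multiplicity one. Since $L_{\C}(\omega_i)\cong\Delta(\omega_i)$ in characteristic zero, Proposition~\ref{product_characters} yields
\[
\chi(\omega_i)\chi(\lambda)=\sum_{|S|=i}\chi(\lambda+e_S).
\]
Writing $\lambda+\rho=(\lambda_1,\ldots,\lambda_{n+1})$ in these coordinates, dominance together with the strictness of $\rho$ gives $\lambda_1>\lambda_2>\cdots>\lambda_{n+1}$ with integer gaps $\lambda_j-\lambda_{j+1}\geq 1$. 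The vector $\lambda+e_S+\rho$ then has coordinates $\lambda_j+\varepsilon_j$ with $\varepsilon_j\in\{0,1\}$ summing to $i$.

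The key observation is that any repetition among these coordinates can only occur at \emph{adjacent} positions: for $j<k$ one has
\[
(\lambda_j+\varepsilon_j)-(\lambda_k+\varepsilon_k)\geq(\lambda_j-\lambda_k)-1\geq(k-j)-1,
\]
so a coincidence forces $k=j+1$, $\lambda_j=\lambda_{j+1}+1$, $\varepsilon_j=0$ and $\varepsilon_{j+1}=1$. Call $S$ \emph{admissible} if no such index $j$ exists. For non-admissible $S$, the vector $\lambda+e_S+\rho$ lies on a reflection hyperplane, hence $\chi(\lambda+e_S)=0$ by Lemma~\ref{action_characters}. For admissible $S$, the sequence $(\lambda_j+\varepsilon_j)$ is already strictly decreasing, so $\lambda+e_S$ is dominant and $\chi(\lambda+e_S)$ contributes with coefficient $+1$, no Weyl reflection being required. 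Since the map $S\mapsto\lambda+e_S$ is manifestly injective, we obtain
\[
\chi(\omega_i)\chi(\lambda)=\sum_{S\text{ admissible}}\chi(\lambda+e_S),
\]
a sum of \emph{distinct} Weyl characters of dominant weights each with coefficient one, so by Lemma~\ref{basis_character} the tensor product is multiplicity-free.

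The main obstacle is really only the adjacency lemma above; once one knows that repetitions among the shifted coordinates can only occur between consecutive entries, the rest is formal. The key ingredients are the integrality and the strict gap $\geq 1$ of the entries of $\lambda+\rho$, which is why it is essential to work in the $\rho$-shifted coordinates and with the dot action rather than with $\lambda$ directly. No characteristic-zero input is needed beyond the identification $L_{\C}(\omega_i)\cong\Delta(\omega_i)$ and the full irreducibility of all Weyl modules over $\C$.
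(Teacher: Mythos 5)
Your proposal is correct, but it is worth noting that the paper does not actually prove this statement at all: Theorem \ref{A_n_char0} is quoted directly from Stembridge's classification (\cite[Theorem 1.1.A]{Stembridge}), whose full result is an if-and-only-if classification of multiplicity-free tensor products in type $A_n$ obtained by quite different (combinatorial/Littlewood--Richardson) methods. What you give instead is the classical minuscule-weight argument, self-contained within the paper's toolkit: since every weight of $L_{\C}(\omega_i)\cong\Lambda^iV$ has multiplicity one, Proposition \ref{product_characters} yields $\chi(\omega_i)\chi(\lambda)=\sum_{|S|=i}\chi(\lambda+e_S)$, and your adjacency lemma is exactly right --- for $j<k$ one has $(\lambda_j+\varepsilon_j)-(\lambda_k+\varepsilon_k)\geq(k-j)-1$, so a repeated coordinate forces $k=j+1$ with $\varepsilon_j=0$, $\varepsilon_{j+1}=1$. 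One can even sharpen your hyperplane remark: since $(\lambda_j+\varepsilon_j)-(\lambda_{j+1}+\varepsilon_{j+1})\geq 0$ always holds, every $\lambda+e_S$ lies in $D$, and the non-admissible ones lie precisely in $D\setminus X^+$, so Lemma \ref{action_characters}(2) applies verbatim (rather than only the reflection-fixing argument behind it). The admissible terms are distinct dominant weights with coefficient $+1$, and Lemma \ref{basis_character} finishes the proof. Your route buys a short, elementary, and type-uniform proof (it works for any minuscule weight in any type), at the cost of proving only the sufficiency direction that the paper needs, whereas citing Stembridge gives the complete characterization of when multiplicity-freeness holds.
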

%This is a particular case of \cite[1.1.A]{Stembridge}.

\begin{thm}
	Let $G$ be of type $A_n$ and $p=2$. Let $\lambda,\mu\in X^+$ be nonzero $2$-restricted dominant weights. Up to the reordering of $\lambda$ and $\mu$ and up to duality, $L(\lambda)\otimes L(\mu)$ is multiplicity-free if and only if one of the following holds:
	\begin{enumerate}[label = \emph{(\arabic*)}] 
	\item $\lambda=\omega_1$ and $\mu=\omega_{i_1}+\ldots+\omega_{i_r}$ for even numbers $1<i_1<\ldots<i_r\leq n$,
	\item $\lambda=\omega_n$ and $\mu=\omega_{i_1}+\ldots+\omega_{i_r}$ for certain $i_1<\ldots<i_r< n$ such that $n+1-i_j$ is even for all $j\in \{1,\ldots,r\}$,
	\item $\lambda=\omega_2$ and $\mu=\omega_j$ for some $2<j\leq n$ with $j-2\equiv 3\bmod 4$,
	\item $\lambda=\omega_{n-1}$ and $\mu=\omega_j$ for some $1\leq j< n-1$ with $n-1-j\equiv 3\bmod 4$.
\end{enumerate}
\end{thm}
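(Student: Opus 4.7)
The plan is to show that the two conditions coincide precisely with the list in Theorem \ref{A_n_completely_reducible}: in this specific setting, being multiplicity-free is equivalent to being completely reducible. The key point is that in every case of the list, one of the weights is a fundamental weight, which allows us to invoke Stembridge's characteristic $0$ classification (Theorem \ref{A_n_char0}).

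For the \emph{only if} direction, suppose $L(\lambda)\otimes L(\mu)$ is multiplicity-free. By Lemma \ref{completely_reducible}, it is then completely reducible, and Theorem \ref{A_n_completely_reducible} immediately forces $(\lambda,\mu)$ (up to reordering) to be of one of the four stated forms. So this direction requires no work beyond quoting the two results.

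For the \emph{if} direction, assume that $(\lambda,\mu)$ is one of the four listed pairs. Theorem \ref{A_n_completely_reducible} tells us that $L(\lambda)\otimes L(\mu)$ is completely reducible. In every one of the four cases, $\lambda$ is a fundamental weight ($\omega_1$, $\omega_n$, $\omega_2$, or $\omega_{n-1}$), so by Theorem \ref{A_n_char0} the characteristic $0$ tensor product $L_{\C}(\lambda)\otimes L_{\C}(\mu)$ is multiplicity-free. The hypotheses of Theorem \ref{comp_red_0} are therefore satisfied: $\lambda$ and $\mu$ are $2$-restricted, the tensor product is completely reducible in characteristic $2$, and the corresponding tensor product over $\C$ is multiplicity-free. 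We conclude that $L(\lambda)\otimes L(\mu)$ is multiplicity-free, as desired.

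There is essentially no obstacle in this proof: both deep inputs (Gruber's complete-reducibility classification and Stembridge's characteristic $0$ classification) are cited, and the crucial ``bridge'' Theorem \ref{comp_red_0} was already established. The only thing worth double-checking is that the lists really match literally (they do, because we take the classification in Theorem \ref{A_n_completely_reducible} as the output list), and that duality is handled symmetrically, which it is since $L(\lambda)\otimes L(\mu)$ is multiplicity-free (resp.\ completely reducible) if and only if its dual $L(\lambda^*)\otimes L(\mu^*)$ is, where $\lambda^*=-w_0\lambda$ permutes the fundamental weights by $\omega_i\mapsto \omega_{n+1-i}$.
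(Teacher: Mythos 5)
Your proof is correct and follows exactly the same route as the paper: both directions reduce to Gruber's complete-reducibility classification (Theorem \ref{A_n_completely_reducible}) via Lemma \ref{completely_reducible}, and the converse is settled by combining it with Stembridge's result (Theorem \ref{A_n_char0}) through the bridge Theorem \ref{comp_red_0}. Your additional remark spelling out how duality acts on fundamental weights is a harmless elaboration of what the paper leaves implicit.
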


\begin{proof}
	Suppose that $\lambda$ and $\mu$ do not satisfy the conditions. Then $L(\lambda)\otimes L(\mu)$ is not completely reducible by Theorem \ref{A_n_completely_reducible}, and hence it has multiplicity by Lemma \ref{completely_reducible}.
	
	Now suppose that $\lambda,\;\mu$ verify the condition of the theorem. By Theorem \ref{A_n_completely_reducible}, \linebreak$L(\lambda)\otimes L(\mu)$ is completely reducible. Moreover, $L_{\C}(\lambda)\otimes L_{\C}(\mu)$ is multiplicity-free by \linebreak Theorem \ref{A_n_char0}. Therefore, $L(\lambda)\otimes L(\mu)$ is multiplicity-free by Theorem \ref{comp_red_0}.
\end{proof}

\cleardoublepage
\nocite{*}
\bibliographystyle{alpha}  
\bibliography{biblio}  % author-year citation style
% \bibliographystyle{siam}  % numbered citation style (deprecated by some)
%\printbibliography

\end{document}